\begin{document}
\title{Fundamental limits of symmetric low-rank matrix estimation}
\author{Marc Lelarge \& L\'eo Miolane\footnote{M.L. and L.M. are with INRIA-ENS, Paris France, emails: marc.lelarge@ens.fr and leo.miolane@inria.fr}}
\date{}
\maketitle
\begin{abstract}
We consider the high-dimensional inference problem where the signal is a low-rank symmetric matrix which is corrupted by an additive Gaussian noise. Given a probabilistic model for the low-rank matrix, we compute the limit in the large dimension setting for the mutual information between the signal and the observations, as well as the matrix minimum mean square error, while the rank of the signal remains constant. We also show that our model extends beyond the particular case of additive Gaussian noise and we prove an universality result connecting the community detection problem to our Gaussian framework. We unify and generalize a number of recent works on PCA, sparse PCA, submatrix localization or community detection by computing the information-theoretic limits for these problems in the high noise regime. In addition, we show that the posterior distribution of the signal given the observations is characterized by a parameter of the same dimension as the square of the rank of the signal (i.e.\ scalar in the case of rank one). 
This allows to locate precisely the information-theoretic thresholds for the above mentioned problems.
Finally, we connect our work with the hard but detectable conjecture in statistical physics.
\end{abstract}

\section{Introduction}

The estimation of a low-rank matrix observed through a noisy channel is a fundamental problem in statistical inference with applications in machine learning, signal processing or information theory. We shall consider the high dimensional setting where the low-rank matrix to estimate is symmetric and where the noise is additive and Gaussian:
\begin{equation} \label{eq:problem}
	\bY = \sqrt{\frac{\lambda}{n}} \, \bX\bX^{\intercal} +\bZ
\end{equation}
where $n$ is the dimension and $\lambda$ captures the strength of the signal.
Our framework can encompass a wide range of low-rank signal $\bX$ where the components of the vector are i.i.d.\ with a given prior distribution $P_0$.
Moreover, thanks to the universality property first introduced in~\cite{DBLP:conf/allerton/LesieurKZ15} and proved in~\cite{krzakala2016mutual}, our results with additive Gaussian noise have direct implications for a wide range of channels. In the context of community detection, we will prove another universality result showing the equivalence between Bernoulli channel and Gaussian channel that will allow us to transfer our results about rank-one matrix estimation to the community detection problem in the limit of large degrees.
In this paper, we aim at computing the best achievable performance (in term of mean square error) for the estimation of the low-rank signal. More precisely, we prove limiting expressions for the mutual information $I(\bbf{X};\bbf{Y})$ and the minimum mean square error (MMSE), as conjectured in \cite{DBLP:conf/allerton/LesieurKZ15}. This allows us to compute the information-theoretic threshold for this estimation problem,
i.e.\ the critical value $\lambda_c$ such that when $\lambda < \lambda_c$ no algorithm can retrieve the signal better than a ``random guess'' whereas for $\lambda > \lambda_c$ the signal can be estimated more accurately. 

As we explain below, particular instances of our result (corresponding to various choices for the prior distribution $P_0$) have been studied recently. Bounds based on second moment computations have been derived (see the recent works~\cite{banks2016information,perry2016optimality} and the references therein) but they are not expected to be tight in the regime considered in this paper.
Random matrix theory also provides some bounds~\cite{baik2005phase,feral2007largest,benaych2011eigenvalues} and we will comment their tightness in the sequel. Another proof technique relies on the careful analysis of an approximate message passing (AMP) algorithm first introduced in~\cite{rangan2012iterative} for the matrix factorization problem and studied in~\cite{deshpande2014information,montanari2015finding,deshpande2016asymptotic}. 

The underlying idea behind the study of AMP is that the estimation problem \eqref{eq:problem} can be characterized by a single scalar equation~\cite{rangan2012iterative}, a behavior called ``replica-symmetric'' in statistical physics.
From a physics point of view, one can see the components of $\bbf{X}$ as a system of $n$ spins distributed according to the (random) posterior distribution $\PP(\bbf{X}\, | \, \bbf{Y})$. This system is expected to have a ``replica-symmetric'' behavior (see for instance~\cite{zdeborova2016statistical}). This means that the correlations between the spins vanish in the $n \to \infty$ limit so that important quantities will concentrate around their means. For the rank-one case, this implies that the behavior of the system will be characterized by a single scalar parameter.
This ``replica-symmetric'' scenario is well known in the physics literature. It corresponds to the high-temperature behavior of the Sherrington-Kirkpatrick (SK) model, studied by Mezard, Parisi and Virasoro in their groundbreaking book~\cite{mezard1987spin}. At low-temperature,~\cite{mezard1987spin} predicted a ``replica-symmetry breaking'' for the SK model, so that the system is no more described by a simple scalar but a function.
However, this would not be the case for our estimation problem, (mostly) because it arises from a planted problem.
This class of models enjoys specific properties due to the presence of the planted (hidden) solution of the estimation problem and to the fact that the parameters of the inference channel (noise, priors...) are supposed to be known by the statistician. In the statistical physics jargon, the system is on the ``Nishimori line'' (see~\cite{nishimori2001statistical,iba1999nishimori,korada2009exact}), a region of the phase diagram where no ``replica-symmetry breaking'' occurs. These properties will play a crucial role in our proofs. For a detailed introduction to the connections between statistical physics and statistical inference, see~\cite{zdeborova2016statistical}.

Our proof technique will therefore be built on the mathematical approach developed by Talagrand~\cite{talagrand2010meanfield1} and Panchenko~\cite{panchenko2013SK} to study the Sherrington-Kirkpatrick model. 
We proves limiting expressions for the mutual information (i.e.\ the free energy) and the MMSE, confirming a conjecture from~\cite{DBLP:conf/allerton/LesieurKZ15}. This conjecture was recently proved by~\cite{barbier2016mutual} (under some additional assumptions) for rank-one matrix estimation, using AMP and spatial coupling techniques.
In the present paper, we are able to show that a sample $\bx$ drawn from the posterior distribution has an asymptotic deterministic overlap with the signal $\bX$: $\left(\frac{1}{n} \sum_i x_iX_i\right)^2\to q^*(\lambda)^2$ in $L^2$. Suppose for simplicity that $\E_{P_0} X = 0$, then we show that as soon as $q^*(\lambda)>0$, it is possible to strictly improve over dummy estimators, i.e.\ estimators that do not depend on the observed data $\bY$. Hence our result gives an explicit formula to compute the minimal value of the signal strength $\lambda$ in order to do strictly better than the dummy estimator for a wide range of low-rank matrix estimation problem. Moreover, it gives the best possible performance as a function of $\lambda$ and the prior $P_0$, achievable by any algorithm (with no computational constraint). Finally, our work leads to an extension of the hard but detectable conjecture from statistical physics that we present in Section~\ref{sec:conjecture}.

Our main results are presented in the next section where some applications are also described. In Section~\ref{sec:rs_formula}, we make the connection with the statistical physics approach, Section~\ref{sec:proof_rs_formula} contains the proof of our main first result and Section~\ref{sec:overlap_concentration_without_perturbation} contains the proof of the concentration for the overlap. The generalization to finite rank and general priors is done in Section~\ref{sec:gen}. Finally, the connection with the community detection problem is done in Section~\ref{sec:sbm}.

\section{Main results}

\subsection{Rank-one matrix estimation}\label{sec:rank1}
Let $P_0$ be a probability distribution on $\R$ with finite second moment. Consider the following Gaussian additive channel for $\lambda>0$,
\BEA
\label{defmodel}Y_{i,j} = \sqrt{\frac{\lambda}{n}} X_i X_j + Z_{i,j}, \quad \text{for } 1 \leq i<j \leq n,
\EEA
where $X_i \overset{\text{\tiny i.i.d.}}{\sim} P_0$ and $Z_{i,j} \overset{\text{\tiny i.i.d.}}{\sim} \mathcal{N}(0,1)$. We denote the input vector by $\bX=(X_1,\dots, X_n)$, the output matrix by $\bY=(Y_{i,j})_{1\leq i<j\leq n}$ and the noise matrix by $\bZ=(Z_{i,j})_{1\leq i<j\leq n}$. We denote by $\EE$ the expectation with respect to the randomness of $\bX,\bY,\bZ$. Notice that we suppose here to observe only the coefficients of $\sqrt{\lambda/n} \bbf{X} \bbf{X}^{\intercal} + \bbf{Z}$ that are above the diagonal. The case where all the coefficients are observed can be directly deduced from this case.

Our first main result is an exact computation of the limit when $n$ tends to infinity of the mutual information $\frac{1}{n} I(\bbf{X},\bbf{Y})$ for this Gaussian channel as well as the matrix minimum mean square error defined by:
\begin{align*}
\MMSE_n(\lambda) 
&= \min_{\hat{\theta}} \frac{2}{n(n-1)} \sum_{1\leq i<j\leq n} \EE\left[ \left(X_iX_j- \hat{\theta}_{i,j}(\bbf{Y}) \right)^2\right] \label{eq:def_mmse_min_intro} \\
&=\frac{2}{n(n-1)} \sum_{1\leq i<j\leq n} \EE\left[ \left(X_iX_j-\EE\left[X_iX_j|\bbf{Y} \right]\right)^2\right],
\end{align*}
where the minimum is taken over all estimators $\hat{\theta}$ (i.e.\ measurable functions of the observations $\bbf{Y}$ that could also depend on auxiliary randomness).
We define the following function
\BEA
\label{eq:defF}\mathcal{F} : (\lambda,q) \in \R_+^2 \mapsto -\frac{\lambda}{4}q^2 + \E \log \left( \int dP_0(x) \exp\left(\sqrt{\lambda q}Zx + \lambda q x X - \frac{\lambda}{2}q x^2\right)\right),
\EEA
where $Z \sim \mathcal{N}(0,1)$ and $X \sim P_0$ are independent random variables.

\begin{theorem}\label{th:1}
For $\lambda>0$, we have
\BEAS
\lim_{n\to \infty} \frac{1}{n} I(\bbf{X},\bbf{Y}) = \frac{\lambda \E_{P_0} [X^2]^2}{4} - \sup_{q\geq 0}\mathcal{F} (\lambda,q).
\EEAS
This limit is a concave function of $\lambda$. Let $D \subset (0, + \infty)$ be the set of points where this function is differentiable. By concavity, $D$ is equal to $(0, + \infty)$ minus a countable set.
Then, for all $\lambda \in D$, the maximizer $q^*(\lambda)$ of $q\geq 0\mapsto \Fcal(\lambda,q)$ is unique and is such that
\BEAS
\lim_{n\to \infty}\MMSE_n(\lambda) = \E_{P_0} [X^2]^2-q^*(\lambda)^2.
\EEAS
\end{theorem}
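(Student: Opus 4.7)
The plan is to rewrite $\frac{1}{n}I(\bbf{X};\bbf{Y})$ as a normalized free energy and compute its limit via Guerra's interpolation together with overlap concentration on the Nishimori line. Expanding the squared norm in the Gaussian likelihood of~\eqref{defmodel} and using $\E Z_{i,j}=0$ and $\E Y_{i,j}X_iX_j=\sqrt{\lambda/n}\,\E[X^2]^2$, one obtains
\[
\frac{1}{n}I(\bbf{X};\bbf{Y}) \;=\; \frac{\lambda(n-1)}{4n}\,\E[X^2]^2 \;-\; \phi_n(\lambda),\qquad \phi_n(\lambda) := \frac{1}{n}\E\log Z_n,
\]
with $Z_n = \int \prod_i dP_0(x_i)\exp H_n(x)$ and Hamiltonian $H_n(x) = \sqrt{\lambda/n}\sum_{i<j}Y_{i,j}x_ix_j - \frac{\lambda}{2n}\sum_{i<j}x_i^2x_j^2$. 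Proving the first statement of Theorem~\ref{th:1} thus amounts to establishing $\phi_n(\lambda) \to \sup_{q\geq 0}\mathcal{F}(\lambda,q)$.

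For the lower bound on $\phi_n$, I would implement Guerra's interpolation between the model~\eqref{defmodel} at $t=1$ and, at $t=0$, $n$ independent scalar Gaussian channels $Y_i^{(0)} = \sqrt{\lambda q}\,X_i + Z_i'$ with a free parameter $q\geq 0$. A Gaussian integration by parts combined with the Nishimori identity (which replaces overlaps with the planted signal by overlaps between two independent posterior replicas) yields
\[
\phi_n'(t) \;=\; \frac{\lambda}{4}\,\E\bigl\langle (Q_{12}-q)^2\bigr\rangle \;-\; \frac{\lambda q^2}{4} \;+\; O(1/n),
\]
where $Q_{12} = \frac{1}{n}\sum_i x_i^{(1)}x_i^{(2)}$ is the replica overlap. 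Integrating over $t\in[0,1]$ and dropping the non-negative quadratic term gives $\phi_n(\lambda) \geq \mathcal{F}(\lambda,q) - o(1)$ for every $q\geq 0$, hence $\liminf_n \phi_n(\lambda) \geq \sup_q \mathcal{F}(\lambda,q)$.

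The matching upper bound is the main obstacle; it is here that the planted / Nishimori structure is essential because it rules out replica-symmetry breaking and forces the overlap to concentrate. The idea is to recover the quadratic term dropped above by choosing $q$ equal to the actual overlap of the system. Using the concentration result of Section~\ref{sec:overlap_concentration_without_perturbation} (obtained via a small Gaussian perturbation of the Hamiltonian), $Q_{12}$ is within $o_{L^1}(1)$ of a deterministic value $q_n(\lambda)$. Plugging $q = q_n$ into Guerra's interpolation (or, if necessary, letting $q$ depend on $t$ in the spirit of the Barbier--Macris adaptive scheme to accommodate that the overlap value changes along the path) makes $\int_0^1\E\langle(Q_{12}-q)^2\rangle\,dt \to 0$, yielding $\phi_n(\lambda) = \mathcal{F}(\lambda, q_n) + o(1)$. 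Combined with the lower bound $\phi_n \geq \sup_q \mathcal{F}$, this forces $q_n$ to be an asymptotic maximizer of $\mathcal{F}(\lambda,\cdot)$; extracting a convergent subsequence $q_n\to q_\infty$ we get $\phi_n(\lambda) \to \mathcal{F}(\lambda,q_\infty) = \sup_q\mathcal{F}(\lambda,q)$. A continuity argument shows that the perturbation added to force concentration contributes only $o(1)$ to the free energy.

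Finally, the MMSE statement follows from the I-MMSE identity, which in the present model reads
\[
\frac{d}{d\lambda}\,\frac{1}{n}I(\bbf{X};\bbf{Y}) \;=\; \frac{n-1}{4n}\,\MMSE_n(\lambda).
\]
Concavity of $\lambda\mapsto \frac{1}{n}I(\bbf{X};\bbf{Y})$ (which persists in the limit) together with pointwise convergence of the mutual information implies convergence of derivatives on the cocountable set $D$ of differentiability. The envelope theorem applied at $\lambda\in D$, where $q^*(\lambda)$ is the unique maximizer of $\mathcal{F}(\lambda,\cdot)$, gives $\frac{d}{d\lambda}\sup_q \mathcal{F}(\lambda,q) = q^*(\lambda)^2/4$ after a short Gaussian integration by parts on the scalar channel inside $\mathcal{F}$. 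Combined with the I-MMSE relation, this yields $\lim_n \MMSE_n(\lambda) = \E_{P_0}[X^2]^2 - q^*(\lambda)^2$ for every $\lambda\in D$.
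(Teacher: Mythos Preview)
Your lower bound via Guerra's interpolation and your derivation of the MMSE limit from the envelope theorem and convexity both match the paper's arguments (Proposition~\ref{prop:guerra_bound}, Proposition~\ref{prop:derivative_phi}, Corollary~\ref{cor:limit_mmse}).

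The upper bound, however, is handled differently and your sketch has a real gap. The paper does \emph{not} close Guerra's bound by plugging back an overlap value; instead it runs the Aizenman--Sims--Starr cavity scheme (Section~\ref{sec:aizenman}): after adding the perturbation of Section~\ref{sec:small_perturbation}, it compares the $(n+1)$-spin to the $n$-spin system and shows $\limsup_n \E_\epsilon[A_n] \le \E\,\mathcal F_{\epsilon_n}(\lambda,Q) \le \sup_q \mathcal F(\lambda,q)$, where $Q=\langle \bbf x^{(1)}.\bbf x^{(2)}\rangle$ is random. The overlap concentration used there (Proposition~\ref{prop:overlap}) is concentration around this \emph{random} $Q$, not around a deterministic $q_n(\lambda)$.

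Your proposal has two issues. First, a circularity: you invoke Section~\ref{sec:overlap_concentration_without_perturbation} to get concentration around a deterministic value, but that theorem is proved \emph{after} the RS formula and explicitly relies on it (it uses the convexity and differentiability of the limit $\phi$, and finishes by appealing to Corollary~\ref{cor:limit_mmse}). Second, even with the perturbation of Section~\ref{sec:small_perturbation}, what one controls is $\E\langle(\bbf x^{(1)}.\bbf x^{(2)}-Q)^2\rangle$ with $Q$ random and observation-dependent; there is no a priori deterministic $q_n$ to feed into Guerra's path. Your fallback of letting $q$ depend on $t$ ``in the spirit of Barbier--Macris'' is a genuinely different technique (adaptive/stochastic interpolation) that can be made to work, but it is not a small tweak of Guerra's argument: one must build a differential equation for $q(t)$, prove it is well-posed, and show the resulting free energy matches $\sup_q\mathcal F(\lambda,q)$. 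None of this is in your sketch. The paper's cavity route sidesteps all of it by comparing system sizes rather than trying to saturate the interpolation.
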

To the best of our knowledge, the rigorous result closest to ours is provided by~\cite{barbier2016mutual} (for discrete priors) where a restrictive assumption is made on $P_0$, namely the function $q\mapsto \Fcal(\lambda,q)$ is required to have at most three stationary points. Our most general result will generalize Theorem~\ref{th:1} to any probability distribution $P_0$ over $\RR^k$ with finite second moment and with $k$ fixed (see Section~\ref{sec:introg}). For the sake of clarity, we first concentrate on the rank-one case, provide a detailed proof and then generalize it to the general case.
\\

In order to get an upper bound on the matrix minimum mean square error, we will consider the ``dummy estimators'', i.e.\ estimators $\hat{\theta}$ that do not depend on $\bY$ (and that are thus independent of $\bX$). If $\hat{\theta}$ is a dummy estimator, its mean square error is equal to
\begin{align*}
	\text{MSE}(\hat{\theta}) = \frac{2}{n(n-1)} \sum_{1\leq i<j\leq n} \EE\left[ \left(X_iX_j- \E \hat{\theta}_{i,j} \right)^2\right] + \text{Var}(\hat{\theta}_{i,j})
\end{align*}
because $\bX$ and $\hat{\theta}$ are independent. Therefore, the ``best'' dummy estimator (in term of mean square error) is $\hat{\theta}_{i,j} = \E_{P_0} [X]^2$ for all $i<j$ which gives a ``dummy'' matrix mean square error of:
\BEAS
\DMSE = \E_{P_0} [X^2]^2-\E_{P_0} [X]^4\geq 0.
\EEAS
As we will see later in Proposition~\ref{prop:q_star}, the optimizer $q^*(\lambda)$ defined in Theorem~\ref{th:1} is such that: $q^*(\lambda) \xrightarrow[\lambda \to 0]{} \E_{P_0}[X]^2$ and $q^*(\lambda) \xrightarrow[\lambda \to \infty]{} \E_{P_0}[X^2]$. Consequently, Theorem~\ref{th:1} gives the limits of the MMSE for the low (i.e.\ $\lambda\to 0$) and high (i.e.\ $\lambda\to \infty$) signal regimes:
\begin{align*}
	\lim_{\lambda \to 0} \lim_{n \to \infty} \MMSE_n(\lambda) &= \DMSE \\
	\lim_{\lambda \to \infty} \lim_{n \to \infty} \MMSE_n(\lambda) &= 0
\end{align*}
It is important to note that the regime considered in this paper with $\lambda \in (0,\infty)$ corresponds to a high noise regime: the $\MMSE$ will be positive for any finite value of $\lambda$ in our model~\eqref{defmodel}. In particular, exact reconstruction of the signal is typically not possible.
\\

Theorem~\ref{th:1} indicates that the value of $q^*(\lambda)$ determines the best achievable performance for the estimation problem. We will now see that $q^*(\lambda)$ encodes the geometry of the posterior distribution of $\bbf{X}$ given $\bbf{Y}$.
The posterior distribution of $\bbf{X}$ given $\bbf{Y}$ is given by
\BEA
\label{def:post}
\quad dP(\bbf{x} | \bbf{Y}) = \frac{1}{Z_n(\lambda)} \Big( \prod_{i=1}^n dP_0(x_i) \Big) \exp \big( \sum_{i<j} x_i x_j \sqrt{\frac{\lambda}{n}} Y_{i,j} - \frac{\lambda}{2n} x_i^2 x_j^2 \big),
\EEA
where $Z_n(\lambda)$ is the normalization function. We will adopt a standard notation in statistical physics and denote by $\langle \cdot \rangle$ the average with respect to this random (because depending on $\bY$) distribution. We also denote by $\bx$ a random vector with distribution given by~\eqref{def:post}. This means that for any function $f$ on $\R^n$ that is integrable with respect to $P_0^{\otimes n}$, we have by definition:
$$
\langle f(\bbf{x}) \rangle = \frac{1}{Z_n(\lambda)} \int \Big( \prod_{i=1}^n dP_0(x_i) \Big) f(\bbf{x}) \exp \big( \sum_{i<j} x_i x_j \sqrt{\frac{\lambda}{n}} Y_{i,j} - \frac{\lambda}{2n} x_i^2 x_j^2 \big).
$$
This quantity is well-defined. Indeed if we write $L(\bbf{x})= e^{-\frac{1}{2}\sum_{i<j}(Y_{i,j} - \sqrt{\lambda} x_{i}x_j)^2} \in (0,1]$, we have $$\langle f(\bbf{x}) \rangle = \frac{\E_{P_0^{\otimes n}} [L(\bbf{x}) f(\bbf{x})]}{\E_{P_0^{\otimes n}} [L(\bbf{x})]}.$$
Note that this quantity is random and we will see it as a function of the random vectors $\bX$ and $\bZ$:
\BEAS
\langle f(\bbf{x}) \rangle = \frac{1}{Z_n(\lambda)} \int \Big( \prod_{i=1}^n dP_0(x_i) \Big) f(\bbf{x}) \exp \big( \sum_{i<j} x_i x_j \sqrt{\frac{\lambda}{n}} (Z_{i,j} + \sqrt{\frac{\lambda}{n}} X_i X_j ) - \frac{\lambda}{2n} x_i^2 x_j^2 \big),
\EEAS
and $\EE\langle f(\bbf{x}) \rangle$ is then its mean.

For $\bbf{u},\bbf{v} \in \R^n$ we define the overlap between the configurations $\bbf{u}$ and $\bbf{v}$ as: $\bbf{u}.\bbf{v} = \frac{1}{n} \sum_{i=1}^n u_i v_i$.
Let $m \in \N^*$. The geometry of the Gibbs distribution $\langle \cdot \rangle$ can be characterized by the matrix $(\bbf{x}^{(i)}.\bbf{x}^{(j)})_{1 \leq i,j \leq m}$ of the overlaps between $m$ i.i.d.\ samples $\bbf{x}^{(1)}, \dots, \bbf{x}^{(m)}$ from $\langle \cdot \rangle$. Indeed, one can easily verify (using Proposition~\ref{prop:nishimori}) that the rescaled norm of each sample $\sqrt{\bbf{x}^{(i)}.\bbf{x}^{(i)}}$ concentrates around $\sqrt{\E_{P_0}[X^2]}$, so that the matrix of the overlaps encodes the distances between $m$ samples from $\langle \cdot \rangle$.

The Nishimori identity (Proposition~\ref{prop:nishimori}) gives that $\bbf{x}^{(i)}.\bbf{x}^{(j)}$ is equal to $\bbf{x}.\bbf{X}$ in law. The behavior of this quantity is simple: the next result shows that $(\bx.\bX)^2$ concentrates asymptotically around $q^*(\lambda)^2$. In words, we see that if $\bY$ is obtained from $\bX$ thanks to~\eqref{defmodel} and if $\bx$ is a random vector distributed according to the posterior distribution~\eqref{def:post} given $\bY$, then the square of its overlap with the initial vector $\bX$, i.e.\ $\left(\bx.\bX\right)^2$ converges to the deterministic value $q^*(\lambda)^2$ as $n$ tends to infinity. Thus, $q^*(\lambda)$ encodes the geometry of the Gibbs distribution $\langle \cdot \rangle$.

\begin{theorem}\label{th:2}
 If $P_0$ has a bounded support then for all $\lambda \in D$ we have for $\bx$ with distribution given by~\eqref{def:post},
\BEAS
\E \Big\langle \big((\bbf{x}.\bbf{X})^2 - q^*(\lambda)^2\big)^2 \Big\rangle \xrightarrow[n \to \infty]{} 0.
\EEAS
\end{theorem}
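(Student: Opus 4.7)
The overall strategy is to decompose
$$\E\bigl\langle\bigl((\bx\cdot\bX)^2 - q^*(\lambda)^2\bigr)^2\bigr\rangle = \E\bigl(\langle(\bx\cdot\bX)^2\rangle - q^*(\lambda)^2\bigr)^2 + \E\,\mathrm{Var}_{\mathrm{Gibbs}}\bigl((\bx\cdot\bX)^2\bigr),$$
and to show both terms vanish as $n\to\infty$. The first captures concentration of the posterior-averaged overlap-square around $q^*(\lambda)^2$; the second is a Gibbs-level concentration of $(\bx\cdot\bX)^2$ around its posterior mean.

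For the \emph{first term}, the Nishimori identity gives $\E[X_iX_j\langle x_ix_j\rangle] = \E\langle x_ix_j\rangle^2 = \E\langle x_ix_jX_iX_j\rangle$, hence $\E[(X_iX_j - \langle x_ix_j\rangle)^2] = \E[X_i^2X_j^2] - \E\langle x_ix_jX_iX_j\rangle$, and summing over $i<j$ yields $\E\langle(\bx\cdot\bX)^2\rangle = \E_{P_0}[X^2]^2 - \MMSE_n(\lambda) + O(1/n)$. Theorem~\ref{th:1} then gives $\E\langle(\bx\cdot\bX)^2\rangle \to q^*(\lambda)^2$ at every $\lambda\in D$. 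To strengthen this to $L^2$-concentration of the random variable $\langle(\bx\cdot\bX)^2\rangle$, I use the free energy $F_n(\lambda) := n^{-1}\log Z_n(\lambda)$: bounded support of $P_0$ makes $F_n$ an $O(n^{-1/2})$-Lipschitz function of $\bZ$, so Gaussian concentration gives $F_n - \E F_n\to 0$ in $L^2$. The I-MMSE identity makes $\E F_n(\lambda)$ convex in $\lambda$, and Theorem~\ref{th:1} yields pointwise convergence to the convex function $\mathcal{F}^*(\lambda) := \sup_q \mathcal{F}(\lambda,q)$, differentiable at $\lambda\in D$ with derivative $q^*(\lambda)^2/4$. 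A Griffiths--Hepp sandwich, comparing $F_n'(\lambda)$ to the finite differences $\epsilon^{-1}[F_n(\lambda\pm\epsilon) - F_n(\lambda)]$ and using uniform concentration of $F_n$ on $[\lambda-\epsilon,\lambda+\epsilon]$, transfers concentration from $F_n$ to $F_n'(\lambda)$. Gaussian integration by parts identifies $F_n'(\lambda)$ with $\tfrac14\langle(\bx\cdot\bX)^2\rangle$ plus noise terms of known deterministic mean that are themselves $O(n^{-1/2})$-concentrated, yielding $\langle(\bx\cdot\bX)^2\rangle \to q^*(\lambda)^2$ in $L^2$.

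The \emph{second term}, the Gibbs concentration $\E\,\mathrm{Var}_{\mathrm{Gibbs}}((\bx\cdot\bX)^2)\to 0$, is the main obstacle, and the content of Section~\ref{sec:overlap_concentration_without_perturbation}. Written using two replicas it equals $\tfrac12\E\langle((\bx^{(1)}\cdot\bX)^2 - (\bx^{(2)}\cdot\bX)^2)^2\rangle$. In standard spin-glass theory one would introduce an auxiliary Ghirlanda--Guerra-type Gaussian perturbation of the Hamiltonian to force such concentration, but the title of the section indicates the authors avoid this device. The route I propose goes through the second derivative of the free energy: $\E F_n''(\lambda)$ equals, up to $O(1)$ Gaussian IBP corrections, a positive multiple of $n\cdot\E\,\mathrm{Var}_{\mathrm{Gibbs}}((\bx\cdot\bX)^2)$. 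Convexity of $\E F_n$ and the convergence of $\E F_n'$ on $D$ force $\int_{\lambda_1}^{\lambda_2}\E F_n''(s)\,ds = \E F_n'(\lambda_2) - \E F_n'(\lambda_1)\to 0$ as $\lambda_2-\lambda_1\to 0$ with $\lambda_1,\lambda_2\in D$, so $\int_{\lambda_1}^{\lambda_2}\E\,\mathrm{Var}_{\mathrm{Gibbs}}((\bx\cdot\bX)^2)\,ds = O(1/n)$. Pointwise vanishing at $\lambda\in D$ follows from monotonicity/regularity properties of the Nishimori line. Bounded support of $P_0$ is essential throughout: for the Lipschitz estimates behind Gaussian concentration, for self-averaging of $\|\bX\|^2/n \to \E_{P_0}[X^2]$, and for the uniform integrability implicit in the $L^2$ statement of the theorem.
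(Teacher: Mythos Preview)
Your two–step decomposition (disorder–level concentration of $\langle(\bx\cdot\bX)^2\rangle$ plus Gibbs–level concentration) and your use of free–energy convexity are exactly the right instincts, and your treatment of the first term is close to what the paper does in its second lemma of Section~\ref{sec:overlap_concentration_without_perturbation}. The genuine gap is in the second term.

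First, the identification ``$\E F_n''(\lambda)$ equals, up to $O(1)$ Gaussian IBP corrections, a positive multiple of $n\cdot\E\,\mathrm{Var}_{\mathrm{Gibbs}}((\bx\cdot\bX)^2)$'' is not correct. What convexity controls is the Gibbs variance of $\partial_\lambda H_n$, equivalently of
\[
L_n(\bx,\lambda)=\sum_{i<j}\frac{Z_{i,j}}{\sqrt n}\,x_ix_j+\frac{2\sqrt\lambda}{n}\,x_ix_jX_iX_j-\frac{\sqrt\lambda}{n}\,x_i^2x_j^2,
\]
and this object contains the Gaussian noise piece $\frac{1}{\sqrt n}\sum Z_{i,j}x_ix_j$, whose Gibbs fluctuations are of the same order as the overlap piece. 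There is no identity, after Gaussian IBP or otherwise, that reduces $\mathrm{Var}_{\mathrm{Gibbs}}(L_n)$ to $\mathrm{Var}_{\mathrm{Gibbs}}((\bx\cdot\bX)^2)$. Second, even granting an integral bound of the form $\int_{\lambda_1}^{\lambda_2}\E\,\mathrm{Var}_{\mathrm{Gibbs}}(\cdot)\,d\lambda\to 0$, your step ``pointwise vanishing at $\lambda\in D$ follows from monotonicity/regularity on the Nishimori line'' is exactly the hard step, and no such monotonicity is available.

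The paper's route is to keep $L_n$ intact and prove, \emph{pointwise} at $\lambda\in D$, that
$\frac{1}{n}\E\langle|L_n-\E\langle L_n\rangle|\rangle\to 0$. The Gibbs part uses a differential inequality: setting $f(\lambda)=\E\langle|L_n(\bx^{(1)},\lambda)-L_n(\bx^{(2)},\lambda)|\rangle$, one bounds the (upper) derivative of $f$ by $\frac{C}{\sqrt\lambda}\E\langle(L_n-\langle L_n\rangle)^2\rangle + Cn$, integrates once to dominate $f(\lambda_0)$ by $f(\lambda)$ plus the variance integral, and integrates a second time over a window $[\lambda_0,\lambda_0+\delta]$. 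The variance integral is then controlled by $G_n'(\sqrt{\lambda_0+\delta})-G_n'(\sqrt{\lambda_0})$ via convexity, and differentiability of the limit at $\lambda_0\in D$ lets one send $\delta\to 0$. This double integration is precisely what replaces your unjustified ``monotonicity'' step. Once $L_n$ concentrates, one does \emph{not} read off $\mathrm{Var}_{\mathrm{Gibbs}}((\bx\cdot\bX)^2)$ directly; instead one computes, via Gaussian IBP and Nishimori,
\[
\frac{1}{n}\E\langle L_n(\bx^{(1)})(\bx^{(1)}\!\cdot\bx^{(2)})^2\rangle=\frac{\sqrt\lambda}{2}\,\E\langle(\bx^{(1)}\!\cdot\bx^{(2)})^4\rangle+O(1/n),
\qquad
\frac{1}{n}\E\langle L_n\rangle=\frac{\sqrt\lambda}{2}\,\E\langle(\bx^{(1)}\!\cdot\bx^{(2)})^2\rangle+O(1/n),
\]
and the concentration of $L_n$ forces the left sides to match, yielding $\E\langle(\bx^{(1)}\!\cdot\bx^{(2)})^4\rangle-(\E\langle(\bx^{(1)}\!\cdot\bx^{(2)})^2\rangle)^2\to 0$. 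This Ghirlanda--Guerra–style transfer from $L_n$ to the overlap is the missing idea in your argument.
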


To the best of our knowledge, the convergence in $L^2$ stated in Theorem~\ref{th:2} is a new contribution of our work.

\subsection{Effective Gaussian scalar channel}

We now study more carefully the quantity $q^*(\lambda)$ which characterizes the limit for the square of the overlap of two vectors drawn from the posterior distribution. We will relate it to the following scalar Gaussian channel:
\begin{equation} \label{eq:scalar_channel}
Y_0 = \sqrt{\gamma} X_0+ Z_0,
\end{equation}
where $X_0 \sim P_0$ and $Z_0 \sim \mathcal{N}(0,1)$ are independent random variables.
Note that the posterior distribution of $X_0$ knowing $Y_0$ is then given by
$dP(X_0=x|Y_0) = \frac{1}{Z(Y_0)} dP_0(x)e^{Y_0\sqrt{\gamma}x-\frac{\gamma x^2}{2}}$,
where the random variable $Z(Y_0)$ is the normalizing constant:
\BEAS
Z(Y_0) = \int dP_0(x)e^{Y_0\sqrt{\gamma}x-\frac{\gamma x^2}{2}}
=\int dP_0(x)e^{\gamma x X_0+\sqrt{\gamma}x Z_0 -\frac{\gamma x^2}{2}}.
\EEAS
We can then relate this quantity to the mutual information $\i(\gamma) = I(X_0,Y_0)$ of the scalar Gaussian channel
\begin{align}
\i(\gamma)
&= \frac{\gamma\EE[X_0^2]}{2}-\EE[\log Z(Y_0)]
= \frac{\gamma\EE[X_0^2]}{2}-\EE \log \Big[ \int dP_0(x) \exp(\sqrt{\gamma}Z_0x + \gamma x X_0 - \frac{\gamma}{2} x^2)\Big]. \label{eq:i_scalar}
\end{align}
Hence, playing with the equations, we can rewrite the first statement of Theorem~\ref{th:1} as follows:
\begin{equation} \label{eq:limsc}
	\lim_{n\to \infty} \frac{1}{n} I(\bbf{X},\bbf{Y}) = \frac{\lambda \E [X_0^2]^2}{4} - \sup_{q \geq 0} \left( \frac{\lambda q}{2}\left(\E[X_0^2]-\frac{q}{2}\right) -\i(\lambda q)\right).
\end{equation}
The minimum mean square error for the scalar Gaussian channel is defined as
\begin{equation}\label{eq:mmsescalar}
\mmse(\gamma) = \EE\left[ (X_0 - \EE[X_0|Y_0])^2\right]
= \EE\left[ X_0^2\right] - \EE\left[ \EE[X_0|Y_0]^2\right],
\end{equation}
where we used the identity $\EE\left[ \EE[X_0|Y_0]^2\right]=\EE\left[ X_0\EE[X_0|Y_0]\right]$.
The minimum mean square error is related to the mutual information by the following equation (from~\cite{guo2005mutual}):
$\frac{d \i}{d\gamma}(\gamma) = \frac{1}{2}\mmse(\gamma)$.
Now, we see thanks to this relation that the value $q^*(\lambda)$ attaining the supremum in the right-hand term of~\eqref{eq:limsc} should satisfy the following equation (see Proposition~\ref{prop:q_star}):
\BEA
\label{eq:fixp}q^*=\E[X_0^2]-\mmse(\lambda q^*).
\EEA
This equation was first derived in \cite{rangan2012iterative} for matrix factorization and appeared a number of times in settings similar to ours, in the context of community detection~\cite{montanari2015finding} and~\cite{deshpande2016asymptotic}, or sparse PCA~\cite{deshpande2014information}. We will discuss the application to the community detection problem in Sections~\ref{sec:sbmintro} and~\ref{sec:onec}. 
We now discuss the sparse PCA problem as introduced in~\cite{deshpande2014information}. With our notations, this setting corresponds to $P_0\sim\Ber(\epsilon)$ being a Bernoulli distribution with parameter $\epsilon>0$. It is proved in~\cite{deshpande2014information}, that there exists $\epsilon_*$ such that for $\epsilon>\epsilon_*$, the fixed point equation~\eqref{eq:fixp} has only one solution in $[0,\infty)$ and in this case Theorem 2 in~\cite{deshpande2014information} gives the asymptotic $\MMSE$ and shows that it is achieved by AMP algorithm. Our Theorem~\ref{th:1} allows us to compute the asymptotic $\MMSE$ for all values of $\epsilon$, indeed we have:

  \begin{proposition}\label{prop:spca}
    For $P_0\sim\Ber(\epsilon)$, we have
    \BEAS
    \lim_{n\to \infty} \frac{1}{n} I(\bbf{X},\bbf{Y}) = \frac{\lambda\epsilon^2}{4}-\sup_{q\geq 0} \left\{-\frac{\lambda q^2}{4}+\EE\left[\log\left( 1-\epsilon +\epsilon e^{\sqrt{\lambda q}Z+\lambda q X_0-\lambda q/2} \right) \right]\right\},
    \EEAS
    where $X_0\sim \Ber(\epsilon)$ and $Z\sim\Ncal(0,1)$. For almost all $\lambda>0$, the maximizer $q^*(\lambda)$ of the right hand term is unique and is such that
\BEAS
\lim_{n\to \infty}\MMSE_n(\lambda) = \epsilon^2-q^*(\lambda)^2.
\EEAS
  \end{proposition}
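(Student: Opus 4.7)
This proposition is essentially a direct specialization of Theorem~\ref{th:1} to the Bernoulli prior, so the plan is to unpack the definition of $\mathcal{F}(\lambda,q)$ with $P_0 = \Ber(\epsilon)$ and match terms.

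First I would compute the prior moments. Since $P_0 = \Ber(\epsilon)$, we have $\EE_{P_0}[X^2] = \EE_{P_0}[X] = \epsilon$, so the prefactor $\lambda \EE_{P_0}[X^2]^2/4$ in Theorem~\ref{th:1} becomes $\lambda \epsilon^2 / 4$, matching the proposition.

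Next, I would compute the integral inside $\mathcal{F}(\lambda,q)$ explicitly. With $P_0$ supported on $\{0,1\}$ and the crucial observation that $x = x^2$ for $x \in \{0,1\}$, we get
\begin{equation*}
\int dP_0(x)\, \exp\!\Bigl(\sqrt{\lambda q}\, Z x + \lambda q\, x X - \tfrac{\lambda}{2} q\, x^2\Bigr) = (1-\epsilon) + \epsilon\, \exp\!\Bigl(\sqrt{\lambda q}\, Z + \lambda q\, X_0 - \tfrac{\lambda q}{2}\Bigr),
\end{equation*}
where $X_0 \sim \Ber(\epsilon)$. Plugging this into the definition~\eqref{eq:defF} of $\mathcal{F}$ yields
\begin{equation*}
\mathcal{F}(\lambda,q) = -\tfrac{\lambda q^2}{4} + \EE\!\Bigl[\log\bigl(1-\epsilon + \epsilon\, e^{\sqrt{\lambda q}\, Z + \lambda q\, X_0 - \lambda q/2}\bigr)\Bigr],
\end{equation*}
which is exactly the expression appearing under the supremum in the statement. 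The mutual information formula then follows from the first assertion of Theorem~\ref{th:1}.

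For the MMSE claim, I would invoke the second assertion of Theorem~\ref{th:1}: since $\lambda \mapsto \lim_n \frac{1}{n} I(\bX,\bY)$ is concave in $\lambda$, its non-differentiability set is countable, hence Lebesgue-negligible. On its complement $D$ the maximizer $q^*(\lambda)$ is unique and
\begin{equation*}
\lim_{n \to \infty} \MMSE_n(\lambda) = \EE_{P_0}[X^2]^2 - q^*(\lambda)^2 = \epsilon^2 - q^*(\lambda)^2.
\end{equation*}
Since $D$ contains almost every $\lambda > 0$, this gives the MMSE statement. There is really no obstacle here beyond verifying the Bernoulli-specific simplification $x = x^2$; the entire content of the proposition is contained in Theorem~\ref{th:1}, and the proof amounts to substitution.
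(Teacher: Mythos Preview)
Your proposal is correct and matches the paper's treatment exactly: the paper presents this proposition as an immediate consequence of Theorem~\ref{th:1} without a separate proof, and you have correctly carried out the Bernoulli specialization (using $\E_{P_0}[X^2]=\epsilon$ and $x=x^2$ on $\{0,1\}$) to recover both the mutual information and MMSE formulas.
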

  
  As shown in Section~\ref{sec:onec} below, this sparse PCA model is connected to the problem of finding one community in a random graph: we will show that as the average degree tends to infinity, the Bernoulli channel can be approximated by an additive Gaussian channel. Another related problem is the submatrix localization as studied in~\cite{hajek2016information} which corresponds to a case where $P_0$ is a mixture of two Gaussian distributions with different means.

  \subsection{Phase transition in the case \texorpdfstring{$\E_{P_0}[X]=0$}{E[X]=0}}\label{sec:trans}

In this section, we concentrate 
on the particular case where $\E_{P_0}[X]=0$. Without loss of generality, we can also assume that $\E_{P_0}[X^2]=1$.
We first start with the particular case $P_0=\Ncal(0,1)$ where explicit formulas are available. 
The input-output mutual information for the Gaussian scalar channel~\eqref{eq:scalar_channel} is then the well-known channel capacity under input power constraint: $\i(\gamma)=\frac{1}{2}\log (1+\gamma)$ and then $\mmse(\gamma)=\frac{1}{1+\gamma}$. This is a case where~\eqref{eq:fixp} can be solved explicitly, namely, if $\lambda\leq 1$, then $q^*(\lambda) = 0$ and if $\lambda>1$, then two values are possible: $q^*(\lambda)\in \{0,1-\frac{1}{\lambda}\}$ but only $1-\frac{1}{\lambda}$ achieves the supremum in~\eqref{eq:limsc}. Hence we have $q^*(\lambda)=\max\left(0,1-\frac{1}{\lambda}\right)$ so that in the case $P_0=\Ncal(0,1)$, we have:
\BEA
\label{eq:mmsenorm}\MMSE_n(\lambda) \to \left\{
\begin{array}{ll}
  1&\mbox{ if } \lambda\leq 1,\\
  \frac{1}{\lambda}\left(2-\frac{1}{\lambda}\right)&\mbox{ otherwise.}
\end{array}\right.
\EEA
In particular, we see that as long as $\lambda\leq 1$, the dummy estimator $\hat{\theta}_{i,j}=0$ is optimal in term of matrix mean square error. Only when $\lambda>1$, the $\MMSE$ starts to decrease below $1$. 

Our probabilistic model~\eqref{defmodel} has been the focus of much recent work in random matrix theory~\cite{baik2005phase,feral2007largest,benaych2011eigenvalues}. The focus in this literature is the analysis of the extreme eigenvalues of the symmetric matrix $\bY/\sqrt{n}$ and its associated eigenvector leading to performance guarantee for principal component analysis (PCA). The main result of interest to us is the following: for any distribution $P_0$ such that $\E_{P_0}[X^2]=1$, we have
\begin{itemize}
\item if $\lambda\leq 1$, the top eigenvalue of $\bY/\sqrt{n}$ converges a.s.\ to $2$ as $n\to \infty$, and the top eigenvector $\bv$ (with norm $\|\bv\|^2=n$) has trivial correlation with $\bX$: $\bv.\bX\to 0$ a.s.
  \item if $\lambda>1$, the top eigenvalue of $\bY/\sqrt{n}$ converges a.s.\ to $\sqrt{\lambda}+1/\sqrt{\lambda}>2$ and the top eigenvector $\bv$ (with norm $\|\bv\|^2=n$) has nontrivial correlation with $\bX$: $\left(\bv.\bX\right)^2\to 1-1/\lambda$ a.s.
\end{itemize}
Note that this result has been proved under considerably fewer assumptions than we make in the present paper. We refer the interested reader to~\cite{feral2007largest,benaych2011eigenvalues} and the references therein for further details.
In our context, we can compare the performance of PCA with the information-theoretic bounds. If we take an estimator proportional to $v_iv_j$, i.e.\ $\hat{\theta}_{i,j}=\delta v_iv_j$ for $\delta\geq 0$, we can compute explicitly the $\MSE$ obtained as a function of $\delta$ and minimize it. The optimal value for $\delta$ depends on $\lambda$, more precisely if $\lambda<1$, then $\delta=0$ resulting in a limit for $\MSE$ of one while for $\lambda\geq 1$, the optimal of value for $\delta$ is $1-1/\lambda$ resulting in the following $\MSE$ for PCA:
\BEA
\label{eq:perfpca}\MSE^{\text{PCA}}_{n}(\lambda) \to \left\{
\begin{array}{ll}
  1&\mbox{ if } \lambda\leq 1,\\
  \frac{1}{\lambda}\left(2-\frac{1}{\lambda}\right)&\mbox{ otherwise.}
\end{array}\right.
\EEA
Comparing to~\eqref{eq:mmsenorm}, we see that in the particular case of $P_0=\Ncal(0,1)$, PCA is optimal: it is able to get a matrix mean square error strictly less than $1$ as soon as it is information theoretically possible and its mean square error is optimal.

\begin{figure}[thpb]
	\centering
	\includegraphics[width=7cm]{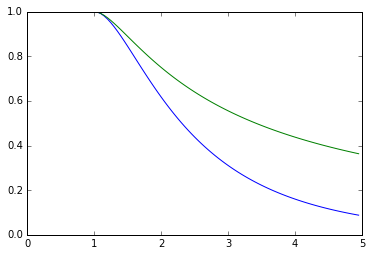}
	\caption{Performance of PCA for the $\ZZ /2$ synchronization problem: the blue curve is the limit of the $\MMSE$ (with the prior $P_0(+1)=P_0(-1)=\frac{1}{2}$) and the green curve is the limit of the $\MSE$ achieved by PCA~\eqref{eq:perfpca}, seen as functions of $\lambda$.}
	\label{fig:comp}
\end{figure}

We now discuss the $\ZZ /2$ synchronization problem studied in~\cite{bandeira2016tightness} which corresponds to the prior $P_0(+1)=P_0(-1)=\frac{1}{2}$. It turns out that exactly this model has been studied in~\cite{deshpande2016asymptotic} and the connection with the community detection problem will be made clear in the next section. We now compute the $\MMSE$ for this problem. The $\mmse$ for the effective Gaussian scalar channel~\eqref{eq:scalar_channel} can be computed explicitly. An easy computation gives
\BEAS
\EE[X_0|Y_0] = \tanh\left( \sqrt{\gamma}Y_0\right),
\EEAS
so that we have thanks to~\eqref{eq:mmsescalar}:
$$
\mmse(\gamma) = 1 - \EE\left[ \tanh\left( \sqrt{\gamma}Z_0+\gamma X_0\right)^2\right]\\
= 1 - \EE\left[ \tanh\left( \sqrt{\gamma}Z_0+\gamma \right)^2\right].
$$
In particular, the fixed point equation~\eqref{eq:fixp} reduces now to $q^* = \EE\left[ \tanh\left( \sqrt{\lambda q^*}Z_0+\lambda q^* \right)^2\right]$ which has one solution for $\lambda \leq 1$ equals to zero and an additional solution $q^*(\lambda)>0$ for $\lambda>1$ which is the one achieving the supremum in~\eqref{eq:limsc}. Hence extending $q^*(\lambda)$ to zero for $\lambda\leq 1$, we have in this case, $\MMSE_n(\lambda) \to 1-q^*(\lambda)^2$ for all values of $\lambda>0$. It turns out that for $\lambda>1$, we have $1-q^*(\lambda)^2<\frac{1}{\lambda}\left(2-\frac{1}{\lambda}\right)$ as shown on Figure~\ref{fig:comp}. This is a case where PCA is able to beat the dummy estimator as soon as it is information theoretically possible but still achieves a sub-optimal $\MMSE$.
\\

We now present a more general result. As we will see later in Proposition~\ref{prop:q_star}, $\lambda \in D \mapsto q^*(\lambda)$ is non-decreasing and, in the case of a centered distribution $P_0$, we have $\lim\limits_{\lambda\to 0} q^*(\lambda)=0$. We then define
\BEA
\label{def:lambdac}\lambda_c = \sup \{\lambda > 0 \ | \ q^*(\lambda) = 0 \}.
\EEA

A direct application of Theorem~\ref{th:1} gives
\begin{proposition}\label{prop:lc}
  We assume that $P_0$ is such that $\E_{P_0}[X]=0$ and $\E_{P_0}[X^2]=1$.
  For all $\lambda< \lambda_c$, we have
\BEAS
\lim_{n\to \infty} \frac{1}{n} I(\bbf{X},\bbf{Y}) = \frac{\lambda}{4} \ \mbox{and,} \ \lim_{n\to \infty}\MMSE_n(\lambda) = 1.
\EEAS
For almost all $\lambda>\lambda_c$, we have
\BEAS
\lim_{n\to \infty} \frac{1}{n} I(\bbf{X},\bbf{Y}) = \frac{\lambda}{4}-\Fcal(\lambda,q^*(\lambda)) \ \mbox{and,} \ \lim_{n\to \infty}\MMSE_n(\lambda) = 1-q^*(\lambda)^2<1,
\EEAS
where $q^*(\lambda)$ is the unique maximum of $q\mapsto \Fcal(\lambda,q)$ defined in~\eqref{eq:defF}.
\end{proposition}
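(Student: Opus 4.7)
The plan is to derive Proposition~\ref{prop:lc} as a direct specialization of Theorem~\ref{th:1}, combined with the definition~\eqref{def:lambdac} of $\lambda_c$ and the monotonicity of $q^*$ on $D$ from Proposition~\ref{prop:q_star}. Imposing $\E_{P_0}[X^2]=1$ turns Theorem~\ref{th:1} into
\begin{align*}
\lim_{n\to\infty}\tfrac{1}{n}I(\bX,\bY)=\tfrac{\lambda}{4}-\sup_{q\geq 0}\Fcal(\lambda,q),
\qquad
\lim_{n\to\infty}\MMSE_n(\lambda)=1-q^*(\lambda)^2\text{ for }\lambda\in D,
\end{align*}
so the entire task reduces to locating $q^*(\lambda)$ and the supremum on each side of $\lambda_c$.

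The pivotal observation is that $q=0$ is always a candidate with value zero: plugging $q=0$ into~\eqref{eq:defF} kills $-\lambda q^2/4$ and collapses the integrand to $\int dP_0(x)=1$, giving $\Fcal(\lambda,0)=0$; so $\sup_{q\geq 0}\Fcal(\lambda,q)\geq 0$, with equality iff $0$ is a maximizer. For $\lambda<\lambda_c$, the definition~\eqref{def:lambdac} combined with monotonicity of $q^*$ on $D$ forces $q^*(\lambda)=0$ for every $\lambda\in D\cap(0,\lambda_c)$: pick $\lambda'\in D$ with $\lambda<\lambda'<\lambda_c$ and $q^*(\lambda')=0$, then $0\leq q^*(\lambda)\leq q^*(\lambda')=0$. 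Thus $\sup_q\Fcal(\lambda,q)=0$ on $D\cap(0,\lambda_c)$, and since $\lambda\mapsto \sup_q\Fcal(\lambda,q)$ is continuous in $\lambda$ (as the difference between the linear map $\lambda/4$ and the concave limit of $n^{-1}I(\bX,\bY)$ supplied by Theorem~\ref{th:1}), this equality extends to every $\lambda<\lambda_c$, proving the mutual information formula in that regime.

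For the MMSE limit, Theorem~\ref{th:1} immediately yields $\MMSE_n(\lambda)\to 1$ whenever $\lambda\in D\cap(0,\lambda_c)$. To cover the countable exceptional set, I would combine the trivial upper bound $\MMSE_n(\lambda)\leq \DMSE=\E_{P_0}[X^2]^2-\E_{P_0}[X]^4=1$ (achieved by the zero estimator) with the monotonicity of $\lambda\mapsto\MMSE_n(\lambda)$ (a higher-SNR channel simulates the lower one by adding an independent Gaussian, so the MMSE is non-increasing in $\lambda$): for an exceptional $\lambda<\lambda_c$ choose $\lambda''\in D\cap(\lambda,\lambda_c)$, so $1\geq\MMSE_n(\lambda)\geq\MMSE_n(\lambda'')\to 1$, and sandwiching forces the limit. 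Finally, for almost every $\lambda>\lambda_c$ one has $\lambda\in D$, so Theorem~\ref{th:1} applies verbatim; and the definition of $\lambda_c$ together with monotonicity of $q^*$ force $q^*(\lambda)>0$ strictly, whence $1-q^*(\lambda)^2<1$. The only substantive point — and hence the main (mild) obstacle — is this continuity/monotonicity extension past the countable set $D^c$; everything else is a direct reading of Theorem~\ref{th:1}.
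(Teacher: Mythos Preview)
Your proof is correct and follows the paper's approach: the paper states the proposition as a direct consequence of Theorem~\ref{th:1} without further argument, and your write-up is a careful fleshing-out of exactly that. One simplification worth noting: since $\phi(\lambda)=\sup_{q\geq 0}\Fcal(\lambda,q)$ is convex with $\phi'(\lambda)=q^*(\lambda)^2/4=0$ on the dense set $D\cap(0,\lambda_c)$, convexity forces $\phi'_-=\phi'_+=0$ on all of $(0,\lambda_c)$, so in fact $(0,\lambda_c)\subset D$ and your MMSE-monotonicity sandwich, while correct, is not needed.
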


We call $\lambda_c$ the threshold for nontrivial estimation as the $\MMSE$ is strictly less than the dummy mean square error $\DMSE$ only for $\lambda>\lambda_c$. We clearly have $\lambda_c\leq 1$ and the case $\lambda_c=1$ corresponds to cases where PCA is optimal in the sense that it achieves a nontrivial estimation as soon as it is information theoretically possible. Note however that even if $\lambda_c=1$, the $\MSE$ achieved by PCA can be larger than the $\MMSE$ as it is the case for the $\ZZ /2$ synchronization problem described above. Indeed, the performance of PCA does not depend on the prior $P_0$, so that it is not surprising to be sub-optimal in some cases.

There are even cases where $\lambda_c<1$, so that in the range $\lambda\in (\lambda_c,1)$ PCA has the same performance as the dummy estimator with $\MSE^{\text{PCA}}_{n}(\lambda) \to 1$ while the $\MMSE$ is strictly lower than one. Of course achieving this $\MMSE$ might be computationally hard and we comment more on this in Section~\ref{sec:conjecture}.
We now shortly describe an example where $\lambda_c<1$ which corresponds to the sparse Rademacher prior that has been recently studied in~\cite{banks2016information,perry2016optimality}.
This is another example of sparse PCA but now with a centered prior, namely for $\rho\in [0,1]$, we take $P_0(0)=1-\rho$ and $P_0(1/\sqrt{\rho})=P_0(-1/\sqrt{\rho}) = \frac{\rho}{2}$. We then denote by $\lambda_c(\rho)$ the threshold for nontrivial estimation at sparsity level $\rho$. It is easy to see that $\lambda_c(1)=1$ but there exists a critical value $\rho^*$ such that $\lambda_c(\rho)=1$ for $\rho\geq \rho^*$ and $\lambda_c(\rho)<1$ for $\rho< \rho^*$. \cite{banks2016information} provides bounds for $\lambda_c(\rho)$ and bounds for the value of $\rho^*$ are also given in~\cite{perry2016optimality}. The same exact characterization as the one given here for $\rho^*$ was proved in~\cite{krzakala2016mutual,barbier2016mutual}. By a numerical evaluation, we obtain $\rho^*\approx 0.09$ and we refer to~\cite{krzakala2016mutual,barbier2016mutual} for more details.

Another case where PCA is not optimal will be presented with more details in the next Section~\ref{sec:sbmintro} corresponding to the case $P_0\left( \sqrt{\frac{1-p}{p}}\right)=p$ and $P_0\left( - \sqrt{\frac{p}{1-p}}\right)=1-p$.
We will obtain the existence of $p^*$ such that $\lambda_c(p)<1$ for $p<p^*$.

\subsection{Optimal detection in the asymmetric stochastic block model}\label{sec:sbmintro}

In this section, we show how our results for matrix factorization apply to the problem of community detection on a random graph model.
We start by defining the random graph model that we are going to study.

\begin{definition}[Stochastic block model (SBM)]
	Let $M$ be a $2 \times 2$ symmetric matrix whose entries are in $[0,1]$. Let $n \in \N^*$ and $p \in [0,1]$. We define the stochastic block model with parameters $(M,n,p)$ as the random graph $\bbf{G}$ defined by: 

	\begin{enumerate}
		\item The vertices of $\bbf{G}$ are the integers in $\{1, \dots, n\}$. 
		\item For each vertex $i \in \{1, \dots, n \}$ one draws independently $X_i \in \{1,2\}$ according to $\PP(X_i = 1) = p$. $X_i$ will be called the label (or the class, or the community) of the vertex $i$.
		\item For each pair of vertices $\{i,j\}$ the unoriented edge $G_{i,j}$ is then drawn conditionally on $X_i$ and $X_j$ according to a Bernoulli distribution with mean $M_{X_i,X_j}$, independently of everything else. $i \sim j$ in $\bbf{G}$ is and only if $G_{i,j} = 1$.
	\end{enumerate}
\end{definition}

The graph $\bbf{G}$ is therefore generated according to the underlying partition of the vertices in two classes. Our main focus will be on the community detection problem: given the graph $\bbf{G}$, is it possible to retrieve the labels $\bbf{X}$ better than a random guess? 

We investigate this question in the asymptotic of large sparse graphs, where $n \rightarrow +\infty$ while the average degree remains fixed. We will then let the average degree tend to infinity. We note that the models studied in~\cite{DBLP:conf/allerton/LesieurKZ15},~\cite{krzakala2016mutual} or~\cite{barbier2016mutual} showing that the Gaussian additive model approximates well the graph model, deal with dense graphs where the average degree tends to infinity with $n$. \cite{deshpande2016asymptotic} dealing with the symmetric stochastic block model (i.e.\ $p=1/2$) is more closely related to our model as the average degree tends to infinity at an arbitrary slow rate.
We define the connectivity matrix $M$ as follows:
\begin{equation} \label{eq:M}
	M =
	\frac{d}{n}
	\begin{pmatrix}
		a & b \\ 
		b & c
	\end{pmatrix},
\end{equation}
where $a,b,c,d$ remain fixed as $n \rightarrow + \infty$. We will say that community detection is solvable if and only if there is some algorithm that recovers the communities more accurately than a random guess would.
A simple argument (see~\cite{caltagirone2016asymmetric}) shows that if $pa +(1-p)b \neq pb + (1-p)c$
then non-trivial information on the community of a vertex can be gained just by looking at its degree and the community detection is then solvable. In this section, we concentrate on the case:
\begin{equation} \label{eq:equal_degreesintro}
	pa +(1-p)b = pb + (1-p)c = 1.
\end{equation}

The average degree of a vertex is then equal to $d$, independently of its class.
As mentioned above, we are first going to let $n$ tend to infinity, while the other parameters remain constant, and then let $d$ tend to infinity. We need now to define the signal strength parameter $\lambda$ for this model and to relate it to our main model~\eqref{defmodel}. We define $\epsilon = 1 - b$ so that equation~\eqref{eq:equal_degreesintro} allows us to express all parameters in terms of $\epsilon$ and $p$:
\begin{equation} \label{eq:paramsintro}
		a = 1 + \frac{1-p}{p} \epsilon,\quad
		b = 1 - \epsilon, \quad
		c = 1 + \frac{p}{1-p} \epsilon.
\end{equation}
We now define $\tilde{\bX}$ by $\tilde{X}_i = \phi_p(X_i)$, where $\phi_p(1) = \sqrt{\frac{1-p}{p}}$ and $\phi_p(2)= - \sqrt{\frac{p}{1-p}}$. The signal is contained in $\tilde{\bX}$ and the observations are the edges of the graph which are independent Bernoulli random variables with means given by~\eqref{eq:paramsintro}, so that:
\BEA
\label{eq:defG}G_{i,j} = \Ber\left(\frac{d}{n}+\frac{d\epsilon}{n}\tilde{X}_i\tilde{X}_j\right).
\EEA
In a setting where $\epsilon \to 0$, we see that the variance of $G_{i,j}$ does not depend (at the first order) on the signal and we have: $\Var(G_{i,j}) \sim \frac{d}{n}$. Hence, if we try to approximate~\eqref{eq:defG} by a Gaussian additive model by matching the first and second moments, we would have:
\BEAS
\tilde{G}_{i,j} = \frac{d}{n}+\frac{d\epsilon}{n}\tilde{X}_i\tilde{X}_j + \sqrt{\frac{d}{n}}Z_{i,j},
\EEAS
so that by defining $Y_{i,j} = \sqrt{\frac{n}{d}}\left(\tilde{G}_{i,j}-\frac{d}{n}\right)$, we obtain:
\BEA
\label{eq:defGgauss}Y_{i,j} =\sqrt{\frac{d\epsilon^2}{n}}\tilde{X}_i\tilde{X}_j+Z_{i,j},
\EEA
which corresponds exactly to our model~\eqref{defmodel} with $\lambda=d \epsilon^2=d(1-b)^2$ and $P_0\left( \sqrt{\frac{1-p}{p}}\right)=p=1-P_0\left( -\sqrt{\frac{p}{1-p}}\right)$. This heuristic argument will be made rigorous in the sequel and we will show that the limit for the mutual information $\frac{1}{n}I(\bX,\bbf{G})$ is the same as the mutual information $\frac{1}{n}I(\tilde{\bX},\bY)$ of the channel~\eqref{eq:defGgauss}.
Note that in the case $p=1/2$ studied in~\cite{deshpande2016asymptotic}, we end up with exactly the $\ZZ /2$ synchronization problem studied in previous section (see Figure~\ref{fig:comp}).
For a general value of $p$, we are now in the framework of Proposition~\ref{prop:lc}, so that we can define $\lambda_c(p)$ by~\eqref{def:lambdac} for each $p$. We will show in the sequel that the matrix mean square error for the matrix factorization problem~\eqref{eq:defGgauss} corresponds to the ``community overlap'', a popular performance measure for community detection (see for instance~\cite{neeman2014asymSBM}) defined below. We are then able to characterize the solvability of the community detection problem (closing a gap left in~\cite{caltagirone2016asymmetric}). We start with the definition of an estimator of the graph's labels.
\begin{definition}[Estimator]
	An estimator of the labels $\bbf{X}$ is a function $\bbf{x}: \bbf{G} \mapsto \{1,2\}^n$ that could depend on auxiliary randomness (random variables independent of $\bbf{X}$).
\end{definition}

For a labeling $\bbf{x} \in \{1,2\}^n$ and $i \in \{1,2\}$ we define $S_i(\bbf{x}) = \{ k \in \{1, \dots ,n \} | x_k = i \}$, i.e.\ the indices of the nodes that have the label $i$ according to $\bbf{x}$. We now recall a popular performance measure for estimators.

\begin{definition}[Community Overlap]
	For $\bbf{x},\bbf{y} \in \{1,2\}^n$ we define the community overlap of the configuration $\bbf{x}$ and $\bbf{y}$ as
	$$
	\overlap(\bbf{x},\bbf{y}) = \frac{1}{n} \max_{\sigma} \sum_{i=1,2} \Big( \# S_i(\bbf{x}) \cap S_{\sigma(i)}(\bbf{y}) - \frac{1}{n} \# S_i(\bbf{x}) \# S_{\sigma(i)}(\bbf{y}) \Big)
	$$
	where the maximum is taken over the permutations of $\{1,2 \}$.
\end{definition}

Two configurations have thus a positive community overlap if they are correlated, up to a permutation of the classes. We will then say that the community detection problem is solvable, if there exists an estimator (i.e.\ an algorithm) that achieves a positive overlap with positive probability.

\begin{definition}[Solvability]
	We say that the community detection problem is solvable (in the limit of large degrees) if there exists an estimator $\bbf{x}(\bbf{G})$ such that
	$$
	\liminf_{d \to \infty} \liminf_{n \to \infty} \E(\overlap(\bbf{x}(\bbf{G}),\bbf{X})) >0.$$
\end{definition}

For a fixed $p$, we have seen that $\lambda_c(p)$ defined by~\eqref{def:lambdac} with prior $P_0\left( \sqrt{\frac{1-p}{p}}\right)=p=1-P_0\left( -\sqrt{\frac{p}{1-p}}\right)$, is the threshold for nontrivial matrix estimation and the following theorem shows that in the case of the stochastic block model, it is also the threshold for solvability.

\begin{theorem} \label{th:solvability_sbm}
	\begin{itemize}
		\item If $\lambda > \lambda_c(p)$, then the community detection problem is solvable.
		\item If $\lambda < \lambda_c(p)$, then the community detection problem is not solvable.
	\end{itemize}
\end{theorem}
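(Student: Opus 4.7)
The plan is to reduce the SBM detection problem to the Gaussian matrix estimation problem for which $\lambda_c(p)$ was defined via Proposition~\ref{prop:lc}, and then translate the dichotomy $q^*(\lambda) > 0$ vs.\ $q^*(\lambda) = 0$ on the Gaussian side into positive vs.\ vanishing asymptotic community overlap on the SBM side.

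\textbf{Step 1 (universality).} The heuristic \eqref{eq:defG}--\eqref{eq:defGgauss} has to be upgraded to a theorem stating that
\[
\lim_{d \to \infty} \limsup_{n \to \infty} \Big| \tfrac{1}{n} I(\bX; \bG) - \tfrac{1}{n} I(\tilde{\bX}; \bY) \Big| = 0,
\]
together with the analogous statement for the matrix MMSE. The natural route is an interpolation/Lindeberg argument exchanging one edge-observation $G_{i,j}$ at a time for the Gaussian $Y_{i,j}$: the variance matching $\mathrm{Var}(G_{i,j}) = \tfrac{d}{n}(1+O(\epsilon))$ makes the first two moments agree to leading order when $\epsilon = \sqrt{\lambda/d}$ is small, and higher-moment discrepancies vanish uniformly once we pass $d \to \infty$. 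Because the (scaled) log-likelihood ratio has bounded derivatives with respect to the exchanged variable when integrated against the posterior on bounded $\tilde\bX$, the swap is controlled by Taylor expansion and Gaussian integration by parts. This is the universality result announced for Section~\ref{sec:sbm}.

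\textbf{Step 2 (overlap as matrix MSE).} With the encoding $\tilde X_i = \phi_p(X_i)$, a direct computation (after optimizing over the two permutations of $\{1,2\}$) shows that up to additive $o(1)$ terms and normalization constants bounded away from zero,
\[
\overlap(\bx, \bX) \;=\; \alpha(p) \Big| \tfrac{1}{n} \textstyle\sum_i \tilde x_i \tilde X_i \Big| + o(1).
\]
Hence positive asymptotic community overlap is equivalent to $\bigl(\tfrac{1}{n}\sum_i \tilde x_i \tilde X_i\bigr)^2$ being bounded away from zero, which by the Nishimori identity is equivalent to the matrix MMSE for $\tilde\bX \tilde\bX^\intercal$ being strictly below the dummy value $\DMSE$.

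\textbf{Step 3 (the two directions).} If $\lambda > \lambda_c(p)$, Proposition~\ref{prop:lc} combined with Step~1 gives $\lim_{d\to\infty}\lim_n \MMSE_n(\lambda) = 1 - q^*(\lambda)^2 < 1$. I would then take as estimator a rounding $\bx(\bG) \in \{1,2\}^n$ of the Bayes-optimal estimator $\hat\theta_{i,j} = \EE[\tilde X_i \tilde X_j \mid \bG]$ (e.g.\ via the leading eigenvector of $\hat\theta$). Theorem~\ref{th:2} on the Gaussian side, transferred by Step~1, shows that the posterior overlap concentrates around $\pm q^*(\lambda) \neq 0$, which via Step~2 yields $\liminf_{d}\liminf_{n} \E\,\overlap(\bx(\bG), \bX) > 0$. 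Conversely, if $\lambda < \lambda_c(p)$ then $q^*(\lambda) = 0$ and the MMSE equals $\DMSE$. Were some estimator $\bx(\bG)$ to achieve asymptotic overlap $\geq c > 0$, plugging $\hat\theta_{i,j} = \phi_p(x_i)\phi_p(x_j)$ into the matrix MSE would (by Step~2) produce an estimator of $\tilde X_i \tilde X_j$ with MSE strictly below $\DMSE$, contradicting Step~1 and Proposition~\ref{prop:lc}.

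\textbf{Main obstacle.} I expect Step~1 to be the technical heart. The Bernoulli-to-Gaussian comparison has to be made in a regime where the signal-to-noise ratio is $O(1)$ and every edge is discrete; the edgewise log-likelihood ratios are heavy-tailed when $G_{i,j}$ flips between $0$ and $1$, ruling out a naive moment-matching swap at the level of individual observations. The clean workaround is to interpolate at the level of the free energy $\tfrac{1}{n}\EE\log Z_n$, where one only needs smoothness in the channel parameter plus concentration of $\log Z_n$, and then to deduce the MMSE/overlap convergence from the I-MMSE relation~\cite{guo2005mutual} together with the concavity in $\lambda$ established in Theorem~\ref{th:1}. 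Handling the permutation symmetry in the definition of $\overlap$ and converting posterior-mean estimators into hard $\{1,2\}$-labellings in Step~3 requires a symmetrization/rounding argument, but this is standard once the Bayes-optimal overlap is shown to be bounded away from zero.
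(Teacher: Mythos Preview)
Your architecture matches the paper's: universality of the mutual information via a Lindeberg swap plus Gaussian interpolation (your Step~1 is the paper's Theorem~\ref{th:information_sbm}), the identification of community overlap with $\big|\tfrac{1}{n}\sum_i \tilde x_i\tilde X_i\big|$ (your Step~2 is Lemma~\ref{lem:overlap_equivalence}), and an I-MMSE relation to pass from mutual information to MMSE (your ``main obstacle'' is resolved exactly as you suggest, via an SBM-side identity $\tfrac{1}{n}\partial_\lambda I(\bX;\bG)\approx\tfrac{1}{4}\MMSE^G_n$, equation~\eqref{eq:control_mmse}, which is then integrated). Two of your sub-steps, however, are either over-engineered or have a genuine gap.

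\textbf{Solvable direction.} You propose to round the posterior-mean matrix via its leading eigenvector and to transfer Theorem~\ref{th:2} (overlap concentration) from the Gaussian model to the SBM posterior. The paper never transfers Theorem~\ref{th:2}, and doing so is not free: universality is established only at the level of the free energy / mutual information, not of the overlap distribution under $\langle\cdot\rangle_G$. The paper sidesteps this entirely by taking as estimator a \emph{sample} $\bx$ from $\PP(\bX=\cdot\mid\bG)$, which already lies in $\{1,2\}^n$ with no rounding needed. From $\MMSE^G_n(\lambda)=1-\E\big\langle(\tfrac{1}{n}\sum_i\tilde x_i\tilde X_i)^2\big\rangle_G+o(1)$ one reads off directly that $\MMSE^G_n<1$ forces $\E\langle|\tfrac{1}{n}\sum_i\tilde x_i\tilde X_i|\rangle_G>0$, hence positive expected community overlap via Step~2.

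\textbf{Non-solvable direction.} Plugging $\hat\theta_{i,j}=\tilde a_i\tilde a_j$ does not by itself yield MSE below $\DMSE=1$, because $\E(\tilde a_i\tilde a_j)^2$ is not controlled (the $\tilde a_i$ take values in $\{-\sqrt{p/(1-p)},\sqrt{(1-p)/p}\}$, not of unit square). The paper fixes this with a scaling: setting $\hat\theta_{i,j}=\delta\,\tilde a_i\tilde a_j$ gives
\[
\frac{2}{n(n-1)}\sum_{i<j}\E(\tilde X_i\tilde X_j-\delta\tilde a_i\tilde a_j)^2
\;\le\; 1-2\delta\,\alpha^2+O(\delta^2),
\qquad \alpha=\liminf\tfrac{1}{n}\E\big|\textstyle\sum_i\tilde X_i\tilde a_i\big|>0,
\]
which is $<1$ for $\delta$ small, contradicting $\MMSE^G_n\to 1$.
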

\begin{figure}[thpb]
	\centering
	\includegraphics[width=10cm]{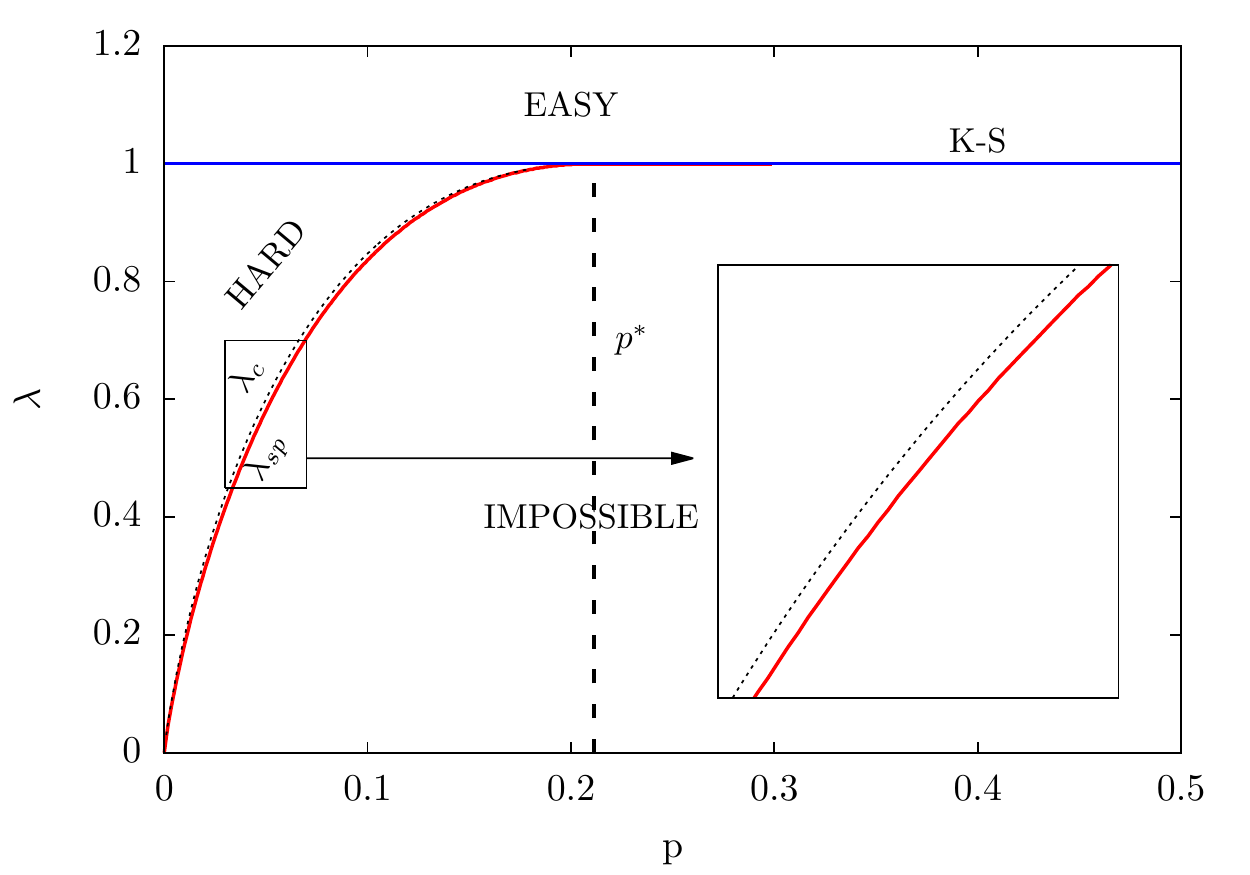}
	\caption{Phase diagram for the asymmetric community detection problem. The easy phase follows from~\cite{abbe2015detection}, the impossible phase below the spinodal curve $\lambda_{sp}(p)$ (red curve) was proved in~\cite{caltagirone2016asymmetric} and the hard phase is a conjecture. The dotted curve corresponding to $\lambda_c(p)$ is the curve for solvability of the community detection problem and is proved by our Theorem~\ref{th:solvability_sbm}.}
	\label{fig:phase_diagram}
\end{figure}

The function $p \mapsto \lambda_c(p)$ is plotted on Figure~\ref{fig:phase_diagram}. We see that the situation is similar to the sparse Rademacher prior described in previous section, where now the parameter $p$ controlling the asymmetry between the two communities play a role similar to the sparsity $\rho$ in the sparse PCA case. More precisely, for $p^*=\frac{1}{2}-\frac{1}{2\sqrt{3}}$ (computed in~\cite{barbier2016mutual} for the model~\eqref{eq:defGgauss} or~\cite{caltagirone2016asymmetric}), we have: if $p\geq p^*$, then $\lambda_c(p)=1$ which is known in this setting as the Kesten-Stigum bound and if $p<p^*$, then $\lambda_c(p)<1$. The regime where $p<p^*$ and $\lambda\in (\lambda_c(p),1)$ is conjectured to be ``hard but detectable'', see Section~\ref{sec:conjecture}.

We have no doubt that our analysis will extend to other models like the censored block model~\cite{saade2015spectral} or the labeled stochastic block model~\cite{saade2016clustering} in the large degree regime.

\subsection{Finding one community}\label{sec:onec}

As explained above, the solvability problem is trivial if the degrees are not homogeneous in the graph. However, the case where the condition~\eqref{eq:equal_degreesintro} is not satisfied is still interesting. The particular case of a single community where $c=b$ and $d=1$ (i.e.\ $
	M =
	\frac{1}{n}
	\begin{pmatrix}
		a & b \\ 
		b & b
	\end{pmatrix}
$) with our previous notations has been studied in~\cite{montanari2015finding}. We can conduct the same heuristic argument as above: let $\tilde{X}_i = 2-X_i \in \{0,1\}$, so that we have
\BEAS
G_{i,j} =\Ber\left( \frac{b}{n}+\frac{a-b}{n}\tilde{X}_i\tilde{X}_j\right).
\EEAS
Hence, in a setting where $a,b\to \infty$ with $a-b=o(b)$, we see that $\Var(G_{i,j}) \sim \frac{b}{n}$ so that the associated Gaussian additive model is given by:
\BEAS
\tilde{G}_{i,j} = \frac{b}{n}+\frac{a-b}{n}\tilde{X}_i\tilde{X}_j +\sqrt{\frac{b}{n}}Z_{i,j},
\EEAS
and then with $Y_{i,j} = \sqrt{\frac{n}{b}}\left(G_{i,j}-\frac{b}{n}\right)$, we get
\BEAS
Y_{i,j} =\sqrt{\frac{(a-b)^2}{nb}}\tilde{X}_i\tilde{X}_j+Z_{i,j},
\EEAS
which corresponds exactly to our model~\eqref{defmodel} with $\lambda=\frac{(a-b)^2}{b}$ and $P_0\left(1\right)=p=1-P_0\left( 0\right)$.
More precisely, the single community problem, with parameters $a,b\to \infty$ such that $\frac{(a-b)^2}{b}\to \lambda$ falls into our framework. Note that this case is exactly the sparse PCA setting studied in Proposition~\ref{prop:spca} which is consistent with the non-rigorous results stated in~\cite{montanari2015finding} (formula (41) for the free energy in~\cite{montanari2015finding} is exactly the right-hand term in Proposition~\ref{prop:spca}). In particular, the non-rigorous results of section 3.2 in~\cite{montanari2015finding} (i.e.\ in the large-degree asymptotics) are made rigorous by our work.

\subsection{An extension of the hard but detectable conjecture}\label{sec:conjecture}

We can now formalize and extend a conjecture emerging in statistical physics~\cite{DBLP:conf/isit/LesieurKZ15},~\cite{krzakala2016mutual} for sparse PCA\@.
The hard but detectable conjecture deals with the case where $\E_{P_0}[X]=0$ and can be stated as follows:
\begin{conjecture}\label{conj:hard}
 For the model~\eqref{defmodel} with $\E_{P_0}[X]=0$ and $\E_{P_0}[X^2]=1$, we define $\lambda_c$ by~\eqref{def:lambdac}. If $\lambda_c<1$ then achieving a better $\MSE$ than the dummy estimator (i.e.\ beating $\DMSE$) is hard for $\lambda\in (\lambda_c, 1)$.
\end{conjecture}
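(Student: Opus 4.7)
The plan is to establish computational hardness in this regime, since by Theorem~\ref{th:1} and Proposition~\ref{prop:lc} the information-theoretic MMSE is strictly below $\DMSE$ for $\lambda \in (\lambda_c, 1)$ (so the signal \emph{is} detectable in principle) while natural algorithms all appear to fail. I would proceed in three stages.

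First, formalize ``hard'' in a restricted computational model, since unconditional hardness against Turing machines is beyond current techniques. Natural candidates are: (i) approximate message passing with any polynomial-time or spectral initialization; (ii) low-degree polynomial estimators in the sense of Hopkins--Kothari--Potechin--Steurer; or (iii) average-case reductions from planted clique or planted dense subgraph. Each yields a genuine but partial statement.

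For approach (i), analyze the AMP state evolution recursion $q_{t+1} = \mathbb{E}_{P_0}[X^2] - \mmse(\lambda q_t)$ under the normalization $\mathbb{E}_{P_0}[X] = 0$, $\mathbb{E}_{P_0}[X^2] = 1$. The point $q = 0$ is a locally stable fixed point iff $\lambda \, |\mmse'(0)| \leq 1$, and since $\mmse'(0) = -\mathbb{E}_{P_0}[X^2]^2 = -1$ this gives stability precisely for $\lambda \leq 1$. Hence for $\lambda \in (\lambda_c, 1)$, AMP initialized near $q = 0$ (the only data-free initialization compatible with a centered posterior) stays at the trivial fixed point and achieves exactly $\DMSE$. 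The heart of the argument is to rule out efficient non-trivial initializations: spectral initializations fail because $\bY/\sqrt{n}$ has a semicircle bulk with no outlier for $\lambda < 1$ (the BBP transition), and polynomial initializations can be attacked by the low-degree framework of approach (ii).

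For approach (ii), formulate the associated detection problem as a simple-vs-simple test between the planted law of $\bY$ and pure Gaussian noise. I would compute the $L^2$ norm of the degree-$D$ projection of the likelihood ratio $L_n = \frac{dP_{\text{planted}}}{dP_{\text{null}}}$. Expanding $L_n$ along products $\prod_k Y_{i_k,j_k}$ and using Isserlis' theorem reduces the computation to a sum over graphs (essentially matchings on pairs of hyperedges) weighted by $\mathbb{E}_{P_0}[X^{k}]$ moments; the resulting generating series converges uniformly in $n$ for $D = n^{o(1)}$ iff $\lambda < 1$. The conditional-expectation transfer then upgrades low-degree indistinguishability into a lower bound on the error of any low-degree polynomial estimator of $X_iX_j$, yielding $\DMSE$ as an asymptotic floor.

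The main obstacle is conceptual rather than technical. Conjecture~\ref{conj:hard} asserts \emph{computational} hardness exactly in the gap between the information-theoretic threshold $\lambda_c$ and the spectral threshold $1$. Current techniques cannot prove unconditional hardness in a general model of computation for such an average-case problem; any proof must either restrict to a specific class (AMP, low-degree, SoS, SQ) — in which case one obtains a theorem that rigorously corroborates the conjecture without settling it — or exhibit a reduction from a problem already believed to be average-case hard, which for Nishimori-line inference problems has so far proven elusive. A realistic first milestone is therefore to prove the low-degree version for the priors of Sections~\ref{sec:trans} and \ref{sec:sbmintro}, leaving the full conjecture open but with sharply increased confidence.
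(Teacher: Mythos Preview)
The statement is labeled a \emph{conjecture} in the paper, and the paper offers no proof of it. Section~\ref{sec:conjecture} introduces Conjecture~\ref{conj:hard} as a formalization of a statistical-physics prediction, explicitly noting that Proposition~\ref{prop:lc} only establishes that beating $\DMSE$ is information-theoretically \emph{possible} in the gap $(\lambda_c,1)$, not that it is computationally hard. The paper then states the more general Conjecture~\ref{conj:gen}, observes that it would imply Conjecture~\ref{conj:hard}, and gives supporting heuristics (the AMP state-evolution fixed point at $q=0$ being stable for $\lambda<1$ when $\E_{P_0}[X]=0$, and the BBP threshold for PCA). None of this is presented as a proof.

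Your proposal is therefore not to be compared against a paper proof --- there is none. What you have written is a sensible research program (AMP state evolution, low-degree likelihood ratio, reductions), and you correctly flag the essential obstruction: unconditional average-case hardness is out of reach, so any result would be conditional on a restricted computational model or on a hardness assumption for another planted problem. Your AMP stability computation matches the paper's discussion around~\eqref{eq:rec_amp} exactly. But you should recognize that the endpoint of your own outline --- ``prove the low-degree version for specific priors, leaving the full conjecture open'' --- is not a proof of Conjecture~\ref{conj:hard}; it is evidence for it. The paper treats the statement the same way.
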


Clearly, Proposition~\ref{prop:lc} only shows that in the regime $\lambda\in (\lambda_c, 1)$ achieving a better $\MSE$ than $\DMSE$ is possible. For $\lambda>1$, we have seen that PCA beats $\DMSE$. Note also, that there are various natural notions of performance for our model (detection, reconstruction) and the conjecture should hold for all of them, see~\cite{banks2016information}. A similar conjecture for the problem of community detection in the symmetric stochastic block model emerged in~\cite{decelle2011asymptotic} and for the non-symmetric case in~\cite{caltagirone2016asymmetric} (see Figure~\ref{fig:phase_diagram}).
\\

Thanks to our Theorem~\ref{th:1}, we know that as soon as $q^*(\lambda) > \E_{P_0}[X]^2$, then it is possible to beat the dummy estimator, i.e.\ achieve a $\MSE$ strictly better than $\DMSE$.
There are now two questions: 
\BIT
\item is it easy to beat $\DMSE$?
\item is it easy to achieve $\MMSE$?
\EIT
Conjecture~\ref{conj:hard} is related to the first question and we now give a general conjecture which deals with both questions by giving the best $\MSE$ achievable efficiently.
For $\eta>0$, consider the following sequence $(q^t_\eta)_{t\in \N}$ defined by:
\begin{equation} \label{eq:rec_amp}
q^0_\eta = \eta, \quad \mbox{and for} \ t\geq 0, \quad q^{t+1}_\eta =\E_{P_0}[X^2]-\mmse(\lambda q^t_\eta).
\end{equation}

Let $\tilde{q}=\lim_{\eta \to 0}\lim_{t\to \infty}q^t_\eta$ (which is always well defined, see below), then we make the following conjecture:
\begin{conjecture}\label{conj:gen}
 For the model~\eqref{defmodel}, the best mean square error that can be achieved efficiently is $\E_{P_0}[X^2]^2-\tilde{q}^2$.
\end{conjecture}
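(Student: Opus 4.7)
Since the closing statement is a \emph{conjecture} rather than a theorem, no complete proof is known; my plan is therefore an attack strategy, split into an achievability bound and a matching computational lower bound, together with an honest flag of where the wall sits.

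\textbf{Achievability via AMP.} The plan is to show that approximate message passing, initialized from a vanishingly small informative warm start $\bbf{x}^0 = \eta\,\bbf{1} + \text{(tiny perturbation)}$, asymptotically attains matrix MSE equal to $\E_{P_0}[X^2]^2 - \tilde q^2$. Building on the state-evolution framework developed in~\cite{rangan2012iterative,deshpande2014information,deshpande2016asymptotic}, I would prove that the empirical joint law of $(\bbf{x}^t, \bbf{X})$ converges to the joint law of $(\widehat{X}_0(Y_0^t), X_0)$ in the effective scalar channel~\eqref{eq:scalar_channel} at SNR $\gamma_t = \lambda q^t_\eta$, with $q^t_\eta$ satisfying exactly the recursion~\eqref{eq:rec_amp}. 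Since $\gamma \mapsto \mmse(\gamma)$ is non-increasing, the map $q \mapsto \E_{P_0}[X^2] - \mmse(\lambda q)$ is non-decreasing, so for every $\eta > 0$ the orbit $(q^t_\eta)_t$ converges monotonically to a fixed point $q^\infty_\eta$, and monotonicity in $\eta$ makes $\tilde q = \lim_{\eta \downarrow 0} q^\infty_\eta$ well defined. The I-MMSE identity and the translation of overlap into matrix MSE supplied by Theorem~\ref{th:1} then turn $\tilde q$ into the claimed asymptotic matrix MSE, exhibiting a polynomial-time estimator that meets the target.

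\textbf{Matching computational lower bound.} The harder direction is that no polynomial-time estimator strictly beats $\E_{P_0}[X^2]^2 - \tilde q^2$. An unconditional proof is out of reach of current techniques, so the realistic plan is to prove the bound within a restricted model of computation. Two natural routes are (i) the low-degree polynomial method, bounding the $L^2$-advantage of any degree-$O(\log n)$ polynomial in $\bbf{Y}$ at distinguishing the planted model from the pure-noise null; and (ii) the sum-of-squares hierarchy at degree $O(\log n)$, where one exhibits pseudo-expectations whose correlation with $\bbf{X}$ is capped at $\tilde q$. A third, conditional route is a prior-by-prior reduction from planted clique or a related canonical hard problem (in the spirit of the Berthet--Rigollet reductions for sparse PCA) to the task of beating $\tilde q$.

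\textbf{Main obstacle.} The wall is the lower bound for \emph{generic} priors $P_0$. The value $\tilde q$ is defined purely analytically as the limit of a scalar recursion involving $\mmse(\cdot)$ and has no combinatorial counterpart, so any reduction from a hard problem must be engineered prior by prior, while the sharp low-degree and SoS thresholds currently available cover only a few special cases (sparse Rademacher, symmetric SBM) and almost always in the detection rather than the full estimation formulation asked for here. A secondary, more technical obstacle sits on the achievability side at the critical point where the scalar map has a marginally stable fixed point at zero (derivative one at the origin, which for centered priors coincides with the Kesten--Stigum threshold $\lambda = 1$); the escape of AMP from a vanishingly small initialization then requires a delicate quantitative argument, and this subtlety only compounds in the matrix-valued recursion appearing in the finite-rank generalization of Section~\ref{sec:gen}.
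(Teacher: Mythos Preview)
The statement is explicitly a \emph{conjecture} in the paper, and the paper offers no proof of it; it only supplies heuristic evidence. Concretely, the paper (i) checks that the recursion~\eqref{eq:rec_amp} is monotone so that $\tilde q$ is well defined, (ii) observes that for centered priors the conjecture would imply Conjecture~\ref{conj:hard}, and (iii) points to the AMP analyses of \cite{deshpande2014information,montanari2015finding} as achievability evidence in the sparse-PCA special case. There is no attempt at a computational lower bound beyond declaring the problem open.

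Your proposal is therefore not wrong---there is nothing to be wrong against---and your framing (``attack strategy, not a proof'') is exactly right. On the achievability side you are aligned with the paper: AMP tracked by state evolution is precisely the mechanism the paper invokes, and your monotonicity argument for the recursion matches what the paper sketches just after the conjecture. On the hardness side you go further than the paper does: the low-degree and SoS routes you propose are natural modern frameworks that the paper does not mention at all (it predates the systematic use of low-degree lower bounds for this class of problems). Your honest identification of the wall---that $\tilde q$ is an analytic object with no obvious combinatorial surrogate, so reductions must be prior-specific, and that existing sharp low-degree/SoS bounds are for detection rather than estimation---is accurate and is exactly why the statement remains a conjecture.

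One small correction on the achievability side: the paper's evidence for AMP reaching $\tilde q$ is not quite as complete as your sketch suggests. State evolution as in \cite{rangan2012iterative,deshpande2016asymptotic} tracks AMP from a \emph{fixed} informative initialization, not from a vanishing one; rigorously justifying the $\eta\downarrow 0$ limit (and in particular the escape from $q=0$ when the fixed point there is only marginally unstable) is itself an open technical point, which you correctly flag as a secondary obstacle.
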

Consequently,
\BIT
\item[(i)] if $\tilde{q} =q^*(\lambda)$, i.e.\ $\tilde{q}$ is the maximizer of $q\geq 0\mapsto \Fcal(\lambda,q)$, then the matrix minimum mean square error can be achieved efficiently (with a polynomial-time algorithm).
\item[(ii)] if $\tilde{q} \neq q^*(\lambda)$, i.e $\tilde{q}$ is not the maximizer of $q\geq 0\mapsto \Fcal(\lambda,q)$, then the best mean square error cannot be achieved efficiently.
\EIT

Let $\langle \cdot \rangle_{\gamma}$ denote the posterior distribution of $X_0$ given $Y_0$ in the scalar channel~\eqref{eq:scalar_channel}: for every continuous bounded function $f$, $\langle f(x) \rangle_{\gamma} = \EE[f(X_0)|Y_0]$. Define $G(\gamma) = \E \langle x X_0 \rangle_{\gamma}$, where $x$ is a sample from the posterior distribution $\langle \cdot \rangle_{\gamma}$, independently of everything else. This means $G(\gamma) = \EE \big[X_0 \EE[X_0|Y_0]\big]$.
The recursion~\eqref{eq:rec_amp} can then be rewritten $\lambda q^{t+1}_\eta = \lambda G(\lambda q^t_\eta)$. A computation shows that
$$
G'(\gamma) = \E \left[\left(\langle x^2 \rangle_{\gamma} - \langle x \rangle_{\gamma}^2\right)^2 \right] \geq 0.
$$
$G$ is thus non-decreasing and bounded, the limit $\tilde{q}=\lim_{\eta \to 0}\lim_{t\to \infty}q^t_\eta$ is well defined (and finite) and is a solution of~\eqref{eq:fixp}.
For $\gamma = 0$, the posterior distribution $\langle \cdot \rangle_{\gamma}$ of the scalar channel~\eqref{eq:scalar_channel} is equal to $P_0$. Therefore $G'(0) = \text{Var}(X_0)^2$.
\\

If $\E_{P_0}[X]=0$, then $0$ is a fixed point of the recursion~\eqref{eq:rec_amp}. In order to investigate its stability, one has thus to compare the quantity $\lambda G'(0) = \lambda \text{Var}_{P_0}(X^2) = \lambda (\E_{P_0} [X^2])^2$ to $1$. As a consequence of Conjecture~\ref{conj:gen}, we obtain:
\begin{itemize}
	\item if $\lambda > (\E_{P_0} [X^2])^{-2}$, then a polynomial-time algorithm can beat DMSE (i.e.\ do better than a dummy estimator). Indeed, in this case, PCA beats DMSE but does not necessary achieves MMSE (see Section~\ref{sec:trans}).
	\item if $\lambda < (\E_{P_0} [X^2])^{-2}$, then no efficient algorithm can beat DMSE (i.e.\ do better than a dummy estimator).
\end{itemize}
In particular we see that Conjecture~\ref{conj:gen} would imply Conjecture~\ref{conj:hard}.
\\

If $\E_{P_0}[X] \neq 0$, then $0$ is not a fixed point of the recursion~\eqref{eq:rec_amp} and Conjecture~\ref{conj:gen} implies that it is always possible to beat DMSE with an efficient algorithm.
\\

In the sparse PCA case where $P_0\sim \Ber(\epsilon)$, (i) has been proved in~\cite{deshpande2014information} for $\epsilon>\epsilon^*$ where AMP is shown to be optimal. Indeed in this case,~\eqref{eq:fixp} has only one solution which is $q^*(\lambda)$ so that $\tilde{q} =q^*(\lambda)$. But more generally, AMP is a candidate algorithm for achieving the best possible $\MSE$ for all values of the parameter $\lambda$ (as conjectured in~\cite{DBLP:conf/isit/LesieurKZ15}). For example, in the sparse PCA case, the analysis done in~\cite{deshpande2014information} and~\cite{montanari2015finding} shows that the performance of AMP gives a $\MSE=\epsilon^2-\tilde{q}^2<\DMSE$.
So that it is always easy to beat $\DMSE$ with a polynomial-time algorithm but we conjecture that the performance of AMP is the best possible achievable by an efficient algorithm.
So that as shown in~\cite{montanari2015finding}, for $\epsilon<\epsilon^*$,  there is a set for the parameters $\lambda$ and $\epsilon$ for which we have $\tilde{q} < q^*(\lambda)$ and we believe that achieving the $\MMSE$ is hard in this case.

\subsection{Proof techniques}\label{sec:proof_tech}

We now present the main general ideas for the proof of Theorems~\ref{th:1} and~\ref{th:2}.
As a first step (Section~\ref{sec:guerraton}), we recall a lower bound on the mutual information that was proved in~\cite{krzakala2016mutual} and follows from an application of Guerra's interpolation technique (see Proposition~\ref{prop:guerra_bound}).

Showing that this lower bound is tight requires some work. Two main ingredients will be particularly useful. The first one is called the Nishimori identity and is true in a very general setting. 
This was discovered by Nishimori (see for instance \cite{nishimori2001statistical}) and extensively used in the context of Bayesian inference, see \cite{iba1999nishimori,korada2010tight,zdeborova2016statistical}.
It express the fact that the planted configuration $\bbf{X}$ behaves like a sample $\bbf{x}$ from the posterior distribution $\PP(\bbf{X}=.|\bbf{Y})$, see Proposition~\ref{prop:nishimori}. The second one will consist in perturbing the original model by revealing a small fraction of the entries of the vector $\bX$. It is known that thanks to this perturbation, the correlations decay  so that the overlap will concentrate (see Proposition~\ref{prop:overlap}). This is again a very general result~\cite{andrea2008estimating}. We then need to show that this perturbation is negligible in the computation of the mutual information (see Section~\ref{sec:small_perturbation}). It remains then to do some ``cavity computations'', a technique introduced by Mezard, Parisi and Virasoro in~\cite{mezard1987spin}.
More precisely we will adapt to our setting the Aizenman-Sims-Starr scheme~\cite{aizenman2003extended} which is standard in the context of the SK model. Theorem~\ref{th:1} then follows.

In order to obtain Theorem~\ref{th:2}, we need to prove that the overlap concentrates without the perturbation induced by the revealed entries. To do so, we follow  an approach closely related to the proof of the Ghirlanda-Guerra identities in the SK model from~\cite{panchenko2013SK} and this is done in Section~\ref{sec:overlap_concentration_without_perturbation}.

As explained just after Theorem~\ref{th:1},~\cite{barbier2016mutual} proves Theorem~\ref{th:1} under some additional conditions on the prior $P_0$. The proof technique in~\cite{barbier2016mutual} is completely different from ours and relies on a careful analysis of the AMP algorithm. As explained above, if $\tilde{q}=q^*(\lambda)$, then AMP is expected to be optimal and to achieve the MMSE providing a proof for the tightness of the bound obtained by Guerra's interpolation technique. In order to deal with the case where $\tilde{q}\neq q^*(\lambda)$,~\cite{barbier2016mutual} introduces an auxiliary spatially coupled system and proves that this system has the same mutual information as the original one while $\tilde{q}=q^*(\lambda)$ on this new system. As opposed to~\cite{barbier2016mutual}, our proof does not rely on the analysis of AMP and is fully contained in this paper (see Section~\ref{sec:proof_rs_formula}).

The recent work~\cite{coja2016information} used similar techniques to prove the replica-symmetric formula in a sparse graph setting. However, restrictive assumptions are required to apply the interpolation scheme to their framework.

\subsection{Finite-rank matrix estimation}\label{sec:introg}

We now generalize our results to any probability distributions $P_0$ over $\R^k$ ($k \in \N^*$ is fixed) with finite second moment.
Consider the following Gaussian observation channel
$$
Y_{i,j} = \sqrt{\frac{\lambda}{n}} \bbf{X}_i^{\intercal} \bbf{X}_j + Z_{i,j} \ \text{for } 1 \leq i<j \leq n
$$
where $\bbf{X}_i \overset{\text{\tiny i.i.d.}}{\sim}  P_0$ and $Z_{i,j} \overset{\text{\tiny i.i.d.}}{\sim}  \mathcal{N}(0,1)$. 
Analogously to the unidimensional case, we define
\begin{align*}
	\MMSE_n(\lambda) 
	&= \min_{\hat{\theta}} \frac{2}{n(n-1)} \sum_{1\leq i<j\leq n} \E\left[ \left(\bbf{X}_i^{\intercal} \bbf{X}_j- \hat{\theta}_{i,j}(\bbf{Y}) \right)^2\right] \label{eq:def_mmse_min_multi} \\
	&=\frac{2}{n(n-1)} \sum_{1\leq i<j\leq n} \E\left[ \left(\bbf{X}_i^{\intercal} \bbf{X}_j-\E\left[\bbf{X}_i^{\intercal} \bbf{X}_j|\bbf{Y} \right]\right)^2\right],
\end{align*}
where the minimum is taken over all estimators $\hat{\theta}$ (i.e.\ measurable functions of the observations $\bbf{Y}$).
We now define
$$
\mathcal{F}: (\lambda,\bbf{q}) \in \R \times S_k^+ \mapsto -\frac{\lambda}{4}\|\bbf{q}\|^2 + \E \log \left[ \int dP_0(\bbf{x}) \exp\left(\sqrt{\lambda} (\bbf{Z}^{\intercal} \bbf{q}^{1/2} \bbf{x})+ \lambda \bbf{x}^{\intercal} \bbf{q} \bbf{X} - \frac{\lambda}{2}\bbf{x}^{\intercal} \bbf{q} \bbf{x}\right)\right]
$$
where $S_k^+$ denote the set of $k \times k$ symmetric positive-semidefinite matrices and $\bbf{Z} \sim \mathcal{N}(0,I_k)$ and $\bbf{X} \sim P_0$ are independent random variables.

\begin{theorem}\label{th:3}
  For $\lambda>0$, we have
  $$
  \lim_{n \rightarrow + \infty} \frac{1}{n}  I(\bbf{X},\bbf{Y}) = \frac{\lambda \|\E_{P_0} (\bbf{X} \bbf{X}^{\intercal})\|^2}{4} - \sup_{\bbf{q} \in S_k^+} \mathcal{F}(\lambda,\bbf{q}),
  $$
  For almost all $\lambda>0$, all the maximizers $\bbf{q}$ of $\bbf{q} \in S_k^+ \mapsto \Fcal(\lambda,\bbf{q})$ have the same norm $\| \bbf{q} \|^2 = q^*(\lambda)^2$ and
  $$
	\MMSE_n(\lambda) \xrightarrow[n \to \infty]{} \|\E_{P_0} \bbf{X} \bbf{X}^{\intercal} \|^2 - q^*(\lambda)^2
	$$
\end{theorem}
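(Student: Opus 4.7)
The plan is to extend the proof strategy of Theorems~\ref{th:1} and~\ref{th:2} to the $k$-dimensional setting by systematically replacing the scalar overlap $\bbf{x}.\bbf{X}$ by the $k \times k$ overlap matrix $\bbf{Q} = \frac{1}{n} \sum_{i=1}^n \bbf{x}_i \bbf{X}_i^{\intercal}$, which is the natural object to track in the multidimensional posterior. All three tools invoked in Section~\ref{sec:proof_tech} (Nishimori identity, small-perturbation trick, Aizenman--Sims--Starr cavity) extend to the multidimensional case with essentially the same proofs; the only genuinely new features concern the geometry of the set $S_k^+$ of overlap matrices and the non-uniqueness of the maximizer of $\Fcal(\lambda,\cdot)$.

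First, I would extend Proposition~\ref{prop:guerra_bound} to matrix-valued interpolation. For each fixed $\bbf{q} \in S_k^+$, interpolate between the matrix channel of~\eqref{defmodel} at $t=1$ and a decoupled Gaussian scalar channel $\bbf{Y}_i = \sqrt{\lambda}\,\bbf{q}^{1/2} \bbf{X}_i + \bbf{Z}_i$ at $t=0$. Compute the $t$-derivative of the interpolating mutual information using Gaussian integration by parts; the Nishimori identity makes the derivative a non-positive quadratic form in the fluctuations of $\bbf{Q}$ around $\bbf{q}$, yielding for every $\bbf{q} \in S_k^+$ the inequality $\tfrac{1}{n} I_n(\lambda) \geq \tfrac{\lambda\|\E_{P_0}\bbf{X}\bbf{X}^{\intercal}\|^2}{4} - \Fcal(\lambda,\bbf{q})$. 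Taking $\bbf{q}$ to be a maximizer gives $\liminf \tfrac{1}{n} I_n(\lambda) \geq \tfrac{\lambda\|\E\bbf{X}\bbf{X}^{\intercal}\|^2}{4} - \sup_{\bbf{q} \in S_k^+} \Fcal(\lambda,\bbf{q})$.

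Second, to obtain the matching upper bound, I would perturb the model as in Section~\ref{sec:small_perturbation} by revealing a Gaussian side-observation with SNR $\epsilon$ of each $\bbf{X}_i$, show that this costs only $o_\epsilon(1)$ in $\tfrac{1}{n}I_n$ uniformly in $n$, and adapt Proposition~\ref{prop:overlap} to prove that $\bbf{Q}$ concentrates entry-wise in $L^2$ around $\E\langle \bbf{Q}\rangle$ for a.e. $\epsilon$. Compactness (using the finite second moment of $P_0$ together with Nishimori) identifies the concentration value as some $\bbf{q}^\star \in S_k^+$. Then I would run the multidimensional Aizenman--Sims--Starr cavity of Section~\ref{sec:proof_rs_formula}: adding row/column $n+1$ to the observations introduces a factor in the partition function whose conditional log-expectation, evaluated under overlap concentration, is asymptotically $\Fcal(\lambda,\bbf{q}^\star)$. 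This gives $\limsup \tfrac{1}{n} I_n(\lambda) \leq \tfrac{\lambda\|\E\bbf{X}\bbf{X}^{\intercal}\|^2}{4} - \Fcal(\lambda,\bbf{q}^\star)$. Combined with the Guerra lower bound, this forces $\Fcal(\lambda, \bbf{q}^\star) = \sup_\bbf{q} \Fcal(\lambda,\bbf{q})$ and establishes the formula for the limiting mutual information.

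For the MMSE statement, I would use the I--MMSE identity, which in our setting reads $\tfrac{d}{d\lambda} \tfrac{1}{n} I_n(\lambda) = \tfrac{n-1}{4n}\MMSE_n(\lambda)$. The limit $\varphi(\lambda) = \tfrac{\lambda\|\E\bbf{X}\bbf{X}^{\intercal}\|^2}{4} - \sup_{\bbf{q}\in S_k^+} \Fcal(\lambda,\bbf{q})$ is concave as the difference of a linear and a convex function of $\lambda$, hence differentiable outside a countable set. At points of differentiability, an envelope-theorem argument (the partial derivative $\partial_\lambda \Fcal$ evaluated at any maximizer $\bbf{q}$ equals $-\tfrac{1}{4}\|\bbf{q}\|^2$ by an elementary computation using the Nishimori identity) gives $\varphi'(\lambda) = \tfrac{1}{4}(\|\E\bbf{X}\bbf{X}^{\intercal}\|^2 - \|\bbf{q}\|^2)$ for every maximizer $\bbf{q}$. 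Since the left-hand side does not depend on the choice of maximizer, all maximizers must share the same norm, which we call $q^\star(\lambda)^2$; equating with the I--MMSE derivative produces $\MMSE_n(\lambda) \to \|\E\bbf{X}\bbf{X}^{\intercal}\|^2 - q^\star(\lambda)^2$ for a.e. $\lambda$.

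The main obstacle is the matrix structure of the overlap. In the rank-one case, $\bbf{Q}$ is scalar, the maximizer of $\Fcal(\lambda,\cdot)$ is unique for a.e. $\lambda$, and overlap concentration collapses to a single scalar value. In rank $k$, the Gibbs measure may be invariant under a non-trivial subgroup of $O(k)$ acting on $\bbf{Q}$, so $\arg\max \Fcal(\lambda,\cdot)$ can be a whole orbit; one only controls the scalar summary $\|\bbf{q}\|^2$. Arranging the cavity computation so that it matches the Guerra bound despite this non-uniqueness, and ensuring that the envelope-theorem step in Step~4 only involves the well-defined quantity $\|\bbf{q}\|^2$, is the main technical subtlety compared to the rank-one argument.
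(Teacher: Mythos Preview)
Your overall strategy matches the paper's Section~\ref{sec:gen}: Guerra interpolation for one inequality, a small perturbation plus overlap concentration plus the Aizenman--Sims--Starr cavity for the other, and an envelope-theorem argument for the MMSE. Your closing remark on why only $\|\bbf{q}\|^2$ (and not $\bbf{q}$ itself) is pinned down at the MMSE step is correct and is exactly how Proposition~\ref{prop:derivative_phi_general} is proved. However, you have the directions of the two main bounds reversed, and as written the combination is vacuous.

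In the planted/Nishimori setting, Guerra's interpolation yields a \emph{lower} bound on the free energy $F_n$, hence an \emph{upper} bound on $\tfrac{1}{n}I_n$: the derivative of the interpolating free energy is $\tfrac{\lambda}{4}\E\langle \|\bbf{x}.\bbf{X} - \bbf{q}\|^2\rangle_t - \tfrac{\lambda}{4}\|\bbf{q}\|^2$, a non-negative quadratic minus a constant, giving $F_n \geq \Fcal(\lambda,\bbf{q})$ for every $\bbf{q}$ and thus $\limsup\tfrac{1}{n}I_n \leq \tfrac{\lambda}{4}\|\E\bbf{X}\bbf{X}^{\intercal}\|^2 - \sup_{\bbf{q}}\Fcal(\lambda,\bbf{q})$. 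Your claim that the derivative of the interpolating mutual information is a non-positive quadratic in the fluctuations is not right: differentiating $\tfrac{1}{n}I(t)$ directly gives $\tfrac{\lambda}{4}\|\E\bbf{X}\bbf{X}^{\intercal} - \bbf{q}\|^2 - \tfrac{\lambda}{4}\E\langle\|\bbf{x}.\bbf{X}-\bbf{q}\|^2\rangle_t$, which has no definite sign. Conversely, the Aizenman--Sims--Starr scheme produces an \emph{upper} bound on $F_n$ (equations~\eqref{eq:limsup2} and~\eqref{eq:limsup2_general}), hence a \emph{lower} bound on $\tfrac{1}{n}I_n$. With your assignments swapped, combining your two bounds only yields $\Fcal(\lambda,\bbf{q}^\star) \leq \sup_{\bbf{q}}\Fcal(\lambda,\bbf{q})$, which is trivially true and does not force $\bbf{q}^\star$ to be a maximizer.

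Two smaller points. First, the paper first reduces to priors with finite support before running the cavity argument, because the overlap-concentration step (the lemma from~\cite{andrea2008estimating}) bounds $\sum_{i,j} I(X_i;X_j\mid\bbf{Y},\bbf{Y}')$ by $H(P_0)$ and so needs finite entropy; you should either include this reduction or explain how your Gaussian side-channel perturbation yields concentration under only a second-moment assumption. Second, at a maximizer one has $\partial_\lambda\Fcal(\lambda,\bbf{q}^\star) = +\tfrac{1}{4}\|\bbf{q}^\star\|^2$, not $-\tfrac{1}{4}\|\bbf{q}^\star\|^2$; the paper avoids the matrix first-order condition entirely via the change of variables $\bbf{q}' = \lambda\bbf{q}$ (Propositions~\ref{prop:derivative_phi} and~\ref{prop:derivative_phi_general}), under which $\partial_\lambda\psi(\lambda,\bbf{q}') = \|\bbf{q}'\|^2/(4\lambda^2)$ directly.
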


Theorem~\ref{th:3} is proved in Section~\ref{sec:gen}.
We refer to~\cite{DBLP:conf/isit/LesieurKZ15} where statistical physics arguments have been used to derive the same expression and explicit computations have been made for the sparse PCA problem of rank $k$.

\section{The Replica-Symmetric formula} \label{sec:rs_formula}

In this section, we connect our problem to a statistical physics model which will be closely related to the SK model.
For simplicity we first concentrate with rank-one matrix estimation, with priors discrete $P_0$. The extension to finite-rank and general priors is done in Section~\ref{sec:gen}.

\subsection{Main results}

Let $P_0$ be a probability distribution with finite support $S\subset [-K_0, K_0]$ for some $K_0 >0$. Consider the following observation channel
$$
Y_{i,j} = \sqrt{\frac{\lambda}{n}} X_i X_j + Z_{i,j}, \ \ \text{for } 1 \leq i<j \leq n
$$
where $X_i \overset{\text{\tiny i.i.d.}}{\sim}  P_0$ and $Z_{i,j} \overset{\text{\tiny i.i.d.}}{\sim}  \mathcal{N}(0,1)$ are independent random variables. 
In the following, $\E$ will denote the expectation with respect to the $\bX$ and $\bbf{Z}$ random variables.
We are going to write, for $\sigma \in S$ and $\bbf{x} \in S^n$, 
$$
\begin{cases}
	P_0(\sigma) = \PP(X=\sigma) & \ \text{where } X \sim P_0 \\
	P_0(\bbf{x}) = \prod_{i=1}^n P_0(x_i)
\end{cases}
$$
The mutual information for this Gaussian channel is (see Lemma~\ref{lem:mutual_information_general})
$$
I(\bbf{X},\bbf{Y}) = - \E \left[ \log \left(  \sum_{\bbf{x} \in S^n} P_0(\bbf{x}) \exp\left( \sum_{i<j} x_i x_j \sqrt{\frac{\lambda}{n}} Z_{i,j} - \frac{\lambda}{2n}(x_i x_j - X_i X_j)^2 \right)\right) \right]
$$
We are interested in computing the limit of $\frac{1}{n} I(\bbf{X},\bbf{Y})$. To do so, it will be more convenient to consider the free energy.  Let us define the random Hamiltonian $H_n(\bbf{x}) = \sum_{i<j} x_i x_j \sqrt{\frac{\lambda}{n}} (Z_{i,j} + \sqrt{\frac{\lambda}{n}} X_i X_j ) - \frac{\lambda}{2n} x_i^2 x_j^2$. The posterior distribution of $\bbf{X}$ given $\bbf{Y}$ is then
\BEA
\label{def:post2}P(\bbf{x} | \bbf{Y}) = \frac{1}{Z_n(\lambda)} P_0(\bbf{x}) e^{\sum_{i<j} x_i x_j \sqrt{\frac{\lambda}{n}} Y_{i,j} - \frac{\lambda}{2n} x_i^2 x_j^2} = \frac{1}{Z_n} P_0(\bbf{x}) e^{H_n(\bbf{x})}
\EEA
We define the free energy as
$$
F_n(\lambda) = \frac{1}{n} \E \Big[ \log( \sum_{\bbf{x} \in S^n} P_0(\bbf{x}) \ e^{H_n(\bbf{x})})\Big] = \frac{1}{n} \E \log Z_n(\lambda)
$$
We will express the limit of $F_n$ using the function
$$
\mathcal{F}: (\lambda,q) \mapsto -\frac{\lambda}{4}q^2 + \E \log \left( \sum_{x \in S} P_0(x) \exp\left(\sqrt{\lambda q}Zx + \lambda q x X - \frac{\lambda}{2}q x^2\right)\right)
$$
where $Z \sim \mathcal{N}(0,1)$ and $X \sim P_0$ are independent random variables.
\begin{theorem}[Replica-Symmetric formula] \label{th:rs_formula}
	\begin{align*}
		\lim_{n \rightarrow + \infty} F_n(\lambda) &= \sup_{q \geq 0} \mathcal{F}(\lambda,q)
	\end{align*}
\end{theorem}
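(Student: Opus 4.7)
The proof splits into matching upper and lower bounds on $F_n(\lambda)$.

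For the upper bound, Guerra's interpolation (Proposition~\ref{prop:guerra_bound}) already gives, for each fixed $q \geq 0$, the bound $F_n(\lambda) \leq \mathcal{F}(\lambda,q) + o(1)$, and hence $\limsup_n F_n(\lambda) \leq \sup_{q \geq 0} \mathcal{F}(\lambda,q)$. The interpolation linearly connects the two-body Hamiltonian $H_n$ to an effective decoupled scalar channel at signal strength $\lambda q$; Gaussian integration by parts turns the interpolation derivative into an average of the square of an overlap difference, whose sign is controlled thanks to the Nishimori identity (the system lies on the Nishimori line).

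For the matching lower bound, the plan is the Aizenman--Sims--Starr cavity scheme. The point is that
\[
\liminf_{n\to\infty} F_n(\lambda) \;\geq\; \liminf_{n\to\infty} \bigl( \EE \log Z_{n+1}(\lambda) - \EE \log Z_n(\lambda) \bigr),
\]
so it suffices to analyze a one-spin-addition increment. Writing $Z_{n+1}$ in terms of the $n$-spin partition function and an extra spin $x_{n+1} \in S$, and carefully handling the shift $\lambda/(n+1)\to \lambda/n$ inside the $n$-spin block, the Gaussian structure of the noise collapses the ratio $Z_{n+1}/Z_n$ to an integrand depending on the configuration $\bbf{x}$ only through its overlap $\bbf{x}.\bbf{X}$ with the planted signal. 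If that overlap concentrates around a deterministic value $q^*$, the one-spin-addition difference converges to $\mathcal{F}(\lambda,q^*)$, and optimizing over the possible limit points $q^*$ yields $\sup_{q \geq 0} \mathcal{F}(\lambda,q)$.

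Two ingredients make this work. The Nishimori identity (Proposition~\ref{prop:nishimori}) tells us that the replica overlap $\bbf{x}^{(1)}.\bbf{x}^{(2)}$ has the same law as the planted overlap $\bbf{x}.\bbf{X}$, so concentration statements transfer between them for free. To force the overlap to concentrate, I would perturb the model by an extra Gaussian side channel that reveals $\bbf{X}$ at a small random strength; classical arguments going back to~\cite{andrea2008estimating} then ensure that for almost every value of the perturbation parameter the overlap concentrates, while by the analysis sketched in Section~\ref{sec:small_perturbation} the free energy only changes by $o(1)$. One performs the cavity computation on the perturbed system, passes to a subsequential limit of the concentration value, identifies this limit with a solution of the fixed-point equation~\eqref{eq:fixp}, and finally removes the perturbation.

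The main obstacle I anticipate is the interplay between overlap concentration and the cavity step: the perturbation must be strong enough to enforce concentration but weak enough to be negligible in $F_n$, and the cavity identity has to survive passing to a subsequential limit in which the concentration value $q_n$ is only measurable in $n$. Once this technical coupling is handled, the actual cavity calculation is a Gaussian bookkeeping exercise whose output matches $\mathcal{F}(\lambda,q^*)$ term by term, by the very definition of $\mathcal{F}$.
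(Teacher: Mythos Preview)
Your proposal has the two halves swapped, and this is not just a labeling slip: the internal logic of each half is adapted to the wrong direction.

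Guerra's interpolation here produces a \emph{lower} bound on $F_n$, not an upper one. In the interpolation of Proposition~\ref{prop:guerra_bound} the Nishimori identity turns the derivative into $\frac{\lambda}{4}\E\langle(\bbf{x}.\bbf{X}-q)^2\rangle_t - \frac{\lambda q^2}{4}+o(1)$, which is $\geq -\frac{\lambda q^2}{4}+o(1)$; integrating gives $F_n \geq \mathcal{F}(\lambda,q)+o(1)$ for every $q$, hence $\liminf_n F_n \geq \sup_q \mathcal{F}(\lambda,q)$. Your statement ``for each fixed $q\geq 0$, $F_n(\lambda)\leq \mathcal{F}(\lambda,q)+o(1)$'' cannot be right (take $q=0$, which gives $\mathcal{F}(\lambda,0)=0$), and even if it were, it would imply $\limsup_n F_n\leq \inf_q \mathcal{F}$, not the sup. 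The reversal relative to the unplanted SK model is exactly the point of the Nishimori line: the remainder is a manifest square and the inequality goes the other way.

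Correspondingly, the Aizenman--Sims--Starr scheme is used in the paper for the \emph{upper} bound: one writes $\limsup_n F_n \leq \limsup_n A_n$, shows (after a small perturbation forcing overlap concentration around a random $Q$) that $A_n = \E\,\mathcal{F}_{\epsilon_n}(\lambda,Q)+o(1)$, and then simply bounds $\E\,\mathcal{F}(\lambda,Q)\leq \sup_q \mathcal{F}(\lambda,q)$. Your plan to use the cavity increment as a lower bound fails at the last step: from $\liminf_n F_n \geq \mathcal{F}(\lambda,q^*)$ for \emph{some} subsequential concentration value $q^*$ you cannot ``optimize over the possible limit points'' to reach $\sup_q \mathcal{F}(\lambda,q)$ without already knowing that $q^*$ is a maximizer. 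Two secondary points: the paper's perturbation is not an auxiliary Gaussian side channel but revealing an $\epsilon_n$-fraction of the coordinates of $\bbf{X}$ (this is what feeds Lemma~3.1 of~\cite{andrea2008estimating}); and no identification of $q^*$ with a solution of~\eqref{eq:fixp} is needed in the proof of Theorem~\ref{th:rs_formula}.
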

The Replica-Symmetric formula allows us to compute the limit of the mutual information.
\begin{corollary} \label{cor:rs_information}
	$$
	\lim_{n \rightarrow + \infty} \frac{1}{n}  I(\bbf{X},\bbf{Y}) = \frac{\lambda \E_{P_0} (X^2)^2}{4} - \sup_{q \geq 0} \mathcal{F}(\lambda,q)
	$$
\end{corollary}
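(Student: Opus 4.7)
The plan is to derive the corollary as a direct algebraic consequence of Theorem~\ref{th:rs_formula}, by expressing the mutual information $I(\bbf{X},\bbf{Y})$ in terms of the free energy $F_n(\lambda)$ plus a simple deterministic term that converges explicitly.

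First I would start from the given formula
\[
I(\bbf{X},\bbf{Y}) = - \E \left[ \log \Big(\sum_{\bbf{x} \in S^n} P_0(\bbf{x}) \exp\Big( \sum_{i<j} x_i x_j \sqrt{\tfrac{\lambda}{n}} Z_{i,j} - \tfrac{\lambda}{2n}(x_i x_j - X_i X_j)^2 \Big)\Big) \right]
\]
and expand the square: $(x_ix_j-X_iX_j)^2 = x_i^2x_j^2 - 2 x_ix_jX_iX_j + X_i^2X_j^2$. The first two pieces recombine exactly into the Hamiltonian $H_n(\bbf{x})$ from the definition~\eqref{def:post2}, namely $\sum_{i<j} x_ix_j\sqrt{\lambda/n}\,Z_{i,j} + (\lambda/n) x_ix_jX_iX_j - (\lambda/2n) x_i^2x_j^2$. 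The third piece, $-\tfrac{\lambda}{2n}\sum_{i<j}X_i^2X_j^2$, depends only on $\bbf{X}$, so it factors out of the $\bbf{x}$-sum and out of the logarithm.

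This yields the clean identity
\[
I(\bbf{X},\bbf{Y}) = -\E\log Z_n(\lambda) + \frac{\lambda}{2n}\sum_{i<j}\E[X_i^2X_j^2] = -n F_n(\lambda) + \frac{\lambda(n-1)}{4}\, \E_{P_0}[X^2]^2,
\]
where I used the i.i.d.\ property of the $X_i$'s to evaluate $\sum_{i<j}\E[X_i^2X_j^2] = \binom{n}{2}\E_{P_0}[X^2]^2$. Dividing by $n$ gives
\[
\frac{1}{n} I(\bbf{X},\bbf{Y}) = \frac{\lambda(n-1)}{4n}\E_{P_0}[X^2]^2 - F_n(\lambda).
\]

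Finally I would let $n\to\infty$. The prefactor $\lambda(n-1)/(4n)$ converges to $\lambda/4$, and Theorem~\ref{th:rs_formula} provides $F_n(\lambda)\to \sup_{q\ge 0}\mathcal{F}(\lambda,q)$, which together yield the claimed limit. There is no real obstacle here: the only thing to verify carefully is the bookkeeping in the expansion of the squared term so that the pieces recombine into $H_n(\bbf{x})$ with the correct $X_i^2X_j^2$ remainder, and that the remainder is deterministic enough to be pulled out of both the sum and the expectation before taking the $n\to\infty$ limit.
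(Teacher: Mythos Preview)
Your proof is correct and matches the paper's argument essentially line for line: both expand the square in the mutual information formula to extract the $-\tfrac{\lambda}{2n}\sum_{i<j}X_i^2X_j^2$ term, recognize the remainder as $H_n(\bbf{x})$, and arrive at $\tfrac{1}{n}I(\bbf{X},\bbf{Y}) = \tfrac{\lambda(n-1)}{4n}\E_{P_0}[X^2]^2 - F_n(\lambda)$ before invoking Theorem~\ref{th:rs_formula}. The only difference is that the paper packages the first step as a citation to Lemma~\ref{lem:mutual_information_general}, whereas you carry out the expansion explicitly.
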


\begin{proof} Lemma~\ref{lem:mutual_information_general} gives
	\begin{align*}
		\frac{1}{n} I(\bbf{X},\bbf{Y}) &= -F_n - \frac{1}{n} \E \left[\log \exp(-\frac{\lambda}{2n} \sum_{i<j} X_i^2 X_j^2)\right] 
		= \frac{\lambda}{2n^2} \sum_{i<j} \E [X_i^2 X_j^2] - F_n
		= \frac{\lambda (n-1)}{4n} \E_{P_0}[X^2]^2 - F_n
	\end{align*}
	and Theorem~\ref{th:rs_formula} gives the result.
\end{proof}

\subsection{Consequences of the RS formula} \label{sec:consequences_rs}

We define $\phi: \lambda \mapsto \sup_{q \geq 0} \mathcal{F}(\lambda,q)$. 
$\phi$ is the limit of $\lambda \mapsto F_n(\lambda)$, which is convex. $\phi$ is therefore convex and is thus differentiable everywhere except on a countable set of points. Let $D \subset (0, + \infty)$ be the set of points where $\phi$ is differentiable.
One can easily show that this supremum is achieved over the compact set $[0,\E_{P_0}[X^2]]$. Thus, Corollary~4 from \cite{milgrom2002envelope} (combined with the arguments in the proof below) gives
$$
D = \left\{ \lambda > 0 \ \middle| \ \mathcal{F}(\lambda,\cdot) \ \text{has a unique maximizer} \right\}
$$

\begin{proposition} \label{prop:derivative_phi}
	For all $\lambda \in D$, the maximizer $q^*(\lambda)$ of $q \geq 0 \mapsto \mathcal{F}(\lambda,q)$ is unique and 
	$$
	\phi'(\lambda) = \frac{q^*(\lambda)^2}{4}
	$$
\end{proposition}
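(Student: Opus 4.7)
The plan is to combine the Milgrom-Segal envelope theorem (already cited in the paragraph preceding the statement) with an explicit computation of $\partial_\lambda \mathcal{F}$ at a maximizer, using the I-MMSE identity for the effective scalar channel~\eqref{eq:scalar_channel}.

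First I would show that the supremum defining $\phi$ is attained on a compact interval. Writing $\mathcal{F}(\lambda, q) = -\lambda q^2/4 + \psi(\lambda q)$ with
$$\psi(\gamma) := \E\log\int dP_0(x)\exp\!\left(\sqrt{\gamma}\,Z_0 x + \gamma x X_0 - \tfrac{\gamma}{2} x^2\right),$$
the identity $\psi(\gamma) = \gamma \E_{P_0}[X^2]/2 - \i(\gamma)$ from~\eqref{eq:i_scalar}, together with $\i \geq 0$, gives $\mathcal{F}(\lambda, q) \leq \lambda q \E_{P_0}[X^2]/2 - \lambda q^2/4$. Since $\mathcal{F}(\lambda, 0) = 0$, the sup may be restricted to $q \in [0, 2\E_{P_0}[X^2]]$. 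The map $\mathcal{F}$ is jointly continuous and continuously differentiable in $\lambda$, with $\partial_\lambda \mathcal{F}$ bounded uniformly on compact sets (routine dominated convergence, using the bounded support of $P_0$). Milgrom-Segal then yields, at every $\lambda \in D$, the envelope identity $\phi'(\lambda) = \partial_\lambda \mathcal{F}(\lambda, q^*)$ at each maximizer $q^*$.

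The core of the argument is the derivative computation. The I-MMSE relation $\i'(\gamma) = \mmse(\gamma)/2$ gives $\psi'(\gamma) = (\E_{P_0}[X^2] - \mmse(\gamma))/2$, whence
$$\partial_\lambda \mathcal{F}(\lambda, q) = -\frac{q^2}{4} + q\,\psi'(\lambda q), \qquad \partial_q \mathcal{F}(\lambda, q) = \lambda\bigl(-\tfrac{q}{2} + \psi'(\lambda q)\bigr).$$
If $q^* > 0$ is a maximizer, the interior first-order condition $\partial_q \mathcal{F}(\lambda, q^*) = 0$ reads $\psi'(\lambda q^*) = q^*/2$, equivalently the fixed-point equation $q^* = \E_{P_0}[X^2] - \mmse(\lambda q^*)$ of~\eqref{eq:fixp}. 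Substituting into $\partial_\lambda \mathcal{F}$ at $q^*$ yields $\phi'(\lambda) = -(q^*)^2/4 + q^* \cdot q^*/2 = q^*(\lambda)^2/4$; the boundary case $q^* = 0$ is trivial since both sides vanish.

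Uniqueness for $\lambda \in D$ then drops out: Milgrom-Segal forces $\partial_\lambda \mathcal{F}(\lambda, q_1) = \partial_\lambda \mathcal{F}(\lambda, q_2)$ for any two maximizers, and the derivative formula just computed reduces this to $q_1^2 = q_2^2$, hence $q_1 = q_2$ by non-negativity. The main technical point to be careful about is the regularity — smoothness of $\psi$ and legality of differentiating under the expectation — but this is routine given the boundedness of $P_0$ and Gaussian tail bounds on $Z_0$.
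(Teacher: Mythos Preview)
Your proof is correct. The paper takes a slightly slicker route: it first performs the change of variables $q' = \lambda q$, so that $\phi(\lambda) = \sup_{q' \geq 0}\big\{ -q'^2/(4\lambda) + g(q')\big\}$ with $g$ independent of $\lambda$. The envelope theorem then gives $\phi'(\lambda) = q'^2/(4\lambda^2) = q^*(\lambda)^2/4$ directly at every maximizer, with no appeal to the first-order condition in $q$ and no I-MMSE identity; uniqueness of $q'$ (hence of $q^*$) is then automatic. Your approach instead combines the envelope identity in $\lambda$ with the stationarity condition $\partial_q \mathcal{F}(\lambda,q^*)=0$; this is perfectly valid but requires the interior/boundary case split, and your invocation of I-MMSE, while correct, is not actually needed for this proposition --- it only becomes relevant for the fixed-point characterization in Proposition~\ref{prop:q_star}.
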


\begin{proof}
	One can rewrite $\phi$, using the change of variables $q' = \lambda q$, we have for all $\lambda >0$
	\begin{align*}
		\phi(\lambda) &= \sup_{q' \geq 0} -\frac{q'^2}{4 \lambda} + \E_{X,Z} \log \Big[ \sum_{x \in S} P_0(x) \exp(\sqrt{q'}Zx + q' x X - \frac{q'}{2} x^2)\Big] 
		= \sup_{q' \geq 0} \psi(\lambda,q')
	\end{align*}
	where $\psi(\lambda,q') = \mathcal{F}(\lambda,\frac{q'}{\lambda})$.
	Let $\lambda \in D$. $\phi$ is differentiable at $\lambda$, the envelope theorem from~\cite{milgrom2002envelope} gives us that for all $q' \in \text{argmax}_{q' \geq 0} \psi(\lambda,q')$
	$$
	\phi'(\lambda) = \frac{\partial \psi}{\partial \lambda}(q')= \frac{q'^2}{4\lambda^2}.
	$$
	Thus, the maximizer of $q'\geq 0 \mapsto \psi(\lambda,q')$ is unique. Using the change of variables $q' = \lambda q$, one has that the maximizer $q^*(\lambda)$ of $q \geq 0 \mapsto \mathcal{F}(\lambda,q)$ is also unique and verifies $\phi'(\lambda) = \frac{q^*(\lambda)^2}{4}$.
\end{proof}
\\

Let $\langle \cdot \rangle$ denote the Gibbs measure corresponding to the Hamiltonian $H_n$. This means that for any function $f$ on $S^n$ we have
$$
\langle f(\bbf{x}) \rangle = \frac{1}{Z_n(\lambda)} \sum_{\bbf{x} \in S^n} P_0(\bbf{x}) f(\bbf{x}) e^{H_n(\bbf{x})}.
$$
We recall that $\bx$ is a random sample with distribution $\langle \cdot \rangle$ defined in~\eqref{def:post2}. We also recall the notations: for $\bbf{u},\bbf{v} \in S^n$ we write
\begin{align*}
	\bbf{u}.\bbf{v} = \frac{1}{n} \sum_{i=1}^n u_i v_i \ \ \ \text{and } \ \
	\|\bbf{u}\| = \sqrt{\frac{1}{n} \sum_{i=1}^n (u_i)^2 }
\end{align*}
We call $\bbf{u}.\bbf{v}$ the \textit{overlap} between the configurations $\bbf{u}$ and $\bbf{v}$. Theorem~\ref{th:overlap_concentration_without_perturbation} shows that $q^*(\lambda)$ can be interpreted as the overlap between a random sample (such samples are called \textit{replicas}) $\bbf{x}$ from $\langle \cdot \rangle$ and the planted configuration $\bbf{X}$. 

The Nishimori property, as mentioned in Section~\ref{sec:proof_tech}, is a fundamental identity that will be used repeatedly and is true in a general setting.
It express the fact that the planted configuration $\bbf{X}$ behaves like a replica $\bbf{x}$ sampled from the posterior distribution $\PP(\bbf{X}=.|\bbf{Y})$.

\begin{proposition}[Nishimori identity] \label{prop:nishimori}
	Let $(\bbf{X},\bbf{Y})$ be a couple of random variables on a polish space. Let $k \geq 1$ and let $\bbf{x}^{(1)}, \dots, \bbf{x}^{(k)}$ be $k$ i.i.d.\ samples (given $\bbf{Y}$) from the distribution $\PP(\bbf{X}=. | \bbf{Y})$, independently of every other random variables. Let us denote $\langle \cdot \rangle$ the expectation with respect to $\PP(\bbf{X}=. | \bbf{Y})$ and $\E$ the expectation with respect to $(\bX,\bY)$. Then, for all continuous bounded function $f$
	$$
	\E \langle f(\bbf{Y},\bbf{x}^{(1)}, \dots, \bbf{x}^{(k)}) \rangle
	=
	\E \langle f(\bbf{Y},\bbf{x}^{(1)}, \dots, \bbf{x}^{(k-1)}, \bbf{X}) \rangle
	$$
\end{proposition}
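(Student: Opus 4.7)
The plan is to exploit the fact that, conditionally on $\bY$, the planted signal $\bX$ is itself distributed according to the posterior $\PP(\bX = \cdot \,|\, \bY)$, which is precisely the common law of the replicas $\bx^{(1)}, \dots, \bx^{(k)}$. In other words, conditionally on $\bY$, the tuple $(\bX, \bx^{(1)}, \dots, \bx^{(k)})$ consists of $k+1$ i.i.d.\ samples from the posterior, and the identity then reduces to invoking conditional exchangeability and swapping the last coordinate with $\bX$.

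First, I would unpack the setup. By definition of the conditional distribution, for any bounded measurable $g$,
$$
\E\bigl[g(\bY, \bX)\bigr] = \E\bigl[\langle g(\bY, \bx)\rangle\bigr],
$$
where $\bx$ is a single sample from $\PP(\bX = \cdot \,|\, \bY)$ given $\bY$. Next, by the hypothesis that $\bx^{(1)}, \dots, \bx^{(k)}$ are, conditionally on $\bY$, i.i.d.\ with law $\PP(\bX = \cdot \,|\, \bY)$ and produced from auxiliary randomness independent of $\bX$, the conditional joint law of $(\bX, \bx^{(1)}, \dots, \bx^{(k)})$ given $\bY$ is the product of $k+1$ copies of $\PP(\bX = \cdot \,|\, \bY)$. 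In particular, conditionally on $\bY$, the tuple $(\bX, \bx^{(1)}, \dots, \bx^{(k)})$ is exchangeable.

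Applying the transposition that swaps the first and last coordinates yields, for every continuous bounded $f$,
$$
\E\bigl[f(\bY, \bx^{(1)}, \dots, \bx^{(k-1)}, \bx^{(k)}) \,\big|\, \bY\bigr]
= \E\bigl[f(\bY, \bx^{(1)}, \dots, \bx^{(k-1)}, \bX) \,\big|\, \bY\bigr].
$$
The left-hand side is exactly $\langle f(\bY, \bx^{(1)}, \dots, \bx^{(k)})\rangle$ (the inner expectation with respect to the $k$ replicas given $\bY$), while the right-hand side is $\langle f(\bY, \bx^{(1)}, \dots, \bx^{(k-1)}, \bX)\rangle$ (the inner expectation with respect to the remaining $k-1$ replicas, with $\bX$ kept fixed given $\bY$). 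Taking the outer expectation in $\bY$ concludes the proof.

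There is no real obstacle here, but the step that requires care is the conditional independence of $\bX$ from $(\bx^{(1)}, \dots, \bx^{(k)})$ given $\bY$, which rests on the hypothesis that the sampling randomness is independent of $\bX$. On a Polish space, the existence of a regular conditional distribution $\PP(\bX=\cdot \,|\, \bY)$ is automatic, so all the manipulations above are well-defined; continuity and boundedness of $f$ are only needed to ensure the expectations exist and to legitimize Fubini-type rearrangements.
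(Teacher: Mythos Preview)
Your proof is correct and follows essentially the same approach as the paper: both argue that, since sampling $(\bX,\bY)$ jointly is equivalent to sampling $\bY$ from its marginal and then $\bX$ from the posterior, the tuple $(\bY,\bx^{(1)},\dots,\bx^{(k)})$ has the same law as $(\bY,\bx^{(1)},\dots,\bx^{(k-1)},\bX)$. Your write-up is slightly more detailed in spelling out the conditional exchangeability and the role of the Polish-space assumption, but the core idea is identical.
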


\begin{proof}
	It is equivalent to sample the couple $(\bbf{X},\bbf{Y})$ according to its joint distribution or to sample first $\bbf{Y}$ according to its marginal distribution and then to sample $\bbf{X}$ conditionally to $\bbf{Y}$ from its conditional distribution $\PP(\bbf{X}=.|\bbf{Y})$. Thus the $(k+1)$-tuple $(\bbf{Y},\bbf{x}^{(1)}, \dots,\bbf{x}^{(k)})$ is equal in law to $(\bbf{Y},\bbf{x}^{(1)},\dots,\bbf{x}^{(k-1)},\bbf{X})$.

\end{proof}

\noindent We obtain an important corollary for the estimation of $\bbf{X} \bbf{X}^{\intercal}$ from the observations $\bbf{Y}$.

\begin{corollary} \label{cor:limit_mmse}
	For all $\lambda \in D$,
	$$
	\MMSE_n(\lambda) \xrightarrow[n \to \infty]{} (\E_{P_0} X^2)^2 - q^*(\lambda)^2
	$$
\end{corollary}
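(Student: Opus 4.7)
My plan is to differentiate the replica-symmetric formula in $\lambda$ and use the I-MMSE identity to read off the limit of $\MMSE_n$. Viewing the observations as a vector Gaussian channel $\bbf{Y}=\sqrt{\lambda/n}\,\bbf{S}+\bbf{Z}$ with signal vector $\bbf{S}=(X_iX_j)_{i<j}$, the equality $I(\bbf{X};\bbf{Y})=I(\bbf{S};\bbf{Y})$ (since in both cases the conditional law of $\bbf{Y}$ is $\mathcal{N}(\sqrt{\lambda/n}\,\bbf{S},I)$) together with the standard multivariate I-MMSE theorem and the chain rule yield
$$
\frac{d}{d\lambda}\frac{1}{n}I(\bbf{X};\bbf{Y}) \;=\; \frac{1}{2n^{2}}\sum_{i<j}\E\big[(X_iX_j-\E[X_iX_j\mid\bbf{Y}])^{2}\big] \;=\; \frac{n-1}{4n}\,\MMSE_n(\lambda).
$$
Combined with the algebraic identity $\tfrac{1}{n}I(\bbf{X};\bbf{Y})=\tfrac{\lambda(n-1)}{4n}\,\E_{P_0}[X^{2}]^{2}-F_n(\lambda)$ already established in the proof of Corollary~\ref{cor:rs_information}, this rearranges into the clean formula
$$
\MMSE_n(\lambda) \;=\; \E_{P_0}[X^{2}]^{2}\;-\;\frac{4n}{n-1}\,F_n'(\lambda),
$$
so the corollary reduces to showing $F_n'(\lambda)\to q^{*}(\lambda)^{2}/4$ for every $\lambda\in D$.

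The last step is immediate from convexity. Because $\MMSE_n$ is non-increasing in $\lambda$ (more signal cannot hurt the Bayes estimator), the identity above shows that $F_n'$ is non-decreasing, hence $F_n$ is convex in $\lambda$. Theorem~\ref{th:rs_formula} provides the pointwise limit $F_n\to\phi$ on $(0,+\infty)$. A standard convex-analysis fact --- pointwise convergence of convex functions automatically upgrades to convergence of derivatives at every point where the limit is differentiable --- then forces $F_n'(\lambda)\to\phi'(\lambda)$ for every $\lambda\in D$. Proposition~\ref{prop:derivative_phi} identifies $\phi'(\lambda)=q^{*}(\lambda)^{2}/4$, and inserting this back into the identity above gives $\MMSE_n(\lambda)\to \E_{P_0}[X^{2}]^{2}-q^{*}(\lambda)^{2}$.

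At this stage everything non-trivial has already been done: the RS formula (Theorem~\ref{th:rs_formula}) and the envelope-theorem identification of $\phi'$ (Proposition~\ref{prop:derivative_phi}) carry the real content, while the I-MMSE identity and the convex-convergence lemma are classical. The only point worth checking carefully is the bookkeeping in the I-MMSE step --- specifically the $1/n$ factor coming from $s=\lambda/n$ being the per-coordinate SNR of the vector channel --- but this is a straightforward chain-rule calculation. No concentration, interpolation, Nishimori, or cavity input is needed for this corollary, and in particular the exceptional set outside $D$ is precisely the (countable) set where $\phi$ fails to be differentiable, which is exactly where the convex-convergence argument breaks down.
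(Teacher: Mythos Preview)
Your proof is correct and follows essentially the same skeleton as the paper: relate $\MMSE_n$ to $F_n'$, establish convexity of $F_n$, then pass the derivative through the limit via the convex-convergence lemma and Proposition~\ref{prop:derivative_phi}. The only difference is that where the paper computes $F_n'(\lambda)=\frac{1}{2n^2}\sum_{i<j}\E\langle x_ix_jX_iX_j\rangle$ by hand (Gaussian integration by parts plus the Nishimori identity), you invoke the vector I-MMSE theorem as a black box and get the same relation $\MMSE_n=\E_{P_0}[X^2]^2-\tfrac{4n}{n-1}F_n'$ without touching Nishimori; this is a legitimate shortcut, not a genuinely different argument.
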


\begin{proof}
	$\lambda \mapsto F_n(\lambda)$ is differentiable with derivative
	\begin{align*}
		F_n'(\lambda) &= \frac{1}{n} \E \Big\langle \sum_{i<j} \frac{1}{2 \sqrt{\lambda n}} Z_{i,j} x_i x_j + \frac{x_i x_j X_i X_j}{n} - \frac{(x_i x_j)^2}{2n} \Big\rangle 
													   = \frac{1}{2 n^2} \E \Big\langle \sum_{i<j} x_i x_j X_i X_j\Big\rangle
													   = \frac{(n-1)}{4 n} \E \langle x_i x_j \rangle^2
	\end{align*}
	where we used Gaussian integration by parts and the Nishimori identity (Proposition~\ref{prop:nishimori}). 
	$\lambda \mapsto F_n(\lambda)$ is thus convex.
	Recall the following standard lemma:
	\begin{lemma} \label{lem:deriv_convex}
		Let $I \subset \R$ and $f_n$ a sequence of convex, differentiable functions over $I$. Suppose that for all $x \in I, \ f_n(x) \xrightarrow[n \to \infty]{} f(x)$. Let $D_f= \{x \in I | f \ \text{is differentiable in }x\}$. Then
		$$
		\forall x \in D_f, \ f_n'(x) \xrightarrow[n \to \infty]{} f'(x)
		$$
	\end{lemma}
Using Proposition~\ref{prop:derivative_phi} and the previous lemma we obtain that for all $\lambda \in D$,
$$
\frac{1}{2 n^2} \E \Big\langle \sum_{i<j} x_i x_j X_i X_j\Big\rangle
\xrightarrow[n \to \infty]{} \frac{q^*(\lambda)}{4}
$$
	Therefore, for $\lambda \in D$,
	\begin{align*}
		\MMSE_n(\lambda) &= \frac{2}{n(n-1)} \sum_{1 \leq i < j \leq n} \E \Big[ (X_i X_j - \E[X_i X_j | \bbf{Y}])^2 \Big]
		\\
		&=
		(\E_{P_0} X^2)^2 + \frac{2}{n(n-1)} \sum_{i<j} \E \Big[ \langle x_i x_j \rangle^2 - 2 \langle x_i x_j X_i X_j \rangle \Big] 
		\\
		&=
		(\E_{P_0} X^2)^2 - \frac{2}{n(n-1)} \sum_{i<j} \E \langle x_i x_j X_i X_j \rangle
		\xrightarrow[n \to \infty]{} (\E_{P_0} X^2)^2 - q^{*}(\lambda)
	\end{align*}
\end{proof}
\\

The study of $\lambda \in D \mapsto q^*(\lambda)$ is therefore of crucial importance. The next proposition states its main properties.
We recall that the minimum mean square error $\mmse(\gamma)$ for the scalar channel \eqref{eq:scalar_channel} is defined in equation~\eqref{eq:mmsescalar}.

\begin{proposition}[Properties of $q^*(\lambda)$] \label{prop:q_star}
	\begin{enumerate}[(i)]
		\item The function $\lambda \in D \mapsto q^*(\lambda)$ is non-decreasing.
		\item For all $\lambda \in D$, $q^*(\lambda) = \E_{P_0}(X^2) - \mmse(\lambda q^*(\lambda))$.
		\item $q^*(\lambda) \xrightarrow[\overset{\lambda \in D}{\lambda \to 0}]{} \E_{P_0}(X)^2$.
		\item $q^*(\lambda) \xrightarrow[\overset{\lambda \in D}{\lambda \to + \infty}]{} \E_{P_0}(X^2)$.
	\end{enumerate}
\end{proposition}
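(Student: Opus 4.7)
The whole argument rests on a single computation: using \eqref{eq:i_scalar} to rewrite
$$
\mathcal{F}(\lambda,q) = -\frac{\lambda q^2}{4} + \frac{\lambda q\, \E_{P_0}[X^2]}{2} - \i(\lambda q),
$$
and then applying the I-MMSE identity $\i'(\gamma) = \frac{1}{2}\mmse(\gamma)$ of~\cite{guo2005mutual} to obtain
$$
\partial_q \mathcal{F}(\lambda,q) \;=\; \frac{\lambda}{2}\bigl(\E_{P_0}[X^2] - q - \mmse(\lambda q)\bigr).
$$
Item (i) is then immediate: $\phi$ is convex (as a pointwise limit of the convex functions $F_n$, cf.\ Corollary~\ref{cor:limit_mmse}), so by Proposition~\ref{prop:derivative_phi} the map $\lambda\in D\mapsto q^*(\lambda)^2/4 = \phi'(\lambda)$ is non-decreasing, and since $q^*(\lambda)\geq 0$, $q^*$ itself is non-decreasing.

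For (ii), I split according to where the unique maximizer $q^*(\lambda)\in[0,\E_{P_0}[X^2]]$ sits. The right endpoint is excluded because $\partial_q \mathcal{F}(\lambda,\E_{P_0}[X^2]) = -\frac{\lambda}{2}\mmse(\lambda \E_{P_0}[X^2])<0$ for any nontrivial $P_0$. If $q^*(\lambda)\in(0,\E_{P_0}[X^2])$, the stationarity condition $\partial_q\mathcal{F}(\lambda,q^*)=0$ is exactly $q^* = \E_{P_0}[X^2]-\mmse(\lambda q^*)$. Finally, $q^*(\lambda)=0$ can occur only when $\partial_q\mathcal{F}(\lambda,0)=\frac{\lambda}{2}\E_{P_0}[X]^2\leq 0$, forcing $\E_{P_0}[X]=0$; but then $\mmse(0) = \text{Var}(X_0) = \E_{P_0}[X^2]$, so the identity $q^*=\E_{P_0}[X^2]-\mmse(\lambda q^*)$ reads $0=0$ and still holds.

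For (iii), I let $\lambda\to 0$ along $D$ in the identity from (ii). Since $q^*(\lambda)$ is bounded by $\E_{P_0}[X^2]$, we have $\lambda q^*(\lambda)\to 0$; continuity of $\mmse$ at $0$ (a standard consequence of dominated convergence on the scalar channel) then gives $\mmse(\lambda q^*(\lambda))\to \mmse(0)=\E_{P_0}[X^2]-\E_{P_0}[X]^2$, so $q^*(\lambda)\to \E_{P_0}[X]^2$.

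For (iv), I use (i) to get a limit $q_\infty := \lim_{\lambda\to\infty,\,\lambda\in D}q^*(\lambda)\in[0,\E_{P_0}[X^2]]$. The main obstacle of the proof is ruling out $q_\infty=0$. Substituting $q=\E_{P_0}[X^2]$ into $\mathcal{F}$ yields
$$
\phi(\lambda) \;\geq\; \frac{\lambda\, \E_{P_0}[X^2]^2}{4} - \i\bigl(\lambda \E_{P_0}[X^2]\bigr),
$$
and since $P_0$ has finite support $S$ the scalar mutual information is uniformly bounded: $\i(\gamma)\leq H(P_0)\leq \log|S|$. Hence $\phi(\lambda)\to\infty$. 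If $q_\infty$ were $0$, monotonicity of $q^*$ would force $q^*\equiv 0$ on $D$, so by Proposition~\ref{prop:derivative_phi} we would have $\phi'\equiv 0$ on $D$, and $\phi$ (being convex, hence absolutely continuous) would be constant, contradicting $\phi(\lambda)\to\infty$. Therefore $q_\infty>0$, which gives $\lambda q^*(\lambda)\to\infty$; combined with $\mmse(\gamma)\to 0$ as $\gamma\to\infty$ and the fixed-point identity from (ii), this yields $q_\infty = \E_{P_0}[X^2]$.
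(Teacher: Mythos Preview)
Your proof is correct. Items (i)--(iii) are essentially identical to the paper's argument, the only cosmetic difference being that in (ii) you explicitly rule out the right endpoint $q^*=\E_{P_0}[X^2]$, whereas the paper simply observes that any $q^*>0$ is an interior point of $[0,\infty)$ and so stationarity is automatic.

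For (iv) you take a genuinely different route to rule out $q_\infty=0$. The paper argues as follows: if $q^*\equiv 0$ on $D$, then $\mathcal{F}(\lambda,q)\le 0$ for all $q\ge 0$ and $\lambda\in D$; writing this as $\frac{\gamma}{2}\bigl(\E[X_0^2]-\frac{\gamma}{2\lambda}\bigr)\le \i(\gamma)$ and sending $\lambda\to\infty$ gives $\i(\gamma)\ge \frac{\gamma}{2}\E[X_0^2]$, which combined with the trivial upper bound $\i(\gamma)\le \frac{\gamma}{2}\E[X_0^2]$ forces $\mmse\equiv \E[X_0^2]$, contradicting $\mmse(\gamma)\to 0$. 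Your argument instead lower-bounds $\phi(\lambda)$ by plugging $q=\E[X^2]$ and using the entropy bound $\i(\gamma)\le H(P_0)\le \log|S|$ to get $\phi(\lambda)\to\infty$, then derives a contradiction from $\phi'\equiv 0$ on $D$ (hence $\phi$ constant by convexity). Both are clean; your version is perhaps more visual, but it relies on $P_0$ having finite support to bound $\i$, while the paper's argument only uses $\E_{P_0}[X^2]<\infty$ and so transfers verbatim to the general prior handled in Section~\ref{sec:gen}.
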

\begin{proof}
	The function $\phi$ is convex, so (i) is simply a consequence of Proposition~\ref{prop:derivative_phi}.
	To prove (ii), we remark that equation~\eqref{eq:i_scalar} implies that $\mathcal{F}(\lambda,q)= \frac{\lambda q}{2} \big(\E [X_0^2] - \frac{q}{2} \big) - \i(\lambda q)$. \cite{guo2005mutual} gives us $\i'(\gamma) = \frac{1}{2} \mmse(\gamma)$ so that
	$$
	\frac{\partial}{\partial q} \mathcal{F}(\lambda,q) = \frac{\lambda}{2} \Big( \E[X_0^2] - q - \mmse(\lambda q) \Big)
	$$
	Thus, if $q^*(\lambda) > 0$, then $\frac{\partial}{\partial q} \mathcal{F}(\lambda,q^*(\lambda)) = 0$ and (ii) is verified. 
	Suppose now that $q^*(\lambda) = 0$.
$q \mapsto \mathcal{F}(\lambda,q)$ achieves therefore its maximum in $0$: $\frac{\partial}{\partial q} \mathcal{F}(\lambda,0) \leq 0$.
	We have obviously $\mmse(0) = \E[X_0^2] - \E[X_0]^2$, so that $\frac{\partial}{\partial q} \mathcal{F}(\lambda,0) = \frac{\lambda}{2} \E[X_0]^2 \geq 0$. Consequently $\frac{\partial}{\partial q} \mathcal{F}(\lambda,q^*(\lambda)) = 0$ and (ii) is verified.  

	It remains to prove (iii) and (iv). We first notice that Corollary~\ref{cor:limit_mmse} implies that $q^*(\lambda) \in [0, \E[X_0^2]]$. One can verify easily that $\mmse(\gamma) \xrightarrow[\gamma \to 0]{} \E[X_0^2] - \E[X_0]^2$. Using (ii), this implies (iii).
	Similarly, one have $\mmse(\gamma) \xrightarrow[\gamma \to + \infty]{} 0$. Thus, we only have to show that $q^*(\lambda) > 0$ for $\lambda$ large enough to obtain (iv). If this is not verified, then (i) implies that $q^*(\lambda) = 0$ for all $\lambda \in D$. This implies also that $\E[X_0]=0$ because of (iii). Therefore, for all $q'\geq 0$ and all $\lambda \in D$, one would have
	$$
	\frac{\gamma}{2}(\E[X_0^2] - \frac{\gamma}{2 \lambda}) - \i(\gamma) \leq 0
	$$
	By letting $\lambda \to \infty$, one would obtain that $\i(\gamma) \geq \frac{\gamma}{2} \E[X_0^2]$. However, we have $\i(\gamma) \leq \frac{\gamma}{2} \E[X_0^2]$ because $\i'(\gamma) = \frac{1}{2} \mmse(\gamma)$ and $\mmse(\gamma) \leq \E[X_0^2]$ and $\i(0)=0$. Therefore one would have that for all $\gamma \geq 0$, $\i(\gamma) = \frac{\gamma}{2} \E[X_0^2]$ and thus $\mmse(\gamma) = \E[X_0^2]$. This is incompatible with $\mmse(\gamma) \xrightarrow[\gamma \to + \infty]{} 0$. This concludes the proof.
\end{proof}
\\

We will refine the result of Corollary~\ref{cor:limit_mmse} and show that the square of the overlap between two replicas (or equivalently the overlap between a replica and the planted configuration, because of Proposition~\ref{prop:nishimori}) concentrates around $q^*(\lambda)^2$.

\begin{theorem} \label{th:overlap_concentration_without_perturbation}
	For all $\lambda \in D$,
	$$
	\E \Big\langle \big((\bbf{x}^{(1)}.\bbf{x}^{(2)})^2 - q^*(\lambda)^2\big)^2 \Big\rangle \xrightarrow[n \to \infty]{} 0
	$$
\end{theorem}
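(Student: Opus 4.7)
\textit{Plan.} By the Nishimori identity (Proposition~\ref{prop:nishimori}), the joint law of $(\bx,\bX)$ under $\E\langle \cdot \rangle$ equals that of two independent replicas $(\bx^{(1)},\bx^{(2)})$, hence
\[
\E\big\langle ((\bx.\bX)^2-q^*(\lambda)^2)^2 \big\rangle = \E\big\langle (R_{1,2}^2-q^*(\lambda)^2)^2 \big\rangle,\qquad R_{1,2}:=\bx^{(1)}.\bx^{(2)}.
\]
Expanding the square and using the mean convergence $\E\langle R_{1,2}^2\rangle \to q^*(\lambda)^2$ for $\lambda \in D$ (already contained in the proof of Corollary~\ref{cor:limit_mmse}, through $F_n'(\lambda)\to\phi'(\lambda)=q^*(\lambda)^2/4$ from Lemma~\ref{lem:deriv_convex} together with Gaussian integration by parts and Nishimori), the problem reduces to showing that the total variance of $R_{1,2}^2$ under $\E\langle\cdot\rangle$ tends to zero.

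I would split this total variance into a quenched and a thermal part,
\[
\E\langle R_{1,2}^4\rangle - (\E\langle R_{1,2}^2\rangle)^2 = \underbrace{\E\langle R_{1,2}^2 R_{3,4}^2\rangle - (\E\langle R_{1,2}^2\rangle)^2}_{=\text{Var}(\langle R_{1,2}^2\rangle)} + \underbrace{\E\langle R_{1,2}^4\rangle - \E\langle R_{1,2}^2 R_{3,4}^2\rangle}_{=\E[\text{Var}_{\langle\cdot\rangle}(R_{1,2}^2)]},
\]
using that, given $\bY$, the replica pairs $(\bx^{(1)},\bx^{(2)})$ and $(\bx^{(3)},\bx^{(4)})$ are independent. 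For the quenched part I would combine concentration of the random free energy $\tilde F_n(\lambda):=\frac{1}{n}\log Z_n(\lambda)$ --- which, because $P_0$ has bounded support, follows from a Gaussian Poincar\'e inequality in $\bZ$ and a bounded-differences estimate in $\bX$, yielding $\text{Var}(\tilde F_n(\lambda))=O(1/n)$ --- with convexity of $F_n$ and differentiability of $\phi$ at $\lambda \in D$. A finite-difference convex-analysis argument then shows that the random difference quotients $(\tilde F_n(\lambda+h)-\tilde F_n(\lambda))/h$ concentrate around $\phi'(\lambda)=q^*(\lambda)^2/4$; after Gaussian integration by parts and Nishimori these are identified with $\langle R_{1,2}^2\rangle$ up to $o(1)$, which gives $\text{Var}(\langle R_{1,2}^2\rangle) \to 0$.

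For the thermal part I would follow Panchenko's strategy for the Ghirlanda--Guerra identities adapted to the Nishimori line: repeated Gaussian integration by parts on products of Gibbs averages yields identities between $\E\langle R_{1,2}^4\rangle$, $\E\langle R_{1,2}^2 R_{3,4}^2\rangle$ and $\E\langle R_{1,2}^2\rangle^2$, and the Nishimori symmetry $\bX \stackrel{d}{=} \bx$ collapses several a priori different multi-replica overlap moments. Together with the mean convergence and the free-energy concentration above, these identities pin $\E\langle R_{1,2}^4\rangle$ down to $q^*(\lambda)^4$, and so $\E[\text{Var}_{\langle\cdot\rangle}(R_{1,2}^2)]\to 0$, concluding the proof.

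The main difficulty I anticipate is carrying out this Ghirlanda--Guerra derivation \emph{without} adding the small Gaussian perturbation of the Hamiltonian used elsewhere in the paper (Proposition~\ref{prop:overlap}). In the classical SK proof, such a perturbation is precisely what a priori forces overlap concentration; here the only substitute is the Nishimori symmetry itself, with the planted signal $\bX$ playing the role of a built-in reference replica around which the $\bx^{(\ell)}$ cluster. Making this substitute work pointwise at every $\lambda \in D$, rather than merely almost everywhere (which a crude integration-in-$\lambda$ of the overlap variance against $F_n''$ would already give), is the technical heart of the argument.
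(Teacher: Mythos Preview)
Your proposal is correct in spirit and uses the same ingredients as the paper's proof in Section~\ref{sec:overlap_concentration_without_perturbation}, but the paper organizes the argument more efficiently. Rather than splitting the variance of $R_{1,2}^2$ into thermal and quenched parts, the paper works with the \emph{energy}
\[
L_n(\bx,\lambda)=2\sqrt{\lambda}\,\frac{\partial H_n}{\partial\lambda}(\bx,\lambda)
=\sum_{i<j}\frac{1}{\sqrt n}Z_{i,j}x_ix_j+\frac{2\sqrt\lambda}{n}x_ix_jX_iX_j-\frac{\sqrt\lambda}{n}x_i^2x_j^2
\]
and proves $\frac{1}{n}\E\langle|L_n-\E\langle L_n\rangle|\rangle\to 0$ for $\lambda\in D$. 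The disorder part uses exactly what you describe (free-energy concentration in $\bZ$ via Gaussian Lipschitz concentration and in $\bX$ via bounded differences, plus a convex finite-difference argument exploiting differentiability of $\phi$ at $\lambda$). The thermal part, however, is not obtained by ``repeated Gaussian integration by parts yielding identities'' as you suggest: it comes from the bound $G_n''(\lambda)\ge \frac{1}{n}\E\big[\langle L_n^2\rangle-\langle L_n\rangle^2\big]$ (where $G_n(\lambda)=\E\phi_n(\lambda)$ for a suitably shifted $\phi_n$), integrated over a small window in $\lambda$ and controlled via convexity and differentiability of the limit $\psi$ at $\sqrt\lambda$. This is the Panchenko mechanism you allude to, but the source of the thermal concentration is this second-derivative bound, not IBP.

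Once $L_n$ concentrates, a single test against $f=R_{1,2}^2$ finishes the proof: Gaussian integration by parts together with the Nishimori identity give $\frac{1}{n}\E\langle L_n\rangle=\frac{\sqrt\lambda}{2}\E\langle R_{1,2}^2\rangle+O(1/n)$ and $\frac{1}{n}\E\langle L_n(\bx^{(1)})R_{1,2}^2\rangle=\frac{\sqrt\lambda}{2}\E\langle R_{1,2}^4\rangle+O(1/n)$, so $\E\langle R_{1,2}^4\rangle-(\E\langle R_{1,2}^2\rangle)^2\to 0$ directly, without separating thermal and quenched contributions of $R_{1,2}^2$. Your two-step decomposition would also work but duplicates effort; the paper's route handles both at once.
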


The proof of this result is closely related to the proof of the Ghirlanda-Guerra identities in the SK model from~\cite{panchenko2013SK} and is done in Section~\ref{sec:overlap_concentration_without_perturbation}.

\section{Proof of the Replica-Symmetric formula (Theorem~\ref{th:rs_formula})} \label{sec:proof_rs_formula}

\subsection{The lower bound: Guerra's interpolation method}\label{sec:guerraton}

The following result comes from~\cite{krzakala2016mutual}. This is an application of Guerra's interpolation technique (see~\cite{guerra2003broken}). We reproduce the proof for the sake of completeness.

\begin{proposition} \label{prop:guerra_bound}
	\begin{equation} \label{eq:guerra_bound}
		\liminf_{n \to \infty} F_n \geq \sup_{q \geq 0} \mathcal{F}(\lambda,q)
	\end{equation}
\end{proposition}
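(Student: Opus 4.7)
The strategy is the standard Guerra interpolation adapted to the Nishimori setting. Fix $q \geq 0$ and introduce an auxiliary scalar side-channel: let $\tilde{Z}_1, \ldots, \tilde{Z}_n$ be i.i.d.\ $\mathcal{N}(0,1)$, independent of everything. For $t \in [0,1]$, define the interpolating Hamiltonian
\[
H_{n,t}(\bbf{x}) = \sum_{i<j}\Big(\sqrt{\tfrac{t\lambda}{n}}\, Z_{i,j} x_i x_j + \tfrac{t\lambda}{n} X_i X_j x_i x_j - \tfrac{t\lambda}{2n} x_i^2 x_j^2\Big) + \sum_{i=1}^n\Big(\sqrt{(1-t)\lambda q}\, \tilde{Z}_i x_i + (1-t)\lambda q\, x_i X_i - \tfrac{(1-t)\lambda q}{2} x_i^2\Big),
\]
and set $\varphi(t) = \frac{1}{n} \E \log \sum_{\bbf{x} \in S^n} P_0(\bbf{x}) e^{H_{n,t}(\bbf{x})}$, with $\langle\cdot\rangle_t$ the associated Gibbs average. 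Note that $\varphi(1) = F_n(\lambda)$, while at $t=0$ the Hamiltonian decouples over sites, yielding
\[
\varphi(0) \;=\; \E \log\Big(\sum_{x \in S} P_0(x) \exp\big(\sqrt{\lambda q}\,Zx + \lambda q\, x X - \tfrac{\lambda q}{2} x^2\big)\Big) \;=\; \mathcal{F}(\lambda, q) + \tfrac{\lambda q^2}{4}.
\]

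Next I would compute $\varphi'(t) = \frac{1}{n} \E\langle \partial_t H_{n,t}(\bbf{x})\rangle_t$. The $Z_{i,j}$ and $\tilde{Z}_i$ factors are handled by Gaussian integration by parts, producing terms of the form $\langle x_i^2 x_j^2\rangle_t - \langle x_i x_j\rangle_t^2$ and $\langle x_i^2\rangle_t - \langle x_i\rangle_t^2$. The key step is then to invoke the Nishimori identity (Proposition~\ref{prop:nishimori}) in the form $\E\langle x_i x_j\rangle_t^2 = \E\langle x_i x_j X_i X_j\rangle_t$ and $\E\langle x_i\rangle_t^2 = \E\langle x_i X_i\rangle_t$, which causes the $\langle x_i^2 x_j^2\rangle_t$ and $\langle x_i^2\rangle_t$ terms to cancel. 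After rewriting sums in terms of the overlap $\bbf{x}.\bbf{X} = \frac{1}{n}\sum_i x_i X_i$, the derivative collapses to the clean square-completion
\[
\varphi'(t) \;=\; \frac{\lambda}{4}\, \E\big\langle (\bbf{x}.\bbf{X} - q)^2 \big\rangle_t \;-\; \frac{\lambda q^2}{4} \;-\; R_n(t),
\]
where $R_n(t) = \frac{\lambda}{4n^2}\sum_i \E\langle x_i^2 X_i^2\rangle_t$ is a diagonal remainder bounded by $\lambda K_0^4/(4n)$ since $P_0$ is supported in $[-K_0, K_0]$.

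The crucial observation is that the overlap term is nonnegative, so $\varphi'(t) \geq -\lambda q^2/4 - \lambda K_0^4/(4n)$. Integrating over $[0,1]$ gives
\[
F_n(\lambda) \;=\; \varphi(0) + \int_0^1 \varphi'(t)\,dt \;\geq\; \mathcal{F}(\lambda,q) + \tfrac{\lambda q^2}{4} - \tfrac{\lambda q^2}{4} - \tfrac{\lambda K_0^4}{4n} \;=\; \mathcal{F}(\lambda, q) - O(1/n),
\]
and taking $\liminf_{n\to\infty}$ followed by the supremum over $q \geq 0$ yields the claim.

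The main technical subtlety is the bookkeeping in $\varphi'(t)$: one must verify that every term produced by Gaussian IBP either cancels via Nishimori, assembles into the nonnegative square $\E\langle(\bbf{x}.\bbf{X}-q)^2\rangle_t$, or contributes only the constant $-\lambda q^2/4$ and a vanishing diagonal remainder. Once this algebra is checked, the bound is immediate; the inequality is sharp in the $q$-direction precisely because we completed the square around $q$.
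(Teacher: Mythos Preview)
Your proposal is correct and follows essentially the same approach as the paper: the same interpolating Hamiltonian, the same endpoint identification $\varphi(0)=\mathcal{F}(\lambda,q)+\lambda q^2/4$, the same use of Gaussian integration by parts together with the Nishimori identity to reduce $\varphi'(t)$ to $\frac{\lambda}{4}\E\langle(\bbf{x}.\bbf{X}-q)^2\rangle_t-\frac{\lambda q^2}{4}$ plus an $O(1/n)$ diagonal remainder, and the same integration to conclude. The only cosmetic difference is that the paper writes the remainder as $o(1)$ while you track the explicit bound $\lambda K_0^4/(4n)$; one point worth making explicit (the paper does) is that $\langle\cdot\rangle_t$ is itself a posterior distribution for a bona fide two-channel inference problem, which is what licenses the Nishimori identity at every $t$.
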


\begin{proof}
	Let $q \geq 0$. For $t \in [0,1]$ we define
	$$
	H_n(\bbf{x},t) = \sum_{i<j} \sqrt{\frac{\lambda t}{n}} Z_{i,j} x_i x_j + \frac{\lambda t}{n} x_i x_j X_i X_j - \frac{\lambda t}{2n} x_i^2 x_j^2
	+ \sum_{i=1}^n \sqrt{(1-t)\lambda q} Z_i' x_i + (1-t)\lambda q x_i X_i - \frac{(1-t) \lambda q}{2} x_i^2
	$$
	$\langle \cdot \rangle_t$ will denote the Gibbs measure associated with the Hamiltonian $H_n(\bbf{x},t)$. Remark that $\langle \cdot \rangle_t$ correspond to the posterior distribution of $\bbf{X}$ conditionally to $\bbf{Y}$ and $\bbf{Y}'$ in the following inference channel:
	$$
	\begin{cases}
		Y_{i,j} = \sqrt{\frac{\lambda t}{n}} X_i X_j + Z_{i,j} & \ \text{for } 1 \leq i < j \leq n \\
		Y'_{i} = \sqrt{(1-t) \lambda q} X_i + Z'_{i} & \ \text{for } 1 \leq i \leq n
	\end{cases}
	$$
	where $X_i \overset{\text{\tiny i.i.d.}}{\sim}  P_0$ and $Z_{i,j},\ Z'_i \overset{\text{\tiny i.i.d.}}{\sim}  \mathcal{N}(0,1)$ are independent random variables. We will therefore be able to apply the Nishimori property (property~\ref{prop:nishimori}) with the Gibbs measure $\langle \cdot \rangle_t$. Let us define
	$$
	\psi: t \in [0,1] \mapsto \frac{1}{n} \E \log \sum_{\bbf{x} \in S^n} P_0(\bbf{x}) e^{H_n(\bbf{x},t)}
	\vspace{-0.3cm}
	$$
	We have $\psi(1)=F_n$ and 
	\vspace{-0.2cm}
	\begin{align*}
		\psi(0) 
		&= \frac{1}{n} \E \log \sum_{\bbf{x} \in S^n} P_0(\bbf{x}) 
		\exp\left( \sum_{i=1}^n \sqrt{\lambda q} Z_i' x_i + \lambda q x_i X_i - \frac{\lambda q}{2} x_i^2 \right) \\
		&= \frac{1}{n} \E \log \prod_{i=1}^n \left( \sum_{x_i \in S} P_0(x_i) 
	\exp\left( \sqrt{\lambda q} Z_i' x_i + \lambda q x_i X_i - \frac{\lambda q}{2} x_i^2\right) \right) \\
	&= \mathcal{F}(\lambda,q) + \frac{\lambda q^2}{4}
\end{align*}
$\psi$ is continuous, differentiable on $(0,1)$. For $0<t<1$,
\begin{align} \label{eq:deriv_psi}
	\psi'(t) = \frac{1}{n} \E \left\langle 
		\sum_{i<j} \frac{\sqrt{\lambda}}{2\sqrt{nt}} Z_{i,j} x_i x_j + \frac{\lambda}{n} x_i x_j X_i X_j - \frac{\lambda}{2n} x_i^2 x_j^2
		- \sum_{i=1}^n \frac{\sqrt{\lambda q}}{2 \sqrt{1-t}} Z_i' x_i + \lambda q x_i X_i - \frac{\lambda q}{2} x_i^2
	\right\rangle_t
\end{align}
For $1 \leq i < j \leq n$ we have, by Gaussian integration by parts and by the Nishimori property
\begin{align*}
	\E Z_{i,j} \Big\langle \frac{\sqrt{\lambda}}{2\sqrt{nt}} x_i x_j \Big\rangle_t
	&=
	\frac{\lambda}{2n} \Big( \E \langle x_i^2 x_j^2 \rangle_t - \E \langle x_i x_j \rangle_t^2 \Big)
	=
	\frac{\lambda}{2n} \Big( \E \langle x_i^2 x_j^2 \rangle_t - \E \langle x^{(1)}_i x^{(1)}_j x^{(2)}_i x^{(2)}_j \rangle_t \Big)
	\\
	&=
	\frac{\lambda}{2n} \Big( \E \langle x_i^2 x_j^2 \rangle_t - \E \langle x_i x_j X_i X_j \rangle_t \Big)
\end{align*}
Similarly, we have for $1 \leq i \leq n$
$$
\E \left\langle \frac{\sqrt{\lambda q}}{2 \sqrt{1-t}} Z_i' x_i \right\rangle_t = 
\frac{\lambda q}{2} \left( \E \langle x_i^2 \rangle_t - \E \langle x_i X_i \rangle_t \right)
$$
Therefore equation~\eqref{eq:deriv_psi} simplifies
\begin{align*}
	\psi'(t) &= \frac{1}{n} \E \Big\langle 
		\sum_{i<j} \frac{\lambda}{2n} x_i x_j X_i X_j 
		- \sum_{i=1}^n  \frac{\lambda q}{2} x_i X_i 
	\Big\rangle_t
	= \frac{\lambda}{4} \E \Big\langle (\bbf{x}.\bbf{X})^2 - 2 q \bbf{x}.\bbf{X} \Big\rangle_t + o(1)
	\\
	&=\frac{\lambda}{4} \E \Big\langle (\bbf{x}.\bbf{X} - q)^2 \Big\rangle_t - \frac{\lambda q^2}{4} + o(1) \geq -\frac{\lambda q^2}{4} + o(1)
\end{align*}
where $o(1)$ denotes a quantity that goes to $0$ uniformly in $t \in (0,1)$. Then
$$
F_n - \mathcal{F}(\lambda,q) -\frac{\lambda}{4} q^2 = \psi(1) - \psi(0)
= \int_0^1 \psi'(t) dt 
\geq - \frac{\lambda}{4} q^2 + o(1)
$$
Thus $\liminf\limits_{n \to \infty} F_n \geq \mathcal{F}(\lambda,q)$, for all $q \geq 0$.
\end{proof}

\subsection{Adding a small perturbation} \label{sec:small_perturbation}

It remains therefore to prove the converse bound of~\eqref{eq:guerra_bound}. 
As in the case of the SK model (see~\cite{panchenko2013SK} and~\cite{talagrand2010meanfield2}), it will be convenient to add a small perturbation to our Hamiltonian $H_n$. This is particularly useful to obtain identities involving the distribution of the overlaps under the Gibbs measure.
As we will see later in Section~\ref{sec:overlap_concentration}, this perturbation will force the overlaps to concentrate around their expectations. 
In our context of Bayesian estimation, adding additional observations will induce a perturbation in our Hamiltonian.

Let us fix $\epsilon\in [0,1]$, and suppose we have access to the additional information, for $1 \leq i \leq n$
$$
Y'_i =
\begin{cases}
	X_i &\text{if } L_i = 1 \\
	* &\text{if } L_i = 0
\end{cases}
$$
where $L_i \overset{\text{\tiny i.i.d.}}{\sim}  \Ber(\epsilon)$ and $*$ is a value that does not belong to $S$. 
The posterior distribution of $\bbf{X}$ is now
$$
\PP(\bbf{X}=\bbf{x}| \bbf{Y},\bbf{Y}') = \frac{1}{Z_{n,\epsilon}} \left(\prod_{i | Y'_i \neq *}1(x_i=Y'_i) \right) \left( \prod_{i | Y_i'=*} P_0(x_i) \right) e^{H_n(\bbf{x})}
$$
where $Z_{n,\epsilon}$ is the appropriate normalization constant.
For $\bbf{x} \in S^n$ we define the following (very convenient) notation
\begin{equation} \label{eq:bar}
	\bbf{\bar{x}} = (\bar{x}_1, \dots, \bar{x}_n) = (L_1 X_1 + (1-L_1) x_1, \dots, L_n X_n + (1-L_n)x_n)
\end{equation}
$\bbf{\bar{x}}$ is thus obtained by replacing the coordinates of $\bbf{x}$ that are revealed by $\bbf{Y}'$ by their revealed values. The notation $\bbf{\bar{x}}$ will allow us to obtain a very convenient expression for the free energy of the perturbed model which is defined as
$$
F_{n,\epsilon} = \frac{1}{n} \E \log Z_{n,\epsilon} = \frac{1}{n} \E \Big[ \log( \sum_{\bbf{x} \in S^n} P_0(\bbf{x}) \exp(H_{n}(\bbf{\bar{x}})))\Big] 
$$

\begin{proposition} \label{prop:approximation_f_n_epsilon}
	For all $n \geq 1$ and all $\epsilon,\epsilon' \in [0,1]$, we have
	$$
	 |F_{n,\epsilon} - F_{n,\epsilon'} | \leq \lambda K_0^4 |\epsilon - \epsilon'|.
	$$
\end{proposition}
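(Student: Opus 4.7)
The plan is to bound $|\partial_\epsilon F_{n,\epsilon}|$ uniformly on $[0,1]$ and integrate. Because the $L_i$'s are independent $\Ber(\epsilon)$, $F_{n,\epsilon} = \frac{1}{n}\E\log Z_{n,\epsilon}$ is a polynomial of degree at most $n$ in $\epsilon$, and a direct calculation (differentiating the Bernoulli weights one coordinate at a time) yields
\[
\partial_\epsilon F_{n,\epsilon} = \frac{1}{n}\sum_{i=1}^n \E\!\left[\log Z_{n,\epsilon}\big|_{L_i=1} - \log Z_{n,\epsilon}\big|_{L_i=0}\right],
\]
so it suffices to show that each summand is at most $\lambda K_0^4$ in absolute value.

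The key step is to recognize each summand as a single-coordinate mutual information. Fix $i$ and $\bbf{L}_{-i}$, and let $G(y) = \sum_{\bbf{x}_{-i}} P_0(\bbf{x}_{-i})\exp(H_n(\bbf{\bar x}))$ with the $i$-th slot of $\bbf{\bar x}$ set to $y$. From~\eqref{eq:bar}, $Z_{n,\epsilon}|_{L_i=0} = \sum_y P_0(y)G(y)$, while $Z_{n,\epsilon}|_{L_i=1} = G(X_i)$ since $\bar x_i=X_i$ makes the $x_i$-sum collapse to $1$. Therefore
\[
\frac{Z_{n,\epsilon}|_{L_i=1}}{Z_{n,\epsilon}|_{L_i=0}} = \frac{G(X_i)}{\sum_y P_0(y)G(y)} = \frac{\langle 1(x_i=X_i)\rangle}{P_0(X_i)},
\]
where $\langle\cdot\rangle$ is the Gibbs measure with $L_i=0$, i.e.\ the posterior of $\bbf{X}$ given $(\bbf{Y},\bbf{Y}'_{-i},\bbf{L}_{-i})$. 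Taking expectation and using $X_i \perp (\bbf{Y}'_{-i},\bbf{L}_{-i})$ identifies the two terms as entropies:
\[
\E\!\left[\log Z|_{L_i=1} - \log Z|_{L_i=0}\right] = H(X_i) - H(X_i \mid \bbf{Y},\bbf{Y}'_{-i},\bbf{L}_{-i}) = I(X_i;\bbf{Y} \mid \bbf{Y}'_{-i},\bbf{L}_{-i}).
\]

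It remains to bound this mutual information. Since $(\bbf{Y}'_{-i},\bbf{L}_{-i})$ is a function of $(\bbf{X}_{-i},\bbf{L}_{-i})$ and $\bbf{L}_{-i}$ is independent of everything else, data processing gives $I(X_i;\bbf{Y}\mid \bbf{Y}'_{-i},\bbf{L}_{-i}) \leq I(X_i;\bbf{Y}\mid \bbf{X}_{-i})$. Conditionally on $\bbf{X}_{-i}$, only the entries of $\bbf{Y}$ involving index $i$ carry information on $X_i$, and they reduce via a sufficient statistic to a scalar Gaussian channel of the form~\eqref{eq:scalar_channel} with random SNR $\gamma = \frac{\lambda}{n}\sum_{j\neq i}X_j^2$. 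The I-MMSE identity~\cite{guo2005mutual} together with $\mmse(\cdot)\leq \E[X^2]\leq K_0^2$ yields $\i(\gamma)\leq \gamma K_0^2/2$ pointwise, so averaging gives $I(X_i;\bbf{Y}\mid \bbf{X}_{-i}) \leq \lambda K_0^4/2$. Hence $|\partial_\epsilon F_{n,\epsilon}| \leq \lambda K_0^4/2 \leq \lambda K_0^4$, and integrating over $[\epsilon,\epsilon']$ concludes.

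The only nontrivial piece of bookkeeping is the factorization producing the clean ratio $\langle 1(x_i=X_i)\rangle/P_0(X_i)$ and its identification with $I(X_i;\bbf{Y}\mid \bbf{Y}'_{-i},\bbf{L}_{-i})$; once this is done, the data-processing step and the I-MMSE bound are entirely standard.
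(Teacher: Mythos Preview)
Your proof is correct and takes a genuinely different route from the paper. Both arguments start from the same decomposition
\[
\partial_\epsilon F_{n,\epsilon}=\frac{1}{n}\sum_{i=1}^n\E\big[\log Z_{n,\epsilon}\big|_{L_i=1}-\log Z_{n,\epsilon}\big|_{L_i=0}\big],
\]
but then diverge. The paper rewrites each summand using the Gibbs measure on the remaining $n-1$ coordinates and applies Jensen's inequality to push the expectation over $(Z_{1,j})_j$ inside the logarithm; the Gaussian moment generating function then gives the bound $\lambda K_0^4/n$ per coordinate directly. Your argument instead identifies each summand as the conditional mutual information $I(X_i;\bbf{Y}\mid\bbf{Y}'_{-i},\bbf{L}_{-i})$, upper-bounds it by $I(X_i;\bbf{Y}\mid\bbf{X}_{-i})$ via data processing, and then recognises the latter as a scalar Gaussian channel with random SNR, bounded through I--MMSE. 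Your approach is more conceptual: it explains \emph{why} the perturbation is small (revealing one coordinate adds at most one scalar channel's worth of information), makes the monotonicity $\partial_\epsilon F_{n,\epsilon}\ge 0$ transparent, and even yields the slightly better constant $\lambda K_0^4/2$. The paper's Jensen argument is more self-contained, needing only convexity and a Gaussian MGF rather than data processing and I--MMSE.
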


\begin{proof}
	We are going to bound the derivative of $f:\epsilon \mapsto F_{n,\epsilon}$.
	To do so, we are going to consider a slightly more general model where the probability of revealing $X_i$ depends on $i$: for $1 \leq i \leq n$,
	$L_i \sim \Ber(\epsilon_i)$. We will show that $|\frac{\partial F_{n,\epsilon}}{\partial \epsilon_i}| \leq \lambda K_0^4$ for $i \in \{1, \dots, n\}$. By symmetry between the variables, it suffices to control $\frac{\partial F_{n,\epsilon}}{\partial \epsilon_1}$. Notice that
	\begin{align*}
		F_{n,\epsilon}
	&= \epsilon_1 \E\left[ \frac{1}{n} \log( \sum_{\bbf{x} \in S^n} P_0(\bbf{x}) \exp(H_{n}(\bbf{\bar{x}}))) \Big| L_1 = 1 \right]
	+ (1- \epsilon_1) \E\left[ \frac{1}{n} \log( \sum_{\bbf{x} \in S^n} P_0(\bbf{x}) \exp(H_{n}(\bbf{\bar{x}}))) \Big| L_1 = 0 \right]
	\\
	&= \epsilon_1 \E\left[ \frac{1}{n} \log( \sum_{\bbf{x} \in S^n} P_0(\bbf{x}) \exp(H_{n}(X_1,\bar{x}_2, \dots, \bar{x}_n))) \right]
	+ (1- \epsilon_1) \E\left[ \frac{1}{n} \log( \sum_{\bbf{x} \in S^n} P_0(\bbf{x}) \exp(H_{n}(x_1,\bar{x}_2, \dots, \bar{x}_n))) \right]
	\end{align*}
	Consequently
	\begin{align*}
		\frac{\partial F_{n,\epsilon}}{\partial \epsilon_1}
	&= \E\left[ \frac{1}{n} \log( \sum_{\bbf{x} \in S^n} P_0(\bbf{x}) \exp(H_{n}(X_1,\bar{x}_2, \dots, \bar{x}_n))) \right]
	 - \E\left[ \frac{1}{n} \log( \sum_{\bbf{x} \in S^n} P_0(\bbf{x}) \exp(H_{n}(x_1,\bar{x}_2, \dots, \bar{x}_n))) \right]
	\end{align*}
	Define the Hamiltonian $\tilde{H}_n(\bx)= \sum_{2 \leq i<j} \sqrt{\frac{\lambda}{n}} Z_{i,j} \bar{x}_i \bar{x}_j + \frac{\lambda}{n} \bar{x}_i \bar{x}_j X_i X_j - \frac{\lambda}{2n} \bar{x}_i^2 \bar{x}_j^2$. Let $\langle \cdot \rangle$ denote the Gibbs measure (on $S^{n-1}$) corresponding to the Hamiltonian $\tilde{H}_n$. We can rewrite
	\begin{align*}
		\frac{\partial F_{n,\epsilon}}{\partial \epsilon_1}
	&= \frac{1}{n} \E \log \Big\langle \exp(\sum_{2 \leq j \leq n}\sqrt{\frac{\lambda}{n}} Z_{1,j} X_1 \bar{x}_j + \frac{\lambda}{n} X_1^2 \bar{x}_j X_j - \frac{\lambda}{2n} X_1^2 \bar{x}_j^2) \Big\rangle
		\\
		&\quad - \frac{1}{n} \E \log \Big\langle \sum_{x_1 \in S} P_0(x_1) \exp(\sum_{2 \leq j \leq n}\sqrt{\frac{\lambda}{n}} Z_{1,j} x_1 \bar{x}_j + \frac{\lambda}{n} x_1 X_1 \bar{x}_j X_j - \frac{\lambda}{2n} x_1^2 \bar{x}_j^2) \Big\rangle
	\end{align*}
	where $(x_i)_{2 \leq i \leq n}$ is sampled from $\langle \cdot \rangle$, independently of everything else.
Let $\E_1$ denote the expectation with respect to the variables $(Z_{1,j})_{2 \leq j \leq n}$ only.
By Jensen's inequality
\begin{align*}
		\frac{\partial F_{n,\epsilon}}{\partial \epsilon_1}
		& \leq
\frac{1}{n} \E \log \Big\langle \E_1 \exp(\sum_{2 \leq j \leq n}\sqrt{\frac{\lambda}{n}} Z_{1,j} X_1 \bar{x}_j + \frac{\lambda}{n} X_1^2 \bar{x}_j X_j - \frac{\lambda}{2n} X_1^2 \bar{x}_j^2) \Big\rangle
\\
& \leq
\frac{1}{n} \E \log \Big\langle \exp(\sum_{2 \leq j \leq n} \frac{\lambda}{n} X_1^2 \bar{x}_j X_j ) \Big\rangle
\leq \frac{\lambda K_0^4}{n}
\end{align*}
Analogously one have $\frac{\partial F_{n,\epsilon}}{\partial \epsilon_1} \geq - \frac{\lambda K_0^4}{n}$. Consequently, for all $i \in \{1, \dots,n\}$, $\big| \frac{\partial F_{n,\epsilon}}{\partial \epsilon_i} \big| \leq \frac{\lambda K_0^4}{n}$.
This implies that $|f'| \leq \lambda K_0^4$ and proves the lemma. 
\end{proof}
\\

We define now $\epsilon$ as a uniform random variable over $[0, 1]$, independently of every other random variable. We will note $\E_{\epsilon}$ the expectation with respect to $\epsilon$. For $n \geq 1$, we define also $\epsilon_n = n^{-1/2} \epsilon \sim \mathcal{U}[0, n^{-1/2}]$. Proposition~\ref{prop:approximation_f_n_epsilon} implies that
$$
\big| F_n - \E_{\epsilon} [F_{n, \epsilon_n}] \big| \xrightarrow[n \to \infty]{} 0.
$$
It remains therefore to compute the limit of the free energy averaged over small perturbations.

  \subsection{Aizenman-Sims-Starr scheme} \label{sec:aizenman}
  
The Aizenman-Sims-Starr scheme was introduced in~\cite{aizenman2003extended} in the context of the SK model.
This is what physicists call a ``cavity computation'': one compare the system with $n+1$ variables to the system with $n$ variables and see what happen to the $(n+1)^{\text{th}}$ variable we add.
\\

\noindent With the convention $F_{0,\epsilon_0} = 0$, we have
$F_{n,\epsilon_n} = \frac{1}{n} \sum\limits_{k=0}^{n-1} (k+1) F_{k+1, \epsilon_{k+1}} - k F_{k,\epsilon_k}$ which ensures that
\vspace{-0.2cm}
\begin{equation*} 
	\limsup_{n \to \infty} \E_{\epsilon} [F_{n,\epsilon_n}]
	\leq
	\limsup_{n \to \infty} \E_{\epsilon} \Big[ (n+1) F_{n+1, \epsilon_{n+1}} - n F_{n,\epsilon_n} \Big]
	= \limsup_{n \to \infty} \E_{\epsilon} \Big[ (n+1) F_{n+1, \epsilon_{n}} - n F_{n,\epsilon_n} \Big]
\end{equation*}
because, by Proposition~\ref{prop:approximation_f_n_epsilon}, $|F_{n+1,\epsilon_{n+1}} - F_{n+1,\epsilon_n}| \leq \lambda K_0^4 |\epsilon_{n+1} - \epsilon_n|= O(n^{-3/2})$. Define
$$
A^{(0)}_{n} = (n+1) F_{n+1, \epsilon_{n}} - n F_{n,\epsilon_n} = \E [ \log(Z_{n+1,\epsilon_n}) ] - \E [ \log(Z_{n,\epsilon_n}) ]
$$
where we recall that $Z_{n,\epsilon_n} = \sum_{\bbf{x} \in S^n} P_0(\bbf{x}) e^{H_{n}(\bbf{\bar{x}})}$ where the notation $\bbf{\bar{x}}$ is defined by equation~\eqref{eq:bar}. 
\begin{equation} \label{eq:limsup1}
	\limsup_{n \to \infty} F_n =
	\limsup_{n \to \infty} \E_{\epsilon} [F_{n,\epsilon_n}]
	\leq
	\limsup_{n \to \infty} \E_{\epsilon} [A_{n}^{(0)}]
\end{equation}
Now we are going to compare $H_{n+1}$ with $H_{n}$. Let $\bbf{x} \in S^n$ and $\sigma \in S$. $\sigma$ plays the role of the $(n+1)^{\text{th}}$ variable. 
We decompose $H_{n+1}(\bbf{x},\sigma) = H_{n}'(\bbf{x}) + \sigma z_0(\bbf{x}) + \sigma^2 s_0(\bbf{x})$, where
\vspace{-0.1cm}
\begin{align*}
	H_{n}'(\bbf{x}) &= \sum_{1 \leq i<j \leq n} \sqrt{\frac{\lambda}{n+1}} Z_{i,j} x_i x_j + \frac{\lambda}{n+1} X_i X_j x_i x_j  - \frac{\lambda}{2(n+1)} x_i^2 x_j^2 
	\\
	z_0(\bbf{x}) &= \sum_{i=1}^n \sqrt{\frac{\lambda}{n+1}} Z_{i,n+1} x_i  + \frac{\lambda}{n+1} X_i X_{n+1} x_i \\
	s_0(\bbf{x}) &= -\frac{\lambda}{2(n+1)} \sum_{i=1}^n x_i^2 
\end{align*}
Let $(\tilde{Z}_{i,j})_{1\leq i < j \leq n}$ be independent, standard Gaussian random variables, independent of all other random variables. We have then $H_{n}(\bbf{x}) = H_{n}'(\bbf{x}) + y_0(\bbf{x})$ in law, where
$$
y_0(\bbf{x}) = \sum_{1 \leq i<j \leq n} \frac{\sqrt{\lambda}}{\sqrt{n(n+1)}} \tilde{Z}_{i,j} x_i x_j + \frac{\lambda}{n(n+1)} X_i X_j x_i x_j - \frac{\lambda}{2(n+1)n} x_i^2 x_j^2
$$
We recall that the notation $\bbf{\bar{x}}$ is defined in equation~\eqref{eq:bar} and define analogously $\bar{\sigma} = (1-L_{n+1}) \sigma + L_{n+1} X_{n+1}$. We can thus rewrite
\begin{align*}
	\E [ \log(Z_{n+1,\epsilon_n}) ] &= \E \log \Big( \sum_{\bbf{x} \in S^n} P_0(\bbf{x}) e^{H_{n}'(\bbf{\bar{x}})} \Big[\sum_{\sigma \in S} P_0(\sigma) \exp(\bar{\sigma} z_0(\bbf{\bar{x}}) + \bar{\sigma}^2 s_0(\bbf{\bar{x}}) ) \Big]  \Big) 
\end{align*}
We now define the Gibbs measure $\langle \cdot \rangle$ by
\begin{equation} \label{eq:def_gibbs}
	\langle f(\bbf{x}) \rangle = \frac{1}{Z_{n,\epsilon_n}} \sum_{\bbf{x} \in S^n} P_0(\bbf{x}) f(\bbf{\bar{x}}) \exp(H_n'(\bbf{\bar{x}}))
\end{equation}
for any function $f$ on $S^n$. We have then
\begin{align*}
	A^{(0)}_{n} = \E \log \Big\langle  \sum_{\sigma \in S} P_0(\sigma) \exp\big(\bar{\sigma} z_0(\bbf{x}) + \bar{\sigma}^2 s_0(\bbf{x}) \big) \Big\rangle
	- \E  \log \Big\langle \exp(y_0(\bbf{x})) \Big\rangle 
\end{align*}
It will be more convenient to use ``simplified'' versions of $z_0, s_0$ and $y_0$. We define
\begin{align*}
	z(\bbf{x}) &= \sum_{i=1}^n \sqrt{\frac{\lambda}{n}} Z_{i,n+1} x_i  + \frac{\lambda}{n} X_i X_{n+1} x_i = \sqrt{\frac{\lambda}{n}} \sum_{i=1}^n x_i Z_{i,n+1} + \lambda (\bbf{x}.\bbf{X}) X_{n+1} \\
	s(\bbf{x}) &= -\frac{\lambda}{2n} \sum_{i=1}^n x_i^2 = -\frac{\lambda}{2} \|\bbf{x}\|^2 \\
	y(\bbf{x}) &= \frac{\sqrt{\lambda}}{\sqrt{2}n} \sum_{i=1}^n Z_i'' x_i^2 
	+\frac{\lambda}{2 n^2} \sum_{i=1}^n \left(x_i^2 X_i^2 - \frac{x_i^4}{2}\right)
	+ \frac{\sqrt{\lambda}}{n} \sum_{1 \leq i<j \leq n} x_i x_j \left(\tilde{Z}_{i,j} 
	+ \frac{\sqrt{\lambda}}{n} X_i X_j \right) - \frac{\lambda}{2n^2} x_i^2 x_j^2
	\\
	&= \frac{\sqrt{\lambda}}{\sqrt{2}n} \sum_{i=1}^n Z_i'' x_i^2 
	+ \frac{\sqrt{\lambda}}{n} \sum_{1 \leq i<j \leq n} x_i x_j \tilde{Z}_{i,j} 
	+ \frac{\lambda}{2} \left((\bbf{x}.\bbf{X})^2 - \frac{1}{2} (\bbf{x}.\bbf{x})^2\right)
\end{align*}
where $Z_i'' \overset{\text{\tiny i.i.d.}}{\sim}  \mathcal{N}(0,1)$ independently of any other random variables. Define now
\begin{align*}
	A_{n} = \E \log \left\langle  \sum_{\sigma \in S} P_0(\sigma)\exp(\bar{\sigma} z(\bbf{x}) + \bar{\sigma}^2 s(\bbf{x}) ) \right\rangle
	- \E  \log \left\langle \exp(y(\bbf{x})) \right\rangle 
\end{align*}
Using Gaussian interpolation techniques, it is easy to show that $\E_{\epsilon} |A_{n} - A^{(0)}_{n} | \xrightarrow[n \to \infty]{} 0$, which ensure (using equation~\eqref{eq:limsup1}) that
\begin{equation} \label{eq:limsup2}
	\limsup_{n \rightarrow \infty} F_n
	\leq
	\limsup_{n \rightarrow \infty} \E_{\epsilon} [A_{n}]
\end{equation}

\subsection{Overlap concentration} \label{sec:overlap_concentration}

We will see in this section that the small perturbation that we considered in Section~\ref{sec:small_perturbation} forces the overlaps to concentrate.
Recall that $\langle \cdot \rangle$ is the Gibbs measure defined in equation~\eqref{eq:def_gibbs}. $\langle \cdot \rangle$ correspond to the posterior distribution of $\bbf{X}$ given $\bbf{Y}$ and $\bbf{Y'}$ in the following observation channel
\begin{align*}
	Y_{i,j} &= \sqrt{\frac{\lambda}{n+1}} X_i X_j + Z_{i,j}, \ \ \text{for } \ 1 \leq i<j \leq n \\
	Y'_i &=
	\begin{cases}
		X_i &\text{if } L_i = 1 \\
		* &\text{if } L_i = 0
	\end{cases} \ \text{for} \ 1 \leq i \leq n
\end{align*} 
where $X_i \overset{\text{\tiny i.i.d.}}{\sim}  P_0$, $Z_{i,j} \overset{\text{\tiny i.i.d.}}{\sim}  \mathcal{N}(0,1)$ and $L_i \overset{\text{\tiny i.i.d.}}{\sim}  \Ber(\epsilon_n)$ are independent random variables. The Nishimori property (Proposition~\ref{prop:nishimori}) will thus be valid under $\langle \cdot \rangle$.

The following lemma comes from~\cite{andrea2008estimating} (lemma 3.1). It shows that the extra information $\bbf{Y}'$ forces the correlations to decay.

\begin{lemma}
	$$
	n^{-1/2} \E_{\epsilon}  \left[ \frac{1}{n^2} \sum_{1 \leq i,j \leq n} I(X_i;X_j | \bbf{Y},\bbf{Y'}) \right] \leq \frac{2 H(P_0)}{n} 
	$$
\end{lemma}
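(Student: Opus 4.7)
Following \cite{andrea2008estimating}, I would reduce the integrated sum of pairwise mutual informations to the total variation of the scalar entropy $\Psi(\epsilon) := H(X_1 | \bbf{Y}, \bbf{Y}'(\epsilon))$, whose decrease over $\epsilon \in [0,1]$ is at most $H(P_0)$. Exchangeability of the $X_i$'s will then produce the required factor-of-$n$ gain over the naive bound $I(X_i;X_j|\cdot) \le H(P_0)$.

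First, I would derive two elementary identities by conditioning on whether each coordinate is revealed (probability $\epsilon$) or hidden:
$$H(X_i | \bbf{Y},\bbf{Y}'(\epsilon)) = (1-\epsilon)\,H(X_i | \bbf{Y},\bbf{Y}'_{-i}(\epsilon)),$$
$$I(X_1;X_2 | \bbf{Y},\bbf{Y}'(\epsilon)) = (1-\epsilon)^2\,I(X_1;X_2 | \bbf{Y},\bbf{Y}'_{-\{1,2\}}(\epsilon)),$$
where $\bbf{Y}'_{-S}(\epsilon)$ denotes the partial revelation restricted to coordinates outside $S$. Next, I would differentiate $\Psi$ using the coupling $L_i(\epsilon)=\mathbf{1}(U_i \le \epsilon)$ for i.i.d.\ uniform $U_i$, so that each previously-hidden coordinate flips to revealed at rate $1/(1-\epsilon)$. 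Combining the resulting derivative formula with the two identities above and the exchangeability of the $X_i$'s will yield
$$-\Psi'(\epsilon) \;\ge\; (n-1)\, I(X_1;X_2 | \bbf{Y},\bbf{Y}'(\epsilon)).$$
Integrating over $\epsilon \in [0,1]$ and using $\Psi(0)-\Psi(1) \le H(X_1|\bbf{Y}) \le H(P_0)$ gives $\int_0^1 I(X_1;X_2 | \bbf{Y},\bbf{Y}'(\epsilon))\,d\epsilon \le H(P_0)/(n-1)$. Combined with the exchangeability identity $\sum_{i\ne j} I(X_i;X_j|\cdot) = n(n-1)\,I(X_1;X_2|\cdot)$ and the trivial diagonal bound $\sum_i H(X_i | \bbf{Y},\bbf{Y}'(\epsilon)) \le n(1-\epsilon)H(P_0)$, this produces
$$\int_0^1 \frac{1}{n^2}\sum_{i,j} I(X_i;X_j | \bbf{Y},\bbf{Y}'(\epsilon))\,d\epsilon \;\le\; \frac{3H(P_0)}{2n}.$$

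Finally, since $\epsilon_n = n^{-1/2}\epsilon$ with $\epsilon \sim \mathcal{U}[0,1]$, the change of variables turns $n^{-1/2}\E_\epsilon[\,\cdot\,]$ into $\int_0^{n^{-1/2}}[\,\cdot\,]\,d\epsilon_n$, which is bounded by the integral over all of $[0,1]$ and hence by $\frac{2H(P_0)}{n}$. The hardest part will be the factor-of-$n$ improvement in the second step: the naive bound $I(X_i;X_j|\cdot) \le H(P_0)$ would yield only a constant (not $1/n$) bound on the normalized sum. The sharp scaling requires recognizing that the fixed entropy budget $\Psi(0)-\Psi(1) \le H(P_0)$ must be \emph{shared} among the $n-1$ exchangeable partners of $X_1$, and the derivative identity for $\Psi$ is precisely the mechanism that distributes this budget evenly.
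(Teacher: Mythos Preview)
The paper does not actually prove this lemma; it simply cites Lemma~3.1 of \cite{andrea2008estimating}. Your reconstruction is precisely the argument from that reference: differentiate the single-coordinate conditional entropy $\Psi(\epsilon)=H(X_1\mid \bbf{Y},\bbf{Y}'(\epsilon))$ in the revelation probability, identify the derivative (via the two conditioning identities you wrote) as minus a sum of pairwise conditional mutual informations, use exchangeability to collapse the sum to $(n-1)\,I(X_1;X_2\mid\cdot)$, and integrate so that the fixed entropy budget $\Psi(0)-\Psi(1)\le H(P_0)$ is shared among the $n-1$ partners. The final change of variables from $\E_\epsilon$ to $\int_0^{n^{-1/2}}d\epsilon_n$ and the diagonal bound are handled correctly, and you in fact obtain the slightly sharper constant $3H(P_0)/(2n)$.
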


This implies that the overlap between two replicas, i.e.\ two independent samples $\bbf{x}^{(1)}$ and $\bbf{x}^{(2)}$ from the Gibbs distribution $\langle \cdot \rangle$, concentrates. Let us define
\begin{align*}
	Q &= \Big\langle \frac{1}{n} \sum_{i=1}^n x^{(1)}_i x^{(2)}_i \Big\rangle \\
	b_i &= \langle x_i \rangle
\end{align*}
$Q$ is a random variable depending only on $(Y_{i,j})_{i<j\leq n}$ and $(Y_i')_{i \leq n}$. Notice that $Q = \frac{1}{n} \sum_i b_i^2 \geq 0$. 

\begin{proposition}[Overlap concentration] \label{prop:overlap}
	\begin{align*}
		\E_{\epsilon} \E \Big\langle \big( \frac{1}{n} \sum_{i=1}^n x^{(1)}_i x^{(2)}_i - Q \big)^2 \Big\rangle \xrightarrow[n \to \infty]{} 0
	\end{align*}
\end{proposition}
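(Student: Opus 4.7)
\textbf{Proof plan for Proposition~\ref{prop:overlap}.}
The plan is to reduce the concentration statement to a bound on the average pairwise mutual information $I(X_i;X_j\mid \bY,\bY')$ and then invoke the decay lemma stated just before the proposition.

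First I would expand the square. Since $\bx^{(1)},\bx^{(2)}$ are independent given the observations under $\langle\cdot\rangle$, one has
\begin{align*}
\langle x_i^{(1)}x_i^{(2)}x_j^{(1)}x_j^{(2)}\rangle &= \langle x_ix_j\rangle^2,\qquad
Q = \frac{1}{n}\sum_{i}b_i^2,\\
\Big\langle\Big(\tfrac{1}{n}\sum_i x_i^{(1)}x_i^{(2)} - Q\Big)^2\Big\rangle
&= \frac{1}{n^2}\sum_{i,j}\bigl(\langle x_ix_j\rangle^2 - b_i^2 b_j^2\bigr).
\end{align*}
Factoring as $\langle x_ix_j\rangle^2 - b_i^2 b_j^2 = (\langle x_ix_j\rangle - b_ib_j)(\langle x_ix_j\rangle + b_ib_j)$ and using the boundedness $|x_i|\leq K_0$, I can dominate this by $2K_0^2 \cdot |\langle x_ix_j\rangle - b_ib_j|$, so that it suffices to prove
\[
\frac{1}{n^2}\sum_{i,j}\E_\epsilon\E\bigl|\langle x_ix_j\rangle - \langle x_i\rangle\langle x_j\rangle\bigr| \xrightarrow[n\to\infty]{} 0.
\]

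The key step is to link the Gibbs covariance $\langle x_ix_j\rangle - b_ib_j$ to the mutual information $I(X_i;X_j\mid\bY,\bY')$. Because $\langle\cdot\rangle$ is precisely the posterior distribution of $\bX$ given $(\bY,\bY')$, its two-dimensional marginal on coordinates $(i,j)$ is the joint law $P(X_i,X_j\mid\bY,\bY')$, and its one-dimensional marginals are $P(X_i\mid\bY,\bY'),\ P(X_j\mid\bY,\bY')$. Pinsker's inequality gives
\[
\bigl\|P(X_i,X_j\mid\bY,\bY') - P(X_i\mid\bY,\bY')\otimes P(X_j\mid\bY,\bY')\bigr\|_{TV}
\leq \sqrt{\tfrac{1}{2} I(X_i;X_j\mid\bY,\bY')},
\]
and since $(x_i,x_j)\mapsto x_ix_j$ is bounded by $K_0^2$, the difference of expectations under the joint and the product of marginals is controlled by $2K_0^2$ times this TV distance. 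Therefore
\[
|\langle x_ix_j\rangle - b_ib_j| \leq 2\sqrt{2}\, K_0^2 \sqrt{I(X_i;X_j\mid\bY,\bY')}.
\]

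Averaging over $i,j$, over $\epsilon$ and over $(\bX,\bZ)$, and applying Cauchy--Schwarz, I obtain
\[
\frac{1}{n^2}\sum_{i,j}\E_\epsilon\E|\langle x_ix_j\rangle - b_ib_j|
\leq 2\sqrt{2}\,K_0^2\sqrt{\E_\epsilon\frac{1}{n^2}\sum_{i,j}\E\,I(X_i;X_j\mid\bY,\bY')}.
\]
The lemma stated immediately before Proposition~\ref{prop:overlap} (from \cite{andrea2008estimating}, once $\epsilon$ is replaced by $\epsilon_n=n^{-1/2}\epsilon$ with $\epsilon\sim\mathcal{U}[0,1]$) shows that the quantity under the square root is $O(n^{-1/2})$, so the right-hand side tends to zero and the proposition follows.

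The only real subtlety in this plan is the choice of the perturbation strength: the lemma is applied after averaging over the uniform random $\epsilon_n\in[0,n^{-1/2}]$, which is exactly why Section~\ref{sec:small_perturbation} introduced this randomization; everything else is a routine combination of Pinsker, Cauchy--Schwarz, and the boundedness of the prior's support.
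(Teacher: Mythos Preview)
Your proposal is correct and follows essentially the same route as the paper: expand the square to reduce to $\frac{1}{n^2}\sum_{i,j}|\langle x_ix_j\rangle-\langle x_i\rangle\langle x_j\rangle|$, bound the covariance by the total variation between the joint and product posteriors, apply Pinsker to pass to the conditional mutual information, and conclude via Cauchy--Schwarz and the lemma from~\cite{andrea2008estimating}. The only cosmetic difference is that the paper writes out the intermediate total-variation step explicitly, whereas you go directly from the covariance bound to Pinsker.
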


\begin{proof}
	\begin{align*}
		\big\langle (\bbf{x}^{(1)}.\bbf{x}^{(2)} - Q)^2 \big\rangle &=
		\langle (\bbf{x}^{(1)}.\bbf{x}^{(2)})^2 \rangle - \langle \bbf{x}^{(1)}.\bbf{x}^{(2)} \rangle^2 
		= \frac{1}{n^2} \sum_{1 \leq i,j \leq n} \langle x_i^{(1)} x_i^{(2)} x_j^{(1)} x_j^{(2)} \rangle - \langle x_i^{(1)} x_i^{(2)} \rangle \langle x_j^{(1)} x_j^{(2)} \rangle \\
		&= \frac{1}{n^2} \sum_{1 \leq i,j \leq n} \langle x_i x_j\rangle^2 - \langle x_i \rangle^2 \langle x_j\rangle^2
		\leq \frac{C}{n^2} \sum_{1 \leq i,j \leq n} | \langle x_i x_j\rangle - \langle x_i \rangle \langle x_j\rangle | \\
		&\leq \frac{C}{n^2} \sum_{1 \leq i,j \leq n} \big| \sum_{x_i,x_j} x_i x_j \PP(X_i=x_i, X_j=x_j | \bbf{Y},\bbf{Y'})- x_i x_j \PP(X_i=x_i | \bbf{Y},\bbf{Y'}) \PP(X_j = x_j |\bbf{Y},\bbf{Y'}) \big| \\
		&\leq \frac{C'}{n^2} \sum_{1 \leq i,j \leq n} \Dtv \big(\PP(X_i=., X_j=. |\bbf{Y},\bbf{Y'}); \PP(X_i=. | \bbf{Y},\bbf{Y'})\otimes \PP(X_j =. |\bbf{Y},\bbf{Y'}) \big)\\
		&\leq \frac{C''}{n^2} \sum_{1 \leq i,j \leq n} \sqrt{\Dkl \big(\PP(X_i=., X_j=. |\bbf{Y},\bbf{Y'}); \PP(X_i=. | \bbf{Y},\bbf{Y'})\otimes \PP(X_j =. |\bbf{Y},\bbf{Y'}) \big)}\\
		&\leq C'' \sqrt{\frac{1}{n^2} \sum_{1 \leq i,j \leq n} \Dkl \big(\PP(X_i=., X_j=. |\bbf{Y},\bbf{Y'}); \PP(X_i=. | \bbf{Y},\bbf{Y'})\otimes \PP(X_j =. |\bbf{Y},\bbf{Y'}) \big)}
	\end{align*}
	for some constants $C,C',C'' >0$, where we used Pinsker's inequality to compare the total variation distance $\Dtv$ with the Kullback-Leibler divergence $\Dkl$. So that:
	\begin{align*}
		\E_{\epsilon} \E \Big\langle \big( \frac{1}{n} \sum_{i=1}^n x^{(1)}_i x^{(2)}_i - Q \big )^2 \Big\rangle \leq
		C'' \sqrt{\E_{\epsilon} \Big[ \frac{1}{n^2} \sum_{1 \leq i,j \leq n} I(X_i;X_j | \bbf{Y},\bbf{Y'}) \Big]}
		\ \xrightarrow[n \to \infty]{} 0
	\end{align*}
\end{proof}
\\

As a consequence of the Nishimori property, the overlap between one replica and the planted solution concentrates around the same value as the overlap between two independent replicas.
\begin{corollary} \label{cor:overlap}
	$$
	\E_{\epsilon} \E \Big\langle (\bbf{x}.\bbf{X}-Q)^2 \Big\rangle \xrightarrow[n \to \infty]{} 0
	\ \ \ \text{and } \ \ 
	\E_{\epsilon} \E \Big\langle (\bbf{x}.\bbf{b}-Q)^2 \Big\rangle \xrightarrow[n \to \infty]{} 0
	$$
\end{corollary}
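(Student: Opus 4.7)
The plan is to derive both convergences directly from Proposition~\ref{prop:overlap} using the Nishimori identity and an elementary conditional-variance argument under the Gibbs measure. Recall that $Q$ is measurable with respect to the observations $(\bbf{Y},\bbf{Y}')$, that $\langle\cdot\rangle$ is the posterior distribution of $\bbf{X}$ given $(\bbf{Y},\bbf{Y}')$, and that $\bbf{X}$ has bounded support, so all integrability requirements for Proposition~\ref{prop:nishimori} are met.

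For the first limit, I would apply the Nishimori identity (Proposition~\ref{prop:nishimori}) with $k=2$ to the bounded continuous function
$$f(\bbf{Y},\bbf{Y}',\bbf{x}^{(1)},\bbf{x}^{(2)}) = \bigl(\bbf{x}^{(1)}.\bbf{x}^{(2)} - Q(\bbf{Y},\bbf{Y}')\bigr)^2.$$
This immediately yields
$$\E\bigl\langle (\bbf{x}^{(1)}.\bbf{x}^{(2)} - Q)^2 \bigr\rangle = \E\bigl\langle (\bbf{x}^{(1)}.\bbf{X} - Q)^2 \bigr\rangle = \E\bigl\langle (\bbf{x}.\bbf{X} - Q)^2 \bigr\rangle,$$
and integrating over $\epsilon$ and invoking Proposition~\ref{prop:overlap} gives the first convergence.

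For the second limit, the key observation is that $\bbf{b}$ is deterministic given $(\bbf{Y},\bbf{Y}')$ and that, by definition of $b_i = \langle x_i\rangle$,
$$\bbf{x}^{(1)}.\bbf{b} = \frac{1}{n}\sum_{i=1}^n x_i^{(1)} \langle x_i\rangle = \bigl\langle \bbf{x}^{(1)}.\bbf{x}^{(2)} \,\bigm|\, \bbf{x}^{(1)} \bigr\rangle,$$
where the inner conditional average is with respect to $\bbf{x}^{(2)}$ alone, the two replicas being independent under $\langle\cdot\rangle$. Since $Q = \langle \bbf{x}^{(1)}.\bbf{x}^{(2)}\rangle$, the conditional-variance decomposition applied to $\bbf{x}^{(1)}.\bbf{x}^{(2)}$ given $\bbf{x}^{(1)}$ under $\langle\cdot\rangle$ yields the pointwise bound
$$\bigl\langle (\bbf{x}.\bbf{b} - Q)^2 \bigr\rangle \leq \bigl\langle (\bbf{x}^{(1)}.\bbf{x}^{(2)} - Q)^2 \bigr\rangle.$$
Taking $\E_\epsilon \E$ and applying Proposition~\ref{prop:overlap} concludes.

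Both steps are essentially immediate given the machinery already built in the excerpt; there is no genuine obstacle. The only minor subtlety to check is that $Q$ is a measurable function of $(\bbf{X},\bbf{Z},\bbf{L})$ so that Proposition~\ref{prop:nishimori} legitimately applies with $Q$ treated as part of the ``data variable'', and that the identification $\bbf{x}^{(1)}.\bbf{b} = \bigl\langle \bbf{x}^{(1)}.\bbf{x}^{(2)} \mid \bbf{x}^{(1)}\bigr\rangle$ really uses the product structure of the joint Gibbs measure of the two replicas.
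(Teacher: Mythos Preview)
Your proposal is correct and follows essentially the same approach as the paper: the first limit via the Nishimori identity applied to $(\bbf{x}^{(1)}.\bbf{x}^{(2)}-Q)^2$, and the second via the pointwise bound $\langle(\bbf{x}.\bbf{b}-Q)^2\rangle\le\langle(\bbf{x}^{(1)}.\bbf{x}^{(2)}-Q)^2\rangle$. The only cosmetic difference is that the paper phrases the second step as Jensen's inequality applied to $(\bbf{x}^{(1)}.\bbf{b}-Q)^2=\langle\bbf{x}^{(1)}.\bbf{x}^{(2)}-Q\rangle_{(2)}^2$ rather than as a conditional-variance decomposition, but these are the same argument.
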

\begin{proof}
	The first limit is an application of the Nishimori property~\ref{prop:nishimori} and Proposition~\ref{prop:overlap}. For the second one,
	$$
	(\bbf{x}^{(1)}.\bbf{b} - Q)^2 = \langle \bbf{x}^{(1)}.\bbf{x}^{(2)} - Q\rangle^2 \leq \big\langle (\bbf{x}^{(1)}.\bbf{x}^{(2)} - Q)^2  \big\rangle
	$$
	where the Gibbs measure $\langle \cdot \rangle$ is only with respect to the variable $\bbf{x}^{(2)}$. Proposition~\ref{prop:overlap} concludes the proof.

\end{proof}

\subsection{The main estimate}

Let us denote, for $\epsilon \in [0,1]$,
$$
\mathcal{F}_{\epsilon}: (\lambda,q) \mapsto -\frac{\lambda}{4}q^2 + \epsilon (\E_{P_0} X^2) \frac{\lambda q}{2} + (1-\epsilon)\E \log \Big[ \sum_{x \in S} P_0(x) \exp(\sqrt{\lambda q}Zx + \lambda q x X - \frac{\lambda}{2}q x^2)\Big]
$$
where the expectation $\E$ is taken with respect to the independent random variables $X \sim P_0$ and $Z \sim \mathcal{N}(0,1)$.
The following proposition is one of the key steps of the proof.
\begin{proposition} \label{prop:main_estimate}
	$$
	\lim_{n \to \infty} \E_{\epsilon} \Big| A_{n} - \E \mathcal{F}_{\epsilon_n}(\lambda,Q)\Big| = 0
	$$
\end{proposition}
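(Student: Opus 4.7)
The Aizenman-Sims-Starr quantity $A_n = B_n - C_n$ splits into a ``new spin'' term $B_n = \E\log\langle\sum_\sigma P_0(\sigma)\exp(\bar\sigma z(\bx) + \bar\sigma^2 s(\bx))\rangle$ and a ``Gaussian correction'' term $C_n = \E\log\langle\exp(y(\bx))\rangle$. Both $z(\bx)$ and $y(\bx)$ are Gaussian processes (plus deterministic parts) whose covariances over the freshly introduced noise $Z_{i,n+1},\tilde Z_{i,j},Z_i''$ are polynomials in the overlaps: with two replicas of $\bx$ one has $\mathrm{Cov}(z(\bx^{(1)}),z(\bx^{(2)})) = \lambda\,\bx^{(1)}\!\cdot\!\bx^{(2)}$ and $\mathrm{Cov}(y(\bx^{(1)}),y(\bx^{(2)})) = \frac{\lambda}{2}(\bx^{(1)}\!\cdot\!\bx^{(2)})^2$. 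The plan is to replace each process by a simpler one with the ``concentrated'' covariance, evaluate the result in closed form, and check that it matches $\E\mathcal F_{\epsilon_n}(\lambda,Q)$. The two ingredients that drive the replacements are Proposition~\ref{prop:overlap} and Corollary~\ref{cor:overlap} (giving $\bx^{(1)}\!\cdot\!\bx^{(2)}\to Q$ and $\bx\!\cdot\!\bX\to Q$ in $L^2$ under $\E_\epsilon\E\langle\cdot\rangle$), together with Nishimori applied to $\bx\!\cdot\!\bx$, which has the same law as $\bX\!\cdot\!\bX = \frac{1}{n}\sum X_i^2 \to \E_{P_0}[X^2]$ by the law of large numbers.

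For $C_n$ I would Gaussian-interpolate the centered Gaussian part of $y(\bx)$ toward the trivial process $y^*(\bx) = \sqrt{\lambda Q^2/2}\,Z_0$ of constant covariance $\lambda Q^2/2$. The usual Guerra identity
$$\frac{d}{dt}\E\log\langle e^{y_t(\bx)}\rangle_t \;=\; \tfrac{1}{2}\E\bigl\langle (C_y-C_{y^*})(\bx,\bx) \,-\, (C_y-C_{y^*})(\bx^{(1)},\bx^{(2)})\bigr\rangle_t$$
produces integrands of the form $\frac{\lambda}{2}((\bx\!\cdot\!\bx)^2-Q^2)$ and $\frac{\lambda}{2}((\bx^{(1)}\!\cdot\!\bx^{(2)})^2-Q^2)$, which by the two concentration inputs above converge respectively to $\frac{\lambda}{2}((\E X^2)^2-Q^2)$ and to $0$ (uniformly in $t$). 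Combined with the concentration of the deterministic part $m(\bx) = \frac{\lambda}{2}(\bx\!\cdot\!\bX)^2 - \frac{\lambda}{4}(\bx\!\cdot\!\bx)^2$ at $\frac{\lambda Q^2}{2} - \frac{\lambda(\E X^2)^2}{4}$, integration in $t$ yields $C_n \to \frac{\lambda}{4}\E[Q^2]$.

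For $B_n$ I would split according to $L_{n+1}\in\{0,1\}$: $B_n = \epsilon_n B_n^{(1)} + (1-\epsilon_n) B_n^{(0)}$, where $B_n^{(1)}$ (case $\bar\sigma=X_{n+1}$) collapses the sum over $\sigma$. In both pieces, I would decompose the Gaussian noise $z_g(\bx)=\sqrt{\lambda/n}\sum_i x_i Z_{i,n+1}$ (again by Gaussian interpolation whose error is driven by overlaps) as a shared Gaussian $\sqrt{\lambda Q}\,Z_0$ plus a $\bx$-independent piece $\sqrt{\lambda(\E X^2-Q)}\,W(\bx)$ with $W(\bx)$ i.i.d.\ across $\bx$, then replace $z_d(\bx)=\lambda(\bx\!\cdot\!\bX)X_{n+1}$ by $\lambda Q X_{n+1}$ using Corollary~\ref{cor:overlap} and $s(\bx)=-\frac{\lambda}{2}\|\bx\|^2$ by $-\frac{\lambda}{2}\E X^2$ via Nishimori. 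The crucial cancellation is that the i.i.d.\ noise $W(\bx)$ averages out under $\langle\cdot\rangle$, producing a Gaussian moment-generating factor $\exp(\bar\sigma^2\lambda(\E X^2-Q)/2)$ that exactly converts the $\bar\sigma^2$-coefficient $-\frac{\lambda\E X^2}{2}$ into $-\frac{\lambda Q}{2}$. This gives
$$B_n^{(0)} \;\to\; \E\log\sum_\sigma P_0(\sigma)\exp\!\Bigl(\sqrt{\lambda Q}\,\sigma Z + \lambda Q\,\sigma X - \tfrac{\lambda Q}{2}\sigma^2\Bigr), \qquad B_n^{(1)} \;\to\; \tfrac{\lambda}{2}\,\E_{P_0}[X^2]\,\E[Q],$$
and the three limits combine cleanly into $A_n \to \E\mathcal F_{\epsilon_n}(\lambda,Q)$.

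The main obstacle is making all these replacements rigorous in $L^1(\E_\epsilon\E)$: the Gaussian interpolations produce error terms of the form $\E\langle p(\bx^{(1)}\!\cdot\!\bx^{(2)},\bx\!\cdot\!\bx,\bx\!\cdot\!\bX) - p(Q,\E X^2,Q)\rangle_t$ for polynomials $p$, which must be controlled \emph{uniformly} in the interpolation parameter $t\in[0,1]$. This requires overlap concentration to be preserved along the interpolated Gibbs measures $\langle\cdot\rangle_t$, which follows from the fact that, since $S\subset[-K_0,K_0]$, each new ingredient introduced into the exponent is uniformly bounded; consequently $\langle\cdot\rangle_t$ is absolutely continuous with respect to the original $\langle\cdot\rangle$ with density bounded above and below uniformly in $n$ and $t$, so the conclusions of Proposition~\ref{prop:overlap} and of the Nishimori-based concentration of $\bx\!\cdot\!\bx$ transfer to $\langle\cdot\rangle_t$ with the same $L^2$ rates. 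The bookkeeping of how the factor $\E X^2-Q$ in the ``independent'' Gaussian piece cancels against the Nishimori value of $\|\bx\|^2$ is the substantive content of the proposition.
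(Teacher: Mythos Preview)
Your high-level strategy is right: the covariances of $z(\bbf{x})$ and $y(\bbf{x})$ in the fresh Gaussian noise are polynomials in the overlaps, so everything should reduce to the concentrated value $Q$. But the execution via Gaussian interpolation has a gap exactly where you flag it. The claim that $\langle\cdot\rangle_t$ has density bounded above and below with respect to $\langle\cdot\rangle$ uniformly in $n,t$ is false: the tilting factor $e^{y_t(\bbf{x})}$ (and likewise $e^{\bar\sigma z_t(\bbf{x})}$) contains Gaussian terms of $O(1)$ variance in the fresh noise $\tilde Z,Z'',Z'$, so $y_t(\bbf{x})-y_t(\bbf{x}')$ is a.s.\ unbounded and the Radon--Nikodym density is not uniformly controlled. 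Transferring Proposition~\ref{prop:overlap} from $\langle\cdot\rangle$ to $\langle\cdot\rangle_t$ would therefore require a genuine moment argument against that density, not the immediate transfer you claim. A related issue appears in your treatment of $B_n$: the assertion that the ``i.i.d.\ across $\bbf{x}$'' noise $W(\bbf{x})$ ``averages out under $\langle\cdot\rangle$'' to an MGF factor is only true after taking $\E_W$ \emph{inside} the log-average, which you cannot do; making $\E\log\langle e^{cW(\bbf{x})}\rangle$ close to $c^2/2$ again needs extra control of the spread of $\langle\cdot\rangle$ that you have not supplied.

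The paper takes a different, cleaner route that never tilts the Gibbs measure. It sets $U_1=\big\langle\sum_\sigma P_0(\sigma)e^{\bar\sigma z(\bbf{x})+\bar\sigma^2 s(\bbf{x})}\big\rangle$, $U_2=\langle e^{y(\bbf{x})}\rangle$, and builds explicit comparison variables $V_1,V_2$ (replacing $z(\bbf{x})$ by $\sqrt{\lambda/n}\sum_i b_i Z'_i + \lambda Q X_{n+1}$, etc.), then shows $\E_\epsilon\E(U_k-V_k)^2\to 0$. The point is that $\langle\cdot\rangle$ does not depend on the fresh noise, so one may integrate $Z',\tilde Z,Z''$ out \emph{first}; after doing so, each of $\E_{\bbf{Z}'}U_1^2$, $\E_{\bbf{Z}'}U_1V_1$, $\E_{\bbf{Z}'}V_1^2$ (and similarly for $U_2,V_2$) is a bounded Lipschitz function of the overlaps $\bbf{x}^{(1)}\!\cdot\!\bbf{x}^{(2)}$, $\bbf{x}\!\cdot\!\bbf{X}$, $\bbf{x}\!\cdot\!\bbf{b}$, averaged under the \emph{original} $\langle\cdot\rangle$. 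Overlap concentration (Proposition~\ref{prop:overlap} and Corollary~\ref{cor:overlap}) then applies directly, with no tilted measure in sight. From $U_k\to V_k$ in $L^2$ one passes to $\E|\log U_k-\log V_k|\to 0$ via $|\log u-\log v|\leq\max(u^{-1},v^{-1})|u-v|$, Cauchy--Schwarz, and a uniform bound on $\E U_k^{-2}+\E V_k^{-2}$; finally $\E\log V_1$ and $\E\log V_2=\frac{\lambda}{4}\E Q^2$ are computed in closed form using $(Q,\frac{1}{\sqrt n}\sum_i b_i Z_i')\stackrel{d}{=}(Q,\sqrt{Q}\,Z_0)$.
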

The proof of Proposition~\ref{prop:main_estimate} is reported to Section~\ref{sec:proof_main_estimate}. We deduce here Theorem~\ref{th:rs_formula} from Proposition~\ref{prop:main_estimate} and the results of the previous sections. Because of Proposition~\ref{prop:guerra_bound}, we only have to show that $\limsup\limits_{n \to \infty} F_n \leq \sup\limits_{q \geq 0} \mathcal{F}(\lambda,q)$.
\\

We have by equation~\eqref{eq:limsup2} and Proposition~\ref{prop:main_estimate}
$$
\limsup_{n \to \infty} F_n \leq \limsup_{n \to \infty} \E_{\epsilon} [A_{n}] = \limsup_{n \to \infty} \E_{\epsilon} \E \mathcal{F}_{\epsilon_n}(\lambda,Q).
$$
It remains therefore to show that $\limsup\limits_{n \to \infty} \E_{\epsilon} \E \mathcal{F}_{\epsilon_n}(\lambda,Q) \leq \sup\limits_{q \geq 0} \mathcal{F}(\lambda,q)$.
We have for $\epsilon \in [0,1]$,
\vspace{-0.2cm}
\begin{align*}
	\sup_{q \in [0,K_0^2]} \left| \mathcal{F}_{\epsilon}(\lambda,q) - \mathcal{F}(\lambda,q) \right| 
	&\leq \epsilon \sup_{q \in [0,K_0^2]} \Big(
\frac{\lambda q}{2}(\E_{P_0} X^2)  + \Big| \E \log \Big[ \sum_{x \in S} P_0(x) \exp(\sqrt{\lambda q}Zx + \lambda q x X - \frac{\lambda}{2}q x^2)\Big] \Big| \Big)
\\
&\leq C \epsilon
\end{align*}
where $C$ is a constant independent of $\epsilon$. Noticing that $Q \in [0, K_0^2]$ a.s., we have then $| \E \mathcal{F}_{\epsilon}(\lambda,Q) - \E \mathcal{F}(\lambda,Q) | \leq C \epsilon_0$, for all $\epsilon \in [0,\epsilon_0]$ and therefore
$$
\E_{\epsilon} \E \mathcal{F}_{\epsilon_n}(\lambda,Q) \leq \E_{\epsilon} \E \mathcal{F}(\lambda,Q) + C n^{-1/2}
\leq \sup_{q \geq 0}\mathcal{F}(\lambda,q) + C n^{-1/2}
$$
which implies $\limsup\limits_{n \to \infty} \E_{\epsilon} \E \mathcal{F}_{\epsilon_n}(\lambda,Q) \leq \sup\limits_{q \geq 0} \mathcal{F}(\lambda,q)$. Theorem~\ref{th:rs_formula} is proved.

\subsection{Proof of Proposition~\ref{prop:main_estimate}} \label{sec:proof_main_estimate}

In this section, we prove Proposition~\ref{prop:main_estimate}. This will be a consequence of the following Lemmas~\ref{lem:part1} and~\ref{lem:part2}.
In order to lighten the formulas, we will use the following notations
\begin{align*}
	X' = X_{n+1} \ \ \ \text{and } \ \ \
	Z'_i = Z_{i,n+1}
\end{align*}
Recall
\begin{equation} \label{eq:a_n_proof}
	A_{n} = \E \log \Big\langle  \sum_{\sigma \in S} P_0(\sigma)\exp(\bar{\sigma} z(\bbf{x}) + \bar{\sigma}^2 s(\bbf{x})) \Big\rangle
	- \E  \log \big\langle \exp(y(\bbf{x})) \big\rangle 
\end{equation}
Where we recall that for $\sigma \in S$, $\bar{\sigma}= (1-L_{n+1}) \sigma + L_{n+1} X'$. We are going to compute the asymptotic of $A_{n}$. The computations here are closely related to the cavity computations in the SK model, see for instance~\cite{talagrand2010meanfield1}.

\subsubsection{First part}
In this section, we deal with the first term in equation~\eqref{eq:a_n_proof}. Indeed, we prove the following lemma.

\begin{lemma} \label{lem:part1}
	\begin{align*}
		&\E_{\epsilon} \Big| \E \log \Big\langle  \sum_{\sigma \in S} P_0(\sigma)\exp(\bar{\sigma} z(\bbf{x}) + \bar{\sigma}^2 s(\bbf{x}) ) \Big\rangle
		\\
		&- \Big( \epsilon_n (\E_{P_0} X^2) \E \frac{\lambda Q}{2} + (1-\epsilon_n) \E \log \sum_{\sigma \in S} P_0(\sigma) \exp \big( \sqrt{\lambda Q} \sigma Z_0 + \lambda Q \sigma X' - \frac{\lambda \sigma^2}{2} Q \big) \Big) \Big| \xrightarrow[n \to \infty]{} 0
	\end{align*}
	where $Z_0 \sim \mathcal{N}(0,1)$ is independent of all other random variables.
\end{lemma}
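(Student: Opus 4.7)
My plan is to prove Lemma~\ref{lem:part1} via a Gaussian interpolation that links the source expression to the target expression. For $t \in [0,1]$ I will set
\begin{align*}
z_t(\bx) &= \sqrt{t}\,\sqrt{\lambda/n}\,\sum_{i=1}^n x_i Z_{i,n+1} + t\lambda(\bx.\bX)X' + \sqrt{1-t}\,\sqrt{\lambda Q}\,Z_0 + (1-t)\lambda Q X',\\
s_t(\bx) &= -\tfrac{\lambda}{2}\bigl(t\|\bx\|^2 + (1-t)Q\bigr),
\end{align*}
and define $\Psi(t) := \E\log\bigl\langle \sum_{\sigma \in S} P_0(\sigma)\exp(\bar\sigma z_t(\bx) + \bar\sigma^2 s_t(\bx))\bigr\rangle$. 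First I would check the boundary values: $\Psi(1)$ is the source, and at $t=0$ the argument no longer depends on $\bx$, so the Gibbs average is trivial and splitting according to $L_{n+1}\in\{0,1\}$ recovers the two pieces of the target. In particular, for $L_{n+1}=1$ one has $\bar\sigma=X'$, the $Z_0$-term has vanishing expectation, and the remainder $\frac{\lambda}{2}\E[QX'^2]$ factors as $\frac{\lambda\E_{P_0}[X^2]}{2}\E Q$ because $X_{n+1}$ is independent of $(\bY, \bY', Z_0, Q)$. The essential feature of this interpolation is that, for every $t$, the exponent $\bar\sigma z_t + \bar\sigma^2 s_t$ is the log-likelihood of two additional Gaussian observations of $X_{n+1}$---the partial cavity row with per-coordinate SNR $t\lambda/n$ and a scalar channel with SNR $(1-t)\lambda Q$---so the corresponding Gibbs measure $\langle\cdot\rangle^{(n+1)}_t$ is a bona fide Bayesian posterior and the Nishimori identity is valid at every $t$.

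Next I would compute $\Psi'(t) = \E\langle\bar\sigma\,\partial_t z_t + \bar\sigma^2\,\partial_t s_t\rangle^{(n+1)}_t$ by Gaussian integration by parts on the fresh noise $(Z_{i,n+1})_i$ and $Z_0$; these are independent of the $n$-site Gibbs $\langle\cdot\rangle$ and enter only through $e^{\omega_t}$. The single-replica ``self-variance'' pieces generated by the IBP will cancel exactly with the matching portions of $\partial_t s_t$ (the standard replica-symmetric cancellation), and the remaining mean and cross-replica pieces simplify via Nishimori: swapping the planted $(\bX,X')$ with a second replica $(\bx^{(2)},\bar\sigma^{(2)})$---noting that the correct Bayesian ``effective'' spin at site $n+1$ is $\bar\sigma^{(0)}=X'$, so that in the $L_{n+1}=1$ slice the swap trivially preserves $X'$ while relabelling $\bX\to\bx^{(2)}$, and in the $L_{n+1}=0$ slice it also exchanges $X'$ with $\sigma^{(2)}$---turns $\lambda\,\E[X'\langle\bar\sigma^{(1)}(\bx^{(1)}.\bX)\rangle^{(n+1)}_t]$ into $\lambda\,\E\langle\bar\sigma^{(1)}\bar\sigma^{(2)}(\bx^{(1)}.\bx^{(2)})\rangle^{(n+1)}_t$, with an analogous identity for the $\bx$-free term. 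After all cancellations I expect the compact formula
\[
\Psi'(t) = \frac{\lambda}{2}\,\E\bigl\langle \bar\sigma^{(1)}\bar\sigma^{(2)}\bigl[\bx^{(1)}.\bx^{(2)} - Q\bigr]\bigr\rangle^{(n+1)}_t.
\]

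Finally, using $|\bar\sigma|\le K_0$ and Cauchy--Schwarz,
\[
|\Psi(1)-\Psi(0)| \le \int_0^1|\Psi'(t)|\,dt \le \frac{\lambda K_0^2}{2}\sup_{t\in[0,1]}\Bigl(\E\bigl\langle(\bx^{(1)}.\bx^{(2)} - Q)^2\bigr\rangle^{(n+1)}_t\Bigr)^{1/2},
\]
so $\E_\epsilon|\Psi(1)-\Psi(0)|\to 0$ will follow from the overlap concentration of Corollary~\ref{cor:overlap}. The hard part will be transferring this concentration from the unperturbed $n$-site Gibbs $\langle\cdot\rangle$ (where Corollary~\ref{cor:overlap} is stated) to the cavity-reweighted Gibbs $\langle\cdot\rangle^{(n+1)}_t$. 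I would handle this by noting that $\omega_t(\bx,\sigma)$ is $O(1)$ with high probability: the only unbounded term is the single centered Gaussian $\sqrt{t\lambda/n}\sum x_i Z_{i,n+1}$, which has variance at most $\lambda K_0^2$, while $(\bx.\bX)X'$, $\|\bx\|^2$, $QX'$ and $Z_0$ are either uniformly bounded or sub-Gaussian. On the resulting high-probability event the Radon--Nikodym derivative of $\langle\cdot\rangle^{(n+1)}_t$ with respect to $\langle\cdot\rangle$ is bounded by a constant, so the $L^2$ overlap concentration transfers with only a constant-factor loss; the complement event contributes a vanishing error by standard Gaussian tail estimates. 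Combined with Corollary~\ref{cor:overlap}, this yields $\E_\epsilon|\Psi(1)-\Psi(0)|\to 0$, which is Lemma~\ref{lem:part1}.
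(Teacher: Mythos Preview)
Your approach is correct but takes a genuinely different route from the paper.

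\textbf{What the paper does.} The paper never interpolates. It defines
\[
U_1=\Big\langle \sum_{\sigma}P_0(\sigma)e^{\bar\sigma z(\bx)+\bar\sigma^2 s(\bx)}\Big\rangle,
\qquad
V_1=\sum_\sigma P_0(\sigma)\exp\!\Big(\bar\sigma\sqrt{\tfrac{\lambda}{n}}\sum_i b_iZ_i'+\lambda Q X'\bar\sigma-\tfrac{\lambda Q}{2}\bar\sigma^2\Big),
\]
with $b_i=\langle x_i\rangle$, and shows $\E_\epsilon\E(U_1-V_1)^2\to 0$ by integrating out $\bbf Z'$ in $\E U_1^2$, $\E V_1^2$, $\E U_1V_1$: each becomes a fixed Lipschitz function $F_1$ evaluated at the overlap triple $(\bx^{(1)}.\bx^{(2)},\bx^{(1)}.\bX,\bx^{(2)}.\bX)$ (or a variant) versus $(Q,Q,Q)$, so Proposition~\ref{prop:overlap} and Corollary~\ref{cor:overlap} apply \emph{directly under the unperturbed $n$-site Gibbs} $\langle\cdot\rangle$. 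Then $|\log U_1-\log V_1|\le\max(U_1^{-1},V_1^{-1})|U_1-V_1|$ together with uniform negative-moment bounds and the distributional identity $(Q,\frac{1}{\sqrt n}\sum b_iZ_i')\stackrel{d}{=}(Q,\sqrt{Q}Z_0)$ finishes the lemma.

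\textbf{What you do differently, and what each buys.} Your interpolation yields the clean derivative $\Psi'(t)=\frac{\lambda}{2}\E\langle\bar\sigma^{(1)}\bar\sigma^{(2)}(\bx^{(1)}.\bx^{(2)}-Q)\rangle^{(n+1)}_t$ (I checked the IBP and Nishimori cancellations and they are correct; the scalar-channel SNR $(1-t)\lambda Q$ is $\sigma(\bY,\bY')$-measurable so the posterior is well-defined and Nishimori applies). The price is that overlap concentration is needed under the cavity-tilted measure $\langle\cdot\rangle^{(n+1)}_t$, not under $\langle\cdot\rangle$. Your change-of-measure plan is sound, but the phrasing ``variance at most $\lambda K_0^2$'' refers to a \emph{fixed} $\bx$, which is not enough by itself; what makes it work is the \emph{uniform} bound $\big|\sqrt{t\lambda/n}\sum_i x_iZ_{i,n+1}\big|\le \sqrt{\lambda}K_0\,\|Z'\|_{\ell^2}/\sqrt n$, so that $e^{\pm\omega_t(\bx,\sigma)}$ is controlled by an $\bx$-free random variable with all exponential moments bounded. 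With this observation, Cauchy--Schwarz transfers $\E_\epsilon\E\langle(\bx^{(1)}.\bx^{(2)}-Q)^2\rangle$ to the tilted measure and your argument closes. In short: the paper's moment-matching avoids any change of measure at the cost of explicit second-moment computations; your interpolation is structurally cleaner but requires a (routine) transfer argument you should make precise via the uniform bound above.
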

We write also $f(z,s) = \sum\limits_{\sigma \in S} P_0(\sigma) e^{\bar{\sigma} z + \bar{\sigma}^2 s}$ and we define:
\begin{align*}
	U_1 &= \big\langle f(z(\bbf{x}),s(\bbf{x})) \big\rangle \\
	V_1 
	&= \sum_{\sigma \in S}P_0(\sigma) \exp\left(\bar{\sigma} \sqrt{\frac{\lambda}{n}} \sum_{i=1}^n b_i Z_i' + \lambda Q X' \bar{\sigma} - \frac{\lambda Q}{2} \bar{\sigma}^2 \right)
\end{align*}

\begin{lemma}
	$$
	\E_{\epsilon} \E \Big[ (U_1-V_1)^2 \Big] \xrightarrow[n \to \infty]{} 0
	$$
\end{lemma}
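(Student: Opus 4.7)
The plan is to decompose $\E_\epsilon\E[(U_1-V_1)^2] = \E_\epsilon\E[U_1^2] - 2\E_\epsilon\E[U_1V_1] + \E_\epsilon\E[V_1^2]$ and show that the three terms share a common limit. The argument proceeds in two stages: first integrate out the independent Gaussian noise $(Z'_i)_{1\le i\le n}$ (which enters only linearly, in $z(\bbf{x})$ and in $\sqrt{\lambda/n}\sum_i b_i Z'_i$), then apply Corollary~\ref{cor:overlap} to collapse every remaining overlap to $Q$.

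For the first stage, fix everything except $(Z'_i)_i$ and let $\E_{Z'}$ denote the conditional expectation. Expanding $U_1^2$ as a double sum over $\sigma,\tau\in S$ and introducing two independent Gibbs replicas $\bbf{x}^{(1)},\bbf{x}^{(2)}$, the $Z'$-dependent exponent is linear with coefficient $\sqrt{\lambda/n}(\bar\sigma x_i^{(1)}+\bar\tau x_i^{(2)})$, so a Gaussian integration produces a quadratic correction $\exp\!\bigl(\tfrac{\lambda}{2}[\bar\sigma^2\|\bbf{x}^{(1)}\|^2 + 2\bar\sigma\bar\tau\,\bbf{x}^{(1)}.\bbf{x}^{(2)} + \bar\tau^2\|\bbf{x}^{(2)}\|^2]\bigr)$. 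The two diagonal pieces are cancelled exactly by the $s(\bbf{x}^{(i)})=-\lambda\|\bbf{x}^{(i)}\|^2/2$ factors built into $f(z,s)$, leaving
\[
\E_{Z'}[U_1^2] = \sum_{\sigma,\tau} P_0(\sigma)P_0(\tau)\,\bigl\langle e^{\lambda\bar\sigma(\bbf{x}^{(1)}.\bbf{X})X' + \lambda\bar\tau(\bbf{x}^{(2)}.\bbf{X})X' + \lambda\bar\sigma\bar\tau\,\bbf{x}^{(1)}.\bbf{x}^{(2)}}\bigr\rangle.
\]
Running the same computation for $U_1V_1$ and $V_1^2$, and using the identity $\|\bbf{b}\|^2=Q$, yields
\[
\E_{Z'}[U_1V_1] = \sum_{\sigma,\tau} P_0(\sigma)P_0(\tau)\,\bigl\langle e^{\lambda\bar\sigma(\bbf{x}.\bbf{X})X' + \lambda\bar\sigma\bar\tau\,\bbf{x}.\bbf{b} + \lambda\bar\tau Q X'}\bigr\rangle,
\]
\[
\E_{Z'}[V_1^2] = \sum_{\sigma,\tau} P_0(\sigma)P_0(\tau)\, e^{\lambda Q X'(\bar\sigma+\bar\tau) + \lambda\bar\sigma\bar\tau Q}.
\]

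For the second stage, because $P_0$ is supported in $[-K_0,K_0]$, each of the overlaps $\bbf{x}^{(1)}.\bbf{x}^{(2)},\bbf{x}^{(i)}.\bbf{X},\bbf{x}.\bbf{b}$ and the quantities $Q,\bar\sigma,\bar\tau,X'$ lies in a compact range independent of $n$, so the exponentials above are Lipschitz functions of their overlap arguments with a uniform constant. Corollary~\ref{cor:overlap} gives $\E_\epsilon\E\langle(\bbf{x}.\bbf{X}-Q)^2\rangle\to 0$ and $\E_\epsilon\E\langle(\bbf{x}.\bbf{b}-Q)^2\rangle\to 0$, and the Nishimori identity (Proposition~\ref{prop:nishimori}) transfers the first statement to each of $\bbf{x}^{(1)}.\bbf{x}^{(2)},\bbf{x}^{(1)}.\bbf{X},\bbf{x}^{(2)}.\bbf{X}$. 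Replacing every overlap by $Q$ therefore introduces an error that vanishes in $L^1(\E_\epsilon\E)$, so all three terms $\E_\epsilon\E[U_1^2],\,\E_\epsilon\E[U_1V_1],\,\E_\epsilon\E[V_1^2]$ converge to the common limit $\E_\epsilon\E\sum_{\sigma,\tau}P_0(\sigma)P_0(\tau)e^{\lambda Q X'(\bar\sigma+\bar\tau)+\lambda\bar\sigma\bar\tau Q}$, and $\E_\epsilon\E[(U_1-V_1)^2]\to 0$ follows.

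The mechanism that makes $V_1$ the right surrogate for $U_1$, and the main point to verify, is the cancellation in stage one between the self-energy $s(\bbf{x})$ and the diagonal Gaussian covariance produced by $G(\bbf{x})=\sqrt{\lambda/n}\sum_i x_i Z'_i$: without it, the factor $\bar\sigma^2\|\bbf{x}\|^2$ would survive in $\E_{Z'}[U_1^2]$, and since $\|\bbf{x}\|^2$ concentrates at $\E_{P_0}[X^2]$ rather than at $Q$, the limits of $\E[U_1^2]$ and $\E[V_1^2]$ would differ by a non-vanishing amount. Once this cancellation is isolated, the rest is a routine combination of Gaussian integration, the uniform Lipschitz bound coming from boundedness of the prior, and overlap concentration.
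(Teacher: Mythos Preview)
Your proof is correct and follows essentially the same route as the paper: integrate out the Gaussians $(Z'_i)$ to reduce $\E_{Z'}[U_1^2],\ \E_{Z'}[U_1V_1],\ \E_{Z'}[V_1^2]$ to bounded Lipschitz functions of the overlaps $\bbf{x}^{(1)}.\bbf{x}^{(2)},\ \bbf{x}^{(i)}.\bbf{X},\ \bbf{x}.\bbf{b}$, then invoke the overlap concentration (Proposition~\ref{prop:overlap} and Corollary~\ref{cor:overlap}) to replace each overlap by $Q$, at which point all three expressions coincide with $\E_{Z'}[V_1^2]$. The only cosmetic difference is that the paper packages the argument via a single function $F_1(s,r_1,r_2)$ and shows $\E_\epsilon|\E U_1^2-\E V_1^2|\to 0$ and $\E_\epsilon|\E U_1V_1-\E V_1^2|\to 0$ separately, whereas you treat the three terms in parallel; note also that your phrase ``converge to the common limit'' should be read as ``differ from $\E_\epsilon\E[V_1^2]$ by $o(1)$'', since the target expression still depends on $n$ through $Q$.
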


\begin{proof}
	We are going to show successively that $\E_{\epsilon} |\E U_1^2 -\E V_1^2| \xrightarrow[n \to \infty]{} 0$ and $\E_{\epsilon} |\E U_1 V_1 -\E V_1^2| \xrightarrow[n \to \infty]{} 0$. 
	\\
	We will write $\E_{\bbf{Z}'}$ to denote the expectation with respect to $\bbf{Z}'$ only.
	Let us compute
	\begin{align}
		\E_{\bbf{Z'}} V_1^2 &= \E_{\bbf{Z'}} \sum_{\sigma_1,\sigma_2 \in S} P_0(\sigma_1,\sigma_2) \exp \big(
			(\bar{\sigma}_1 + \bar{\sigma}_2) \sqrt{\frac{\lambda}{n}} \sum_{i=1}^n b_i Z_i' + \lambda Q X' (\bar{\sigma}_1 + \bar{\sigma}_2) - \frac{\lambda Q}{2} (\bar{\sigma}_1^2 + \bar{\sigma}_2^2) 
		\big) \nonumber \\
		&= \sum_{\sigma_1,\sigma_2 \in S} P_0(\sigma_1,\sigma_2) \exp \big(
		(\bar{\sigma}_1 + \bar{\sigma}_2)^2 \frac{\lambda}{2} Q + \lambda Q X' (\bar{\sigma}_1 + \bar{\sigma}_2) - \frac{\lambda Q}{2} (\bar{\sigma}_1^2 + \bar{\sigma}_2^2) 
	\big) \nonumber \\
	&= \sum_{\sigma_1,\sigma_2 \in S} P_0(\sigma_1,\sigma_2) \exp \big(
	\bar{\sigma}_1 \bar{\sigma}_2 \lambda Q + \lambda Q X' (\bar{\sigma}_1 + \bar{\sigma}_2) 
\big) \label{eq:esp_v2}
	\end{align}
	where we write for $i=1,2$, $\bar{\sigma}_i = (1-L_{n+1})\sigma_i + L_{n+1}X'$, as before.
	\\

	\noindent\textbf{Step 1:} $\E_{\epsilon} | \E U_1^2 -\E V_1^2 | \xrightarrow[n \to \infty]{} 0$
	\begin{align*}
		\E_{\bbf{Z'}} U_1^2 &= \E_{\bbf{Z'}} \langle f(z(\bbf{x}),s(\bbf{x})) \rangle^2 \\
																			   &= \E_{\bbf{Z'}} \langle f(z(\bbf{x}^{(1)}),s(\bbf{x}^{(1)})) f(z(\bbf{x}^{(2)}),s(\bbf{x}^{(2)})) \rangle \ \ \text{(where } \bbf{x}^{(1)} \ \text{and } \bbf{x}^{(2)} \ \text{are independent samples from } \langle \cdot \rangle \text{)} \\
																			&= \left\langle \E_{\bbf{Z'}} f(z(\bbf{x}^{(1)}),s(\bbf{x}^{(1)})) f(z(\bbf{x}^{(2)}),s(\bbf{x}^{(2)})) \right\rangle \\
																		 &= \left\langle 
		\sum_{\sigma_1,\sigma_2 \in S} P_0(\sigma_1,\sigma_2) \E_{\bbf{Z'}} \exp\Big(\bar{\sigma}_1 z(\bbf{x}^{(1)}) + \bar{\sigma}_1^2 s(\bbf{x}^{(1)}) 
		+ \bar{\sigma}_2 z(\bbf{x}^{(2)}) + \bar{\sigma}_2^2 s(\bbf{x}^{(2)}) \Big)
	\right\rangle
\end{align*}
The next lemma follows from the simple fact that for $N \sim \mathcal{N}(0,1)$ and $t \in \R$, $\E e^{tN} = \exp(\frac{t^2}{2})$.
\begin{lemma} \label{lem:esp_comp1}
	Let $\bbf{x}^{(1)}, \bbf{x}^{(2)} \in S^n$ and $\sigma_1, \sigma_2 \in S$ be fixed. Then
	\begin{align*}
		\E_{\bbf{Z'}} \exp\left( \sigma_1 \sqrt{\frac{\lambda}{n}} \sum_{i=1}^n x^{(1)}_i Z_i' + \sigma_2 \sqrt{\frac{\lambda}{n}} \sum_{i=1}^n x^{(2)}_i Z_i'  \right) 
		=
		\exp\left(
			\lambda \sigma_1 \sigma_2 \bbf{x}^{(1)}.\bbf{x}^{(2)} 
			+\frac{1}{2} \lambda \sigma_1^2 \|\bbf{x}^{(1)}\|^2
			+\frac{1}{2} \lambda \sigma_2^2 \|\bbf{x}^{(2)}\|^2
		\right)
	\end{align*}
\end{lemma}
Thus, for all $\bbf{x}^{(1)}, \bbf{x}^{(2)} \!\in\! S^n$ and $\sigma_1, \sigma_2 \! \in \! S$, using Lemma~\ref{lem:esp_comp1} and the fact that $s(\bbf{x}) \!=\! -\frac{\lambda}{2} \|\bbf{x}\|^2$ for all $\bbf{x} \!\in\! S^n$,
\begin{align*}
	\E_{\bbf{Z'}} \!\exp\!\Big(\bar{\sigma}_1 z(\bbf{x}^{(1)}) + \bar{\sigma}_1^2 s(\bbf{x}^{(1)})
	+ \bar{\sigma}_2 z(\bbf{x}^{(2)}) + \bar{\sigma}_2^2 s(\bbf{x}^{(2)}) \!\Big)
		\!= \exp\!\Big(\lambda \bar{\sigma}_1 \bar{\sigma}_2 \bbf{x}^{(1)}.\bbf{x}^{(2)} \!+ \lambda X' (\bar{\sigma}_1 (\bbf{x}^{(1)}.\bbf{X}) + \bar{\sigma}_2 (\bbf{x}^{(2)}.\bbf{X})) \!\Big)
	\end{align*}
	We have therefore
	$$
	\E_{\bbf{Z'}} U_1^2 = \left\langle \sum_{\sigma_1,\sigma_2 \in S} P_0(\sigma_1,\sigma_2) \exp\left(\lambda \bar{\sigma}_1 \bar{\sigma}_2 \bbf{x}^{(1)}.\bbf{x}^{(2)} + \lambda X' \left(\bar{\sigma}_1 (\bbf{x}^{(1)}.\bbf{X}) + \bar{\sigma}_2 (\bbf{x}^{(2)}.\bbf{X})\right) \right) \right\rangle
	$$
	Define
	\begin{align*}
		F_1:(s,r_1,r_2) \mapsto &
		\sum_{\sigma_1,\sigma_2 \in S}P_0(\sigma_1,\sigma_2) \exp\Big(\lambda \bar{\sigma}_1 \bar{\sigma}_2 s
		+ \lambda X'(\bar{\sigma}_1 r_1 + \bar{\sigma}_2 r_2) \Big)
	\end{align*}
	We have $\E_{\bbf{Z'}} U_1^2 = \big\langle F_1(\bbf{x}^{(1)}.\bbf{x}^{(2)},\bbf{x}^{(1)}.\bbf{X},\bbf{x}^{(2)}.\bbf{X}) \big\rangle$. 

	\begin{lemma} \label{lem:f_1_lipschitz}
		There exists a constant $L_{0}$ such that $F_1$ is almost surely $L_0$-Lipschitz.
	\end{lemma}
	\begin{proof}
		$F_1$ is a random function that depends only on the random variables $\epsilon_n$, $X'$ and $L_{n+1}$ (because of $\bar{\sigma}_1$ and $\bar{\sigma}_2$). $F_1$ is $\mathcal{C}^1$ on the compact $[-K_0^2, K_0^2]^3$. An easy computation show that
		$$
		\forall (s,r_1,r_2) \in [-K_0^2,K_0^2]^3, \ \|\nabla F_1(s,r_1,r_2) \| \leq 3 \lambda K_0^4 \exp(3 \lambda K_0^4)
		$$
		$F_1$ is thus $L_0$-Lipschitz with $L_0 = 3 \lambda K_0^4 \exp(3 \lambda K_0^4)$.
	\end{proof}
	\\
	
	\noindent Using Lemma~\ref{lem:f_1_lipschitz} we obtain
	\begin{align*}
		\Big\langle & | F_1(\bbf{x}^{(1)}.\bbf{x}^{(2)},\bbf{x}^{(1)}.\bbf{X},\bbf{x}^{(2)}.\bbf{X}) - F_1(Q,Q,Q) | \Big\rangle \leq 
		L_0 \Big\langle \sqrt{(\bbf{x}^{(1)}.\bbf{x}^{(2)} - Q)^2 + (\bbf{x}^{(1)}.\bbf{X} - Q)^2 + (\bbf{x}^{(2)}.\bbf{X} - Q)^2} \Big\rangle
	\end{align*}
	We recall equation~\eqref{eq:esp_v2} to notice that $F_1(Q,Q,Q) = \E_{\bbf{Z'}} V_1^2$, thus, using Proposition~\ref{prop:overlap} and Corollary~\ref{cor:overlap}
	\begin{align*}
		\E_{\epsilon} \E | \E_{\bbf{Z'}} U_1^2 - \E_{\bbf{Z'}} V_1^2 |
		\leq
		L_0 \E_{\epsilon} \E \Big\langle \sqrt{(\bbf{x}^{(1)}.\bbf{x}^{(2)} - Q)^2 + (\bbf{x}^{(1)}.\bbf{X} - Q)^2 + (\bbf{x}^{(2)}.\bbf{X} - Q)^2 } \Big\rangle
		\xrightarrow[n \to \infty]{} 0
	\end{align*}

	\vspace{0.5cm}

	\noindent\textbf{Step 2:} $\E_{\epsilon} | \E U_1 V_1 -\E V_1^2 | \xrightarrow[n \to \infty]{} 0$
	\begin{align*}
		\E_{\bbf{Z'}} U_1 V_1 &= \left\langle \E_{\bbf{Z'}} \sum_{\sigma_1,\sigma_2 \in S} P_0(\sigma_1,\sigma_2)
			\exp\left( \bar{\sigma}_1 z(\bbf{x}) + \bar{\sigma}_1 s(\bbf{x}) + \bar{\sigma}_2 \sqrt{\frac{\lambda}{n}} \sum_{i=1}^n b_i Z_i' + \lambda Q X' \bar{\sigma}_2 - \frac{\lambda Q}{2} \bar{\sigma}_2^2 
			\right)
		\right\rangle
	\end{align*}
	Applying Lemma~\ref{lem:esp_comp1} with $\bbf{x}^{(1)}=\bbf{x}$ and $\bbf{x}^{(2)}=\bbf{b}$ (and using that $\|\bbf{b}\|^2=Q$) one has
	\begin{align*}
		\E_{\bbf{Z'}} \!\exp\!\left(\! \bar{\sigma}_1 z(\bbf{x}) + \bar{\sigma}_1 s(\bbf{x}) + \bar{\sigma}_2 \sqrt{\frac{\lambda}{n}} \sum_{i=1}^n b_i Z_i' + \lambda Q X' \bar{\sigma}_2 - \frac{\lambda Q}{2} \bar{\sigma}_2^2 
		\!\right) 
		\!= \exp\!\left(\lambda \bar{\sigma}_1 \bar{\sigma}_2 \bbf{x}.\bbf{b} + \bar{\sigma}_1 \lambda (\bbf{x}.\bbf{X})X' + \bar{\sigma}_2 \lambda Q X' \right)
	\end{align*}
	Therefore,
	\vspace{-0.2cm}
	\begin{align*}
		\E_{\bbf{Z'}} U_1 V_1 &= \left\langle \sum_{\sigma_1,\sigma_2 \in S} P_0(\sigma_1,\sigma_2)
			\exp\big(\lambda \bar{\sigma}_1 \bar{\sigma}_2 \bbf{x}.\bbf{b} + \bar{\sigma}_1 \lambda (\bbf{x}.\bbf{X})X' + \bar{\sigma}_2 \lambda Q X' \big)
		\right\rangle
	\end{align*}
	We can thus identify
	\vspace{-0.2cm}
	\begin{align*}
		\E_{\bbf{Z'}} U_1 V_1 &= \langle F_1(\bbf{x}.\bbf{b},\bbf{x}.\bbf{X},Q) \rangle  \\
		\E_{\bbf{Z'}} V_1^2 &= F_1(Q,Q,Q)
	\end{align*}
	Again, using Lemma~\ref{lem:f_1_lipschitz} and the concentration result Corollary~\ref{cor:overlap} we obtain
	\begin{align*}
		\E_{\epsilon} \E | E_{\bbf{Z}'} U_1 V_1 - E_{\bbf{Z}'}V_1^2 | \xrightarrow[n \to \infty]{} 0
	\end{align*}
	This concludes the proof.
\end{proof}
\\

\noindent We have now $|\log U_1 - \log V_1 | \leq \max(U_1^{-1},V_1^{-1}) | U_1 - V_1|$. Thus, using Cauchy-Schwarz inequality,
$$
\E | \log U_1 - \log V_1 | \leq \sqrt{\E U_1^{-2} + \E V_1^{-2}} \sqrt{\E (U_1 - V_1)^2}
$$

\begin{lemma} \label{lem:u_v_moment}
	There exists a constant $C$ such that
	$$
	\E U_1^{-2} + \E V_1^{-2} \leq C
	$$
\end{lemma}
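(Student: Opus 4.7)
The strategy is to lower-bound both $V_1$ and $U_1$ by single exponentials via Jensen's inequality, and then to exploit the fact that the ``fresh'' Gaussian variables $(Z'_i)_{i \leq n} = (Z_{i,n+1})_{i \leq n}$, together with the $(n+1)$-st coordinates $X' = X_{n+1}$ and $L_{n+1}$, are independent of the Gibbs measure $\langle \cdot \rangle$, whose Hamiltonian $H_n'$ only involves variables indexed by $\{1,\dots,n\}$. Combined with the uniform bound $|x| \leq K_0$ on the support of $P_0$, this should suffice to control the exponential moments appearing in $\E V_1^{-2}$ and $\E U_1^{-2}$.

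For $V_1$: since $\sum_\sigma P_0(\sigma) = 1$, Jensen's inequality applied to the convex function $\exp$ gives
$$
V_1 \geq \exp\!\left( \alpha \sqrt{\frac{\lambda}{n}} \sum_{i=1}^n b_i Z'_i + \lambda Q X' \alpha - \frac{\lambda Q}{2}\beta \right),
$$
where $\alpha := \sum_\sigma P_0(\sigma)\bar{\sigma}$ and $\beta := \sum_\sigma P_0(\sigma) \bar{\sigma}^2$ are random but satisfy $|\alpha| \leq K_0$ and $\beta \leq K_0^2$. Because $|X'|, |b_i| \leq K_0$ and $Q = \|\bbf{b}\|^2 \leq K_0^2$, the deterministic contributions of the last two terms, once inverted and squared, will be bounded by $\exp(O(\lambda K_0^4))$. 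Conditionally on the $\sigma$-algebra $\mathcal{G}$ generated by $\bbf{X}$, $(Z_{i,j})_{i<j\leq n}$ and $L_1,\dots,L_{n+1}$, all of $\bbf{b}, Q, \alpha, X'$ are deterministic, while the remaining term $-2\alpha \sqrt{\lambda/n}\sum_i b_i Z'_i$ is a centered Gaussian with conditional variance $4\alpha^2 \lambda Q \leq 4\lambda K_0^4$, whose conditional exponential moment is therefore at most $\exp(2\lambda K_0^4)$. Taking expectations will yield $\E V_1^{-2} \leq C(\lambda,K_0)$.

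For $U_1$ the same idea applies, but with Jensen used twice. First, for any fixed $\bbf{x}$, Jensen on the $P_0$-average gives $\sum_\sigma P_0(\sigma) e^{\bar{\sigma} z(\bbf{x}) + \bar{\sigma}^2 s(\bbf{x})} \geq \exp(\alpha z(\bbf{x}) + \beta s(\bbf{x}))$. Then, since $\langle \cdot \rangle$ is itself a probability measure, a second application of Jensen to the Gibbs average yields
$$
U_1 \geq \exp\!\bigl(\alpha \langle z(\bbf{x})\rangle + \beta \langle s(\bbf{x})\rangle\bigr) = \exp\!\left(\alpha \sqrt{\frac{\lambda}{n}} \sum_i b_i Z'_i + \alpha \lambda \langle \bbf{x}.\bbf{X}\rangle X' - \frac{\lambda\beta}{2}\langle\|\bbf{x}\|^2\rangle\right).
$$
Since $|\langle\bbf{x}.\bbf{X}\rangle|$ and $\langle\|\bbf{x}\|^2\rangle$ are bounded by $K_0^2$, the non-Gaussian contributions are deterministically bounded, and the Gaussian term can be handled exactly as for $V_1$, giving $\E U_1^{-2} \leq C(\lambda, K_0)$.

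There is no genuine obstacle here: the only point requiring a bit of care is the independence structure of the fresh variables $(Z'_i)_i$, $X'$, $L_{n+1}$ from the Gibbs measure, which is immediate from the definition of $H_n'$, and everything else reduces to elementary Gaussian integration together with the boundedness of the support of $P_0$.
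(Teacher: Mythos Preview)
Your argument is correct and follows essentially the same route as the paper: a double application of Jensen's inequality to reduce $U_1^{-2}$ and $V_1^{-2}$ to exponentials whose only unbounded part is linear in the fresh Gaussians $(Z'_i)_i$, combined with the independence of these Gaussians from the Gibbs measure and the bound $|x|\leq K_0$. The only cosmetic difference is the order of the two Jensen steps: the paper first uses convexity of $(z,s)\mapsto f(z,s)$ to pull the Gibbs average inside, obtaining $U_1\geq f(\langle z\rangle,\langle s\rangle)$, and then uses convexity of $t\mapsto t^{-2}$ to bound $f^{-2}\leq \sum_\sigma P_0(\sigma)\exp(-2\bar\sigma\langle z\rangle-2\bar\sigma^2\langle s\rangle)$, keeping a sum over $\sigma$; you instead average over $P_0$ first (introducing $\alpha,\beta$) and then over $\langle\cdot\rangle$, landing on a single exponential. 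Both lead to the same Gaussian moment computation and the same constant depending only on $\lambda$ and $K_0$.
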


\begin{proof}
	Using Jensen inequality, we have $U_1 \geq f(\langle z(\bbf{x}) \rangle, \langle s(\bbf{x}) \rangle)$. Then
	$$
	U_1^{-2} \leq f(\langle z(\bbf{x}) \rangle, \langle s(\bbf{x}) \rangle)^{-2} \leq \sum_{\sigma \in S} P_0(\sigma) \exp(-2 \bar{\sigma} \langle z(\bbf{x}) \rangle - 2 \bar{\sigma}^2 \langle s(\bbf{x}) \rangle ) 
	$$
	It remains to show that for any value of $\sigma$, $\E \exp(-2 \bar{\sigma} \langle z(\bbf{x}) \rangle - 2 \bar{\sigma}^2 \langle s(\bbf{x}) \rangle)$ is bounded (independently of $n$ and $\epsilon_n$). $P_0$ as a bounded support, therefore
	$$
	\E \exp(-2 \bar{\sigma} \langle z(\bbf{x}) \rangle - 2 \bar{\sigma}^2 \langle s(\bbf{x}) \rangle) \leq C_0 \E \exp(-2 \bar{\sigma} \sum_{i=1}^n \sqrt{\frac{\lambda}{n}} \langle x_i \rangle Z_i') = C_0 \E \exp(2 \lambda Q \bar{\sigma}^2) \leq C_1
	$$
	for some constant $C_1$. The same kind of arguments shows that $\E V_1^{-2}$ is bounded by a constant.

\end{proof}
\\

\noindent Using the previous lemma
$
\E_{\epsilon} \E | \log U_1 - \log V_1 | \xrightarrow[n \to \infty]{} 0
$. We now compute $\E \log V_1$ explicitly.
\begin{lemma} \label{lem:esp_log_v1}
	$$
	\E \log V_1 = \epsilon_n (\E_{P_0} X^2) \E \frac{\lambda Q}{2} + (1-\epsilon_n) \E \log \sum_{\sigma \in S} P_0(\sigma) \exp \big( \sigma \sqrt{\frac{\lambda}{n}} \sum_{i=1}^n b_i Z'_i + \lambda Q \sigma X' - \frac{\lambda \sigma^2}{2} Q \big)
	$$
\end{lemma}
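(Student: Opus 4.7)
\textbf{Proof plan for Lemma~\ref{lem:esp_log_v1}.} The identity is purely a direct conditioning on the Bernoulli variable $L_{n+1}$ that appears inside $\bar\sigma = (1-L_{n+1})\sigma + L_{n+1} X'$, so the strategy is to split $\E\log V_1$ according to whether $L_{n+1}=0$ or $L_{n+1}=1$ and integrate each piece separately.

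On the event $\{L_{n+1}=0\}$, which has probability $1-\epsilon_n$, we have $\bar\sigma = \sigma$ identically, so $V_1$ coincides exactly with the sum appearing on the right-hand side of the claim. The contribution of this event to $\E\log V_1$ is therefore, by independence of $L_{n+1}$ from every other random variable entering $V_1$,
\[
(1-\epsilon_n)\,\E\log \sum_{\sigma\in S} P_0(\sigma)\exp\bigl(\sigma\sqrt{\lambda/n}\,{\textstyle\sum_i} b_i Z'_i + \lambda Q\sigma X' - \tfrac{\lambda\sigma^2}{2}Q\bigr),
\]
which matches the second term in the lemma.

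On the event $\{L_{n+1}=1\}$, which has probability $\epsilon_n$, we have $\bar\sigma = X'$ for \emph{every} $\sigma\in S$; in particular the summand no longer depends on $\sigma$, so the sum collapses via $\sum_\sigma P_0(\sigma)=1$ and we obtain
\[
V_1 = \exp\Bigl(X'\sqrt{\lambda/n}\,{\textstyle\sum_i} b_i Z'_i + \lambda Q (X')^2 - \tfrac{\lambda Q}{2}(X')^2\Bigr)
    = \exp\Bigl(X'\sqrt{\lambda/n}\,{\textstyle\sum_i} b_i Z'_i + \tfrac{\lambda Q}{2}(X')^2\Bigr).
\]
Hence $\log V_1$ is an affine function of the independent standard Gaussians $Z'_i$, and $X'\sim P_0$ is independent of $(\bbf{b},Q,\bbf{Z}')$ since $\bbf{b}$ and $Q$ are measurable with respect to data involving only the first $n$ coordinates. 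Taking expectations, the linear term in $Z'_i$ vanishes and the remaining piece gives
\[
\E[\log V_1 \mid L_{n+1}=1]
= \tfrac{\lambda}{2}\,\E[(X')^2]\,\E[Q]
= \E\bigl[\tfrac{\lambda Q}{2}\bigr]\,\E_{P_0}[X^2],
\]
which matches the first term in the lemma after multiplying by $\epsilon_n$.

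Adding the two conditional contributions yields the claimed formula. There is no real obstacle here; the only thing one needs to notice is the algebraic cancellation $\lambda Q X'\bar\sigma - \tfrac{\lambda Q}{2}\bar\sigma^2 = \tfrac{\lambda Q}{2}(X')^2$ in the $L_{n+1}=1$ branch that allows the $\sigma$-sum to collapse and produces the clean $\E_{P_0}[X^2]$ prefactor.
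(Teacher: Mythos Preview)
Your proof is correct and follows exactly the same approach as the paper: condition on $L_{n+1}\in\{0,1\}$, observe that on $\{L_{n+1}=1\}$ the $\sigma$-sum collapses because $\bar\sigma=X'$ is constant in $\sigma$, and then use that the $Z'_i$ are centered and that $X'$ is independent of $Q$ to evaluate the resulting expectation. The paper's own argument is identical in structure and detail.
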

\begin{proof}
	It suffices to distinguish the cases $L_{n+1}\!=\!0$ and $L_{n+1}\!=\!1$. If $L_{n+1}\!=\!1$ then for all $\sigma \in S$, $\bar{\sigma}=X'$ and
	\begin{align*}
		\log V_1 &= \log \Big( \exp(X' \sqrt{\frac{\lambda}{n}} \sum_{i=1}^n b_i Z'_i + \lambda Q X'^2 - \frac{\lambda X'^2}{2} Q) \Big)
		\\
		&= X' \sqrt{\frac{\lambda}{n}} \sum_{i=1}^n b_i Z'_i + \frac{\lambda X'^2}{2} Q 
	\end{align*}
	$L_{n+1}$ is independent of all other random variables, thus
	$$
	\E \Big[ 1(L_{n+1}=1) \log V_1 \Big] = \epsilon_n (\E_{P_0} X^2) \frac{\lambda}{2} \E Q
	$$
	because the $Z_i'$ are centered, independent from $X'$ and because $X'$ is independent from $Q$. The case $L_{n+1}=0$ is obvious.

\end{proof}
\\

\noindent The variables $(b_i)_{1 \leq i \leq n}$ and $(Z_i')_{1 \leq i \leq n}$ are independent. Recall that $Q = \frac{1}{n} \sum\limits_{i=1}^n b_i^2$. Therefore, 
\vspace{-0.4cm}
$$
(Q, \frac{1}{\sqrt{n}} \sum_{i=1}^n b_i Z_i') = (Q, \sqrt{Q} Z_0)  \ \ \text{in law }
\vspace{-0.2cm}
$$
where $Z_0 \sim \mathcal{N}(0,1)$ is independent of all other random variables. The expression of $\E \log V_1$ from Lemma~\ref{lem:esp_log_v1} simplifies
$$
\E \log V_1 = \epsilon_n (\E_{P_0} X^2) \E \frac{\lambda Q}{2} + (1-\epsilon_n) \E \log \sum_{\sigma \in S} P_0(\sigma) \exp \big( \sqrt{\lambda Q} \sigma Z_0 + \lambda Q \sigma X' - \frac{\lambda \sigma^2}{2} Q \big)
\vspace{-0.4cm}
$$
thus
$$
\E_{\epsilon} \Big| \E \log U_1 - \Big( \epsilon_n (\E_{P_0} X^2) \E \frac{\lambda Q}{2} + (1-\epsilon_n) \E \log \sum_{\sigma \in S} P_0(\sigma) \exp \big( \sqrt{\lambda Q} \sigma Z_0 + \lambda Q \sigma X' - \frac{\lambda \sigma^2}{2} Q \big) \Big) \Big| \xrightarrow[n \to \infty]{} 0
$$
This proves Lemma~\ref{lem:part1}.

\subsubsection{Second part}

In this section, we handle the second term of equation~\eqref{eq:a_n_proof}. The arguments are similar to the previous section. We show here the following lemma.

\begin{lemma} \label{lem:part2}
	$$
\E_{\epsilon} \left| \E \log \big\langle \!\exp(y(\bbf{x})) \big\rangle - \frac{\lambda}{4} \E  Q^2 \right| \xrightarrow[n \to  \infty]{} 0
$$
	\end{lemma}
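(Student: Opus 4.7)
The plan is to mirror the strategy of Lemma~\ref{lem:part1}: find a surrogate $V_2$ for $U_2 := \langle e^{y(\bx)}\rangle$ such that $\E\log V_2$ equals the target $\frac{\lambda}{4}\E Q^2$ exactly and $V_2$ is $L^2$-close to $U_2$, then invoke the Cauchy--Schwarz argument used after Lemma~\ref{lem:u_v_moment} (which also requires bounded inverse moments $\E U_2^{-2}, \E V_2^{-2}$). Matching the first two moments of $U_2$ under the auxiliary Gaussians suggests the choice
$$V_2 := \exp\!\Big(G(\bb) + \tfrac{\lambda}{4}Q^2\Big), \qquad G(\bb) := \frac{\sqrt{\lambda}}{\sqrt{2}\,n}\sum_{i=1}^n Z''_i\, b_i^2 + \frac{\sqrt{\lambda}}{n}\sum_{i<j}\tilde{Z}_{i,j}\, b_i b_j,$$
where $\bb=\langle \bx\rangle$ (so $\|\bb\|^2=Q$). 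Conditionally on everything but $(\tilde Z,Z'')$, $G(\bb)$ is centered Gaussian with variance $\frac{\lambda}{2}Q^2$, so $\E G(\bb)=0$ and $\E\log V_2 = \frac{\lambda}{4}\E Q^2$ exactly.

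The key computation is to integrate out the auxiliary Gaussians $\tilde Z, Z''$. Writing $y(\bx)=G(\bx) + \frac{\lambda}{2}(\bx.\bX)^2 - \frac{\lambda}{4}(\bx.\bx)^2$, the process $G$ is centered Gaussian with $\mathrm{Cov}(G(\bu),G(\bv)) = \frac{\lambda}{2}(\bu.\bv)^2$. Applying the identity $\E e^{G(\bu)+G(\bv)} = e^{\frac{\lambda}{4}[(\bu.\bu)^2+2(\bu.\bv)^2+(\bv.\bv)^2]}$---after the crucial cancellation of $-\frac{\lambda}{4}(\bx.\bx)^2$ against $\frac{1}{2}\mathrm{Var}\,G(\bx)$---yields
\begin{align*}
	\E_{\tilde Z,Z''}[U_2^2] &= \big\langle e^{\frac{\lambda}{2}[(\bx^{(1)}.\bx^{(2)})^2+(\bx^{(1)}.\bX)^2+(\bx^{(2)}.\bX)^2]}\big\rangle, \\
	\E_{\tilde Z,Z''}[V_2^2] &= e^{3\lambda Q^2/2},\\
	\E_{\tilde Z,Z''}[U_2V_2] &= e^{\lambda Q^2/2}\big\langle e^{\frac{\lambda}{2}[(\bb.\bx)^2+(\bx.\bX)^2]}\big\rangle.
\end{align*}
Each is a bounded Lipschitz function of overlaps in $[-K_0^2,K_0^2]$ (Lipschitz bound as in Lemma~\ref{lem:f_1_lipschitz}). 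By Proposition~\ref{prop:overlap} and Corollary~\ref{cor:overlap}, the overlaps $\bx^{(1)}.\bx^{(2)}$, $\bx^{(1)}.\bX$, $\bx^{(2)}.\bX$, $\bb.\bx$, $\bx.\bX$ all concentrate in $L^2$ around $Q$ under $\E_\epsilon\E\langle\cdot\rangle$, so the three expressions above share the same $\E_\epsilon\E$-limit $\E_\epsilon\E[e^{3\lambda Q^2/2}]$, yielding $\E_\epsilon\E(U_2-V_2)^2 \to 0$.

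The inverse moments are controlled directly: a Gaussian computation gives $\E_{\tilde Z,Z''}[V_2^{-2}] = e^{\lambda Q^2/2} \leq e^{\lambda K_0^4/2}$, and Jensen's inequality $U_2^{-1}\leq e^{-\langle y(\bx)\rangle}$ combined with the conditional Gaussianity of $\langle G(\bx)\rangle$ (variance at most $\frac{\lambda}{2}K_0^4$) and the uniform boundedness of the deterministic remainder gives $\E U_2^{-2}\leq C$ for some $C=C(\lambda,K_0)$. Cauchy--Schwarz as after Lemma~\ref{lem:u_v_moment} then yields $\E_\epsilon|\E\log U_2 - \E\log V_2|\to 0$, and combining with $\E\log V_2 = \frac{\lambda}{4}\E Q^2$ proves the lemma.

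The main difficulty is identifying the correct surrogate $V_2$: the naive deterministic choice $e^{\frac{\lambda}{4}Q^2}$ would fail because $G(\bx)$ does not concentrate ($\mathrm{Cov}(G(\bx^{(1)}),G(\bx^{(2)})) = \frac{\lambda}{2}(\bx^{(1)}.\bx^{(2)})^2$ is strictly smaller than the diagonal $\frac{\lambda}{2}(\bx.\bx)^2$, since $\bx.\bx$ has mean $\E_{P_0}[X^2]$ rather than $Q$). The matching Gaussian $G(\bb)$ with variance $\frac{\lambda}{2}Q^2$ is essential, and the counterterm $-\frac{\lambda}{4}(\bx.\bx)^2$ baked into $y(\bx)$ is exactly the It\^o-type correction that lets the non-concentrating overlap $(\bx.\bx)^2$ drop out of the second-moment identities, leaving only overlaps governed by Corollary~\ref{cor:overlap}.
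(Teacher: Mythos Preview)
Your proposal is correct and follows essentially the same approach as the paper: the surrogate $V_2=\exp\big(G(\bb)+\tfrac{\lambda}{4}Q^2\big)$ coincides with the paper's choice, and the second-moment computations $\E_{\tilde Z,Z''}[U_2^2],\,\E_{\tilde Z,Z''}[V_2^2],\,\E_{\tilde Z,Z''}[U_2V_2]$, the Lipschitz reduction to overlap concentration (Proposition~\ref{prop:overlap} and Corollary~\ref{cor:overlap}), the inverse-moment bound via Jensen, and the final Cauchy--Schwarz step all match the paper's argument. Your framing via the covariance $\mathrm{Cov}(G(\bu),G(\bv))=\tfrac{\lambda}{2}(\bu.\bv)^2$ and the closing remark on why the counterterm $-\tfrac{\lambda}{4}(\bx.\bx)^2$ is needed are a nice conceptual gloss on exactly the computation the paper carries out.
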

	Define
	\begin{align*}
		U_2 &= \big\langle \exp(y(\bbf{x})) \big\rangle \\
		V_2 &= \exp\left(\frac{\sqrt{\lambda}}{\sqrt{2}n} \sum_{i=1}^n b_i^2 Z_i'' + \frac{\sqrt{\lambda}}{n}\sum_{1 \leq i<j \leq n} b_i b_j \tilde{Z}_{i,j} 
		+ \frac{\lambda}{4}Q^2 \right)
	\end{align*}
and recall
	\begin{align*}
		y(\bbf{x}) 
		&= \frac{\sqrt{\lambda}}{\sqrt{2}n} \sum_{i=1}^n Z_i'' x_i^2 
		+ \frac{\sqrt{\lambda}}{n} \sum_{1 \leq i<j \leq n} x_i x_j \tilde{Z}_{i,j} 
		+ \frac{\lambda}{2} ((\bbf{x}.\bbf{X})^2 - \frac{1}{2} (\bbf{x}.\bbf{x})^2)
	\end{align*}

	\begin{lemma}
		$$
		\E_{\epsilon} \E \Big[ (U_2-V_2)^2 \Big] \xrightarrow[n \to \infty]{} 0
		$$
	\end{lemma}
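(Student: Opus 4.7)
The plan is to mirror the proof carried out for $(U_1,V_1)$: expand
$\E(U_2-V_2)^2 = \E U_2^2 - 2\,\E U_2 V_2 + \E V_2^2$
and show that, after averaging in $\E_\epsilon$, each of the three terms converges to the same limit $\E F_2(Q,Q,Q)$ for a bounded Lipschitz function $F_2$ on $[-K_0^2,K_0^2]^3$. The route is standard: first integrate out the Gaussians $Z_i''$ and $\tilde Z_{i,j}$, then recognise the outcome as an evaluation of $F_2$ at certain overlaps, and finally invoke the overlap concentration statements Proposition~\ref{prop:overlap} and Corollary~\ref{cor:overlap}.

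For $\E U_2^2$, I would introduce two independent replicas $\bbf{x}^{(1)},\bbf{x}^{(2)}$ from $\langle\cdot\rangle$ and apply the identity $\E e^{aN}=e^{a^2/2}$ coordinate by coordinate. Integration over $Z_i''$ produces the exponent $\frac{\lambda}{4n^2}\sum_i(x_i^{(1)2}+x_i^{(2)2})^2=O(1/n)$ uniformly in $n$ thanks to the boundedness of $P_0$. Integration over $\tilde Z_{i,j}$ produces $\frac{\lambda}{2n^2}\sum_{i<j}(x_i^{(1)}x_j^{(1)}+x_i^{(2)}x_j^{(2)})^2$, which upon using $\sum_{i<j}u_iu_jv_iv_j = \frac{1}{2}[(n\,\bbf{u}.\bbf{v})^2 - \sum_i u_i^2 v_i^2]$ equals $\frac{\lambda}{4}\|\bbf{x}^{(1)}\|^4 + \frac{\lambda}{4}\|\bbf{x}^{(2)}\|^4 + \frac{\lambda}{2}(\bbf{x}^{(1)}.\bbf{x}^{(2)})^2 + O(1/n)$. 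Adding the deterministic contribution of $y(\bbf{x}^{(1)})+y(\bbf{x}^{(2)})$, the $\|\bbf{x}^{(i)}\|^4$ terms cancel exactly against the $-\frac{\lambda}{4}\|\bbf{x}^{(i)}\|^4$ deterministic pieces of $y$, leaving
$\E_{\bbf{Z}'',\tilde{\bbf{Z}}}U_2^2 = \bigl\langle F_2\bigl(\bbf{x}^{(1)}.\bbf{x}^{(2)},\,\bbf{x}^{(1)}.\bbf{X},\,\bbf{x}^{(2)}.\bbf{X}\bigr)\bigr\rangle\,(1+o(1))$
with $F_2(s,r_1,r_2)=\exp\bigl(\frac{\lambda}{2}(s^2+r_1^2+r_2^2)\bigr)$.

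The same Gaussian computation applied to $V_2^2$, which involves only the deterministic vector $\bbf{b}$ with $\|\bbf{b}\|^2=Q$, yields directly $\E_{\bbf{Z}'',\tilde{\bbf{Z}}}V_2^2 = F_2(Q,Q,Q)(1+o(1))$. For the cross term I pull $V_2$ inside the Gibbs bracket to get $\E U_2 V_2 = \E\langle V_2\, e^{y(\bbf{x})}\rangle$ and perform the same integration with $(\bbf{x},\bbf{b})$ in place of $(\bbf{x}^{(1)},\bbf{x}^{(2)})$; one cancellation uses $\|\bbf{b}\|^4=Q^2$ against the $\frac{\lambda}{4}Q^2$ already present in $V_2$, and another uses $\|\bbf{x}\|^4$ against the deterministic part of $y$, giving $\E_{\bbf{Z}'',\tilde{\bbf{Z}}}U_2 V_2 = \bigl\langle F_2(\bbf{x}.\bbf{b},\,\bbf{x}.\bbf{X},\,Q)\bigr\rangle\,(1+o(1))$. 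Since $F_2$ is bounded and Lipschitz on the compact $[-K_0^2,K_0^2]^3$ (by exactly the $\mathcal{C}^1$-on-compact argument of Lemma~\ref{lem:f_1_lipschitz}), Proposition~\ref{prop:overlap} and Corollary~\ref{cor:overlap} — which give $\E_\epsilon\E\langle(\bbf{x}^{(1)}.\bbf{x}^{(2)}-Q)^2\rangle\to 0$, $\E_\epsilon\E\langle(\bbf{x}.\bbf{X}-Q)^2\rangle\to 0$ and $\E_\epsilon\E\langle(\bbf{x}.\bbf{b}-Q)^2\rangle\to 0$ — imply that the three quantities $\E_\epsilon\E U_2^2$, $\E_\epsilon\E V_2^2$ and $\E_\epsilon\E U_2 V_2$ all tend to the common limit $\lim \E_\epsilon\E F_2(Q,Q,Q)$, and expanding the square gives the desired $\E_\epsilon\E(U_2-V_2)^2\to 0$.

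The main care required is in the bookkeeping of the $O(1/n)$ diagonal residuals produced when replacing sums of the form $\sum_{i<j}u_iu_jv_iv_j$ by $\frac{n^2}{2}(\bbf{u}.\bbf{v})^2$, and above all in checking that the $\|\bbf{x}\|^4$ terms created by the Gaussian integration exactly cancel against the deterministic $-\frac{\lambda}{4n^2}x_i^2x_j^2$ correction built into $y(\bbf{x})$. This cancellation — which is precisely why that correction was placed inside the simplified cavity Hamiltonian — is the non-trivial structural feature that makes the whole comparison $U_2\approx V_2$ tight.
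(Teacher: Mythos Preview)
Your proposal is correct and follows essentially the same route as the paper: expand $(U_2-V_2)^2$, integrate out $\bbf{Z}''$ and $\tilde{\bbf{Z}}$ to identify each of $\E_{\bbf{Z}'',\tilde{\bbf{Z}}}U_2^2$, $\E_{\bbf{Z}'',\tilde{\bbf{Z}}}U_2V_2$, $\E_{\bbf{Z}'',\tilde{\bbf{Z}}}V_2^2$ as evaluations of the same bounded Lipschitz function $F_2(s,r_1,r_2)=\exp\bigl(\tfrac{\lambda}{2}(s^2+r_1^2+r_2^2)\bigr)$ at the relevant overlaps, and conclude via Proposition~\ref{prop:overlap} and Corollary~\ref{cor:overlap}. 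One cosmetic point: the $O(1/n)$ diagonal residuals you carry actually cancel \emph{exactly} --- the $Z''$ contribution $\tfrac{\lambda}{4n^2}\sum_i\bigl((x_i^{(1)})^2+(x_i^{(2)})^2\bigr)^2$ precisely offsets the diagonal correction from $\sum_{i<j}$ in the $\tilde{Z}$ integration, which is why the $Z''$ term was built into $y(\bbf{x})$ --- so the paper obtains exact identities rather than $(1+o(1))$; but your looser bookkeeping is still valid since everything is uniformly bounded.
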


	\begin{proof}
		We are going to show successively that $\E_{\epsilon} |\E U_2^2 -\E V_2^2| \xrightarrow[n \to \infty]{} 0$ and $\E_{\epsilon} |\E U_2 V_2 -\E V_2^2| \xrightarrow[n \to \infty]{} 0$. We write $\E_{\bbf{Z}'',\bbf{\tilde{Z}}}$ to denote the expectation with respect to the random variables $\bbf{Z}''$ and $\bbf{\tilde{Z}}$ only. Let us compute
		\begin{align}
			\E_{\bbf{Z''},\bbf{\tilde{Z}}} V_2^2 &= \E_{\bbf{Z''},\bbf{\tilde{Z}}} \exp(\frac{\sqrt{2 \lambda}}{n} \sum_{i=1}^n b_i^2 Z_i'' + \frac{2 \sqrt{\lambda}}{n}\sum_{i<j \leq n} b_i b_j \tilde{Z}_{i,j} 
			+ \frac{\lambda}{2}Q^2 ) \nonumber \\
			&= \exp(\frac{\lambda}{n^2} \sum_{i=1}^n b_i^4 + \frac{2 \lambda}{n^2} \sum_{i<j\leq n}b_i^2 b_j^2 + \frac{\lambda Q^2}{2})
			= \exp(\frac{\lambda}{n^2} \sum_{1 \leq i,j \leq n}b_i^2 b_j^2 + \frac{\lambda Q^2}{2}) \nonumber \\
			&= \exp(\frac{3 \lambda Q^2}{2}) \label{eq:esp_v2_bis}
		\end{align}
		because $\frac{1}{n} \sum\limits_{i=1}^n b_i^2 = Q$.
		\\

		\noindent\textbf{Step 1:} $\E_{\epsilon} |\E U_2^2 -\E V_2^2| \xrightarrow[n \to \infty]{} 0$
\\
As Lemma~\ref{lem:esp_comp1}, the next lemma follows from the simple fact that for $N \sim \mathcal{N}(0,1)$ and $t \in \R$, $\E e^{tN} = \exp(\frac{t^2}{2})$.
		\begin{lemma} \label{lem:esp_comp2}
			Let $\bbf{x}^{(1)},\bbf{x}^{(2)} \in S^n$ be fixed. Then
			\begin{align*}
				\E_{\bbf{Z''},\bbf{\tilde{Z}}} &\exp\Big(
					\frac{\sqrt{\lambda}}{\sqrt{2}n} \sum_{i=1}^n Z_i'' (x^{(1)}_i)^2 
					+ \frac{\sqrt{\lambda}}{n} \sum_{1 \leq i<j \leq n} x^{(1)}_i x^{(1)}_j \tilde{Z}_{i,j} 
					+
					\frac{\sqrt{\lambda}}{\sqrt{2}n} \sum_{i=1}^n Z_i'' (x^{(2)}_i)^2 
					+ \frac{\sqrt{\lambda}}{n} \sum_{1 \leq i<j \leq n} x^{(2)}_i x^{(2)}_j \tilde{Z}_{i,j} 
				\Big)
				\\
				&= \exp\Big(\frac{\lambda}{2} (\bbf{x}^{(1)}.\bbf{x}^{(2)})^2 + \frac{\lambda}{4}(\|\bbf{x}^{(1)}\|^4+\|\bbf{x}^{(2)}\|^4)\Big)
			\end{align*}
		\end{lemma}
		Therefore
		\begin{align*}
			\E_{\bbf{Z''},\bbf{\tilde{Z}}} U_2^2 = \big\langle \E_{\bbf{Z''},\bbf{\tilde{Z}}} \ \exp(y(\bbf{x}^{(1)}) + y(\bbf{x}^{(2)})) \big\rangle 
			= \big\langle \exp\big( \frac{\lambda}{2} ((\bbf{x}^{(1)}.\bbf{x}^{(2)})^2 + (\bbf{x}^{(1)}.\bbf{X})^2 + (\bbf{x}^{(2)}.\bbf{X})^2 )\big) \big\rangle
		\end{align*}
		The function $F_2: (s,r_1,r_2) \in [-K_0^2,K_0^2]^3 \mapsto
		\exp \Big( \frac{\lambda}{2} (s^2 + r_1^2 + r_2^2) \Big)$ is $L_0'$-Lipschitz, for some $L_0'>0$. Remark that $\E_{\bbf{Z}'',\bbf{\tilde{Z}}} U_2^2 = \langle F_2 (\bbf{x}^{(1)}.\bbf{x}^{(2)}, \bbf{x}^{(1)}.\bbf{X},\bbf{x}^{(2)}.\bbf{X}) \rangle$ and $\E_{\bbf{Z''},\bbf{\tilde{Z}}} V_2^2 = F_2(Q,Q,Q)$ (see equation~\eqref{eq:esp_v2_bis}). Thus
		\begin{align*}
			\E_{\epsilon} |\E \E_{\bbf{Z''},\bbf{\tilde{Z}}} U_2^2 - \E \E_{\bbf{Z''},\bbf{\tilde{Z}}}V_2^2 | 
			&\leq
			\E_{\epsilon} \E \Big\langle | F_2 (\bbf{x}^{(1)}.\bbf{x}^{(2)}, \bbf{x}^{(1)}.\bbf{X},\bbf{x}^{(2)}.\bbf{X}) - F_2 (Q,Q,Q) | \Big\rangle
			\\
			&\leq 
			L_0' 
			\E_{\epsilon} \E \Big\langle \sqrt{(\bbf{x}^{(1)}.\bbf{x}^{(2)} - Q)^2 +  (\bbf{x}^{(1)}.\bbf{X} - Q)^2 + (\bbf{x}^{(2)}.\bbf{X} - Q)^2 } \Big\rangle
			\\
			& \xrightarrow[n \to \infty]{} 0
		\end{align*}
		because of Proposition~\ref{prop:overlap} and Corollary~\ref{cor:overlap}.
		\\

		\noindent\textbf{Step 2:} $\E_{\epsilon} |\E U_2 V_2 -\E V_2^2| \xrightarrow[n \to \infty]{} 0$
\\
		Using Lemma~\ref{lem:esp_comp2} with $\bbf{x}^{(1)}=\bbf{x}$ and $\bbf{x}^{(2)}=\bbf{b}$,
		\begin{align*}
			\E_{\bbf{Z''},\bbf{\tilde{Z}}} U_2 V_2 &= \Big\langle \E_{\bbf{Z''},\bbf{\tilde{Z}}} e^{y(\bbf{x})} \exp(\frac{\sqrt{\lambda}}{\sqrt{2}n} \sum_{i=1}^n b_i^2 Z_i'' + \frac{\sqrt{\lambda}}{n}\sum_{1 \leq i<j \leq n} b_i b_j \tilde{Z}_{i,j} 
			+ \frac{\lambda}{4}Q^2 )\Big\rangle \\
			&= \Big\langle 
			\exp\big(
				\frac{\lambda}{2}(\bbf{x}.\bbf{b})^2 + \frac{\lambda}{4}(\|\bbf{x}\|^4 + \|\bbf{b}\|^4) + \frac{\lambda}{2}((\bbf{x}.\bbf{X})^2 - \frac{1}{2}\|\bbf{x}\|^4) + \frac{\lambda}{4} Q^2
			\big)
		\Big\rangle \\
		&= \Big\langle 
		\exp\big(
			\frac{\lambda}{2}(\bbf{x}.\bbf{b})^2 +  \frac{\lambda}{2}(\bbf{x}.\bbf{X})^2+ \frac{\lambda}{2} Q^2
		\big)
	\Big\rangle
	= F_2(\bbf{x}.\bbf{b},\bbf{x}.\bbf{X},Q)
\end{align*}
Using same arguments as in ``Step 1'', we obtain finally $\E_{\epsilon} |\E \E_{\bbf{Z''},\bbf{\tilde{Z}}} U_2 V_2 - \E \E_{\bbf{Z''},\bbf{\tilde{Z}}}V_2^2| \xrightarrow[n \to \infty]{} 0$.
\end{proof}
\\

\noindent We have $| \log U_2 - \log V_2 | \leq \max(U_2^{-1},V_2^{-1}) |U_2-V_2|$.
Thus, by the Cauchy-Schwarz inequality
\begin{align*}
	\E |\log U_2 - \log V_2 | \leq \sqrt{(\E U_2^{-2} + \E V_2^{-2}) \E |U_2-V_2|^2}
	\leq C \sqrt{\E |U_2 - V_2|^2}
\end{align*}
because, similarly to Lemma~\ref{lem:u_v_moment}, we have for some constant $C>0$, $(\E U_2^{-2} + \E V_2^{-2}) \leq C^2$.  
Therefore $\E_{\epsilon} | \E \log U_2 - \E \log V_2 | \xrightarrow[n \to \infty]{} 0$.
Compute
$$
\E \log V_2 = \E \Big( \frac{\sqrt{\lambda}}{\sqrt{2}n} \sum_{i=1}^n b_i^2 Z_i'' + \frac{\sqrt{\lambda}}{n}\sum_{1 \leq i<j \leq n} b_i b_j \tilde{Z}_{i,j} 
+ \frac{\lambda}{4}Q^2 \Big)
= \frac{\lambda}{4} \E Q^2
$$
because the variables $(Z_i'')_i$ and $(\tilde{Z}_{i,j})_{i,j}$ are centered and independent of the $(b_i)_i$.
\\
Thus
$
\E_{\epsilon} | \E \log U_2 - \frac{\lambda}{4} \E  Q^2) | \xrightarrow[n \to  \infty]{} 0
$
and Lemma~\ref{lem:part2} is proved.

\section{Overlap concentration without perturbation, proof of Theorem~\ref{th:overlap_concentration_without_perturbation}} \label{sec:overlap_concentration_without_perturbation}

In this section, we prove Theorem~\ref{th:overlap_concentration_without_perturbation}. The proof is an adaptation of the proof of Ghirlanda-Guerra identities from~\cite{panchenko2013SK}, section 3.7.
In order to clarify the dependencies in $\lambda$, we will use in the following the notations $H_n(\bbf{x},\lambda)$ and $\langle \cdot \rangle_{\lambda}$ to denote the Gibbs measure corresponding to the Hamiltonian $H_n(\bbf{x},\lambda)$. 
Define
$$
L_n(\bbf{x},\lambda) =  2 \sqrt{\lambda} \frac{\partial H_n}{\partial \lambda}(\bbf{x},\lambda) = \sum_{i<j} \frac{1}{\sqrt{n}} Z_{i,j} x_i x_j + \frac{2 \sqrt{\lambda}}{n} x_i x_j X_i X_j - \frac{\sqrt{\lambda}}{n} x_i^2 x_j^2
$$
We are going to show first that $L_n$ concentrates around its expected value:
\begin{proposition}
	For all $\lambda \in D$,
	$$
	\frac{1}{n} \E \Big\langle | L_n(\bbf{x},\lambda) - \E \langle L_n(\bbf{x},\lambda) \rangle | \Big\rangle_{\lambda} \xrightarrow[n \to \infty]{} 0
	$$
\end{proposition}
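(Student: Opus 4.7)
The plan is to decompose $L_n - \E\langle L_n\rangle_\lambda = (L_n - \langle L_n\rangle_\lambda) + (\langle L_n\rangle_\lambda - \E\langle L_n\rangle_\lambda)$ and to control each piece --- the \emph{thermal} and the \emph{disorder} fluctuation of $L_n$ --- via convexity of a suitably corrected random free energy in the variable $t=\sqrt\lambda$, combined with Gaussian concentration. Reparametrizing by $t=\sqrt\lambda$, the Hamiltonian becomes polynomial, $H_n(\bx,t^2) = tA(\bx)+t^2B(\bx)$, with $A(\bx)=\sum_{i<j}Z_{i,j}x_ix_j/\sqrt{n}$ and $B(\bx)=\sum_{i<j}\bigl(X_iX_jx_ix_j/n - x_i^2x_j^2/(2n)\bigr)$; note $L_n=\partial_t H_n$ and, thanks to the bounded support of $P_0$, $|B(\bx)|/n \leq 3K_0^4/4$. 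A direct Gaussian integration-by-parts computation yields
\[
\partial_t^2\!\Bigl(\tfrac{1}{n}\log Z_n(t^2)\Bigr) \;=\; \tfrac{1}{n}\bigl(\langle L_n^2\rangle - \langle L_n\rangle^2\bigr) + \tfrac{2}{n}\langle B(\bx)\rangle,
\]
so that $\tilde\phi_n(t) := \tfrac{1}{n}\log Z_n(t^2) + (3K_0^4/4)\,t^2$ is almost surely convex in $t$. Its mean $\tilde\Phi_n := \E\tilde\phi_n$ is then also convex, converges pointwise by Theorem~\ref{th:rs_formula} to $\tilde\Phi(t) := \sup_{q\geq 0}\mathcal F(t^2,q)+(3K_0^4/4)\,t^2$, and $\tilde\Phi$ is differentiable at $t=\sqrt\lambda$ precisely when $\lambda\in D$.

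For the disorder part, the Lipschitz bound $\|\nabla_{\bZ}\log Z_n(\lambda)\|^2 \leq \lambda n K_0^4/2$ combined with Gaussian Poincar\'e, plus a bounded-differences estimate that handles the discrete-prior randomness in the $X_i$'s, yields $\mathrm{Var}(\tilde\phi_n(t)) = O(1/n)$ for each fixed $t$. Invoking the classical Griffiths--Panchenko concentration lemma for convex functions (see \cite{panchenko2013SK}), at any point of differentiability of $\tilde\Phi$ one concludes $\E|\tilde\phi_n'(t)-\tilde\Phi_n'(t)|\to 0$. Since $\tilde\phi_n'(t) = \tfrac{1}{n}\langle L_n\rangle_{t^2}+(3K_0^4/2)\,t$, this is precisely $\tfrac{1}{n}\E\bigl|\langle L_n\rangle_\lambda-\E\langle L_n\rangle_\lambda\bigr|\to 0$ for every $\lambda\in D$.

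For the thermal part, I integrate the nonnegative second derivative over $[t-\delta,t+\delta]$ and take expectations:
\[
\int_{t-\delta}^{t+\delta}\!\E\,\tilde\phi_n''(s)\, ds \;=\; \tilde\Phi_n'(t+\delta)-\tilde\Phi_n'(t-\delta).
\]
Letting $n\to\infty$ using the disorder concentration at $t\pm\delta$ (which lie in $D$ up to an arbitrarily small shift, since $D$ is co-countable) and then $\delta\to 0^+$ using continuity of $\tilde\Phi'$ at $t$ (from differentiability and convexity), one obtains $\int_{t-\delta}^{t+\delta}\tfrac{1}{n}\E\,\text{Var}_{\langle\cdot\rangle}(L_n(s^2))\,ds\to 0$. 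This integrated vanishing is promoted to pointwise vanishing at $t$ using that $s\mapsto \tfrac{1}{n}\E\,\text{Var}_{\langle\cdot\rangle}(L_n(s^2))$ is smooth and equi-Lipschitz in $n$ on compact intervals (via Gaussian integration by parts on third-order Gibbs cumulants of $L_n$, themselves controlled by the bounded support of $P_0$). Cauchy--Schwarz then gives $\tfrac{1}{n}\E\langle|L_n-\langle L_n\rangle|\rangle \leq \tfrac{1}{n}\sqrt{\E\,\text{Var}_{\langle\cdot\rangle}(L_n)}\to 0$, and the triangle inequality finishes the proof.

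The hardest step will be the final integrated-to-pointwise upgrade: since $\lambda\in D$ only yields differentiability (not twice-differentiability) of $\tilde\Phi$ at $t=\sqrt\lambda$, the quantity $\tilde\phi_n''(t)$ need not converge, and the argument must exploit uniform regularity (in $n$) of the pre-limit functions $V_n(s)=\tfrac{1}{n}\E\,\text{Var}_{\langle\cdot\rangle}(L_n(s^2))$ rather than any smoothness of the limit $\tilde\Phi$.
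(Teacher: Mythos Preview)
Your decomposition into thermal and disorder fluctuations, and your treatment of the disorder piece via concentration of the random convex function $\tilde\phi_n$, are both correct and match the paper's argument essentially line for line.

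The gap is in the thermal piece. Your plan to upgrade the integrated bound
\[
\int_{t-\delta}^{t+\delta} V_n(s)\,ds \;\leq\; \tilde\Phi_n'(t+\delta)-\tilde\Phi_n'(t-\delta)+O(\delta)
\]
to the pointwise statement $V_n(t)\to 0$ rests entirely on the claim that $s\mapsto V_n(s)=\tfrac{1}{n}\E\,\mathrm{Var}_{\langle\cdot\rangle}\bigl(L_n(\cdot,s^2)\bigr)$ is equi-Lipschitz in $n$. Differentiating, $\partial_s V_n$ contains the term $\tfrac{1}{n}\E\,\kappa_3^{\langle\cdot\rangle}(L_n)$, the third Gibbs cumulant of an extensive observable in a disordered mean-field system. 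Gaussian integration by parts will not reduce this to something controlled merely by the bounded support of $P_0$: after one integration by parts you still face products of three thermal fluctuations of $L_n$, and there is no a~priori $O(n)$ bound on such quantities (this is exactly the kind of regularity that can fail at or near phase transitions). Without the equi-Lipschitz property, $V_n$ could develop spikes of height $O(1)$ and width $o(1)$ around $t$, which the integrated bound does not exclude.

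The paper sidesteps this by working not with $V_n$ but with the $L^1$ quantity $f(\lambda)=\E\langle|L_n(\bx^{(1)},\lambda)-L_n(\bx^{(2)},\lambda)|\rangle_\lambda$. A replica-symmetry computation shows that the generalized derivative of $f$ is bounded by
\[
|f'(\lambda)| \;\lesssim\; K_0^4\,n \;+\; \E\big\langle\bigl(L_n(\bx^{(1)},\lambda)-L_n(\bx^{(2)},\lambda)\bigr)^2\big\rangle_\lambda \;=\; K_0^4\,n + 2n\,V_n(\lambda),
\]
i.e.\ the derivative of the $L^1$ fluctuation is controlled by the $L^2$ fluctuation, \emph{not} by a third cumulant. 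Coupling this with the trivial bound $f(\lambda)^2\leq 2n\,V_n(\lambda)$ and integrating over a small window $[\lambda_0,\lambda_0+\delta]$ yields
\[
\tfrac{1}{n}\,f(\lambda_0)\;\leq\; \sqrt{\tfrac{2\epsilon_n}{n\delta}}+C\epsilon_n+C\delta,
\qquad \epsilon_n:=\int_{\lambda_0}^{\lambda_0+\delta} V_n(l)\,dl,
\]
and $\epsilon_n$ is exactly what the convexity argument controls. Sending $n\to\infty$ and then $\delta\to 0$ gives the result with no regularity assumption on $V_n$. The moral is that you should bound the derivative of the quantity you actually care about ($f$), rather than the derivative of the auxiliary quantity ($V_n$) whose integral you control.
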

We start with the following lemma.
\begin{lemma}
	For all $\lambda \in D$,
	$$
	\frac{1}{n} \E \Big\langle | L_n(\bbf{x},\lambda) - \langle L_n(\bbf{x},\lambda) \rangle | \Big\rangle_{\lambda} \xrightarrow[n \to \infty]{} 0
	$$
\end{lemma}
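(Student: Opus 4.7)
By Cauchy--Schwarz,
\begin{equation*}
\frac{1}{n}\E\langle|L_n - \langle L_n\rangle|\rangle_\lambda \le \frac{1}{n}\sqrt{\E\langle(L_n - \langle L_n\rangle)^2\rangle_\lambda},
\end{equation*}
so the claim reduces to the thermal-variance bound $\frac{1}{n^2}\E\,\mathrm{Var}_\lambda(L_n)\to 0$. Since $L_n = 2\sqrt\lambda\,\partial_\lambda H_n$, the identity $\mathrm{Var}_\lambda(\partial_\lambda H_n) = \partial_\lambda^2\log Z_n(\lambda) - \langle\partial_\lambda^2 H_n\rangle$ yields
\begin{equation*}
\frac{1}{n^2}\E\,\mathrm{Var}_\lambda(L_n) = \frac{4\lambda}{n}F_n''(\lambda) - \frac{4\lambda}{n^2}\E\langle\partial_\lambda^2 H_n\rangle.
\end{equation*}
Because $\partial_\lambda^2 H_n = -\sum_{i<j}Z_{i,j}x_ix_j/(4\lambda^{3/2}\sqrt n)$, one Gaussian integration by parts together with $|x_i|\le K_0$ gives $|\E\langle\partial_\lambda^2 H_n\rangle| = O(n/\lambda)$, so the correction contributes only $O(1/n)$. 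Everything thus reduces to showing $F_n''(\lambda) = o(n)$ for $\lambda\in D$.

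\textbf{Bounding $F_n''(\lambda)$.} Starting from $F_n'(\lambda) = \frac{n-1}{4n}\E\langle x_1x_2\rangle^2$ (proof of Corollary~\ref{cor:limit_mmse}), differentiating once more yields
\begin{equation*}
F_n''(\lambda) = \frac{n-1}{2n}\,\E\!\big[\langle x_1x_2\rangle\,\mathrm{Cov}_\lambda(x_1x_2,\partial_\lambda H_n)\big].
\end{equation*}
I would expand $\partial_\lambda H_n = \sum_{i<j}\big[Z_{i,j}x_ix_j/(2\sqrt{\lambda n}) + x_ix_jX_iX_j/n - x_i^2x_j^2/(2n)\big]$, handle the $Z$-terms by a single Gaussian integration by parts, and reduce the planted factors $\E[X_iX_j\,\cdot\,]$ coming from the bilinear piece to pure replica expectations via the Nishimori identity (Proposition~\ref{prop:nishimori}). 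After this step, each pair $(i,j)$ contributes a factor of $\sqrt{\lambda/n}$ times an $O(1)$ combination of 3-- and 4--replica overlap correlators (uniformly bounded by powers of $K_0$). Summing the $O(n^2)$ pairs against the explicit $1/n$ and $1/\sqrt{\lambda n}$ prefactors produces a bound $|F_n''(\lambda)| = O(\sqrt n)$ uniformly on compact $\lambda$-intervals, hence $F_n''(\lambda)/n = O(n^{-1/2})\to 0$. Combined with the $O(1/n)$ correction, $\frac{1}{n^2}\E\,\mathrm{Var}_\lambda(L_n) = O(n^{-1/2})$, and Cauchy--Schwarz concludes.

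\textbf{Main obstacle.} The delicate part is the careful tracking of the Gaussian--IBP cascade. A priori, the raw sum over $O(n^2)$ pairs appears to blow up, and only the combination of (i) the bounded-prior estimate $|x_i|\le K_0$, (ii) the explicit prefactor structure inherited from $\partial_\lambda H_n$, and (iii) the Nishimori identity used to collapse $\E[X_iX_j \cdots]$ into bounded replica correlators, delivers the required $o(n)$ bound on $F_n''$. This book-keeping is the Bayesian-inference analog of the Hamiltonian-concentration computation for the SK model from Section~3.7 of~\cite{panchenko2013SK}, where a cavity-type expansion combined with bounded-disorder estimates plays the corresponding role; the extra subtlety here is the $\sqrt\lambda$-dependence of $\partial_\lambda H_n$, which produces an additional $1/\sqrt{\lambda n}$ prefactor whose compatibility with the $O(n^2)$ pair-sum must be checked.
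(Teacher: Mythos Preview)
Your reduction via Cauchy--Schwarz and the identity $\frac{1}{n^2}\E\,\mathrm{Var}_\lambda(L_n)=\frac{4\lambda}{n}F_n''(\lambda)-\frac{4\lambda}{n^2}\E\langle\partial_\lambda^2 H_n\rangle$ are correct, and the $O(1/n)$ bound on the correction term is fine. The gap is the claim $F_n''(\lambda)=O(\sqrt n)$. After Gaussian integration by parts on the $Z_{k,l}$-terms, each such term carries a factor $\frac{1}{2\sqrt{\lambda n}}\cdot\sqrt{\lambda/n}=\frac{1}{2n}$, \emph{not} $\sqrt{\lambda/n}$; and the bilinear and quartic terms already carry the explicit $1/n$. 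Summing $\binom{n}{2}$ contributions of size $O(1/n)$ gives only $F_n''(\lambda)=O(n)$, which is useless. Equivalently, the non-Gaussian part of $\partial_\lambda H_n$ equals $\frac{n}{2}(\bbf{x}.\bbf{X})^2-\frac{n}{4}\|\bbf{x}\|^4+O(1)$, so what you need to bound is $\frac{n}{2}\,\E\big[\langle x_1x_2\rangle\,\mathrm{Cov}_\lambda\big(x_1x_2,(\bbf{x}.\bbf{X})^2\big)\big]$; showing this is $o(n)$ amounts to showing that $(\bbf{x}.\bbf{X})^2$ has vanishing thermal fluctuations, which is essentially the overlap concentration you are trying to prove. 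The argument as written is circular.

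The paper avoids this by \emph{not} bounding $F_n''$ pointwise. It integrates the thermal variance over an interval $[\lambda_0,\lambda']$: writing $\phi_n(\lambda)=F_n(\lambda^2)+K_0^4\lambda^2$, one has $G_n''\ge\frac{1}{n}\E\,\mathrm{Var}_{\lambda^2}(L_n)$, so $\epsilon_n:=\frac{1}{n}\int_{\lambda_0}^{\lambda'}\E\,\mathrm{Var}_\lambda(L_n)\,d\lambda$ is controlled by $G_n'(\sqrt{\lambda'})-G_n'(\sqrt{\lambda_0})$, which by convexity and the pointwise convergence $G_n\to\psi$ is bounded by difference quotients of $\psi$. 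Differentiability of $\psi$ at $\sqrt{\lambda_0}$ (this is exactly where $\lambda_0\in D$ enters) then forces $\limsup_n\epsilon_n\to 0$ as $\lambda'\to\lambda_0$. A separate Lipschitz-type estimate on $\lambda\mapsto\E\langle|L_n(\bbf{x}^{(1)},\lambda)-L_n(\bbf{x}^{(2)},\lambda)|\rangle_\lambda$ transfers this integrated bound to the pointwise statement at $\lambda_0$. The essential idea you are missing is that convexity plus convergence of $F_n$ controls $\int F_n''$, not $F_n''$ itself; the hypothesis $\lambda\in D$ is what upgrades this to a pointwise conclusion.
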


\begin{proof}
	Let us fix $\lambda_0 \in D$ and let $\lambda' > \lambda_0$. We are going to make use of the following lemma.

	\begin{lemma}
		Let $f$ be a function such that there exists a continuous function $g$ such that, for all $t \in \R$
		$$
		\limsup_{t' \to t, t' \neq t} | \frac{f(t)-f(t')}{t-t'} | \leq g(t)
		$$
		Then for all $t_1 \leq t_2$
		$$
		| f(t_1) - f(t_2) | \leq \int_{t_1}^{t_2} g(t) dt
		$$
	\end{lemma}
	Define $f: \lambda \mapsto \E \big\langle | L_n(\bbf{x}^{(1)},\lambda) - L_n(\bbf{x}^{(2)},\lambda) | \big\rangle_{\lambda} $. Let $\lambda>\lambda_0$ be fixed. Let $\lambda' >\lambda_0$ such that $\lambda' \neq \lambda$.
	\begin{align*}
		|f(\lambda) - f(\lambda')| \leq 
		&\Big| \E \Big\langle |L_n(\bbf{x}^{(1)}, \lambda') - L_n(\bbf{x}^{(2)},\lambda') | - |L_n(\bbf{x}^{(1)}, \lambda) - L_n(\bbf{x}^{(2)},\lambda) | \Big\rangle_{\lambda'} \Big|\\
		\\
		&+ \Big| \E \big\langle |L_n(\bbf{x}^{(1)}, \lambda) - L_n(\bbf{x}^{(2)},\lambda) | \big\rangle_{\lambda'} - \E \big\langle |L_n(\bbf{x}^{(1)}, \lambda) - L_n(\bbf{x}^{(2)},\lambda) | \big\rangle_{\lambda} \Big |
	\end{align*}
	We are going to upper bound successively these two terms. For the first one
	\begin{align*}
		\Big| \E \Big\langle |L_n(\bbf{x}^{(1)}, \lambda') - L_n(\bbf{x}^{(2)},\lambda') | - |L_n(\bbf{x}^{(1)}, \lambda) - L_n(\bbf{x}^{(2)},\lambda) | \Big\rangle_{\lambda'} \Big|
		& \leq 2 \E \Big\langle |L_n(\bbf{x}^{(1)}, \lambda') - L_n(\bbf{x}^{(1)},\lambda)|  \Big\rangle_{\lambda'} 
	\end{align*}
	and
	$$
	\E \Big\langle |L_n(\bbf{x}^{(1)}, \lambda') - L_n(\bbf{x}^{(1)},\lambda)|  \Big\rangle_{\lambda'} 
	=|\sqrt{\lambda} - \sqrt{\lambda'} | \E \Big\langle | \frac{1}{n} \sum_{i<j} 2 X_i X_j x_i x_j - x_i^2 x_j^2 |  \Big\rangle_{\lambda'} 
	\leq \frac{1}{\sqrt{\lambda_0}} K_0^4 n |\lambda - \lambda'|
	$$
	So that
	$$
	\limsup_{\lambda' \to \lambda, \lambda' \neq \lambda}
	\frac{\Big| \E \Big\langle |L_n(\bbf{x}^{(1)}, \lambda') - L_n(\bbf{x}^{(2)},\lambda') | - |L_n(\bbf{x}^{(1)}, \lambda) - L_n(\bbf{x}^{(2)},\lambda) | \Big\rangle_{\lambda'} \Big|}
	{|\lambda - \lambda'|}
	\leq \frac{1}{\sqrt{\lambda_0}} K_0^4 n
	$$
	For the second term, we remark that the function $\lambda' \mapsto \E \big\langle | L_n(\bbf{x}^{(1)},\lambda) - L_n(\bbf{x}^{(2)},\lambda) | \big\rangle_{\lambda'}$ is differentiable at $\lambda$ with derivative equal to
	\begin{align*}
		\frac{1}{2 \sqrt{\lambda}} \Big( \E \big\langle | L_n(\bbf{x}^{(1)},\lambda) - L_n(\bbf{x}^{(2)},\lambda) &| (L_n(\bbf{x}^{(1)},\lambda)+ L_n(\bbf{x}^{(2)},\lambda))\big\rangle_{\lambda}
			\\
			&-
			\E \big\langle | L_n(\bbf{x}^{(1)},\lambda) - L_n(\bbf{x}^{(2)},\lambda) | \big\rangle 
			\big\langle (L_n(\bbf{x}^{(1)},\lambda)+ L_n(\bbf{x}^{(2)},\lambda))\big\rangle_{\lambda}
		\Big)
		\\
		&=
		\lambda^{-1/2} \E \big\langle | L_n(\bbf{x}^{(1)},\lambda) - L_n(\bbf{x}^{(2)},\lambda) | (L_n(\bbf{x}^{(1)},\lambda)- L_n(\bbf{x}^{(3)},\lambda))\big\rangle_{\lambda}
		\\
		&\leq \lambda^{-1/2} \E \big\langle ( L_n(\bbf{x}^{(1)},\lambda) - L_n(\bbf{x}^{(2)},\lambda) )^2 \big\rangle_{\lambda} 
	\end{align*}
	Thus for all $\lambda > \lambda_0$
	$$
	\limsup_{\lambda' \to \lambda, \lambda' \neq \lambda} | \frac{f(\lambda') - f(\lambda)}{\lambda - \lambda'} | \leq \frac{1}{\sqrt{\lambda_0}} K_0^4 n +\frac{1}{\sqrt{\lambda_0}}  \E \big\langle ( L_n(\bbf{x}^{(1)},\lambda) - L_n(\bbf{x}^{(2)},\lambda) )^2 \big\rangle_{\lambda} 
	$$
	Let $\lambda' > \lambda_0$. Using the Lemma, we have, for all $\lambda' \geq \lambda \geq \lambda_0$
	$$
	f(\lambda_0) \leq f(\lambda) + \int_{\lambda_0}^{\lambda'} \frac{1}{\sqrt{\lambda_0}} K_0^4n +\frac{1}{\sqrt{\lambda_0}} \E \big\langle ( L_n(\bbf{x}^{(1)},l) - L_n(\bbf{x}^{(2)},l) )^2 \big\rangle_{l} dl
	$$
	We integrate with respect to $\lambda$ over $[\lambda_0, \lambda']$, and write $\delta = \lambda' - \lambda_0$
	\begin{align*}
		&\delta f(\lambda_0) 
		\leq \int_{\lambda_0}^{\lambda'} f(\lambda) d\lambda + \delta \int_{\lambda_0}^{\lambda'} \frac{1}{\sqrt{\lambda_0}} K_0^4 n +\frac{1}{\sqrt{\lambda_0}}  \E \big\langle ( L_n(\bbf{x}^{(1)},\lambda) - L_n(\bbf{x}^{(2)},\lambda) )^2 \big\rangle_{\lambda} d\lambda
		\\
		&\leq \int_{\lambda_0}^{\lambda'} f(\lambda) d\lambda + 2 \delta \int_{\lambda_0}^{\lambda'} \E \big\langle ( L_n(\bbf{x}^{(1)},\lambda) - L_n(\bbf{x}^{(2)},\lambda) )^2 \big\rangle_{\lambda} d\lambda
		+\frac{1}{\sqrt{\lambda_0}} \delta^2 K_0^4 n
		\\
		&\leq \Big( \delta \int_{\lambda_0}^{\lambda'} \E \big\langle ( L_n(\bbf{x}^{(1)},\lambda) - L_n(\bbf{x}^{(2)},\lambda) )^2 \big\rangle_{\lambda} d\lambda \Big)^{1/2} 
		+ \frac{\delta}{\sqrt{\lambda_0}} \int_{\lambda_0}^{\lambda'} \E \big\langle ( L_n(\bbf{x}^{(1)},\lambda) - L_n(\bbf{x}^{(2)},\lambda) )^2 \big\rangle_{\lambda} d\lambda
		+\frac{1}{\sqrt{\lambda_0}} \delta^2 K_0^4 n
		\\
		&\leq \Big( 2 \delta \int_{\lambda_0}^{\lambda'} \E \big\langle ( L_n(\bbf{x},\lambda) - \langle L_n(\bbf{x},\lambda) \rangle_{\lambda} )^2 \big\rangle_{\lambda} d\lambda \Big)^{1/2} 
		+ 2 \frac{\delta}{\sqrt{\lambda_0}} \int_{\lambda_0}^{\lambda'} \E \big\langle ( L_n(\bbf{x},\lambda) - \langle L_n(\bbf{x},\lambda)\rangle_{\lambda} )^2 \big\rangle_{\lambda} d\lambda
		+\frac{1}{\sqrt{\lambda_0}} \delta^2 K_0^4 n
	\end{align*}
	Define $\epsilon_n = \frac{1}{n} \int_{\lambda_0}^{\lambda'} \E \big\langle ( L_n(\bbf{x},\lambda) - \langle L_n(\bbf{x},\lambda) \rangle_{\lambda} )^2 \big\rangle_{\lambda} d\lambda$. We have shown
	\begin{equation} \label{eq:ineq_epsilon}
		\frac{1}{n} \E \langle | L_n(\bbf{x}^{(1)},\lambda_0) - L_n(\bbf{x}^{(2)},\lambda_0) | \rangle_{\lambda_0} \leq \sqrt{\frac{2 \epsilon_n}{n \delta}} + \frac{2 \epsilon_n}{\sqrt{\lambda_0}} +  \frac{\delta K_0^4}{\sqrt{\lambda_0}} 
	\end{equation}
	It will be slightly more convenient to ``replace'' $\lambda$ by $\lambda^2$, we define therefore the function
	$$
	\phi_n: \lambda \mapsto \frac{1}{n}\log \int dP_0^{\otimes n}(\bbf{x}) e^{H_n(\bbf{x},\lambda^2)} + K_0^4 \lambda^2
	$$
	The derivatives of $\phi_n$ are
	\begin{align*}
		\phi_n'(\lambda) &= \frac{1}{n} \langle L_n(\bbf{x},\lambda^2) \rangle_{\lambda^2} + 2 K_0^4 \lambda \\
		\phi_n''(\lambda) &= \frac{1}{n} \big( \langle L_n(\bbf{x},\lambda^2)^2 \rangle_{\lambda^2} - \langle L_n(\bbf{x},\lambda^2) \rangle_{\lambda^2}^2 \big) + \frac{1}{n^2} \sum_{i<j} \langle 2 x_i x_j X_i X_j - x_i^2 x_j^2 \rangle + 2 K_0^4
	\end{align*}
	Define $G_n : \lambda \mapsto \E \phi_n(\lambda)$, then
	$$\forall \lambda > 0, \  G_n''(\lambda) \geq \frac{1}{n} \E \big( \langle L_n(\bbf{x},\lambda^2)^2 \rangle_{\lambda^2} - \langle L_n(\bbf{x},\lambda^2) \rangle_{\lambda^2}^2 \big)$$
	Therefore
	\begin{align*}
		G_n'(\sqrt{\lambda'}) - G_n'(\sqrt{\lambda_0}) = \int_{\sqrt{\lambda_0}}^{\sqrt{\lambda'}} G_n''(\lambda) d\lambda 
		&\geq 
		\frac{1}{n} \int_{\sqrt{\lambda_0}}^{\sqrt{\lambda'}} \E \big( \langle L_n(\bbf{x},\lambda^2)^2 \rangle_{\lambda^2} - \langle L_n(\bbf{x},\lambda^2) \rangle_{\lambda^2}^2 \big) d\lambda
		\\
		&=
		\frac{1}{n} \int_{\lambda_0}^{\lambda'} \frac{1}{2 \sqrt{\lambda}} \E \big( \langle L_n(\bbf{x},\lambda)^2 \rangle_{\lambda} - \langle L_n(\bbf{x},\lambda) \rangle_{\lambda}^2 \big) d\lambda
		\\
		&\geq \frac{1}{2 \sqrt{\lambda'}} \epsilon_n
	\end{align*}
	By convexity of $G_n$, we have, for all $y>0$
	$$
	G_n'(\sqrt{\lambda'}) - G_n'(\sqrt{\lambda_0}) \leq \frac{G_n(\sqrt{\lambda'} + y) - G_n(\sqrt{\lambda'})}{y} - \frac{G_n(\sqrt{\lambda_0}) - G_n(\sqrt{\lambda_0} - y)}{y}
	$$
	$G_n$ converge to $\psi: \lambda \mapsto \phi(\lambda^2) + K_0^4 \lambda^2$ 
	$$
	\limsup_{n \to \infty} \epsilon_n \leq 
	2 \sqrt{\lambda'} (\frac{\psi(\sqrt{\lambda'} + y) - \psi(\sqrt{\lambda'})}{y} - \frac{\psi(\sqrt{\lambda_0}) - \psi(\sqrt{\lambda_0} - y)}{y}
	)$$
	Thus, taking the limsup in $n$ in equation~\eqref{eq:ineq_epsilon}:
	$$
	\limsup_{n \to \infty} \frac{1}{n} \E \langle | L_n(\bbf{x}^{(1)},\lambda_0) - L_n(\bbf{x}^{(2)},\lambda_0) | \rangle_{\lambda_0} 
	\leq 4 \frac{\sqrt{\lambda'}}{\sqrt{\lambda_0}} (\frac{\psi(\sqrt{\lambda'} + y) - \psi(\sqrt{\lambda'})}{y} - \frac{\psi(\sqrt{\lambda_0}) - \psi(\sqrt{\lambda_0} - y)}{y})+ \frac{\delta K_0^4}{\sqrt{\lambda_0}}
	$$
	We let then $\lambda' \to \lambda_0$ ($\delta \to 0$).
	$$
	\limsup_{n \to \infty} \frac{1}{n} \E \langle | L_n(\bbf{x}^{(1)},\lambda_0) - L_n(\bbf{x}^{(2)},\lambda_0) | \rangle_{\lambda_0} 
	\leq 4 (\frac{\psi(\sqrt{\lambda_0} + y) - \psi(\sqrt{\lambda_0})}{y} - \frac{\psi(\sqrt{\lambda_0}) - \psi(\sqrt{\lambda_0} - y)}{y})
	$$
	By hypothesis, $\psi$ is differentiable in $\sqrt{\lambda_0}$, hence, by letting $y \to 0$, 
	$\frac{1}{n} \E \langle | L_n(\bbf{x}^{(1)},\lambda_0) - L_n(\bbf{x}^{(2)},\lambda_0) | \rangle_{\lambda_0} \xrightarrow[n \to \infty]{} 0$, which means
	$$
	\frac{1}{n} \E \langle | L_n(\bbf{x},\lambda_0) - \langle L_n(\bbf{x},\lambda_0) \rangle_{\lambda_0} | \rangle_{\lambda_0} \xrightarrow[n \to \infty]{} 0
	$$
\end{proof}
\\

\begin{lemma}
	For all $\lambda \in D$,
	$$
	\frac{1}{n} \E | \langle L_n(\bbf{x},\lambda) \rangle_{\lambda} - \E \langle L_n(\bbf{x},\lambda) \rangle_{\lambda} | \xrightarrow[n \to \infty]{} 0
	$$
\end{lemma}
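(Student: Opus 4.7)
The plan is to recast the statement as the convergence $\E|\phi_n'(\mu_0) - G_n'(\mu_0)| \to 0$ for the convex function
$$\phi_n(\mu) = \frac{1}{n}\log \int dP_0^{\otimes n}(\bbf{x}) e^{H_n(\bbf{x},\mu^2)} + K_0^4\mu^2$$
introduced in the proof of the previous lemma, where $\mu_0 := \sqrt{\lambda}$ and $G_n := \E \phi_n$. Indeed, the formula $\phi_n'(\mu) = \frac{1}{n}\langle L_n(\bbf{x},\mu^2)\rangle_{\mu^2} + 2K_0^4\mu$ recorded there yields
$$\phi_n'(\mu_0) - G_n'(\mu_0) = \frac{1}{n}\bigl(\langle L_n(\bbf{x},\lambda)\rangle_\lambda - \E\langle L_n(\bbf{x},\lambda)\rangle_\lambda\bigr),$$
which is exactly the quantity we want to bound in $L^1$.

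Both $\phi_n$ and $G_n$ are convex on $(0,\infty)$: the expression of $\phi_n''$ from the previous lemma gives $\phi_n'' \geq \tfrac{K_0^4}{2}$ after bounding $|2 x_i x_j X_i X_j - x_i^2 x_j^2| \leq 3 K_0^4$ termwise. I would then use this double convexity to sandwich each derivative between left and right divided differences. Writing $h_n := \phi_n - G_n$ and combining the four convexity inequalities of the form $\phi_n'(\mu_0) \leq (\phi_n(\mu_0+\delta)-\phi_n(\mu_0))/\delta$ and $G_n'(\mu_0) \geq (G_n(\mu_0)-G_n(\mu_0-\delta))/\delta$ (and their symmetric counterparts) yields, for every $\delta > 0$,
$$|\phi_n'(\mu_0)-G_n'(\mu_0)| \leq 2\,\Delta_n(\delta) + \frac{|h_n(\mu_0+\delta)-h_n(\mu_0)|+|h_n(\mu_0-\delta)-h_n(\mu_0)|}{\delta},$$
where $\Delta_n(\delta) := \frac{G_n(\mu_0+\delta)-2G_n(\mu_0)+G_n(\mu_0-\delta)}{\delta}$ is a deterministic second-order finite difference.

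The two pieces are then controlled separately, letting first $n \to \infty$ and then $\delta \to 0$. For the random part: $\log Z_n(\mu^2)$ is Lipschitz in each $Z_{i,j}$ with constant $O(1/\sqrt{n})$ and satisfies $O(1/n)$ bounded differences in each coordinate of $\bbf{X}$, so the Gaussian Poincar\'e and McDiarmid inequalities combine to give $\Var(\phi_n(\mu)) = O(1/n)$ and hence $\E|h_n(\mu)| \to 0$ for every fixed $\mu$. For the deterministic part, Theorem~\ref{th:rs_formula} gives $G_n(\mu) = F_n(\mu^2) + K_0^4\mu^2 \to \psi(\mu) := \phi(\mu^2) + K_0^4\mu^2$ pointwise, and since $\lambda \in D$ the function $\phi$ is differentiable at $\mu_0^2 = \lambda$, so $\psi$ is differentiable at $\mu_0$; consequently $\limsup_n \Delta_n(\delta) = \frac{\psi(\mu_0+\delta)-2\psi(\mu_0)+\psi(\mu_0-\delta)}{\delta} \xrightarrow[\delta \to 0]{} 0$. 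The only mildly delicate point is the ordering of the two limits; once the concentration $\E|h_n(\mu)| \to 0$ is in hand at the three relevant points, the argument is the standard one showing that derivatives of random convex functions concentrate at every point where the limiting function is differentiable, and the conclusion follows.
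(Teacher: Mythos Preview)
Your proposal is correct and follows essentially the same approach as the paper: both recast the claim as $\E|\phi_n'(\mu_0)-G_n'(\mu_0)|\to 0$, sandwich the derivative between divided differences via convexity, control the random remainder $h_n=\phi_n-G_n$ through Gaussian-Lipschitz plus bounded-differences concentration, and handle the deterministic piece using $G_n\to\psi$ together with the differentiability of $\psi$ at $\mu_0=\sqrt{\lambda}$. The only cosmetic differences are that the paper writes the deterministic term as $\bigl|\tfrac{G_n(\mu_0\pm y)-G_n(\mu_0)}{y}-G_n'(\mu_0)\bigr|$ rather than your second-order difference $\Delta_n(\delta)$, and cites Talagrand's Lipschitz concentration and Efron--Stein in place of Poincar\'e and McDiarmid, but these are interchangeable.
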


\begin{proof}
	Let $\lambda_0 \in D$.
	We are going to reuse the functions $\phi_n$ and $G_n$ defined in the proof of the previous Lemma. Notice that $\phi_n$ and $G_n$ are both convex, differentiable functions. We have
	\begin{equation} \label{eq:control_by_deriv}
		\frac{1}{n} \E | \langle L_n(\bbf{x},\lambda_0) \rangle - \E \langle L_n(\bbf{x},\lambda_0) \rangle | 
		= \E | \phi_n'(\sqrt{\lambda_0}) - G_n'(\sqrt{\lambda_0}) |
	\end{equation}
	Let $y >0$ and define $\delta_n(y) = | \phi_n(\sqrt{\lambda_0} - y) - G_n(\sqrt{\lambda_0}-y)| +| \phi_n(\sqrt{\lambda_0} ) - G_n(\sqrt{\lambda_0})| +| \phi_n(\sqrt{\lambda_0} + y) - G_n(\sqrt{\lambda_0}+y)|$. By convexity of $\phi_n$
	\begin{align}
		\phi_n'(\sqrt{\lambda_0}) - G_n'(\sqrt{\lambda_0}) 
		&\leq \frac{\phi_n(\sqrt{\lambda_0} + y) - \phi_n(\sqrt{\lambda_0})}{y} - G_n'(\sqrt{\lambda_0}) \nonumber
		\\
		&\leq \Big| \frac{G_n(\sqrt{\lambda_0} + y) - G_n(\sqrt{\lambda_0})}{y} - G_n'(\sqrt{\lambda_0}) \Big| + \frac{\delta_n(y)}{y} \label{eq:ineq_convex1}
	\end{align}
	Analogously,
	\begin{equation} \label{eq:ineq_convex2}
		\phi_n'(\sqrt{\lambda_0}) - G_n'(\sqrt{\lambda_0}) 
		\geq \Big| \frac{G_n(\sqrt{\lambda_0}) - G_n(\sqrt{\lambda_0} - y)}{y} - G_n'(\sqrt{\lambda_0}) \Big| - \frac{\delta_n(y)}{y}
	\end{equation}
	Thus, combining~\eqref{eq:ineq_convex1} and~\eqref{eq:ineq_convex2},
	\begin{align}
		\E | \phi_n'(\sqrt{\lambda_0}) - G_n'(\sqrt{\lambda_0}) |
		\leq&
		\big| \frac{G_n(\sqrt{\lambda_0} + y) - G_n(\sqrt{\lambda_0})}{y} - G_n'(\sqrt{\lambda_0}) \big|
		\nonumber \\
		+& \big| \frac{G_n(\sqrt{\lambda_0}) - G_n(\sqrt{\lambda_0} -y)}{y} - G_n'(\sqrt{\lambda_0}) \big| + \frac{\E \delta_n(y)}{y} \label{eq:control_convex}
	\end{align}
	We are going to show that for $y \in [- \sqrt{\lambda_0} / 2, \sqrt{\lambda_0} / 2]$, $\delta_n(y) \xrightarrow[n \to \infty]{} 0$. Define 
	$$
	v_n = \sup_{\lambda \in [\sqrt{\lambda_0}/2, 2 \sqrt{\lambda_0}]} \E | \phi_n(\lambda) - \E \phi_n(\lambda)| = \sup_{\lambda \in [\lambda_0/4, 4 \lambda_0]} \E | \frac{1}{n} \log \int dP_0^{\otimes n} e^{H_n(\bbf{x},\lambda)} - F_n(\lambda)|
	$$
	\begin{lemma}
		$$
		v_n= O(n^{-1/2})
		$$
	\end{lemma}

	\begin{proof}
		Let $\lambda \in [\lambda_0/4, 4 \lambda_0]$. Let us decompose $H_n(\bbf{x},\lambda) = h_1(\bbf{x}) + h_2(\bbf{x})$ where
		\begin{align*}
			h_1(\bbf{x}) &= \sqrt{\frac{\lambda}{n}} \sum_{i<j} Z_{i,j} x_i x_j \\
			h_2(\bbf{x}) &= \sum_{i<j} \frac{\lambda}{n} x_i x_j X_i X_j - \frac{\lambda}{2n} x_i^2 x_j^2
		\end{align*}
		We are going to use the following theorem from~\cite{talagrand2010meanfield1} (Theorem 1.3.4).

		\begin{theorem}[\cite{talagrand2010meanfield1}, Theorem 1.3.4] \label{th:concentration_lip}
			Consider a Lipschitz function $F$ on $R^M$ with Lipschitz constant $\leq A$. Let $g_1, \dots, g_M$ be independent standard random variables and $\bbf{g} = (g_1, \dots, g_M)$.
			Then we have for each $t > 0$
			$$
			\PP\left(|F(\bbf{g}) - \E F(\bbf{g})| < t\right) \leq 2 \exp\left(-\frac{t^2}{4A^2}\right)
			$$
			and consequently
			$$
			\E (F(\bbf{g}) - \E F(\bbf{g}))^2 \leq 8 A^2
			$$
		\end{theorem}
		Consider $\bbf{X}$ to be fixed. We apply then this Theorem with 
		$$
		F: (z_{i,j})_{1\leq i <j\leq n} \mapsto \log \int dP_0^{\otimes n}(\bbf{x}) \exp\left( \sum_{i<j} \sqrt{\frac{\lambda}{n}}z_{i,j} x_i x_j  + \frac{\lambda}{n} x_i x_j X_i X_j - \frac{\lambda}{2n} x_i^2 x_j^2 \right)
		$$
		$F$ is $\sqrt{\lambda n} K_0^2$-Lipschitz because for all $i<j$
		$$
		\left| \frac{\partial F}{\partial z_{i,j}}(\bbf{z}) \right| = \sqrt{\frac{\lambda}{n}} \left| \langle x_i x_j \rangle \right|
		\leq \sqrt{\frac{\lambda}{n}} K_0^2
		$$
		If we denote by $\E_{\bbf{Z}}$ the expectation with respect to $\bbf{Z}$ only, Theorem~\ref{th:concentration_lip} gives that for all values of $\bbf{X}$
		$$
		\E_{\bbf{Z}} \left( \log \int dP_0^{\otimes n}(\bbf{x}) e^{H_n(\bbf{x},\lambda)} - \E_{\bbf{Z}} \log \int dP_0^{\otimes n}(\bbf{x}) e^{H_n(\bbf{x},\lambda)} \right)^2 \leq 8 K_0^4 \lambda n
		$$
		and thus $\E \Big( \log \int dP_0^{\otimes n}(\bbf{x}) e^{H_n(\bbf{x},\lambda)} - \E_{\bbf{Z}} \log \int dP_0^{\otimes n}(\bbf{x}) e^{H_n(\bbf{x},\lambda)} \Big)^2 \leq 8 K_0^4 \lambda n$.
		\\

		We are now going to show that $\E_{\bbf{Z}} \log \int dP_0^{\otimes n}(\bbf{x}) e^{H_n(\bbf{x},\lambda)}$ concentrates around its expectation (with respect to $\bbf{X}$). $\E_{\bbf{Z}} \log \int dP_0^{\otimes n}(\bbf{x}) e^{H_n(\bbf{x},\lambda)}$ is a function of $\bbf{X}$. We can easily verify that this function has ``the bounded differences property'' (see~\cite{boucheron2013concentration}, section 3.2) because $\bbf{X}$ has bounded support. Then Corollary 3.2 from~\cite{boucheron2013concentration} (which is a consequence of the Efron-Stein inequality) gives
		$$
		\E \left( \E_{\bbf{Z}} \log \int dP_0^{\otimes n}(\bbf{x}) e^{H_n(\bbf{x},\lambda)} - \E \log \int dP_0^{\otimes n}(\bbf{x}) e^{H_n(\bbf{x},\lambda)}\right)^2 \leq C n
		$$
		where $C$ is a constant that does not depend on $\lambda \in [\lambda_0 / 4, 4 \lambda_0]$. We conclude that $v_n = O(n^{-1/2})$.
	\end{proof}
	\\

	We suppose now that $y \in [- \sqrt{\lambda_0} / 2, \sqrt{\lambda_0}/2]$. Then $\delta_n(y) \xrightarrow[n \to +\infty]{} 0$. Recall that $G_n$ converges to $\psi: \lambda \mapsto \phi(\lambda^2) + 2 K_0^4 \lambda^2$ which is also convex and differentiable in $\sqrt{\lambda_0}$ (remember that $\lambda_0 \in D$). Then, Lemma~\ref{lem:deriv_convex} gives that $G_n'(\sqrt{\lambda_0}) \to \psi'(\sqrt{\lambda_0})$. Letting $n \to \infty$ in~\eqref{eq:control_convex}
	\begin{align*}
		\limsup_{n \to \infty} \E | \phi_n'(\sqrt{\lambda_0}) - G_n'(\sqrt{\lambda_0}) |
		\leq&
		\big| \frac{\psi(\sqrt{\lambda_0} + y) - \psi(\sqrt{\lambda_0})}{y} - \psi'(\sqrt{\lambda_0}) \big|
		\\
		+& \big| \frac{\psi(\sqrt{\lambda_0}) - \psi(\sqrt{\lambda_0} -y)}{y} - \psi'(\sqrt{\lambda_0}) \big| 
	\end{align*}
	We let then $y \to 0$: by differentiability of $\psi$ in $\sqrt{\lambda_0}$, the right-hand side of the previous inequality goes to $0$. We conclude using~\eqref{eq:control_by_deriv}.
\end{proof}
\\

We are now able to prove Theorem~\ref{th:overlap_concentration_without_perturbation}.
Let $\lambda \in D$.
$$
\frac{1}{n} \Big| \E \langle L_n(\bbf{x}^{(1)}) (\bbf{x}^{(1)}.\bbf{x}^{(2)})^2 \rangle - \E\langle (\bbf{x}^{(1)}.\bbf{x}^{(2)})^2 \rangle \E \langle L_n(\bbf{x}^{(1)}) \rangle \Big| \leq \frac{1}{n} K_0^4 \E \Big\langle \big| L_n(\bbf{x}) - \E \langle L_n(\bbf{x}) \rangle \big| \Big\rangle \xrightarrow[n \to \infty]{} 0
$$
Compute
$$
\frac{1}{n} \E \langle L_n(\bbf{x}) \rangle = \sqrt{\lambda} \frac{1}{n^2} \E \langle \sum_{i<j} x_i^{(1)} x_i^{(2)} x_j^{(1)} x_j^{(2)} \rangle = \frac{\sqrt{\lambda}}{2} \E \langle (\bbf{x}^{(1)}.\bbf{x}^{(2)})^2 \rangle + O(\frac{1}{n})
$$
Therefore $\E\langle (\bbf{x}^{(1)}.\bbf{x}^{(2)})^2 \rangle \E \langle L(\bbf{x}^{(1)}) \rangle = \frac{\sqrt{\lambda}}{2} (\E \langle (\bbf{x}^{(1)}.\bbf{x}^{(2)})^2 \rangle)^2$. Moreover, using Gaussian integration by parts and the Nishimori property,
\begin{align*}
	\E  \langle L_n(\bbf{x}^{(1)})(\bbf{x}^{(1)}.\bbf{x}^{(2)})^2 \rangle 
	&= \sqrt{\lambda}\E \langle (\bbf{x}^{(1)}.\bbf{X})^2 (\bbf{x}^{(1)}.\bbf{x}^{(2)})^2 \rangle + \frac{1}{n^{3/2}} \sum_{i<j} \E Z_{i,j} \langle x^{(1)}_i x^{(1)}_j (\bbf{x}^{(1)}.\bbf{x}^{(2)})^2 \rangle 
	\\
	&- \frac{\sqrt{\lambda}}{n^2} \sum_{i<j} \E \langle (x^{(1)}_i x^{(1)}_j)^2 (\bbf{x}^{(1)}.\bbf{x}^{(2)})^2 \rangle +O(\frac{1}{n}) \\
	&= O(\frac{1}{n}) + \sqrt{\lambda} \Big( \E \big\langle (\bbf{x}^{(1)}.\bbf{X})^2 (\bbf{x}^{(1)}.\bbf{x}^{(2)})^2 \big\rangle 
	\\
	&+ \sum_{i<j} \E \Big[ \big\langle (x^{(1)}_i x^{(1)}_j x^{(2)}_i x^{(2)}_j)(\bbf{x}^{(1)}.\bbf{x}^{(2)})^2 \big\rangle - \big\langle (x^{(1)}_i x^{(1)}_j x^{(3)}_i x^{(3)}_j + x^{(1)}_i x^{(1)}_j x^{(4)}_i x^{(4)}_j)(\bbf{x}^{(1)}.\bbf{x}^{(2)})^2 \big\rangle  \Big]
\Big)
\\
&= \frac{\sqrt{\lambda}}{2} \E \big\langle (\bbf{x}^{(1)} \bbf{x}^{(2)})^2 (\bbf{x}^{(1)}.\bbf{x}^{(2)})^2 \big\rangle + O(\frac{1}{n}) 
\\
&= \frac{\sqrt{\lambda}}{2} \E \langle (\bbf{x}^{(1)}.\bbf{x}^{(2)})^4 \rangle + O(\frac{1}{n})
\end{align*}
Hence $\E \Big\langle \big( (\bbf{x}^{(1)}.\bbf{x}^{(2)})^2 - \E \langle (\bbf{x}^{(1)}.\bbf{x}^{(2)})^2 \rangle \big)^2 \Big\rangle \xrightarrow[n \to \infty]{} 0$. Corollary~\ref{cor:limit_mmse} leads then to the theorem statement.

\section{Extension to general multidimensional input distributions}\label{sec:gen}

In this section, we generalize the results of Section~\ref{sec:rs_formula} to any probability distributions $P_0$ over $\R^k$ ($k \in \N^*$ is fixed) that has a finite second moment: $\E_{P_0} \|\bbf{X}\|^2 < \infty$.

\subsection{Main results}

Let $P_0$ be a probability distribution over $\R^k$ that admits a finite second moment. Consider the following Gaussian observation channel
$$
Y_{i,j} = \sqrt{\frac{\lambda}{n}} \bbf{X}_i^{\intercal} \bbf{X}_j + Z_{i,j} \ \text{for } 1 \leq i<j \leq n
$$
where $\bbf{X}_i \overset{\text{\tiny i.i.d.}}{\sim}  P_0$ and $Z_{i,j} \overset{\text{\tiny i.i.d.}}{\sim}  \mathcal{N}(0,1)$. We write $P_0^{\otimes n}= P_0 \otimes P_0 \otimes \dots \otimes P_0$ ($n$ times).
\\

We will prove in the beginning of Section~\ref{sec:proof_multidim} that the mutual information for this Gaussian channel is
$$
I(\bbf{X},\bbf{Y}) = - \E \Big[ \log \int d P_0^{\otimes n}(\bbf{x}) \exp( \sum_{i<j} \bbf{x}_i^{\intercal} \bbf{x}_j \sqrt{\frac{\lambda}{n}} Z_{i,j} - \frac{\lambda}{2n}(\bbf{x}_i^{\intercal} \bbf{x}_j - \bbf{X}_i^{\intercal} \bbf{X}_j)^2 ) \Big]
$$
We are interested in computing the limit of $\frac{1}{n} I(\bbf{X},\bbf{Y})$. To do so, it will be more convenient to consider the free energy.  Let us define the random Hamiltonian $H_n(\bbf{x}) = \sum_{i<j} \bbf{x}_i^{\intercal} \bbf{x}_j \sqrt{\frac{\lambda}{n}} (Z_{i,j} + \sqrt{\frac{\lambda}{n}} \bbf{X}_i^{\intercal} \bbf{X}_j ) - \frac{\lambda}{2n} (\bbf{x}_i^{\intercal} \bbf{x}_j)^2$. 
We define the partition function as
$$
Z_n =  \int d P_0^{\otimes n}(\bbf{x}) \ e^{H_n(\bbf{x})}
$$
and the free energy as
$$
F_n = \frac{1}{n} \E \log Z_n
$$
We will express the limit of $F_n$ using the following function
$$
\mathcal{F}: (\lambda,\bbf{q}) \in \R \times S_k^+ \mapsto -\frac{\lambda}{4}\|\bbf{q}\|^2 + \E \log \left( \int dP_0(\bbf{x}) \exp(\sqrt{\lambda} (\bbf{Z}^{\intercal} \bbf{q}^{1/2} \bbf{x})+ \lambda \bbf{x}^{\intercal} \bbf{q} \bbf{X} - \frac{\lambda}{2}\bbf{x}^{\intercal} \bbf{q} \bbf{x})\right)
$$
where $S_k^+$ denote the set of $k \times k$ symmetric positive-semidefinite matrices and $\bbf{Z} \sim \mathcal{N}(0,\bbf{I}_k)$ and $\bbf{X} \sim P_0$ are independent random variables.

\begin{theorem}[Replica-Symmetric formula, general case] \label{th:rs_formula_multidim}
	\begin{align*}
		\lim_{n \rightarrow + \infty} F_n &= \sup_{\bbf{q} \in S_k^+} \mathcal{F}(\lambda,\bbf{q})
	\end{align*}
\end{theorem}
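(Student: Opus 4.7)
The strategy will mirror the four-step argument used for Theorem~\ref{th:rs_formula}---Guerra interpolation, partial revelation of the signal, Aizenman-Sims-Starr cavity computation, and overlap concentration---with scalar overlaps $\bbf{x}.\bbf{X}\in\R$ upgraded to matrix overlaps $\bbf{x}.\bbf{X}^{\intercal}:=\frac{1}{n}\sum_i \bbf{x}_i\bbf{X}_i^{\intercal}\in\R^{k\times k}$, the role of $q\ge 0$ played by $\bbf{q}\in S_k^+$, and with a truncation step prepended to handle priors satisfying only $\E_{P_0}\|\bbf{X}\|^2<\infty$. For $K>0$, letting $P_0^K$ be the renormalized restriction of $P_0$ to $\{\|\bbf{X}\|\le K\}$ and coupling both models through the same Gaussian noise, one bounds the $L^2$ norm of the Hamiltonian difference by a quantity controlled by $\E_{P_0}[\|\bbf{X}\|^4\,1\{\|\bbf{X}\|>K\}]$; a Jensen plus Cauchy-Schwarz argument then gives $|F_n-F_n^K|\le \rho(K)$ (uniformly in $n$), and an analogous estimate on the variational side, reducing the problem to compactly supported $P_0$. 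The factor $\bbf{q}^{1/2}$ in $\Fcal$ arises naturally because a centered Gaussian vector with covariance $\bbf{q}$ has the law of $\bbf{q}^{1/2}\bbf{Z}$ with $\bbf{Z}\sim\mathcal N(0,I_k)$.

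\textbf{Lower bound.} Fix $\bbf{q}\in S_k^+$. I would interpolate, as in Proposition~\ref{prop:guerra_bound}, between the full channel ($t=1$) and $n$ decoupled channels $\bbf{Y}_i'=\bbf{q}^{1/2}\bbf{X}_i+\bbf{Z}_i'$ with $\bbf{Z}_i'\sim\mathcal N(0,I_k)$ i.i.d.\ ($t=0$). Gaussian integration by parts combined with the multidimensional Nishimori identity (Proposition~\ref{prop:nishimori}, which applies verbatim in any polish space) produces the matrix analogue
\[
\psi'(t)=\tfrac{\lambda}{4}\E\big\langle \|\bbf{x}.\bbf{X}^{\intercal}-\bbf{q}\|^2\big\rangle_t-\tfrac{\lambda}{4}\|\bbf{q}\|^2\ \ge\ -\tfrac{\lambda}{4}\|\bbf{q}\|^2,
\]
and integrating from $0$ to $1$ yields $\liminf_n F_n\ge\Fcal(\lambda,\bbf{q})$ for every $\bbf{q}\in S_k^+$.

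\textbf{Upper bound.} I would run Sections~\ref{sec:small_perturbation}--\ref{sec:proof_main_estimate} in parallel, independently revealing each full vector $\bbf{X}_i$ with probability $\epsilon_n=n^{-1/2}\epsilon$ and replacing scalar quantities by their vector-valued analogues. Proposition~\ref{prop:approximation_f_n_epsilon} adapts because only bounded support and Jensen are used in its proof, and the Aizenman-Sims-Starr scheme then produces $\limsup F_n\le\limsup\E_\epsilon A_n$ for an $A_n$ built from a cavity spin in $\R^k$. The vector-valued version of Proposition~\ref{prop:overlap}, whose proof (Pinsker plus the lemma of~\cite{andrea2008estimating}) is dimension-independent, forces the symmetric positive-semidefinite matrix
\[
\bbf{Q}=\frac{1}{n}\sum_{i=1}^n\langle\bbf{x}_i\rangle\langle\bbf{x}_i\rangle^{\intercal}\in S_k^+
\]
to represent the common limit of $\bbf{x}^{(1)}.\bbf{x}^{(2)\intercal}$ and $\bbf{x}.\bbf{X}^{\intercal}$. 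Replacing the scalar Lemmas~\ref{lem:esp_comp1}--\ref{lem:esp_comp2} by their vector-valued analogues yields the main estimate $\E_\epsilon|A_n-\E\Fcal_{\epsilon_n}(\lambda,\bbf{Q})|\to 0$; the crucial point is that $\frac{1}{\sqrt n}\sum_i\langle\bbf{x}_i\rangle\bbf{Z}_i'^{\intercal}$ has covariance $\bbf{Q}$ and is therefore distributed as $\bbf{Q}^{1/2}\bbf{Z}_0$ with $\bbf{Z}_0\sim\mathcal N(0,I_k)$, reproducing exactly the $\bbf{q}^{1/2}$ appearing in the definition of $\Fcal$.

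\textbf{Main obstacles.} The genuinely new difficulty is matrix bookkeeping: each cavity identity becomes a quadratic form in $\bbf{q}^{1/2}$ with a Frobenius norm in place of a square, the Lipschitz estimates of Lemma~\ref{lem:f_1_lipschitz} must be redone on $S_k^+$-valued arguments, and the envelope-style step of Proposition~\ref{prop:derivative_phi} must be carried out on the cone $S_k^+$. A convenient preliminary is to show that the supremum defining $\Fcal(\lambda,\cdot)$ is attained on the compact set $\{\bbf{q}\in S_k^+:\|\bbf{q}\|\le\|\E_{P_0}\bbf{X}\bbf{X}^{\intercal}\|\}$, which both makes the variational problem well-posed and lets the truncation estimate from the first paragraph pass uniformly through the supremum. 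Once these pieces are in place, the convergence $\MMSE_n(\lambda)\to\|\E_{P_0}\bbf{X}\bbf{X}^{\intercal}\|^2-q^*(\lambda)^2$ follows from the identity $\partial_\lambda F_n = \frac{1}{4n^2}\E\langle(\sum_{i<j}\bbf{x}_i^{\intercal}\bbf{x}_j\bbf{X}_i^{\intercal}\bbf{X}_j)\rangle$ together with a matrix envelope theorem showing that, at points of differentiability of $\lambda\mapsto\sup_{\bbf{q}}\Fcal(\lambda,\bbf{q})$, all maximizers share a common Frobenius norm $q^*(\lambda)$.
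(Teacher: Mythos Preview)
Your overall architecture matches the paper's: Guerra interpolation for the lower bound, perturbation + Aizenman--Sims--Starr + overlap concentration for the upper bound, preceded by a reduction step to handle general priors. The matrix-overlap bookkeeping you sketch is also what the paper does in Section~\ref{sec:gen}.

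There is, however, a genuine gap in your reduction step. You propose truncating $P_0$ to the ball $\{\|\bbf{X}\|\le K\}$, obtaining a \emph{compactly supported} prior, and then running the rest of the argument. But the overlap concentration (your ``vector-valued version of Proposition~\ref{prop:overlap}'') rests on the lemma from~\cite{andrea2008estimating}, whose bound reads $2H(P_0)/n$ with $H(P_0)$ the Shannon entropy of $P_0$. For a continuous prior, even with compact support, $H(P_0)=+\infty$ and the bound is vacuous. The paper therefore does not merely truncate: it \emph{discretizes} each coordinate onto a grid $\{iK_0/m:\,|i|\le m\}$, producing a prior with finite support (Section~6.2.2), and only then invokes the perturbation/overlap machinery. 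Your plan needs this discretization step inserted between the truncation and the cavity computation.

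A secondary issue concerns the truncation estimate itself. Bounding the $L^2$ norm of the Hamiltonian difference naturally produces a quantity like $\E_{P_0}[\|\bbf{X}\|^4\,1\{\|\bbf{X}\|>K\}]$, which requires a finite fourth moment---but the hypothesis is only $\E_{P_0}\|\bbf{X}\|^2<\infty$. The paper avoids this by interpolating between the two models (Lemma~6.3): the derivative $\phi'(t)$ is $\frac{\lambda(n-1)}{4n}\E\langle \bbf{x}_1^{\intercal}\bbf{x}_2\bbf{X}_1^{\intercal}\bbf{X}_2-\bar{\bbf{x}}_1^{\intercal}\bar{\bbf{x}}_2\bar{\bbf{X}}_1^{\intercal}\bar{\bbf{X}}_2\rangle_t$, and the Nishimori identity lets one replace Gibbs averages of squared quantities by expectations under $P_0$, so that after Cauchy--Schwarz the bound depends only on $\E\|\bbf{X}\|^2$ and the second moment of $\bbf{X}-\bar{\bbf{X}}$. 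Your cruder Jensen/Cauchy--Schwarz route does not exploit Nishimori and would fail under the stated hypothesis.
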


The Replica-Symmetric formula allows us to compute the limit of the mutual information.

\begin{corollary} \label{cor:rs_information_multidim}
	$$
	\lim_{n \rightarrow + \infty} \frac{1}{n}  I(\bbf{X},\bbf{Y}) = \frac{\lambda \|\E_{P_0} (\bbf{X} \bbf{X}^{\intercal})\|^2}{4} - \sup_{\bbf{q} \in S_k^+} \mathcal{F}(\lambda,\bbf{q})
	$$
\end{corollary}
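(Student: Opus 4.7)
The strategy is to mimic the rank-one argument from Corollary~\ref{cor:rs_information}: rewrite $\frac{1}{n}I(\bbf{X},\bbf{Y})$ as $-F_n$ plus an explicit deterministic correction that converges to $\frac{\lambda}{4}\|\E_{P_0}(\bbf{X}\bbf{X}^\intercal)\|^2$, and then invoke Theorem~\ref{th:rs_formula_multidim} to identify the limit of $F_n$. The only genuinely new content of the corollary, compared with the scalar case, is the computation of the second moment of $\bbf{X}_i^\intercal \bbf{X}_j$ in terms of the matrix $\E_{P_0}[\bbf{X}\bbf{X}^\intercal]$.

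Concretely, I would start from the expression
$$I(\bbf{X},\bbf{Y}) = - \E \log \int dP_0^{\otimes n}(\bbf{x}) \exp\!\Big( \sum_{i<j} \bbf{x}_i^{\intercal} \bbf{x}_j \sqrt{\tfrac{\lambda}{n}} Z_{i,j} - \tfrac{\lambda}{2n}(\bbf{x}_i^{\intercal} \bbf{x}_j - \bbf{X}_i^{\intercal} \bbf{X}_j)^2 \Big)$$
recalled just before Theorem~\ref{th:rs_formula_multidim}, and expand the quadratic term as
$$-\tfrac{\lambda}{2n}(\bbf{x}_i^{\intercal} \bbf{x}_j - \bbf{X}_i^{\intercal} \bbf{X}_j)^2 = -\tfrac{\lambda}{2n}(\bbf{x}_i^{\intercal} \bbf{x}_j)^2 + \tfrac{\lambda}{n}(\bbf{x}_i^{\intercal} \bbf{x}_j)(\bbf{X}_i^{\intercal} \bbf{X}_j) - \tfrac{\lambda}{2n}(\bbf{X}_i^{\intercal} \bbf{X}_j)^2 .$$
Combined with the noise term $\sqrt{\lambda/n}\, Z_{i,j}\, \bbf{x}_i^\intercal \bbf{x}_j$, the first two pieces reconstitute the Hamiltonian $H_n(\bbf{x})$, while the third is independent of $\bbf{x}$ and hence factors out of the integral. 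Taking the expectation gives
$$\frac{1}{n}I(\bbf{X},\bbf{Y}) = -F_n + \frac{\lambda}{2n^2}\sum_{1 \leq i<j \leq n}\E\!\left[(\bbf{X}_i^\intercal \bbf{X}_j)^2\right] .$$

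For $i<j$, $\bbf{X}_i$ and $\bbf{X}_j$ are independent with law $P_0$, so a direct index computation yields
$$\E\!\left[(\bbf{X}_i^\intercal \bbf{X}_j)^2\right] = \sum_{a,b=1}^{k} \big(\E_{P_0}[X_a X_b]\big)^2 = \|\E_{P_0}(\bbf{X} \bbf{X}^\intercal)\|^2 ,$$
which is finite by the second-moment assumption on $P_0$. The correction term therefore equals $\tfrac{\lambda(n-1)}{4n}\|\E_{P_0}(\bbf{X} \bbf{X}^\intercal)\|^2 \to \tfrac{\lambda}{4}\|\E_{P_0}(\bbf{X} \bbf{X}^\intercal)\|^2$, and combining with Theorem~\ref{th:rs_formula_multidim} gives the announced identity.

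I do not anticipate any real obstacle here: once Theorem~\ref{th:rs_formula_multidim} is available, the derivation is routine bookkeeping (Fubini and the second-moment hypothesis are enough to justify the exchange of integrals and expectations). All of the genuine work, replica-symmetric lower bound via Guerra interpolation, Nishimori identity, overlap concentration under perturbation, and the Aizenman--Sims--Starr cavity computation, has been pushed into the proof of Theorem~\ref{th:rs_formula_multidim} itself.
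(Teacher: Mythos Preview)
Your proposal is correct and follows exactly the same route as the paper's own proof: expand the quadratic in the mutual-information expression to separate $-F_n$ from the deterministic correction $\frac{\lambda}{2n^2}\sum_{i<j}\E[(\bbf{X}_i^\intercal \bbf{X}_j)^2]$, identify this second moment as $\|\E_{P_0}(\bbf{X}\bbf{X}^\intercal)\|^2$ by independence, and then invoke Theorem~\ref{th:rs_formula_multidim}. If anything, you spell out more of the intermediate bookkeeping (the expansion of the square and the index computation for the second moment) than the paper does.
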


\begin{proof}
	\begin{align*}
		\frac{1}{n} I(\bbf{X},\bbf{Y}) &= -F_n - \frac{1}{n} \E \log \exp(-\frac{\lambda}{2n} \sum_{i<j} (\bbf{X}_i^{\intercal} \bbf{X}_j)^2) 
		= \frac{\lambda}{2n^2} \sum_{i<j} \E (\bbf{X}_i^{\intercal} \bbf{X}_j)^2 - F_n 
		= \frac{\lambda (n-1)}{4n} \|\E_{P_0}(\bbf{X} \bbf{X}^{\intercal})\|^2 - F_n
	\end{align*}
	and Theorem~\ref{th:rs_formula_multidim} gives the result.
\end{proof}
\\

We define $\phi: \lambda \mapsto \sup_{\bbf{q} \in S_k^+} \mathcal{F}(\lambda,\bbf{q})$. $\phi$ is the limit of $\lambda \mapsto F_n(\lambda)$, which is convex. $\phi$ is therefore convex and is thus derivable everywhere except on a countable set of points. Let $D \subset (0, + \infty)$ be the set of points where $\phi$ is derivable.

\begin{proposition} \label{prop:derivative_phi_general}
	For all $\lambda \in D$, all the maximizers $\bbf{q}$ of $\bbf{q} \in S_k^+ \mapsto \Fcal(\lambda,\bbf{q})$ have the same norm $\| \bbf{q} \|^2 = q^*(\lambda)^2$ and
	$$
	\phi'(\lambda) = \frac{q^*(\lambda)^2}{4}
	$$
\end{proposition}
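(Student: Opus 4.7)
The plan is to mirror the proof of Proposition~\ref{prop:derivative_phi}, using the change of variables $\bbf{q}' = \lambda \bbf{q}$ and the envelope theorem from~\cite{milgrom2002envelope}; the only novelty in the multidimensional setting is that the envelope identity will pin down the Frobenius norm of the maximizer rather than the maximizer itself. For $\lambda>0$, set
\begin{equation*}
\psi(\lambda,\bbf{q}') := \Fcal\bigl(\lambda,\bbf{q}'/\lambda\bigr) = -\frac{\|\bbf{q}'\|^2}{4\lambda} + \E\log\int dP_0(\bbf{x})\exp\!\Bigl(\bbf{Z}^{\intercal}(\bbf{q}')^{1/2}\bbf{x} + \bbf{x}^{\intercal}\bbf{q}'\bbf{X} - \tfrac12\bbf{x}^{\intercal}\bbf{q}'\bbf{x}\Bigr),
\end{equation*}
for $\bbf{q}'\in S_k^+$. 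Since $\bbf{q}\mapsto\lambda\bbf{q}$ is a bijection of $S_k^+$, one has $\phi(\lambda)=\sup_{\bbf{q}'\in S_k^+}\psi(\lambda,\bbf{q}')$, and the crucial observation is that $\lambda$ enters $\psi$ only through the first summand, so that $\partial_\lambda\psi(\lambda,\bbf{q}') = \|\bbf{q}'\|^2/(4\lambda^2)$, independently of $\bbf{q}'$.

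Before invoking the envelope theorem I would check that the supremum is attained. As in the rank-one case, one shows via a matrix-valued I-MMSE identity (generalizing part (ii) of Proposition~\ref{prop:q_star} to the vector observation channel $\bbf{Y}_0=\sqrt{\lambda}\,\bbf{q}^{1/2}\bbf{X}+\bbf{Z}$) that any maximizer $\bbf{q}^*$ of $\Fcal(\lambda,\cdot)$ must satisfy $\bbf{q}^*\preceq \E_{P_0}[\bbf{X}\bbf{X}^{\intercal}]$ in the Loewner order, so the supremum is achieved on this compact subset of $S_k^+$.

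Once existence is granted, Corollary~4 of~\cite{milgrom2002envelope} applies at every $\lambda\in D$: for any maximizer $\bbf{q}'^*$ of $\bbf{q}'\mapsto\psi(\lambda,\bbf{q}')$,
\begin{equation*}
\phi'(\lambda) \;=\; \partial_\lambda\psi(\lambda,\bbf{q}'^*) \;=\; \frac{\|\bbf{q}'^*\|^2}{4\lambda^2}.
\end{equation*}
Because the right-hand side does not depend on the choice of maximizer, all maximizers of $\psi(\lambda,\cdot)$ share a common Frobenius norm. Undoing the change of variables shows that all maximizers $\bbf{q}^*$ of $\bbf{q}\mapsto\Fcal(\lambda,\bbf{q})$ share the same norm $q^*(\lambda):=\|\bbf{q}^*\|$, and $\phi'(\lambda)=q^*(\lambda)^2/4$. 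The main obstacle, compared to the scalar proof, is the compactness step: unlike the trivial interval $[0,\E_{P_0}[X^2]]$ available in the rank-one case, the Loewner-order confinement of the maximizers in $S_k^+$ is not immediate and genuinely requires a matrix-valued I-MMSE argument; the rest of the computation is a direct transcription of the scalar one.
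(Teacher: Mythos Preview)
Your proof is correct and follows the same route as the paper (which simply says ``the proof is the same than for Proposition~\ref{prop:derivative_phi}''): change of variables $\bbf{q}'=\lambda\bbf{q}$ so that $\lambda$ enters only through $-\|\bbf{q}'\|^2/(4\lambda)$, then apply the envelope theorem of~\cite{milgrom2002envelope}.

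One remark: you flag the compactness step as the ``main obstacle'' and invoke a matrix I-MMSE argument to confine maximizers by $\E_{P_0}[\bbf{X}\bbf{X}^\intercal]$ in Loewner order, but this is more than you need and risks circularity (the fixed-point characterization in Proposition~\ref{prop:q_star} is derived \emph{after} existence of the maximizer). The paper gets compactness much more cheaply (see the proof of Lemma~\ref{lem:approx_F_discrete}): writing the log-moment term as $\E\log\int dP_0(\bbf{x})\,e^{-\frac12\|\bbf{Y}-\sqrt{\lambda}\bbf{q}^{1/2}\bbf{x}\|^2}+\frac{\lambda}{2}\E\bbf{X}^\intercal\bbf{q}\bbf{X}$ shows it is at most $\frac{\lambda}{2}\mathrm{Tr}(\bbf{q}\,\E\bbf{X}\bbf{X}^\intercal)\le\frac{\lambda}{2}\|\bbf{q}\|\,\|\E\bbf{X}\bbf{X}^\intercal\|$, so $\Fcal(\lambda,\bbf{q})\to-\infty$ as $\|\bbf{q}\|\to\infty$ and the supremum is attained on a ball in $S_k^+$. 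No I-MMSE is needed.
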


The proof is the same than for Proposition~\ref{prop:derivative_phi}. 
Analogously to the unidimensional case, we define
\begin{align*}
	\MMSE_n(\lambda) 
	&= \min_{\hat{\theta}} \frac{2}{n(n-1)} \sum_{1\leq i<j\leq n} \E\left[ \left(\bbf{X}_i^{\intercal} \bbf{X}_j- \hat{\theta}_{i,j}(\bbf{Y}) \right)^2\right] \label{eq:def_mmse_min} \\
	&=\frac{2}{n(n-1)} \sum_{1\leq i<j\leq n} \E\left[ \left(\bbf{X}_i^{\intercal} \bbf{X}_j-\E\left[\bbf{X}_i^{\intercal} \bbf{X}_j|\bbf{Y} \right]\right)^2\right],
\end{align*}
where the minimum is taken over all estimators $\hat{\theta}$ (i.e.\ measurable functions of the observations $\bbf{Y}$ that could depend on auxiliary randomness).
The following result is the analog of Corollary~\ref{cor:limit_mmse} and is proved with the same arguments.

\begin{corollary} \label{cor:limit_mmse_general}
	For all $\lambda \in D$,
	$$
	\MMSE_n(\lambda) \xrightarrow[n \to \infty]{} \|\E_{P_0} \bbf{X} \bbf{X}^{\intercal} \|^2 - q^*(\lambda)^2
	$$
\end{corollary}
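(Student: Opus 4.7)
The plan is to imitate line by line the proof of Corollary~\ref{cor:limit_mmse}, with scalar products $X_iX_j$ systematically replaced by inner products $\bbf{X}_i^{\intercal}\bbf{X}_j$. Concretely the three steps are: (i) identify $F_n'(\lambda)$ as a Gibbs average via Gaussian integration by parts and the Nishimori identity; (ii) transfer $F_n\to\phi$ to $F_n'\to\phi'$ on the full-measure set $D$ using convexity; and (iii) rewrite $\MMSE_n(\lambda)$ in terms of that same Gibbs quantity up to a deterministic constant.

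\textbf{Step 1: computation of $F_n'(\lambda)$.} Differentiating inside the expectation,
$$F_n'(\lambda)=\frac{1}{n}\E\Big\langle \sum_{i<j}\frac{Z_{i,j}\,\bbf{x}_i^{\intercal}\bbf{x}_j}{2\sqrt{\lambda n}}+\frac{(\bbf{x}_i^{\intercal}\bbf{x}_j)(\bbf{X}_i^{\intercal}\bbf{X}_j)}{n}-\frac{(\bbf{x}_i^{\intercal}\bbf{x}_j)^2}{2n}\Big\rangle.$$
Gaussian integration by parts in $Z_{i,j}$ gives $\E Z_{i,j}\langle \bbf{x}_i^{\intercal}\bbf{x}_j\rangle=\sqrt{\lambda/n}\,(\E\langle(\bbf{x}_i^{\intercal}\bbf{x}_j)^2\rangle-\E\langle \bbf{x}_i^{\intercal}\bbf{x}_j\rangle^2)$, and the Nishimori identity (Proposition~\ref{prop:nishimori}) applied to two replicas supplies $\E\langle \bbf{x}_i^{\intercal}\bbf{x}_j\rangle^2=\E\langle(\bbf{x}_i^{\intercal}\bbf{x}_j)(\bbf{X}_i^{\intercal}\bbf{X}_j)\rangle$. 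The $(\bbf{x}_i^{\intercal}\bbf{x}_j)^2$ contributions cancel, and exchangeability in $(i,j)$ yields
$$F_n'(\lambda)=\frac{n-1}{4n}\,\E\langle \bbf{x}_1^{\intercal}\bbf{x}_2\cdot \bbf{X}_1^{\intercal}\bbf{X}_2\rangle=\frac{n-1}{4n}\,\E\langle \bbf{x}_1^{\intercal}\bbf{x}_2\rangle^2\ \geq\ 0,$$
proving in passing that $F_n$ is convex on $(0,+\infty)$.

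\textbf{Steps 2 and 3: passing to the limit and rewriting $\MMSE_n$.} Theorem~\ref{th:rs_formula_multidim} gives $F_n\to\phi$ pointwise; both functions are convex, so Lemma~\ref{lem:deriv_convex} transfers the convergence to derivatives at every $\lambda\in D$ and Proposition~\ref{prop:derivative_phi_general} identifies the limit $\phi'(\lambda)=q^*(\lambda)^2/4$. Combined with Step~1, this yields $\E\langle \bbf{x}_1^{\intercal}\bbf{x}_2\rangle^2\to q^*(\lambda)^2$. Now expand the square defining $\MMSE_n$: since $\E[\bbf{X}_i^{\intercal}\bbf{X}_j\mid\bbf{Y}]=\langle \bbf{x}_i^{\intercal}\bbf{x}_j\rangle$, for $i\neq j$ independence of $\bbf{X}_i,\bbf{X}_j\sim P_0$ gives $\E(\bbf{X}_i^{\intercal}\bbf{X}_j)^2=\|\E_{P_0}\bbf{X}\bbf{X}^{\intercal}\|^2$, and Nishimori again identifies the cross term: $\E[\bbf{X}_i^{\intercal}\bbf{X}_j\langle \bbf{x}_i^{\intercal}\bbf{x}_j\rangle]=\E\langle \bbf{x}_i^{\intercal}\bbf{x}_j\rangle^2$. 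Therefore
$$\MMSE_n(\lambda)=\|\E_{P_0}\bbf{X}\bbf{X}^{\intercal}\|^2-\E\langle \bbf{x}_1^{\intercal}\bbf{x}_2\rangle^2\xrightarrow[n\to\infty]{}\|\E_{P_0}\bbf{X}\bbf{X}^{\intercal}\|^2-q^*(\lambda)^2.$$

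The only genuinely new ingredient beyond the rank-one case is the matrix envelope argument packaged into Proposition~\ref{prop:derivative_phi_general}: a priori the maximizer $\bbf{q}\in S_k^+$ of $\mathcal{F}(\lambda,\cdot)$ need not be unique, yet on $D$ all maximizers share the common Frobenius norm $q^*(\lambda)$, which is exactly the scalar invariant needed to close the computation above. Everything else is a transcription of the rank-one argument with the single substitution $X_iX_j\mapsto \bbf{X}_i^{\intercal}\bbf{X}_j$, so I expect no further obstacles.
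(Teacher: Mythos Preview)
Your proposal is correct and follows precisely the approach the paper intends: the paper itself does not spell out a proof of Corollary~\ref{cor:limit_mmse_general} beyond the remark that it ``is proved with the same arguments'' as Corollary~\ref{cor:limit_mmse}, and your three steps (differentiate $F_n$, apply Gaussian integration by parts plus Nishimori, then invoke convexity via Lemma~\ref{lem:deriv_convex} and Proposition~\ref{prop:derivative_phi_general}) are exactly that transcription with $X_iX_j\mapsto\bbf{X}_i^{\intercal}\bbf{X}_j$. Your closing observation about Proposition~\ref{prop:derivative_phi_general} is also on point: the non-uniqueness of the matrix maximizer is harmless because only its Frobenius norm enters.
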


\subsection{Proofs} \label{sec:proof_multidim}

In this section we prove Theorem~\ref{th:rs_formula_multidim}: we will show how the proofs of Section~\ref{sec:proof_rs_formula} generalize to the general multidimensional case. 

\subsubsection{Mutual information}

We start by the general expression of the mutual information.
\begin{lemma} \label{lem:mutual_information_general}
	$$
	I(\bbf{X},\bbf{Y}) = - \E \Big[ \log \int d P_0^{\otimes n}(\bbf{x}) \exp( \sum_{i<j} \bbf{x}_i^{\intercal} \bbf{x}_j \sqrt{\frac{\lambda}{n}} Z_{i,j} - \frac{\lambda}{2n}(\bbf{x}_i^{\intercal} \bbf{x}_j - \bbf{X}_i^{\intercal} \bbf{X}_j)^2 )\Big]
	$$
\end{lemma}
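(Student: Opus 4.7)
The plan is to start from the standard identity $I(\bbf{X},\bbf{Y}) = \E \log \frac{p(\bbf{Y}\mid \bbf{X})}{p(\bbf{Y})}$, write the conditional and marginal densities explicitly, and then simplify using the structure of the Gaussian noise. Since, conditionally on $\bbf{X}$, the entries $Y_{i,j}$ are independent with $Y_{i,j}\sim \mathcal{N}(\sqrt{\lambda/n}\,\bbf{X}_i^\intercal \bbf{X}_j,1)$, the conditional density is a product of one-dimensional Gaussian densities, and the marginal density is obtained by integrating against $P_0^{\otimes n}$.

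The key calculation is to expand the squares in the log-ratio. Substituting $Y_{i,j}=\sqrt{\lambda/n}\,\bbf{X}_i^\intercal \bbf{X}_j + Z_{i,j}$, the quantity $Y_{i,j}-\sqrt{\lambda/n}\,\bbf{x}_i^\intercal \bbf{x}_j$ equals $Z_{i,j}+\sqrt{\lambda/n}(\bbf{X}_i^\intercal \bbf{X}_j - \bbf{x}_i^\intercal \bbf{x}_j)$. Expanding its square and inserting it into the ratio, the $Z_{i,j}^2$ and $\frac{1}{\sqrt{2\pi}}$ factors in numerator and denominator cancel, leaving
\[
\log \frac{p(\bbf{Y}\mid\bbf{X})}{p(\bbf{Y})} = -\log \!\int\! dP_0^{\otimes n}(\bbf{x})\,\exp\!\Big(\!\sum_{i<j}\!\sqrt{\tfrac{\lambda}{n}}Z_{i,j}(\bbf{x}_i^\intercal\bbf{x}_j-\bbf{X}_i^\intercal\bbf{X}_j)-\tfrac{\lambda}{2n}(\bbf{x}_i^\intercal\bbf{x}_j-\bbf{X}_i^\intercal\bbf{X}_j)^2\Big).
\]

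The final step is to factor out of the integral the term $\exp(-\sum_{i<j}\sqrt{\lambda/n}\,Z_{i,j}\,\bbf{X}_i^\intercal\bbf{X}_j)$, which does not depend on $\bbf{x}$, so that the integrand inside the log takes the form stated in the lemma. Taking expectation yields an extra additive contribution $\E\sum_{i<j}\sqrt{\lambda/n}\,Z_{i,j}\,\bbf{X}_i^\intercal\bbf{X}_j$, which vanishes because $\bbf{Z}$ is independent of $\bbf{X}$ and centered. This gives exactly the claimed formula.

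No step is really difficult: the only mild obstacle is the bookkeeping in the expansion of the square and the recognition that the ``mixed'' linear term in $\bbf{X}_i^\intercal \bbf{X}_j$ can be pulled outside the integral and then killed in expectation. All convergence/integrability issues are harmless since $P_0$ has a finite second moment and the noise is Gaussian, so one can safely apply Fubini to interchange the integrals defining $p(\bbf{Y})$.
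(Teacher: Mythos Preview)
Your proposal is correct and follows essentially the same route as the paper: write the mutual information as the expected log of the Radon--Nikodym derivative $p(\bbf{Y}\mid\bbf{X})/p(\bbf{Y})$, substitute $Y_{i,j}=\sqrt{\lambda/n}\,\bbf{X}_i^\intercal\bbf{X}_j+Z_{i,j}$, cancel the common $e^{-\frac12\sum Z_{i,j}^2}$ factor, and simplify. You are in fact slightly more explicit than the paper about the last step---pulling the $\bbf{x}$-independent factor $\exp(-\sum_{i<j}\sqrt{\lambda/n}\,Z_{i,j}\,\bbf{X}_i^\intercal\bbf{X}_j)$ outside the integral and observing that its contribution vanishes in expectation by independence of $\bbf{Z}$ and $\bbf{X}$---while the paper jumps directly to the final formula; both arguments are the same in substance.
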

\begin{proof}
	Let $\mu$ denote the Lebesgue measure on $\R^{n(n-1)/2}$.
	The mutual information between $\bbf{X}$ and $\bbf{Y}$ is defined as the Kullback-Leibler divergence between $P_{(\bbf{X},\bbf{Y})}$, the joint distribution of $(\bbf{X},\bbf{Y})$, and $P_0^{\otimes n} \otimes P_{\bbf{Y}}$, the product of the marginal distributions of $\bbf{X}$ and $\bbf{Y}$. This Kullback-Leibler divergence is well defined because $P_{(\bbf{X},\bbf{Y})}$ is absolutely continuous with respect to $P_{0}^{\otimes n} \otimes P_{\bbf{Y}}$. Indeed for any Borel set $A$ of $\R^{kn} \times \R^{n(n-1)/2}$:
	$$
	P_{(\bbf{X},\bbf{Y})}(A) = \frac{1}{(2\pi)^{n(n-1)/4}} \int \int_{\bbf{y}} 1((\bbf{x},\bbf{y}) \in A) \exp\big(- \frac{1}{2} \sum_{i<j} (y_{i,j} - \sqrt{\frac{\lambda}{n}} \bbf{x}_i^{\intercal} \bbf{x}_j)^2 \big) d\mu(\bbf{y}) d P_0^{\otimes n}(\bbf{x})
	$$
	If $A$ is a Borel set of $\R^{n(n-1)/2}$, then
	$$
	P_{\bbf{Y}}(A) = \frac{1}{(2\pi)^{n(n-1)/4}}  \int_{\bbf{y}} 1(\bbf{y} \in A) \Big( \int \exp\big(- \frac{1}{2} \sum_{i<j} (y_{i,j} - \sqrt{\frac{\lambda}{n}} \bbf{x}_i^{\intercal} \bbf{x}_j)^2 \big) d P_0^{\otimes n}(\bbf{x}) \Big) d\mu(\bbf{y}) 
	$$
	so that
	$$
	\frac{d P_{(\bbf{X},\bbf{Y})}}{d P_0^{\otimes n} \otimes P_{\bbf{Y}}} = 
	\frac{\exp\big(- \frac{1}{2} \sum_{i<j} (Y_{i,j} - \sqrt{\frac{\lambda}{n}} \bbf{X}_i^{\intercal} \bbf{X}_j)^2 \big)}
	{\int \exp\big(- \frac{1}{2} \sum_{i<j} (Y_{i,j} - \sqrt{\frac{\lambda}{n}} \bbf{x}_i^{\intercal} \bbf{x}_j)^2 \big) d P_0^{\otimes n}(\bbf{x})
	}
	$$
	We can thus compute the mutual information
	\begin{align*}
		I(\bbf{X},\bbf{Y}) 
		&= \E \log \Big(
		\frac{\exp\big(- \frac{1}{2} \sum_{i<j} (Y_{i,j} - \sqrt{\frac{\lambda}{n}} \bbf{X}_i^{\intercal} \bbf{X}_j)^2 \big)}
		{\int \exp\big(- \frac{1}{2} \sum_{i<j} (Y_{i,j} - \sqrt{\frac{\lambda}{n}} \bbf{x}_i^{\intercal} \bbf{x}_j)^2 \big) d P_0^{\otimes n}(\bbf{x})
		}
	\Big)
	\\
	&= - \E \log \Big(  \int d P_0^{\otimes n}(\bbf{x}) \exp( \sum_{i<j} \bbf{x}_i^{\intercal} \bbf{x}_j \sqrt{\frac{\lambda}{n}} Z_{i,j} - \frac{\lambda}{2n}(\bbf{x}_i^{\intercal} \bbf{x}_j - \bbf{X}_i^{\intercal} \bbf{X}_j)^2 )\Big)
\end{align*}
\end{proof}

\subsubsection{Reduction to finite distribution}
We will show in this section that it suffices to prove Theorem~\ref{th:rs_formula_multidim} for input distribution $P_0$ with finite support.
\\

Suppose the Theorem~\ref{th:rs_formula_multidim} holds for distribution over $\R^k$ with finite support. Let $P_0$ be a probability distribution that admits a finite second moment: $\E_{P_0}\|\bbf{X}\|^2 < \infty$. We are going to approach $P_0$ with distributions with finite supports.
\\
Let $0 < \epsilon \leq 1$.
Let $K_0 > 0$ such that for any marginal $\mu$ of $P_0$, $\mu([-K_0,K_0]) > 1 - \epsilon^2$.
Let $m \in \N^*$ such that $\frac{K_0}{m} \leq \epsilon$. For $x \in [-K_0,K_0]$ we will use the notation
$$
\bar{x} = 
\begin{cases}
	\frac{K_0}{m} \Big\lfloor \frac{x m}{K_0} \Big\rfloor & \ \text{if} \ x \in [-K_0,K_0] \\
	0 & \ \text{otherwise}
\end{cases}
$$
Consequently if $x \in [-K_0,K_0]$, $\bar{x} \leq x < \bar{x} + \frac{K_0}{m} \leq \bar{x} + \epsilon$. For $\bx \in [-K_0,K_0]^k$ we also define $\bbf{\bar{x}}= (\bar{x}_1, \dots, \bar{x}_k) \in \bar{S}$ where $\bar{S} = \{i \frac{K_0}{m} \ | \ i=-m, \dots, m\}^k$. Finally, we define $\bar{P}_0$ the image distribution of $P_0$ through the application $\bx \mapsto \bbf{\bar{x}}$. Let $n \geq 1$. We will note $\bar{F}_n$ the free energy corresponding to the distribution $\bar{P}_0$ and $\bar{\mathcal{F}}$ the function $\mathcal{F}$ corresponding to the distribution $\bar{P}_0$.
$\bar{P}_0$ has a finite support, we have then by assumptions
\begin{equation} \label{eq:limit_discrete}
\bar{F}_n \xrightarrow[n \to \infty]{} \sup_{\bbf{q} \in S_k^+} \bar{\mathcal{F}}(\lambda,\bbf{q})
\end{equation}

\begin{lemma} \label{lem:approx_f_discrete}
	There exists a constant $K > 0$, that depends only on $P_0$, such that, for all $n \geq 1$
	$$
	| F_n - \bar{F}_n | \leq \lambda K \epsilon
	$$
\end{lemma}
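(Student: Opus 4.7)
The idea is to couple the two models and interpolate linearly between them. Set $\bbf{\bar{X}}_i=\overline{\bX_i}$ with $\bX\sim P_0^{\otimes n}$; since the pushforward of $P_0$ by $\bx\mapsto\bbf{\bar{x}}$ is $\bar P_0$, one can rewrite both free energies as expectations over the common randomness $(\bX,\bZ)$,
$$
F_n=\tfrac{1}{n}\E\log\int dP_0^{\otimes n}(\bx)\,e^{H(\bx;\bX)},\qquad \bar F_n=\tfrac{1}{n}\E\log\int dP_0^{\otimes n}(\bx)\,e^{H(\bbf{\bar{x}};\bbf{\bar{X}})},
$$
where $H(\bbf{u};\bbf{v})=\sum_{i<j}\sqrt{\lambda/n}\,Z_{i,j}\bbf{u}_i^{\intercal}\bbf{u}_j+\tfrac{\lambda}{n}\bbf{v}_i^{\intercal}\bbf{v}_j\,\bbf{u}_i^{\intercal}\bbf{u}_j-\tfrac{\lambda}{2n}(\bbf{u}_i^{\intercal}\bbf{u}_j)^2$.

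For $t\in[0,1]$, let
$$
\phi(t)=\tfrac{1}{n}\E\log\int dP_0^{\otimes n}(\bx)\exp\bigl(tH(\bx;\bX)+(1-t)H(\bbf{\bar{x}};\bbf{\bar{X}})\bigr),
$$
so that $\phi(0)=\bar F_n$, $\phi(1)=F_n$, and $|F_n-\bar F_n|\leq\sup_{t\in(0,1)}|\phi'(t)|$. Differentiating and applying Gaussian integration by parts to each $Z_{i,j}$ (as in the proof of Proposition~\ref{prop:guerra_bound}) expresses $\phi'(t)$ as a sum, over $i<j$, of terms of the form $\tfrac{\lambda}{n^2}\E\langle A_{i,j}\rangle_t$, where $\langle\cdot\rangle_t$ is the Gibbs measure associated with the interpolating exponent and each $A_{i,j}$ is a product of at most four factors among $\bx_i^{\intercal}\bx_j$, $\bbf{\bar{x}}_i^{\intercal}\bbf{\bar{x}}_j$, $\bX_i^{\intercal}\bX_j$, $\bbf{\bar{X}}_i^{\intercal}\bbf{\bar{X}}_j$, always containing at least one factor of the form $\bx_i^{\intercal}\bx_j-\bbf{\bar{x}}_i^{\intercal}\bbf{\bar{x}}_j$ or $\bX_i^{\intercal}\bX_j-\bbf{\bar{X}}_i^{\intercal}\bbf{\bar{X}}_j$.

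The bound on each such term follows from the telescoping identity $\bx_i^{\intercal}\bx_j-\bbf{\bar{x}}_i^{\intercal}\bbf{\bar{x}}_j=(\bx_i-\bbf{\bar{x}}_i)^{\intercal}\bx_j+\bbf{\bar{x}}_i^{\intercal}(\bx_j-\bbf{\bar{x}}_j)$ (and the analogue for $\bX$) combined with Cauchy--Schwarz, which reduces the estimate to moments of $\|\bX-\bbf{\bar{X}}\|$, $\|\bX\|$, $\|\bx-\bbf{\bar{x}}\|$, and $\|\bx\|$ under $P_0$ or under $\langle\cdot\rangle_t$. The key signal-side estimate is
$$
\E\|\bX-\bbf{\bar{X}}\|\leq\sqrt{k}\,\epsilon+\E\bigl[\|\bX\|\,\mathbf 1_{\{\|\bX\|_\infty>K_0\}}\bigr]\leq\sqrt{k}\,\epsilon+\sqrt{k\,\E\|\bX\|^2}\,\epsilon,
$$
using the grid spacing $K_0/m\leq\epsilon$ on the event $\{\|\bX\|_\infty\leq K_0\}$, the union bound $\PP(\|\bX\|_\infty>K_0)\leq k\epsilon^2$, Cauchy--Schwarz on the tail, and $\E_{P_0}\|\bX\|^2<\infty$. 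For the Gibbs-side moments of $\bx$ under $\langle\cdot\rangle_t$, the representation $H(\bx;\bX)=-\tfrac12\sum_{i<j}(Y_{i,j}-\sqrt{\lambda/n}\bx_i^{\intercal}\bx_j)^2+\tfrac12\sum_{i<j}Y_{i,j}^2$ shows that $e^{tH(\bx;\bX)}$ is bounded, as a function of $\bx$, by a quantity that depends only on $(\bX,\bZ)$, so the moments of $\bx$ under $\langle\cdot\rangle_t$ are controlled by those of $P_0^{\otimes n}$ up to a factor depending only on $(\bX,\bZ)$.

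The main obstacle is that, unlike in the Guerra interpolation of Proposition~\ref{prop:guerra_bound}, the measure $\langle\cdot\rangle_t$ is not itself the posterior of a natural Bayesian channel (since $\bbf{\bar{x}}$ is a deterministic function of $\bx$), so the Nishimori identity cannot be invoked to swap $\bx_i$ with $\bX_i$ for free inside $\langle\cdot\rangle_t$. One must instead rely on the direct moment comparison with the prior $P_0^{\otimes n}$ described above, which yields the required $\lambda K\epsilon$ bound with a constant $K$ depending only on $k$ and $\E_{P_0}\|\bX\|^2$.
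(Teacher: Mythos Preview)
Your linear interpolation $\phi(t)=\frac{1}{n}\E\log\int dP_0^{\otimes n}(\bbf{x})\exp\bigl(tH(\bbf{x};\bbf{X})+(1-t)H(\bbf{\bar x};\bbf{\bar X})\bigr)$ is natural, and you correctly diagnose that $\langle\cdot\rangle_t$ is no longer a posterior, so the Nishimori identity is unavailable. The problem is that your proposed workaround does not close the gap. Writing $e^{tH(\bbf{x};\bbf{X})+(1-t)H(\bbf{\bar x};\bbf{\bar X})}\le C(\bbf{X},\bbf{Z})$ with $C$ independent of $\bbf{x}$ only yields
\[
\langle f(\bbf{x})\rangle_t \;\le\; \frac{C(\bbf{X},\bbf{Z})}{Z_t}\,\E_{P_0^{\otimes n}}[f(\bbf{x})],
\]
and the ratio $C(\bbf{X},\bbf{Z})/Z_t$ is typically of order $\exp(cn^2)$: the upper bound $\log C=\frac{t}{2}\sum_{i<j}Y_{i,j}^2+\frac{1-t}{2}\sum_{i<j}\bar Y_{i,j}^2$ is a sum of $\binom{n}{2}$ terms each of order one, while any a~priori lower bound on $\log Z_t$ (e.g.\ via Jensen) is only of order $n$. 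So this factor cannot be absorbed, and the terms you need to control---in particular $\E\langle(\bbf{x}_1^{\intercal}\bbf{x}_2-\bbf{\bar x}_1^{\intercal}\bbf{\bar x}_2)^2\rangle_t$ on the tail event $\{\|\bbf{x}_1\|_\infty>K_0\}$---remain unbounded. The difficulty is structural: for an unbounded prior, the only mechanism in this paper that controls Gibbs moments of $\bbf{x}$ by moments of $\bbf{X}$ under $P_0$ is Nishimori, and your interpolation destroys it.

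The paper avoids this by choosing a different interpolation that \emph{preserves} the Bayesian structure. It introduces an independent noise array $Z'_{i,j}$ and sets
\[
H_{n,t}(\bbf{x})=\sum_{i<j}\Bigl[\sqrt{\tfrac{\lambda t}{n}}Z_{i,j}\,\bbf{x}_i^{\intercal}\bbf{x}_j+\tfrac{\lambda t}{n}\bbf{x}_i^{\intercal}\bbf{x}_j\,\bbf{X}_i^{\intercal}\bbf{X}_j-\tfrac{\lambda t}{2n}(\bbf{x}_i^{\intercal}\bbf{x}_j)^2\Bigr]
+\sum_{i<j}\Bigl[\sqrt{\tfrac{\lambda(1-t)}{n}}Z'_{i,j}\,\bbf{\bar x}_i^{\intercal}\bbf{\bar x}_j+\tfrac{\lambda(1-t)}{n}\bbf{\bar x}_i^{\intercal}\bbf{\bar x}_j\,\bbf{\bar X}_i^{\intercal}\bbf{\bar X}_j-\tfrac{\lambda(1-t)}{2n}(\bbf{\bar x}_i^{\intercal}\bbf{\bar x}_j)^2\Bigr].
\]
This is exactly the log-likelihood for the pair of observations $Y_{i,j}=\sqrt{\lambda t/n}\,\bbf{X}_i^{\intercal}\bbf{X}_j+Z_{i,j}$ and $Y'_{i,j}=\sqrt{\lambda(1-t)/n}\,\bbf{\bar X}_i^{\intercal}\bbf{\bar X}_j+Z'_{i,j}$, so $\langle\cdot\rangle_t$ is a genuine posterior and Nishimori gives $\E\langle(\bbf{x}_1^{\intercal}\bbf{x}_2-\bbf{\bar x}_1^{\intercal}\bbf{\bar x}_2)^2\rangle_t=\E(\bbf{X}_1^{\intercal}\bbf{X}_2-\bbf{\bar X}_1^{\intercal}\bbf{\bar X}_2)^2$, which is controlled by the signal-side estimate you already carried out. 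After Gaussian integration by parts and Nishimori, $\phi'(t)$ collapses to $\frac{\lambda(n-1)}{4n}\E\langle \bbf{x}_1^{\intercal}\bbf{x}_2\,\bbf{X}_1^{\intercal}\bbf{X}_2-\bbf{\bar x}_1^{\intercal}\bbf{\bar x}_2\,\bbf{\bar X}_1^{\intercal}\bbf{\bar X}_2\rangle_t$, and the bound $|\phi'(t)|\le\lambda K\epsilon$ follows from Cauchy--Schwarz and the second-moment estimate on $\bbf{X}-\bbf{\bar X}$. The extra independent noise is the missing idea in your argument.
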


\begin{proof}
	We define, for $0 \leq t \leq 1$ and $\bbf{x}\in \R^k$
	\begin{align*}
		H_{n,t}(\bbf{x}) = 
	&\sum_{1 \leq i<j\leq n} \sqrt{\frac{\lambda t}{n}} Z_{i,j} \bbf{x}_i^{\intercal} \bbf{x}_j + \frac{\lambda t}{n} \bbf{x}_i^{\intercal} \bbf{x}_j \bbf{X}_i^{\intercal} \bbf{X}_j - \frac{\lambda t}{2n} (\bbf{x}_i^{\intercal} \bbf{x}_j)^2
	\\
	&+\sum_{1 \leq i<j\leq n} \sqrt{\frac{\lambda (1-t)}{n}} Z'_{i,j} \bbf{\bar{x}}_i^{\intercal} \bbf{\bar{x}}_j + \frac{\lambda (1-t)}{n} \bbf{\bar{x}}_i^{\intercal} \bbf{\bar{x}}_j \bbf{\bar{X}}_i^{\intercal} \bbf{\bar{X}}_j - \frac{\lambda (1-t)}{2n} (\bbf{\bar{x}}_i^{\intercal} \bbf{\bar{x}}_j)^2
	\end{align*}
	Define
	$$
	\phi(t) = \frac{1}{n} \E \log \left(  
		\int dP_0^{\otimes n}(\bbf{x}) \exp \big( H_{n,t}(\bbf{x})\big)
	\right)
	$$
	Remark that $\phi(0) = \bar{F}_n$ and $\phi(1)=F_n$.
	Let $\langle \cdot \rangle_t$ be the Gibbs measure on $S^n$ associated with the Hamiltonian $H_{n,t}$.
	For $0 < t < 1$ we have, by Gaussian integration by parts and the Nishimori property:
	$$
	\phi'(t)
	= \frac{\lambda}{2n^2}\sum_{1 \leq i < j \leq n} 
	\E \Big\langle \bbf{x}^{\intercal}_i \bbf{x}_j \bbf{X}^{\intercal}_i \bbf{X}_j - \bbf{\bar{x}}^{\intercal}_i \bbf{\bar{x}}_j \bbf{\bar{X}}^{\intercal}_i \bbf{\bar{X}}_j \Big\rangle
	= \frac{\lambda (n-1)}{4n}
	\E \Big\langle \bbf{x}^{\intercal}_1 \bbf{x}_2 \bbf{X}^{\intercal}_1 \bbf{X}_2 - \bbf{\bar{x}}^{\intercal}_1 \bbf{\bar{x}}_2 \bbf{\bar{X}}^{\intercal}_1 \bbf{\bar{X}}_2 \Big\rangle
	$$
	Therefore
	\begin{align*}
		|\phi'(t)|
	&\leq
		\frac{\lambda}{4} \left|
		\E \Big\langle 
	\bbf{x}^{\intercal}_1 \bbf{x}_2 \bbf{X}^{\intercal}_1 \bbf{X}_2 - \bbf{\bar{x}}^{\intercal}_1 \bbf{\bar{x}}_2 \bbf{X}^{\intercal}_1 \bbf{X}_2 
	+\bbf{\bar{x}}^{\intercal}_1 \bbf{\bar{x}}_2 \bbf{X}^{\intercal}_1 \bbf{X}_2 - \bbf{\bar{x}}^{\intercal}_1 \bbf{\bar{x}}_2 \bbf{\bar{X}}^{\intercal}_1 \bbf{\bar{X}}_2 
\Big\rangle
\right|
	\\
	&\leq
	\frac{\lambda}{4} \left|\E \Big\langle 
		(\bbf{x}^{\intercal}_1 \bbf{x}_2 - \bbf{\bar{x}}^{\intercal}_1 \bbf{\bar{x}}_2)\bbf{X}^{\intercal}_1 \bbf{X}_2 
	\Big\rangle \right|
	+
	\frac{\lambda}{4} \left|\E \Big\langle 
		\bbf{\bar{x}}^{\intercal}_1 \bbf{\bar{x}}_2 (\bbf{X}^{\intercal}_1 \bbf{X}_2 -\bbf{\bar{X}}^{\intercal}_1 \bbf{\bar{X}}_2)
	\Big\rangle \right|
	\\
	&\leq 
	\left(
		\E \Big\langle 
			(\bbf{x}^{\intercal}_1 \bbf{x}_2 - \bbf{\bar{x}}^{\intercal}_1 \bbf{\bar{x}}_2)^2
		\Big\rangle
		\E
		(\bbf{X}^{\intercal}_1 \bbf{X}_2)^2
	\right)^{1/2}
	+
	\left(
		\E 
			(\bbf{X}^{\intercal}_1 \bbf{X}_2 - \bbf{\bar{X}}^{\intercal}_1 \bbf{\bar{X}}_2)^2
		\E\Big\langle 
			(\bbf{\bar{x}}^{\intercal}_1 \bbf{\bar{x}}_2)^2
		\Big\rangle
	\right)^{1/2}
	\\
	&\leq 
	(\E \| \bbf{X}_1\|^2 + \E \| \bbf{\bar{X}}_1\|^2)
	\left(
		\E 
			(\bbf{X}^{\intercal}_1 \bbf{X}_2 - \bbf{\bar{X}}^{\intercal}_1 \bbf{\bar{X}}_2)^2
	\right)^{1/2}
	\leq 
	(\E \| \bbf{X}_1\|^2 + \E \| \bbf{\bar{X}}_1\|^2)
	\left(
		2\E 
		\big((\bbf{X}_1 - \bbf{\bar{X}}_1)^{\intercal}\bbf{X}_2 \big)^2
	\right)^{1/2}
	\\
	&\leq 
	(3 \E \| \bbf{X}_1\|^2 + 1)
	\left(
		4
		\sum_{i=1}^k \E (\bbf{X}_{1,i} - \bbf{\bar{X}}_{1,i})^2 \E(\bbf{X}_{2,i})^2
	\right)^{1/2}
\end{align*}
For $1 \leq i \leq k$, $\E (\bbf{X}_{1,i} - \bbf{\bar{X}}_{1,i})^2 \leq \epsilon^2 (1 + \E \bbf{X}_{1,i}^2)$.
We obtain that $|\phi'(t)| \leq \lambda K \epsilon$, where $K$ is a constant depending only on $\E \|\bbf{X}_1\|^2$.
Thus $|F_n - \bar{F}_n| = |\phi(1) - \phi(0)|  \leq \lambda K \epsilon$, which concludes the proof.
\end{proof}

\begin{lemma} \label{lem:approx_F_discrete}
	There exists a constant $K' > 0$ that depends only on $P_0$, such that
	$$
\left| \sup_{\bbf{q} \in S_k^+} \mathcal{F}(\lambda,\bbf{q})
-
\sup_{\bbf{q} \in S_k^+} \bar{\mathcal{F}}(\lambda,\bbf{q}) \right| \leq K' \epsilon
	$$
\end{lemma}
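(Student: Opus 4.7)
The plan is to reproduce, at the level of the scalar function $\mathcal{F}$, the Gaussian interpolation used in Lemma~\ref{lem:approx_f_discrete}, after restricting both suprema to a common compact set of $\bbf{q}$'s depending only on $P_0$.

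\textbf{Step 1 (compactification).} Evaluating at $\bbf{q}=0$ shows $\sup_{\bbf{q}\in S_k^+}\mathcal{F}(\lambda,\bbf{q})\geq 0$, and similarly for $\bar{\mathcal{F}}$. On the other hand, the log-integral appearing in $\mathcal{F}(\lambda,\bbf{q})$ is the normalizing constant of the posterior for the scalar channel $\bbf{Y}=\sqrt{\lambda}\,\bbf{q}^{1/2}\bbf{X}+\bbf{Z}$; nonnegativity of its mutual information gives
$$
\mathcal{F}(\lambda,\bbf{q})\leq -\frac{\lambda}{4}\|\bbf{q}\|^2+\frac{\lambda}{2}\,\mathrm{tr}(\bbf{q}\,\Sigma_0),\qquad \Sigma_0=\E_{P_0}[\bbf{X}\bbf{X}^{\intercal}],
$$
and analogously for $\bar{\mathcal{F}}$ with $\bar{\Sigma}_0=\E[\bbf{\bar{X}}\bbf{\bar{X}}^{\intercal}]$. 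Since $\|\bar{\Sigma}_0\|\leq\|\Sigma_0\|+O(\epsilon)$, both suprema are attained on the common compact set $K=\{\bbf{q}\in S_k^+:\|\bbf{q}\|\leq 4\|\Sigma_0\|\}$, which depends only on $P_0$.

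\textbf{Step 2 (interpolation).} Fix $\bbf{q}\in K$ and let $\bbf{Z},\bbf{Z}'\sim\mathcal{N}(0,\bbf{I}_k)$ be independent of everything else. For $t\in[0,1]$ define
\begin{align*}
H_t(\bbf{x})\,=\,&\sqrt{\lambda t}\,\bbf{Z}^{\intercal}\bbf{q}^{1/2}\bbf{x}+\lambda t\,\bbf{x}^{\intercal}\bbf{q}\bbf{X}-\tfrac{\lambda t}{2}\bbf{x}^{\intercal}\bbf{q}\bbf{x}\\
&+\sqrt{\lambda(1-t)}\,\bbf{Z}'^{\intercal}\bbf{q}^{1/2}\bbf{\bar{x}}+\lambda(1-t)\,\bbf{\bar{x}}^{\intercal}\bbf{q}\bbf{\bar{X}}-\tfrac{\lambda(1-t)}{2}\bbf{\bar{x}}^{\intercal}\bbf{q}\bbf{\bar{x}},
\end{align*}
and $\phi(t)=\E\log\int dP_0(\bbf{x})\,e^{H_t(\bbf{x})}$. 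The associated Gibbs measure $\langle\cdot\rangle_t$ is the posterior of $\bbf{X}$ given the two channels $\sqrt{\lambda t}\bbf{q}^{1/2}\bbf{X}+\bbf{Z}$ and $\sqrt{\lambda(1-t)}\bbf{q}^{1/2}\bbf{\bar{X}}+\bbf{Z}'$, so the Nishimori identity (Proposition~\ref{prop:nishimori}) applies. One checks $\phi(1)=\mathcal{F}(\lambda,\bbf{q})+\tfrac{\lambda}{4}\|\bbf{q}\|^2$ and, since the pushforward of $P_0$ by $\bbf{x}\mapsto\bbf{\bar{x}}$ is $\bar{P}_0$, $\phi(0)=\bar{\mathcal{F}}(\lambda,\bbf{q})+\tfrac{\lambda}{4}\|\bbf{q}\|^2$. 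Gaussian integration by parts on $\bbf{Z}$ and $\bbf{Z}'$ followed by Nishimori yields, exactly as in the proof of Lemma~\ref{lem:approx_f_discrete},
$$
\phi'(t)=\frac{\lambda}{2}\,\E\big\langle \bbf{x}^{\intercal}\bbf{q}\bbf{X}-\bbf{\bar{x}}^{\intercal}\bbf{q}\bbf{\bar{X}}\big\rangle_t.
$$

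\textbf{Step 3 (uniform bound and conclusion).} Decomposing $\bbf{x}^{\intercal}\bbf{q}\bbf{X}-\bbf{\bar{x}}^{\intercal}\bbf{q}\bbf{\bar{X}}=(\bbf{x}-\bbf{\bar{x}})^{\intercal}\bbf{q}\bbf{X}+\bbf{\bar{x}}^{\intercal}\bbf{q}(\bbf{X}-\bbf{\bar{X}})$, applying Cauchy--Schwarz together with $\|\bbf{q}\|_{\mathrm{op}}\leq\|\bbf{q}\|$, and using Nishimori to replace pure-$\bbf{x}$ Gibbs averages by expectations under $P_0$, we obtain
$$
|\phi'(t)|\,\leq\,\lambda\,\|\bbf{q}\|\,\sqrt{\E\|\bbf{X}\|^2}\,\sqrt{\E\|\bbf{X}-\bbf{\bar{X}}\|^2}.
$$
By the same computation as in Lemma~\ref{lem:approx_f_discrete}, $\E\|\bbf{X}-\bbf{\bar{X}}\|^2\leq C_{P_0}\epsilon^2$ once $K_0$ is chosen large enough that $\E\big[\|\bbf{X}\|^2 1(\|\bbf{X}\|_\infty>K_0)\big]\leq\epsilon^2$. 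Integrating in $t$ over $[0,1]$ and invoking Step~1 yields, uniformly for $\bbf{q}\in K$,
$$
|\mathcal{F}(\lambda,\bbf{q})-\bar{\mathcal{F}}(\lambda,\bbf{q})|\leq K'\epsilon,
$$
and taking suprema over $K$ gives the claim. The main obstacle is really Step~1: confining both suprema to a compact set whose size does not depend on the discretization parameter $\epsilon$. Once this reduction is made, Step~2 is a direct transcription of the Guerra-type interpolation of Lemma~\ref{lem:approx_f_discrete} to the effective scalar channel governed by $\bbf{q}$, and Step~3 is routine.
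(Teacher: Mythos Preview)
Your proposal is correct and follows essentially the same route as the paper. The paper's proof is very terse: it first confines both suprema to a common compact set via the same mutual-information upper bound you use in Step~1, and then simply says ``using the same kind of arguments than in the proof of Lemma~\ref{lem:approx_f_discrete}'' to obtain the uniform bound $|\mathcal{F}(\lambda,\bbf{q})-\bar{\mathcal{F}}(\lambda,\bbf{q})|\leq \lambda K'\epsilon$ on that compact set---exactly the Guerra-type interpolation plus Nishimori/Cauchy--Schwarz that you spell out in Steps~2 and~3.
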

\begin{proof}
	First notice that both suprema are achieved over a common compact set $K \subset S_k^+$. Indeed, for $\bbf{q} \in S_k^+$,
	\begin{align*}
		\E \log \int dP_0(\bbf{x}) \exp(\sqrt{\lambda} (\bbf{Z}^{\intercal} \bbf{q}^{1/2} \bbf{x})+ \lambda \bbf{x}^{\intercal} \bbf{q} \bbf{X} - \frac{\lambda}{2}\bbf{x}^{\intercal} \bbf{q} \bbf{x})
		&=
		\E \log \int dP_0(\bbf{x}) e^{-\frac{1}{2} \| \bbf{Y} - \bbf{q}^{1/2} \bbf{x}\|^2} + \frac{\lambda}{2} \E \bbf{X}^{\intercal} \bbf{q} \bbf{X}
		\\
		&\leq 
		\frac{\lambda}{2} \text{Tr} \left[ \bbf{q} \E \bbf{X} \bbf{X}^{\intercal} \right] \leq \frac{\lambda}{2} \|\bbf{q}\| \|\E \bbf{X} \bbf{X}^{\intercal} \|
	\end{align*}
	One have a similar inequality for $\bar{P}_0$. Therefore both suprema are achieved over a common compact set $C \subset S_k^+$ (that depends only on $P_0$).
	Using the same kind of arguments than in the proof of Lemma~\ref{lem:approx_f_discrete}, we obtain that there exists a constant $K'$ that depends only on $P_0$ such that $\forall \bbf{q} \in C, \ | \mathcal{F}(\lambda,\bbf{q}) - \bar{\mathcal{F}}(\lambda, \bbf{q})| \leq \lambda K' \epsilon$. This proves the lemma.
\end{proof}
\\

Combining equation~\ref{eq:limit_discrete} and Lemmas~\ref{lem:approx_f_discrete} and~\ref{lem:approx_F_discrete}, we obtain that there exists $n_0 \geq 1$ such that for all $n \geq n_0$,
$$
| F_n - \sup_{\bbf{q} \in S_k^+} \mathcal{F}(\lambda,\bbf{q}) | \leq \lambda (K + K' +1) \epsilon
$$
where $K$ and $K'$ are two constants independent of $n_0$ and $\epsilon$. This proves Theorem~\ref{th:rs_formula_multidim}. It remains therefore to prove Theorem~\ref{th:rs_formula_multidim} for distribution $P_0$ with finite support $S$. In the following, we suppose such a distribution to be fixed.

\subsubsection{The lower bound: Guerra's interpolation technique}
We have the extension of Proposition~\ref{prop:guerra_bound}.

\begin{proposition} \label{prop:guerra_bound_multidim}
	\begin{equation} 
		\liminf_{n \to \infty} F_n \geq \sup_{\bbf{q} \in S_k^+} \mathcal{F}(\lambda,\bbf{q})
	\end{equation}
\end{proposition}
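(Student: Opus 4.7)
The plan is to mimic the rank-one Guerra interpolation of Proposition~\ref{prop:guerra_bound}, adapted to the matrix-valued overlap. Fix $\bbf{q} \in S_k^+$ and let $\bbf{q}^{1/2}$ be its symmetric square root. For $t \in [0,1]$, introduce the interpolating Hamiltonian
\begin{align*}
H_n(\bbf{x},t) &= \sum_{i<j}\Big(\sqrt{\tfrac{\lambda t}{n}} Z_{i,j}\,\bbf{x}_i^{\intercal}\bbf{x}_j + \tfrac{\lambda t}{n}\,\bbf{x}_i^{\intercal}\bbf{x}_j\,\bbf{X}_i^{\intercal}\bbf{X}_j - \tfrac{\lambda t}{2n}(\bbf{x}_i^{\intercal}\bbf{x}_j)^2\Big) \\
&\quad + \sum_{i=1}^n \Big(\sqrt{(1-t)\lambda}\,(\bbf{Z}_i')^{\intercal}\bbf{q}^{1/2}\bbf{x}_i + (1-t)\lambda\,\bbf{x}_i^{\intercal}\bbf{q}\bbf{X}_i - \tfrac{(1-t)\lambda}{2}\,\bbf{x}_i^{\intercal}\bbf{q}\bbf{x}_i\Big),
\end{align*}
where $\bbf{Z}_i'\overset{\text{\tiny i.i.d.}}{\sim}\mathcal{N}(0,\bbf{I}_k)$ are fresh, independent of all other randomness. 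This is the posterior Hamiltonian for the joint inference channel consisting of the original matrix observations with signal strength $\lambda t$ together with, for every vertex $i$, an additional vector Gaussian side observation $\bbf{Y}_i' = \sqrt{(1-t)\lambda}\,\bbf{q}^{1/2}\bbf{X}_i + \bbf{Z}_i'$; in particular, the associated Gibbs measure $\langle\cdot\rangle_t$ satisfies the Nishimori identity (Proposition~\ref{prop:nishimori}).

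Set $\psi(t) = \frac{1}{n}\E\log\int dP_0^{\otimes n}(\bbf{x})\,e^{H_n(\bbf{x},t)}$. At $t=1$ we have $\psi(1)=F_n$, and at $t=0$ the Hamiltonian decouples across the $n$ sites, so
$$
\psi(0) = \E\log\int dP_0(\bbf{x})\exp\!\Big(\sqrt{\lambda}\,\bbf{Z}^{\intercal}\bbf{q}^{1/2}\bbf{x} + \lambda\,\bbf{x}^{\intercal}\bbf{q}\bbf{X} - \tfrac{\lambda}{2}\,\bbf{x}^{\intercal}\bbf{q}\bbf{x}\Big) = \mathcal{F}(\lambda,\bbf{q}) + \tfrac{\lambda}{4}\|\bbf{q}\|^2.
$$

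For $t\in(0,1)$, differentiate $\psi$ in $t$ and apply Gaussian integration by parts to the $Z_{i,j}$ and $\bbf{Z}_i'$ terms; this reduces the $\sqrt{\lambda/(nt)}$ and $\sqrt{\lambda/(1-t)}$ factors to the second-moment quantities $\E\langle(\bbf{x}_i^{\intercal}\bbf{x}_j)^2\rangle_t$ and $\E\langle\bbf{x}_i^{\intercal}\bbf{q}\bbf{x}_i\rangle_t$, which by Nishimori equal $\E\langle\bbf{x}_i^{\intercal}\bbf{x}_j\bbf{X}_i^{\intercal}\bbf{X}_j\rangle_t$ and $\E\langle\bbf{x}_i^{\intercal}\bbf{q}\bbf{X}_i\rangle_t$ respectively. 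Introducing the $k\times k$ overlap matrix $\bbf{Q}(\bbf{x},\bbf{X}) = \tfrac{1}{n}\sum_i \bbf{x}_i\bbf{X}_i^{\intercal}$, one uses the identities $\sum_{i,j}(\bbf{x}_i^{\intercal}\bbf{x}_j)(\bbf{X}_i^{\intercal}\bbf{X}_j) = n^2\|\bbf{Q}(\bbf{x},\bbf{X})\|^2$ and (by symmetry of $\bbf{q}$) $\sum_i \bbf{x}_i^{\intercal}\bbf{q}\bbf{X}_i = n\langle\bbf{q},\bbf{Q}(\bbf{x},\bbf{X})\rangle$ to complete the square:
$$
\psi'(t) = \frac{\lambda}{4}\,\E\bigl\langle\|\bbf{Q}(\bbf{x},\bbf{X}) - \bbf{q}\|^2\bigr\rangle_t - \frac{\lambda}{4}\|\bbf{q}\|^2 + o(1) \;\geq\; -\frac{\lambda}{4}\|\bbf{q}\|^2 + o(1),
$$
where $o(1)$ is uniform in $t\in(0,1)$. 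Integrating from $0$ to $1$ gives $F_n - \mathcal{F}(\lambda,\bbf{q}) - \tfrac{\lambda}{4}\|\bbf{q}\|^2 \geq -\tfrac{\lambda}{4}\|\bbf{q}\|^2 + o(1)$, hence $\liminf_n F_n \geq \mathcal{F}(\lambda,\bbf{q})$ for every $\bbf{q}\in S_k^+$, which yields the desired bound after taking the supremum.

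The main nuisance, which replaces the scalar computation of the rank-one case, is purely algebraic: one must carefully bookkeep the two quadratic forms (via $\bbf{Q}(\bbf{x},\bbf{X})$ and $\langle\bbf{q},\bbf{Q}\rangle$) so that the interpolation produces a perfect square $\|\bbf{Q}-\bbf{q}\|^2$. Once the symmetry of $\bbf{q}$ is used to identify $\text{Tr}(\bbf{q}\bbf{Q}^{\intercal})=\langle\bbf{q},\bbf{Q}\rangle$, the rest of the argument is a verbatim transcription of the rank-one proof, with the diagonal correction in $\sum_{i<j}$ contributing only an $O(1/n)$ term that is absorbed in the $o(1)$.
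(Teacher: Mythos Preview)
Your proposal is correct and follows exactly the approach the paper indicates: the paper itself does not write out a proof here, stating only that ``the proof is exactly the same as in the unidimensional case,'' and you have correctly spelled out what that means---the same Guerra interpolation with the scalar side channel replaced by the vector Gaussian channel $\bbf{Y}_i'=\sqrt{(1-t)\lambda}\,\bbf{q}^{1/2}\bbf{X}_i+\bbf{Z}_i'$, the same Nishimori cancellation, and the matrix-overlap identity $\sum_{i,j}(\bbf{x}_i^{\intercal}\bbf{x}_j)(\bbf{X}_i^{\intercal}\bbf{X}_j)=n^2\|\bbf{Q}\|^2$ producing the perfect square $\|\bbf{Q}-\bbf{q}\|^2$. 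One small expositional slip: it is not $\E\langle(\bbf{x}_i^{\intercal}\bbf{x}_j)^2\rangle_t$ that Nishimori turns into $\E\langle\bbf{x}_i^{\intercal}\bbf{x}_j\,\bbf{X}_i^{\intercal}\bbf{X}_j\rangle_t$, but rather the off-diagonal term $\E\langle\bbf{x}_i^{\intercal}\bbf{x}_j\rangle_t^2$ arising from Gaussian integration by parts; the diagonal $\E\langle(\bbf{x}_i^{\intercal}\bbf{x}_j)^2\rangle_t$ cancels against the explicit $-\frac{\lambda t}{2n}(\bbf{x}_i^{\intercal}\bbf{x}_j)^2$ term, exactly as in the rank-one computation.
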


The proof is exactly the same as in the unidimensional case. The Nishimori property (Proposition~\ref{prop:nishimori}) applies to the general case, the computations are the same.

\subsubsection{Adding a small perturbation}
The results of Section~\ref{sec:small_perturbation} can be easily generalized.
Let us fix $\epsilon\in [0,1]$, and suppose we have access to the additional information, for $1 \leq i \leq n$
$$
\bbf{Y}'_i =
\begin{cases}
	\bbf{X}_i &\text{if } L_i = 1 \\
	* &\text{if } L_i = 0
\end{cases}
$$
where $L_i \overset{\text{\tiny i.i.d.}}{\sim}  \Ber(\epsilon)$ and $*$ is a value that does not belong to $S$. The free energy is now

$$
F_{n,\epsilon} = \frac{1}{n} \E \Big[\log \sum_{\bbf{x} \in S^n} P_0(\bbf{x}) \ e^{H_{n}(\bbf{\bar{x}})}\Big] 
$$
where $\bbf{\bar{x}} = (L_i \bbf{X}_i + (1-L_i)\bbf{x}_i)_{1 \leq i \leq n}$.
\\

The proof of Proposition~\ref{prop:approximation_f_n_epsilon}  can be adapted to the general case to obtain:
\begin{proposition} \label{prop:approximation_f_n_epsilon_epsilon}
For all $n \geq 1$ and for all $\epsilon,\epsilon' \in [0,1]$,
	$$
	|F_{n,\epsilon} - F_{n,\epsilon'} | \leq \lambda k^2 K_0^4 |\epsilon - \epsilon'|.
	$$
\end{proposition}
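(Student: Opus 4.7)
The plan is to mirror the scalar proof of Proposition~\ref{prop:approximation_f_n_epsilon}, with $\R^k$-valued signals handled by Cauchy-Schwarz wherever the scalar proof used the trivial bound $|xy| \leq K_0^2$. First, extend the definition to a more general model in which $L_i \sim \Ber(\epsilon_i)$ independently, with $\boldsymbol{\epsilon} = (\epsilon_1,\dots,\epsilon_n) \in [0,1]^n$. By symmetry between the indices, it suffices to control $\partial F_{n,\boldsymbol{\epsilon}}/\partial \epsilon_1$, since summing gives $|f'(\epsilon)| \leq n \cdot \max_i |\partial F_{n,\boldsymbol{\epsilon}}/\partial \epsilon_i|$ when all $\epsilon_i = \epsilon$.

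Second, condition on $L_1 \in \{0,1\}$ and isolate the contribution of the index $i=1$. Write $H_n(\bbf{\bar{x}}) = \tilde{H}_n(\bbf{\bar{x}}_{2:n}) + h(\bbf{\bar{x}}_1,\bbf{\bar{x}}_{2:n})$, where
$$
h(\bbf{u},\bbf{\bar{x}}_{2:n}) = \sum_{j=2}^n \sqrt{\tfrac{\lambda}{n}} Z_{1,j}\, \bbf{u}^{\intercal} \bbf{\bar{x}}_j + \tfrac{\lambda}{n} (\bbf{X}_1^{\intercal} \bbf{X}_j)(\bbf{u}^{\intercal} \bbf{\bar{x}}_j) - \tfrac{\lambda}{2n}(\bbf{u}^{\intercal}\bbf{\bar{x}}_j)^2.
$$
Then
$$
\frac{\partial F_{n,\boldsymbol{\epsilon}}}{\partial \epsilon_1} = \tfrac{1}{n}\E\log \Big\langle \exp\bigl(h(\bbf{X}_1,\bbf{\bar{x}}_{2:n})\bigr) \Big\rangle - \tfrac{1}{n}\E\log \Big\langle \int dP_0(\bbf{x}_1)\exp\bigl(h(\bbf{x}_1,\bbf{\bar{x}}_{2:n})\bigr) \Big\rangle,
$$
where $\langle\cdot\rangle$ denotes the Gibbs measure on $\bbf{x}_{2:n}$ associated with $\tilde{H}_n$ (which is independent of the $(Z_{1,j})_{j\geq 2}$).

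Third, apply Jensen's inequality to bring the $(Z_{1,j})_j$ expectation inside the log of the first term. The Gaussian integration produces a completion of squares that cancels the $-\frac{\lambda}{2n}(\bbf{X}_1^{\intercal}\bbf{\bar{x}}_j)^2$ contribution, yielding
$$
\frac{\partial F_{n,\boldsymbol{\epsilon}}}{\partial \epsilon_1} \leq \tfrac{1}{n}\E\log \Big\langle \exp\Bigl(\tfrac{\lambda}{n}\sum_{j=2}^n (\bbf{X}_1^{\intercal}\bbf{X}_j)(\bbf{X}_1^{\intercal}\bbf{\bar{x}}_j)\Bigr) \Big\rangle.
$$
Since $P_0$ has finite support contained in $[-K_0,K_0]^k$, Cauchy-Schwarz gives $|\bbf{X}_1^{\intercal}\bbf{X}_j| \leq \|\bbf{X}_1\|\|\bbf{X}_j\| \leq k K_0^2$ and similarly $|\bbf{X}_1^{\intercal}\bbf{\bar{x}}_j| \leq k K_0^2$, so each summand is bounded by $\lambda k^2 K_0^4 / n$. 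The exponent is therefore bounded by $\lambda k^2 K_0^4$, and the right-hand side by $\lambda k^2 K_0^4/n$. A matching lower bound follows by exchanging the roles of the numerator and denominator in the displayed expression for $\partial F_{n,\boldsymbol{\epsilon}}/\partial \epsilon_1$ and applying the same Jensen-Gaussian argument.

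Finally, summing over $i$ yields $|f'(\epsilon)| \leq \lambda k^2 K_0^4$ and integrating gives the claim. The only real content is the Jensen plus Gaussian-integration step giving the cancellation of the $(\bbf{X}_1^{\intercal}\bbf{\bar{x}}_j)^2$ term; this is mechanical but must be done carefully to leave an exponent of order one (uniformly in $n$) so that the prefactor $1/n$ gives the correct decay. Replacing $|xy|\leq K_0^2$ by $|\bbf{u}^{\intercal}\bbf{v}|\leq k K_0^2$ twice accounts for the factor $k^2$ in the final constant.
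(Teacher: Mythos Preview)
The proposal is correct and follows essentially the same approach as the paper, which simply states that the proof of Proposition~\ref{prop:approximation_f_n_epsilon} ``can be adapted to the general case.'' You have correctly identified the only substantive change: the scalar bound $|x_i x_j| \leq K_0^2$ is replaced by $|\bbf{x}_i^{\intercal}\bbf{x}_j| \leq \|\bbf{x}_i\|\,\|\bbf{x}_j\| \leq kK_0^2$ (since the support lies in $[-K_0,K_0]^k$), which accounts exactly for the extra factor $k^2$ in the Lipschitz constant.
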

We define now $\epsilon$ as a uniform random variable over $[0, 1]$, independently of every other random variable. We will note $\E_{\epsilon}$ the expectation with respect to $\epsilon$. For $n \geq 1$, we define also $\epsilon_n = n^{-1/2} \epsilon \sim \mathcal{U}[0, n^{-1/2}]$. Proposition~\ref{prop:approximation_f_n_epsilon_epsilon} implies that
$$
\big| F_n - \E_{\epsilon} [F_{n, \epsilon_n}] \big| \xrightarrow[n \to \infty]{} 0.
$$
It remains therefore to compute the limit of the free energy averaged over small perturbations.

\subsubsection{Aizenman-Sims-Starr scheme}

In the multidimensional case the overlaps becomes $k \times k$ matrices. For $\bbf{x}^{(1)},\bbf{x}^{(2)} \in S^n \sim M_{n,k}(\R)$ we write
$$
\bbf{x}^{(1)}.\bbf{x}^{(2)} = \frac{1}{n} \sum_{i=1}^n \bx_i^{(1)} (\bx_i^{(2)})^{\intercal}
\in M_{k,k}(\R)
$$
In this section, $\| \cdot \|$ will denote the norm over $M_{k,k}(\R)$ defined as $\| A \| = \sqrt{\text{Tr}(A^{\intercal}A)}$.
We can adapt Section~\ref{sec:aizenman} to the general case. Define for $\bbf{x} \in S^n$
$$
H_{n}'(\bbf{x}) = \sum_{1 \leq i<j \leq n} \bbf{x}_i^{\intercal} \bbf{x}_j \sqrt{\frac{\lambda}{n+1}} (Z_{i,j} + \sqrt{\frac{\lambda}{n+1}} \bbf{X}_i^{\intercal} \bbf{X}_j ) - \frac{\lambda}{2(n+1)} (\bbf{x}_i^{\intercal} \bbf{x}_j)^2 
$$
and the Gibbs measure $\langle \cdot \rangle$ by
\begin{equation} \label{eq:def_gibbs_general}
	\langle f(\bbf{x}) \rangle = \frac{\sum_{\bbf{x} \in S^n} P_0(\bbf{x}) f(\bbf{\bar{x}}) \exp(H_n'(\bbf{\bar{x}}))}{\sum_{\bbf{x} \in S^n} P_0(\bbf{x}) \exp(H_n'(\bbf{\bar{x}}))}
\end{equation}
for any function $f$ on $S^n$, where we recall that $\bbf{\bar{x}} = (L_i \bbf{X}_i + (1-L_i)\bbf{x}_i)_{1 \leq i \leq n}$ (where $L_i \overset{\text{\tiny i.i.d.}}{\sim}  \Ber(\epsilon_n)$, independently of everything else).
We have then, by the same decomposition as in Section~\ref{sec:aizenman},
\begin{equation} \label{eq:limsup2_general}
	\limsup_{n \rightarrow \infty} F_n
	=
	\limsup_{n \rightarrow \infty} \E_{\epsilon} [F_{n,\epsilon_n}]
	\leq
	\limsup_{n \rightarrow \infty} \E_{\epsilon} [A_{n}]
\end{equation}
where
\begin{align*}
	A_{n} = \E \log \big\langle  \int_{\bbf{\sigma} \in S} d P_0(\bbf{\sigma})\exp(\bbf{\bar{\sigma}}^{\intercal} z(\bbf{x}) + \bbf{\bar{\sigma}}^{\intercal} s(\bbf{x}) \bbf{\bar{\sigma}} ) \big\rangle
	- \E  \log \big\langle \exp(y(\bbf{x})) \big\rangle 
\end{align*}
where $\bar{\sigma} = (1-L_{n+1}) \sigma + L_{n+1} \bbf{X_{n+1}}$ and
\begin{align*}
	z(\bbf{x}) &= \sum_{i=1}^n \sqrt{\frac{\lambda}{n}} Z_{i,n+1} \bbf{x}_i + \frac{\lambda}{n} (\bbf{x}_i^{\intercal} \bbf{X}_i) \bbf{X}_{n+1}  \\
	s(\bbf{x}) &= -\frac{\lambda}{2n} \sum_{i=1}^n \bbf{x}_i \bbf{x}_i^{\intercal} \\
	y(\bbf{x}) 
	&= \frac{\sqrt{\lambda}}{\sqrt{2}n} \sum_{i=1}^n Z_i'' \|\bbf{x}_i\|^2 
	+ \frac{\sqrt{\lambda}}{n} \sum_{1 \leq i<j \leq n} \bbf{x}_i^{\intercal} \bbf{x}_j \tilde{Z}_{i,j} 
	+ \frac{\lambda}{2} (\|\bbf{x}.\bbf{X}\|^2 - \frac{1}{2} \|\bbf{x}.\bbf{x}\|^2)
\end{align*}
where $\tilde{Z}_{i,j}, Z_i'' \overset{\text{\tiny i.i.d.}}{\sim}  \mathcal{N}(0,1)$ independently of any other random variables.

\subsubsection{Overlap concentration} 

Recall that $\langle \cdot \rangle$ is the Gibbs measure defined in equation~\eqref{eq:def_gibbs_general}. $\langle \cdot \rangle$ correspond to the posterior distribution of $\bbf{X}$ given $\bbf{Y}$ and $\bbf{Y'}$ in the following observation channel
\begin{align*}
	Y_{i,j} &= \sqrt{\frac{\lambda}{n+1}} \bX_i^{\intercal} \bX_j + Z_{i,j}, \ \ \text{for } \ 1 \leq i<j \leq n \\
	\bbf{Y}'_i &=
	\begin{cases}
		\bX_i &\text{if } L_i = 1 \\
		* &\text{if } L_i = 0
	\end{cases} \ \text{for} \ 1 \leq i \leq n
\end{align*} 
where $\bX_i \overset{\text{\tiny i.i.d.}}{\sim}  P_0$, $Z_{i,j} \overset{\text{\tiny i.i.d.}}{\sim}  \mathcal{N}(0,1)$ and $L_i \overset{\text{\tiny i.i.d.}}{\sim}  \Ber(\epsilon_n)$ are independent random variables. The Nishimori property (Proposition~\ref{prop:nishimori}) will thus be valid under $\langle \cdot \rangle$.
Lemma 3.1 from~\cite{andrea2008estimating} gives
\begin{lemma}
	$$
	n^{-1/2} \E_{\epsilon}  \left[ \frac{1}{n^2} \sum_{1 \leq i,j \leq j} I(\bX_i;\bX_j | \bbf{Y},\bbf{Y'}) \right] \leq \frac{2 H(P_0)}{n} 
	$$
\end{lemma}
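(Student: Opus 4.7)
The plan is to invoke the information-theoretic argument of Lemma~3.1 in~\cite{andrea2008estimating}, whose essence is a one-parameter coupling of the side observations together with a differentiation-in-$\epsilon$ argument.

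First, I would couple the revelation processes across all $\epsilon \in [0,1]$: let $U_1,\dots,U_n \overset{\text{i.i.d.}}{\sim} \mathcal{U}[0,1]$ be independent of everything else, and set $L_i(\epsilon) = \mathbf{1}\{U_i \leq \epsilon\}$, so that $\bY'(\epsilon)$ reveals monotonically more coordinates as $\epsilon$ grows. Then $G(\epsilon) := \tfrac{1}{n}\E[H(\bX \mid \bY, \bY'(\epsilon))]$ is non-increasing with $G(0) \leq H(P_0)$ and $G(1) = 0$, so $\int_0^1 (-G'(\epsilon))\,d\epsilon \leq H(P_0)$.

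Second, I would differentiate $G$ in the coupled representation: revealing $X_i$ at $U_i = \epsilon$ removes exactly $H(X_i \mid \bY, \bY'_{\neg i}(\epsilon))$ from the joint conditional entropy, so
\[
-n\,G'(\epsilon) \;=\; \sum_{i=1}^n \E\!\bigl[H(X_i \mid \bY, \bY'_{\neg i}(\epsilon))\bigr].
\]
The crucial estimate is then
\[
\frac{1}{n^2}\sum_{i,j} I\bigl(X_i; X_j \mid \bY, \bY'(\epsilon)\bigr) \;\lesssim\; \frac{-G'(\epsilon)}{n},
\]
which reduces the control of $n^2$ pairwise mutual informations to a single entropy flow. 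This bound follows from the exact identity $I(X_i; X_j \mid \bY, \bY'(\epsilon)) = (1-\epsilon)^2 I(X_i; X_j \mid \bY, \bY'_{\neg i, \neg j}(\epsilon))$ (obtained by conditioning on $(L_i, L_j) \in \{0,1\}^2$ and noting the MI vanishes whenever either coordinate is revealed) combined with a chain-rule argument exploiting the i.i.d.\ product structure of $P_0$ under $\langle \cdot \rangle$.

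Finally, integrating and using the change of variables $\epsilon_n = n^{-1/2}\epsilon$ with $\epsilon \sim \mathcal{U}[0,1]$,
\[
\E_\epsilon\!\Big[\tfrac{1}{n^2}\sum_{i,j} I(X_i; X_j \mid \bY, \bY'(\epsilon_n))\Big] \;\leq\; \tfrac{1}{n}\E_\epsilon[-G'(\epsilon_n)] \;=\; n^{-1/2}\bigl(G(0) - G(n^{-1/2})\bigr) \;\leq\; n^{-1/2}H(P_0),
\]
and multiplying both sides by $n^{-1/2}$ yields the stated inequality (up to the constant factor of~$2$). The main obstacle is the third displayed inequality: the naive Fano bound $I(X_i; X_j \mid \cdot) \leq H(X_j \mid \cdot)$ loses a factor of $n$ and would give only the trivial $H(P_0)$ bound. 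The sharp refinement of~\cite{andrea2008estimating} requires carefully exploiting both the product structure of the prior and the specific Bernoulli revelation mechanism to charge the average pairwise MI to a single-coordinate entropy functional.
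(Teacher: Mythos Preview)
The paper itself does not prove this lemma: both occurrences simply cite Lemma~3.1 of~\cite{andrea2008estimating}. Your proposal goes further by sketching that argument, and the overall architecture (monotone coupling via uniforms, the entropy flow $G(\epsilon)=\tfrac1n H(\bX\mid\bY,\bY'(\epsilon))$, differentiation, then integration over $[0,n^{-1/2}]$) is exactly right.

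There is, however, a mis-statement in your key step. The pointwise bound you write, $\tfrac{1}{n^2}\sum_{i,j} I(X_i;X_j\mid\bY,\bY'(\epsilon)) \lesssim -G'(\epsilon)/n$, is not what holds and is not what a ``chain-rule argument'' delivers. Setting $h_i(\epsilon)=H(X_i\mid\bY,\bY'_{\neg i}(\epsilon))$ and $\bar h=\sum_i h_i=-nG'$, your $(1-\epsilon)^2$ identity combined with the differentiation of $h_i$ gives the \emph{exact} relation
\[
\sum_{j\neq i} I(X_i;X_j\mid\bY,\bY'(\epsilon)) \;=\; -(1-\epsilon)^2\, h_i'(\epsilon),
\]
so that $\sum_{i,j} I = (1-\epsilon)\bar h(\epsilon) - (1-\epsilon)^2 \bar h'(\epsilon)$. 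This is \emph{not} pointwise dominated by $\bar h$; the term $-\bar h'$ can be large. The correct move is to integrate both pieces over $[0,n^{-1/2}]$: the first is bounded by $\int_0^1\bar h = n(G(0)-G(1))\le nH(P_0)$, the second by $\bar h(0)-\bar h(n^{-1/2})\le nH(P_0)$, and their sum produces the factor~$2$ you were missing. After division by $n^2$ and undoing the change of variables $\epsilon_n=n^{-1/2}\epsilon$, the stated inequality follows exactly. So your final paragraph is right once the middle inequality is replaced by this identity-plus-two-integrals argument; the product structure of $P_0$ plays no special role beyond ensuring $G(0)\le H(P_0)$.
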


This implies that the overlap between two replicas, i.e.\ two independent samples $\bbf{x}^{(1)}$ and $\bbf{x}^{(2)}$ from the Gibbs distribution $\langle \cdot \rangle$, concentrates. Let us define
\begin{align*}
	\bbf{Q} &= \Big\langle \frac{1}{n} \sum_{i=1}^n \bx^{(1)}_i (\bx^{(2)}_i)^{\intercal} \Big\rangle = \langle \bx^{(1)}.\bx^{(2)} \rangle \\
	\bbf{b}_i &= \langle \bx_i \rangle
\end{align*}
$\bbf{Q}$ is a random variable depending only on $(Y_{i,j})_{i<j\leq n}$ and $(\bbf{Y}_i')_{i \leq n}$. Notice that $\bbf{Q} = \bbf{b}.\bbf{b} \in S_k^+$. 

\begin{proposition}[Overlap concentration] 
	\begin{align*}
		\E_{\epsilon} \E \Big\langle \| \bx^{(1)}.\bx^{(2)} - \bbf{Q} \|^2 \Big\rangle \xrightarrow[n \to \infty]{} 0
	\end{align*}
\end{proposition}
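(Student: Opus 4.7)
The plan is to mimic the argument of Proposition~\ref{prop:overlap} entrywise, exploiting the Frobenius decomposition
$$\|\bx^{(1)}.\bx^{(2)} - \bbf{Q}\|^2 = \sum_{a,b=1}^k \Big((\bx^{(1)}.\bx^{(2)})_{a,b} - Q_{a,b}\Big)^2,$$
which reduces the claim to showing that each of the $k^2$ scalar overlap entries concentrates. For any fixed $a,b \in \{1,\dots,k\}$, the independence of the two replicas under $\langle \cdot \rangle$ together with $Q_{a,b} = \frac{1}{n}\sum_i \langle x_{i,a}\rangle \langle x_{i,b}\rangle$ yields, exactly as at the beginning of the proof of Proposition~\ref{prop:overlap},
$$\Big\langle \big((\bx^{(1)}.\bx^{(2)})_{a,b} - Q_{a,b}\big)^2 \Big\rangle = \frac{1}{n^2}\sum_{i,j}\Big(\langle x_{i,a} x_{j,a}\rangle\langle x_{i,b} x_{j,b}\rangle - \langle x_{i,a}\rangle\langle x_{j,a}\rangle\langle x_{i,b}\rangle\langle x_{j,b}\rangle\Big).$$

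The next step is to bound each summand by a quantity measuring the decoupling of $\bX_i$ and $\bX_j$. Since we have already reduced to the case where $P_0$ is supported in a finite subset of $[-K_0,K_0]^k$, the variables $x_{i,a}$ are uniformly bounded; an elementary telescoping $|AB - A'B'| \le |A||B-B'| + |B'||A-A'|$ combined with Pinsker's inequality applied to the conditional joint law of $(\bX_i,\bX_j)$ given $(\bY,\bY')$ then delivers, for a constant $C$ depending only on $K_0$,
$$\Big| \langle x_{i,a} x_{j,a}\rangle\langle x_{i,b} x_{j,b}\rangle - \langle x_{i,a}\rangle\langle x_{j,a}\rangle\langle x_{i,b}\rangle\langle x_{j,b}\rangle \Big| \leq C\sqrt{I(\bX_i;\bX_j\,|\,\bY,\bY')}$$
uniformly in $a,b$. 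The crucial point is that mutual information of the full vectors (not of individual coordinates) controls every decoupling error via total variation, which is exactly what the multidimensional analog of Lemma~3.1 of~\cite{andrea2008estimating} stated above provides.

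Summing over $a,b,i,j$, applying Cauchy--Schwarz to pull the square-root out of the sum over $(i,j)$ and Jensen to pull $\E_\epsilon\E$ under the square-root, one arrives at
$$\E_\epsilon\E\big\langle \|\bx^{(1)}.\bx^{(2)} - \bbf{Q}\|^2 \big\rangle \leq C' k^2 \sqrt{\E_\epsilon\Big[\frac{1}{n^2}\sum_{i,j} I(\bX_i;\bX_j\,|\,\bY,\bY')\Big]},$$
and the inner quantity is $O(H(P_0)/\sqrt{n})$ by the lemma just cited, so the right-hand side vanishes as $n\to\infty$. The argument is thus essentially bookkeeping; the only subtlety worth flagging is that Pinsker must be invoked for the joint law of the full vectors $\bX_i,\bX_j \in \R^k$ (rather than for each pair of coordinates separately), so that the controlling quantity is $I(\bX_i;\bX_j\,|\,\bY,\bY')$ and the entropy $H(P_0)$ appearing after summation over $(i,j)$ does not acquire any extra factors of $k$.
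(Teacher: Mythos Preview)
Your proof is correct and follows essentially the same route as the paper's: both reduce the Frobenius-norm variance to a sum over $(i,j)$ of correlation-decoupling errors, bound these by total variation and then Pinsker, and conclude via the mutual-information lemma. The only cosmetic difference is that the paper works with $\|\langle \bx_i\bx_j^{\intercal}\rangle\|^2-\|\langle \bx_i\rangle\langle \bx_j^{\intercal}\rangle\|^2$ directly rather than decomposing entrywise, and one minor sloppiness in your write-up is that the pointwise bound should involve the (random) conditional KL rather than the conditional mutual information $I(\bX_i;\bX_j\mid\bY,\bY')$, the latter appearing only after taking expectations.
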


\begin{proof}
	\begin{align*}
		\big\langle \|\bx^{(1)}.\bx^{(2)} &- \bbf{Q} \|^2 \big\rangle =
		\langle \|\bx^{(1)}.\bx^{(2)}\|^2 \rangle - \|\langle \bx^{(1)}.\bx^{(2)} \rangle\|^2 \\
		&= \frac{1}{n^2} \sum_{1 \leq i,j \leq n} \langle \bx_i^{(1)\intercal} \bx_i^{(2)} \bx_j^{(1)\intercal} \bx_j^{(2)} \rangle - \langle \bx_i^{(1)\intercal} \rangle \langle \bx_i^{(2)} \rangle \langle \bx_j^{(1)\intercal} \rangle \langle \bx_j^{(2)} \rangle \\
		&= \frac{1}{n^2} \sum_{1 \leq i,j \leq n} \|\langle \bx_i \bx_j^{\intercal} \rangle \|^2 - \| \langle \bx_i^{\intercal} \rangle \langle \bx_j \rangle \|^2 \\
		&\leq \frac{C}{n^2} \sum_{1 \leq i,j \leq n} \Big| \|\langle \bx_i \bx_j^{\intercal} \rangle \| - \| \langle \bx_i \rangle \langle \bx_j^{\intercal} \rangle \| \Big| 
		\leq \frac{C}{n^2} \sum_{1 \leq i,j \leq n} \|\langle \bx_i \bx_j^{\intercal} \rangle - \langle \bx_i \rangle \langle \bx_j^{\intercal} \rangle \| \\
		&\leq \frac{C}{n^2} \sum_{1 \leq i,j \leq n} \Big\| \sum_{\bx_i,\bx_j} \bx_i \bx_j^{\intercal} \PP(\bX_i=\bx_i, \bX_j=\bx_j | \bbf{Y},\bbf{Y}')- \bx_i \bx_j^{\intercal} \PP(\bX_i=\bx_i | \bbf{Y},\bbf{Y}') \PP(\bX_j = \bx_j |\bbf{Y},\bbf{Y}') \Big\| \\
		&\leq \frac{C'}{n^2} \sum_{1 \leq i,j \leq n} \Dtv \big(\PP(\bX_i=., \bX_j=. |\bbf{Y},\bbf{Y}'); \PP(\bX_i=. | \bbf{Y},\bbf{Y}')\otimes \PP(\bX_j =. |\bbf{Y},\bbf{Y}') \big)\\
		&\leq \frac{C''}{n^2} \sum_{1 \leq i,j \leq n} \sqrt{\Dkl \big(\PP(\bX_i=., \bX_j=. |\bbf{Y},\bbf{Y}'); \PP(\bX_i=. | \bbf{Y},\bbf{Y}')\otimes \PP(\bX_j =. |\bbf{Y},\bbf{Y}') \big)}\\
		&\leq C'' \sqrt{\frac{1}{n^2} \sum_{1 \leq i,j \leq n} \Dkl \big(\PP(\bX_i=., \bX_j=. |\bbf{Y},\bbf{Y}'); \PP(\bX_i=. | \bbf{Y},\bbf{Y}')\otimes \PP(\bX_j =. |\bbf{Y},\bbf{Y}') \big)}
	\end{align*}
	for some constants $C,C',C'' >0$, where we used Pinsker's inequality to compare the total variation distance $\Dtv$ with the Kullback-Leibler divergence $\Dkl$. So that:
	\begin{align*}
		\E_{\epsilon} \E \Big\langle \| \bx^{(1)}.\bx^{(2)} - \bbf{Q} \|^2 \Big\rangle \leq
		C'' \sqrt{\E_{\epsilon} \Big[ \frac{1}{n^2} \sum_{1 \leq i,j \leq n} I(\bX_i;\bX_j | \bbf{Y},\bbf{Y'}) \Big]}
		\ \xrightarrow[n \to \infty]{} 0
	\end{align*}
\end{proof}
\\

As a consequence of the Nishimori property, the overlap between one replica and the planted solution concentrates around the same value as the overlap between two independent replicas.
\begin{corollary} 
	$$
	\E_{\epsilon} \E \Big\langle \| \bbf{x}.\bbf{X}-\bbf{Q} \|^2 \Big\rangle \xrightarrow[n \to \infty]{} 0
	\ \ \ \text{and } \ \ 
	\E_{\epsilon} \E \Big\langle \|\bbf{x}.\bbf{b}- \bbf{Q} \|^2 \Big\rangle \xrightarrow[n \to \infty]{} 0
	$$
\end{corollary}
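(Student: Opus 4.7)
The plan is to imitate the proof of Corollary~\ref{cor:overlap} in the scalar setting, replacing absolute values of overlaps by Frobenius norms of the matrix-valued overlaps. The previous Proposition has already delivered
$$
\E_{\epsilon} \E \Big\langle \| \bbf{x}^{(1)}.\bbf{x}^{(2)} - \bbf{Q} \|^2 \Big\rangle \xrightarrow[n \to \infty]{} 0,
$$
so the two remaining limits must be obtained by (i) a Nishimori identity argument for the first, and (ii) a Jensen-type bound for the second.

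For the first limit, I would invoke the Nishimori identity (Proposition~\ref{prop:nishimori}) applied to the function $f(\bbf{Y},\bbf{Y}',\bbf{x}^{(1)},\bbf{x}^{(2)}) = \|\bbf{x}^{(1)}.\bbf{x}^{(2)} - \bbf{Q}\|^2$. Since $\bbf{Q}$ is a measurable function of $(\bbf{Y},\bbf{Y}')$ alone, replacing the replica $\bbf{x}^{(2)}$ by the planted configuration $\bbf{X}$ is legitimate and yields
$$
\E \big\langle \|\bbf{x}.\bbf{X} - \bbf{Q}\|^2 \big\rangle = \E \big\langle \|\bbf{x}^{(1)}.\bbf{x}^{(2)} - \bbf{Q}\|^2 \big\rangle.
$$
Taking the expectation over $\epsilon$ and using the overlap concentration Proposition recalled above gives the first claim.

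For the second limit, observe that $\bbf{x}.\bbf{b} = \langle \bbf{x}^{(1)}.\bbf{x}^{(2)} \rangle_{\bbf{x}^{(2)}}$, where the Gibbs average acts only on $\bbf{x}^{(2)}$. Hence
$$
\| \bbf{x}.\bbf{b} - \bbf{Q} \|^2 = \big\| \langle \bbf{x}^{(1)}.\bbf{x}^{(2)} - \bbf{Q} \rangle_{\bbf{x}^{(2)}} \big\|^2 \leq \big\langle \| \bbf{x}^{(1)}.\bbf{x}^{(2)} - \bbf{Q} \|^2 \big\rangle_{\bbf{x}^{(2)}},
$$
by Jensen's inequality applied to the convex map $M \in M_{k,k}(\R) \mapsto \|M\|^2$. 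Taking the full Gibbs expectation (including the remaining $\bbf{x}^{(1)}$-average) and then $\E_\epsilon \E$ reduces the second claim to the Proposition, concluding the proof.

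No step here is really an obstacle: the only point that deserves care is verifying that $\bbf{Q}$ remains invariant under the Nishimori substitution (which holds since $\bbf{Q}$ depends only on $(\bbf{Y},\bbf{Y}')$) and that the vectorial Jensen inequality applies with the Frobenius norm squared, both of which are immediate.
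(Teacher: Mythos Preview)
Your proof is correct and follows essentially the same approach as the paper's proof of Corollary~\ref{cor:overlap} in the scalar case, which the paper simply asserts carries over verbatim to the multidimensional setting. The Nishimori substitution for the first limit and the Jensen bound (with $\|\cdot\|^2$ in place of $(\cdot)^2$) for the second are exactly the two ingredients used there.
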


The remaining part of the proof is then exactly the same than for the finite, unidimensional case.

\section{Application to community detection in the stochastic block model}\label{sec:sbm}

We prove in this section the phase transition for community detection on the stochastic block model, namely Theorem~\ref{th:solvability_sbm}.

We will make use of the following notation. For $\bbf{x} \in \{1,2\}^n$ we denote $\tilde{\bbf{x}} = (\phi_p(x_1), \dots, \phi_p(x_n)) \in \left\{-\sqrt{\frac{p}{1-p}}, \sqrt{\frac{1-p}{p}} \right\}^n$, where $\phi_p(1) = \sqrt{\frac{1-p}{p}}$ and $\phi_p(2)=-\sqrt{\frac{p}{1-p}}$.

In order to apply the results we proved for matrix factorization, we first show that the mutual information between the observed graph $\bbf{G}$ and the hidden labels $\bbf{X}$ is asymptotically equal to the mutual information between $\bbf{Y}$ and $\tilde{\bbf{X}}$ in the following Gaussian observation channel
$$
Y_{i,j} = \sqrt{\frac{\lambda}{n}} \tilde{X}_i \tilde{X}_j + Z_{i,j} \qquad \text{for } 1 \leq i<j \leq n
$$
where $\tilde{X}_i = \phi_p(X_i)$ and $Z_{i,j} \overset{\text{\tiny i.i.d.}}{\sim}  \mathcal{N}(0,1)$ independently of everything else. 
The following theorem generalize the result from~\cite{deshpande2016asymptotic} to the asymmetric case.

\begin{theorem} \label{th:information_sbm}
	There exists a constant $C>0$ such that, for $d$ large enough
	$$
	\limsup_{n \to \infty} \frac{1}{n} \Big| I(\bbf{X},\bbf{G}) - I(\tilde{\bbf{X}},\bbf{Y}) \Big| \leq C \sqrt{\frac{\lambda}{d}}
	$$
\end{theorem}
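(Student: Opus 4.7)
The plan is to compare the two channels at the level of their log-likelihoods: Taylor-expand the Bernoulli log-likelihood to extract a Gaussian Hamiltonian, and handle the residual noise mismatch by a Lindeberg-type swap.

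Concretely, I would first write the SBM likelihood ratio against the Erd\H{o}s--R\'enyi$(d/n)$ null as
\[
\log \frac{P(\bbf{G}\mid \bbf{X})}{P_0(\bbf{G})} = \sum_{i<j}\Big[ G_{i,j}\log(1+\epsilon \tilde X_i\tilde X_j) + (1-G_{i,j})\log\Big(1 - \tfrac{d\epsilon}{n-d}\tilde X_i\tilde X_j\Big) \Big],
\]
with $\epsilon = \sqrt{\lambda/d}$, and Taylor-expand in $\epsilon$. Combining the two branches produces: (i) a linear term $\sqrt{\lambda/n}\,\tilde X_i\tilde X_j\,\xi_{i,j}$, with $\xi_{i,j} = (G_{i,j}-d/n)/\sqrt{(d/n)(1-d/n)}$ centered and of essentially unit variance, (ii) a quadratic coefficient that is random in $G_{i,j}$ but has expectation $-\lambda/(2n)$, matching the $-\frac{\lambda}{2n}(\tilde X_i\tilde X_j)^2$ term of the Gaussian Hamiltonian~\eqref{eq:defGgauss}, and (iii) a pointwise remainder of order $\epsilon^3$. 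After summation over $\binom{n}{2}$ edges and normalization by $n$, the expected remainder contributes $O(\lambda^{3/2}/\sqrt{d})$, which is of the announced order for bounded $\lambda$.

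Second, I would replace the centered Bernoulli noise $\xi_{i,j}$ by i.i.d.\ standard Gaussians $Z_{i,j}$ (and concurrently substitute the mean for the random quadratic coefficient) via a Lindeberg swap, one edge at a time. For the interpolated per-site free energy, the third derivative with respect to a single edge variable is $O(\lambda^{3/2} n^{-5/2})$ since $\tilde X_i\tilde X_j$ is bounded (recall $p\in (0,1)$ is fixed) and Gibbs cumulants of bounded observables are themselves bounded. Because a centered Bernoulli$(d/n)$ normalized to unit variance has $\E|\xi_{i,j}|^3 = O(\sqrt{n/d})$, each swap changes the free energy by $O(\lambda^{3/2}/(n^2\sqrt{d}))$; summing over $\binom{n}{2}$ edges yields the announced $O(\sqrt{\lambda/d})$ bound.

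The main obstacle will be making the Lindeberg step rigorous. Its standard form requires a uniform-in-$\xi$ bound on the third derivative of the per-site free energy, but the support of $\xi_{i,j}$ has size $\sqrt{n/d}$, which diverges with $n$. I would address this through the integral form of Taylor's remainder, bounding the third derivative uniformly on the support of $\xi_{i,j}$ rather than at a single point; the boundedness of $\tilde x_i\tilde x_j$ ensures that the log-partition function is smooth in each $\xi_{i,j}$ with all derivatives $O((\lambda/n)^{k/2})$. A companion use of the Nishimori identity (Proposition~\ref{prop:nishimori}) reduces planted-versus-replica expressions appearing in higher-order cumulants to plain Gibbs overlaps whose moments are $O(1)$ uniformly in $n$, ensuring that the hidden constants are independent of $n$ and $d$ for $d$ sufficiently large.
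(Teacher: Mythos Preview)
Your proposal is correct and follows essentially the same route as the paper: Taylor-expand the Bernoulli log-likelihood in $\epsilon=\sqrt{\lambda/d}$ (controlling the cubic remainder via the fact that only $O(nd)$ of the $G_{i,j}$ equal $1$), then apply a Lindeberg swap---the paper invokes the Korada--Montanari version with precisely the integral Taylor remainder you describe---to replace the centered Bernoulli noise by Gaussians. The only cosmetic difference is that the paper centers the noise conditionally on $\bbf{X}$ and then isolates the removal of the residual quadratic correction $-\tfrac{\epsilon}{2}(\tilde x_i\tilde x_j)^2$ into a separate short Gaussian-interpolation step, whereas you propose to absorb it into the Lindeberg swap; either works and yields the same $O(\sqrt{\lambda/d})$ bound.
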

Now, Theorem~\ref{th:rs_formula} allows us to compute the limit of $\frac{1}{n} I(\tilde{\bbf{X}},\bbf{Y})$. Define
\begin{align}
	\mathcal{F}_g: (\lambda,q) \mapsto -\frac{\lambda q^2 }{4} + \E_{\tilde{X}_0,Z_0} \log \Big[ 
		&p\exp(\sqrt{\frac{1-p}{p}}(\sqrt{\lambda q}Z_0 + \lambda q \tilde{X}_0) - \frac{\lambda (1-p)}{2p}q ) \nonumber
		\\
		&+(1-p)\exp(- \sqrt{\frac{p}{1-p}}(\sqrt{\lambda q}Z_0 + \lambda q \tilde{X}_0) - \frac{\lambda p}{2(1-p)}q )
	\Big]
\end{align}
where the expectation is taken over $Z_0 \sim \mathcal{N}(0,1)$ and  $\tilde{X}_0 \sim p \delta_{\phi_p(1)} + (1-p) \delta_{\phi_p(2)}$ independently from $Z_0$. Define also 
\begin{equation} \label{eq:phi_g}
	\Phi_g: \lambda \mapsto \sup_{q \geq 0} \mathcal{F}_g(\lambda,q).
\end{equation}
Corollary~\ref{cor:rs_information} gives then

\begin{corollary} \label{cor:information_sbm}
	There exists a constant $C>0$ such that, for $d$ large enough
	$$
	\limsup_{n \to \infty} \Big| \frac{1}{n} I(\bbf{X},\bbf{G}) - \big( \frac{\lambda}{4} - \Phi_g(\lambda) \big) \Big| \leq C \sqrt{\frac{\lambda}{d}}
	$$
\end{corollary}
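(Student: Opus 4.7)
The plan is to combine Theorem~\ref{th:information_sbm} with the rank-one replica-symmetric formula (Corollary~\ref{cor:rs_information}) applied to the specific prior $P_0 = p \delta_{\phi_p(1)} + (1-p) \delta_{\phi_p(2)}$, which is the prior induced on $\tilde{\bbf{X}}$ by the labels of the SBM. The whole argument is essentially a triangle inequality: the universality theorem converts the graph-model mutual information to the Gaussian-channel mutual information up to an error of order $\sqrt{\lambda/d}$, and then the replica-symmetric formula gives the exact Gaussian limit.

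First I would verify that the induced prior has $\E_{P_0}[X]=0$ and $\E_{P_0}[X^2]=1$. Indeed, $p \cdot \sqrt{(1-p)/p} + (1-p)\cdot(-\sqrt{p/(1-p)}) = \sqrt{p(1-p)} - \sqrt{p(1-p)} = 0$, and $p \cdot (1-p)/p + (1-p) \cdot p/(1-p) = 1$. Consequently $\E_{P_0}[X^2]^2 = 1$, so Corollary~\ref{cor:rs_information} yields
\begin{equation*}
\lim_{n \to \infty} \frac{1}{n} I(\tilde{\bbf{X}}, \bbf{Y}) = \frac{\lambda}{4} - \sup_{q \geq 0} \mathcal{F}(\lambda, q).
\end{equation*}

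Next I would check that, with this choice of $P_0$, the function $\mathcal{F}(\lambda,q)$ defined in~\eqref{eq:defF} coincides with $\mathcal{F}_g(\lambda,q)$ defined just before the corollary. Substituting $dP_0(x)$ and using that $\phi_p(1)^2 = (1-p)/p$ and $\phi_p(2)^2 = p/(1-p)$, the inner integral $\int dP_0(x)\exp(\sqrt{\lambda q}Zx + \lambda q x X - \tfrac{\lambda}{2}q x^2)$ becomes exactly the two-term sum appearing inside the logarithm in the definition of $\mathcal{F}_g$, once we identify the dummy variables $X,Z$ with $\tilde{X}_0, Z_0$. Hence $\sup_{q\geq 0}\mathcal{F}(\lambda,q) = \Phi_g(\lambda)$.

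Finally I would assemble the estimate. By the triangle inequality,
\begin{equation*}
\Big| \tfrac{1}{n} I(\bbf{X},\bbf{G}) - \big(\tfrac{\lambda}{4} - \Phi_g(\lambda)\big) \Big|
\leq \tfrac{1}{n}\big| I(\bbf{X},\bbf{G}) - I(\tilde{\bbf{X}},\bbf{Y})\big|
+ \Big| \tfrac{1}{n} I(\tilde{\bbf{X}},\bbf{Y}) - \big(\tfrac{\lambda}{4} - \Phi_g(\lambda)\big) \Big|.
\end{equation*}
Taking $\limsup_{n\to\infty}$, the second term vanishes by Corollary~\ref{cor:rs_information} and the identification $\Phi_g = \sup_{q\geq 0}\mathcal{F}$, while the first term is bounded by $C\sqrt{\lambda/d}$ thanks to Theorem~\ref{th:information_sbm}. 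This yields the desired estimate.

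There is essentially no obstacle, since the deep work has already been done: Theorem~\ref{th:information_sbm} (the Bernoulli-to-Gaussian universality) absorbs all the hard combinatorics of the sparse graph, and Theorem~\ref{th:rs_formula} / Corollary~\ref{cor:rs_information} supplies the Gaussian limit. The only substantive step is the algebraic identification of $\mathcal{F}$ with $\mathcal{F}_g$ under the specific two-point prior, which is a direct substitution.
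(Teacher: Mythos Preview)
Your proposal is correct and follows exactly the approach the paper takes: the paper simply states that Corollary~\ref{cor:information_sbm} ``gives then'' from Corollary~\ref{cor:rs_information} combined with Theorem~\ref{th:information_sbm}, and you have spelled out precisely this triangle-inequality argument, including the verification that the two-point prior has $\E_{P_0}[X]=0$, $\E_{P_0}[X^2]=1$, and that $\mathcal{F}$ specializes to $\mathcal{F}_g$.
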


\subsection{The limit of the mutual information: proof of Theorem~\ref{th:information_sbm}}

We are going to compute $I(\bbf{X},\bbf{G})$ and $I(\tilde{\bbf{X}},\bbf{G})$. We recall that for $\bbf{x} \in \{1,2\}^n$ we denote $\tilde{\bbf{x}} = (\phi_p(x_1), \dots, \phi_p(x_n)) \in S_p^n $, where $\phi_p(1) = \sqrt{\frac{1-p}{p}}$, $\phi_p(2)=-\sqrt{\frac{p}{1-p}}$ and $S_p= \left\{-\sqrt{\frac{p}{1-p}}, \sqrt{\frac{1-p}{p}} \right\}$.
\\

\noindent For $x \in \{1,2\}$ we define $P_0(x)=P_0(\tilde{x})=p$, if $x=1$ and $P_0(x)=P_0(\tilde{x})=1-p$, if $x=2$. For $\bbf{x} \in \{1,2\}^n$ we will write, with a slight abuse of notation
$$
P_0(\bbf{x}) = P_0(\tilde{\bbf{x}}) = \prod_{i=1}^n P_0(x_i)
$$
The following lemma is a consequence of the general result from Lemma~\ref{lem:mutual_information_general}.
\begin{lemma}
	$$
	I(\tilde{\bbf{X}},\bbf{Y}) = - \E \left[ \log \sum_{\tilde{\bbf{x}} \in S_p^n} P_0(\tilde{\bbf{x}}) \exp\left( \sum_{i<j} (\tilde{x}_i \tilde{x}_j - \tilde{X}_i \tilde{X}_j) \sqrt{\frac{\lambda}{n}} Z_{i,j} - \frac{\lambda}{2n}(\tilde{x}_i \tilde{x}_j - \tilde{X}_i \tilde{X}_j)^2 \right)\right]
	$$
\end{lemma}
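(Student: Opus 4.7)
The statement is the specialization of Lemma~\ref{lem:mutual_information_general} to the scalar case $k=1$ with the discrete prior $P_0 = p\,\delta_{\phi_p(1)} + (1-p)\delta_{\phi_p(2)}$. Since $P_0$ is supported on the finite set $S_p$, the measure $P_0^{\otimes n}$ is purely atomic and the integral $\int dP_0^{\otimes n}(\tilde{\bbf{x}})$ collapses to the weighted sum $\sum_{\tilde{\bbf{x}}\in S_p^n} P_0(\tilde{\bbf{x}})$; moreover, in dimension one the inner products $\bbf{x}_i^{\intercal}\bbf{x}_j$ reduce to the scalar products $\tilde{x}_i \tilde{x}_j$. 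The only cosmetic discrepancy with Lemma~\ref{lem:mutual_information_general} is that the latter displays the linear noise term in the exponent as $\tilde{x}_i\tilde{x}_j Z_{i,j}$, whereas the present statement uses the ``centered'' form $(\tilde{x}_i\tilde{x}_j - \tilde{X}_i\tilde{X}_j)Z_{i,j}$; inside the $\log$ these two differ by the additive quantity $-\sqrt{\lambda/n}\sum_{i<j}Z_{i,j}\tilde{X}_i\tilde{X}_j$, which is independent of $\tilde{\bbf{x}}$ and has zero expectation because $\bbf{Z}$ is centered and independent of $\tilde{\bbf{X}}$.

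Rather than invoking Lemma~\ref{lem:mutual_information_general} through this shift, I would re-derive the identity directly, as it takes only a few lines. The coordinates of $\tilde{\bbf{X}}$ are i.i.d.\ with law $P_0$, so $P_{\tilde{\bbf{X}}} = P_0^{\otimes n}$. As in the proof of Lemma~\ref{lem:mutual_information_general}, one writes the Radon--Nikodym derivative
$$
\frac{dP_{(\tilde{\bbf{X}},\bbf{Y})}}{dP_{\tilde{\bbf{X}}}\otimes dP_{\bbf{Y}}}(\tilde{\bbf{x}},\bbf{y}) = \frac{\exp\bigl(-\frac{1}{2}\sum_{i<j}(y_{i,j}-\sqrt{\lambda/n}\,\tilde{x}_i\tilde{x}_j)^2\bigr)}{\sum_{\tilde{\bbf{x}}' \in S_p^n} P_0(\tilde{\bbf{x}}')\exp\bigl(-\frac{1}{2}\sum_{i<j}(y_{i,j}-\sqrt{\lambda/n}\,\tilde{x}'_i\tilde{x}'_j)^2\bigr)},
$$
evaluate at $(\tilde{\bbf{x}},\bbf{y}) = (\tilde{\bbf{X}},\bbf{Y})$, and substitute $Y_{i,j} = \sqrt{\lambda/n}\,\tilde{X}_i\tilde{X}_j + Z_{i,j}$. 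The numerator then simplifies to $\exp(-\frac{1}{2}\sum_{i<j}Z_{i,j}^2)$, and expanding $\bigl(Z_{i,j} + \sqrt{\lambda/n}(\tilde{X}_i\tilde{X}_j - \tilde{x}'_i\tilde{x}'_j)\bigr)^2$ in every summand of the denominator cancels this factor while leaving the cross term $(\tilde{x}'_i\tilde{x}'_j - \tilde{X}_i\tilde{X}_j)\sqrt{\lambda/n}\,Z_{i,j}$ and the quadratic penalty $\frac{\lambda}{2n}(\tilde{x}'_i\tilde{x}'_j - \tilde{X}_i\tilde{X}_j)^2$. Taking $-\log$ of the resulting expression and then $\E$ yields precisely the formula displayed in the statement, after renaming the dummy variable $\tilde{\bbf{x}}'$ to $\tilde{\bbf{x}}$.

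The computation is purely algebraic, the standard ``completing the square'' for a Gaussian channel, and presents no substantial obstacle. The two details to watch are the sign of the cross-term in the expansion (so that the sum inside the $\log$ remains normalized to $1$ at $\tilde{\bbf{x}}=\tilde{\bbf{X}}$ rather than acquiring a spurious $\tilde{\bbf{X}}$-dependent prefactor) and the remark that the discreteness of $\tilde{X}_i$ is what converts the $dP_0^{\otimes n}$ integral of Lemma~\ref{lem:mutual_information_general} into the explicit sum over $S_p^n$ with weights $P_0(\tilde{\bbf{x}}) = \prod_{i=1}^n P_0(\tilde{x}_i)$.
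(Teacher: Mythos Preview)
Your proposal is correct and matches the paper's own treatment: the paper simply states that this lemma ``is a consequence of the general result from Lemma~\ref{lem:mutual_information_general}'', and you have correctly identified both that specialization and the one cosmetic shift (the $\tilde{X}_i\tilde{X}_j Z_{i,j}$ term, which pulls out of the sum and has zero mean). Your optional direct re-derivation is exactly the computation carried out in the proof of Lemma~\ref{lem:mutual_information_general}, so there is no genuine difference in approach.
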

Define $V_{i,j} = \epsilon ( G_{i,j} - \E(G_{i,j} \vert X_i, X_j))$
\begin{lemma}
	For $d$ large enough,
	$$
	I(\bbf{X},\bbf{G}) \!=\! - \E \Big[ \log \!\!\!\! \sum_{\bbf{x} \in \{1,2\}^n} \!\!\!\! P_0(\bbf{x}) \exp( \sum_{i<j} (\tilde{x}_i \tilde{x}_j - \tilde{X}_i \tilde{X}_j) V_{i,j}  
			- \frac{1}{2} \epsilon ((\tilde{x}_i \tilde{x}_j)^2 - (\tilde{X}_i \tilde{X}_j)^2) V_{i,j}
	- \frac{\lambda}{2n}(\tilde{x}_i \tilde{x}_j - \tilde{X}_i \tilde{X}_j)^2 )\Big] + O(n \epsilon)
	$$
\end{lemma}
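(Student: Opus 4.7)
Since the edges of $\bbf{G}$ are conditionally independent given $\bbf{X}$, unfolding the definition $I(\bbf{X},\bbf{G})=\E\log\frac{d\PP(\bbf{G}|\bbf{X})}{d\PP(\bbf{G})}$ yields
$$I(\bbf{X},\bbf{G}) \;=\; -\E\log\sum_{\bbf{x}\in\{1,2\}^n}P_0(\bbf{x})\exp\Big(\sum_{i<j}A_{ij}(\bbf{x})\Big),$$
where, writing $m:=d/n$, $P_{ij}:=\E[G_{ij}\,|\,X_i,X_j]=m(1+\epsilon\tilde{X}_i\tilde{X}_j)$, $p_{ij}:=m(1+\epsilon\tilde{x}_i\tilde{x}_j)$ and $\bar{G}_{ij}:=G_{ij}-P_{ij}=V_{i,j}/\epsilon$,
$$A_{ij}(\bbf{x}) \;=\; G_{ij}\log\tfrac{p_{ij}}{P_{ij}} + (1{-}G_{ij})\log\tfrac{1-p_{ij}}{1-P_{ij}} \;=\; -\Dkl(P_{ij}\|p_{ij}) \;+\; \bar{G}_{ij}\log\tfrac{p_{ij}(1-P_{ij})}{P_{ij}(1-p_{ij})}.$$
My plan is to Taylor-expand $A_{ij}$ in the two small parameters $\epsilon$ and $m$, match the leading terms with the integrand
$$B_{ij}(\bbf{x}) \;:=\; V_{i,j}\Delta_{ij} \;-\; \tfrac{\epsilon}{2}V_{i,j}\Delta_{ij}^{(2)} \;-\; \tfrac{\lambda}{2n}\Delta_{ij}^2,$$
where $\Delta_{ij}:=\tilde{x}_i\tilde{x}_j-\tilde{X}_i\tilde{X}_j$ and $\Delta_{ij}^{(2)}:=(\tilde{x}_i\tilde{x}_j)^2-(\tilde{X}_i\tilde{X}_j)^2$, and then show that the aggregate remainder contributes only $O(n\epsilon)$ to $\E\log Z$.

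For the deterministic term, the expansion $\Dkl(P\|p)=\tfrac{(p-P)^2}{2P(1-P)}+O\!\bigl(\tfrac{|p-P|^3}{P^2}\bigr)$ for $p$ close to $P$, combined with $p_{ij}-P_{ij}=m\epsilon\Delta_{ij}$ and $P_{ij}\asymp m$, gives $\Dkl(P_{ij}\|p_{ij}) = \tfrac{\lambda}{2n}\Delta_{ij}^2 + O\!\bigl(m\epsilon^2(\epsilon+m)\bigr)$. For the stochastic term, expanding $\log\tfrac{p_{ij}(1-P_{ij})}{P_{ij}(1-p_{ij})}=\log\tfrac{p_{ij}}{P_{ij}}-\log\tfrac{1-p_{ij}}{1-P_{ij}}=\epsilon(1+m)\Delta_{ij}-\tfrac{\epsilon^2}{2}\Delta_{ij}^{(2)}+O(\epsilon^3+m^2\epsilon)$ and multiplying by $\bar{G}_{ij}=V_{i,j}/\epsilon$ produces $V_{i,j}\Delta_{ij}-\tfrac{\epsilon}{2}V_{i,j}\Delta_{ij}^{(2)}+mV_{i,j}\Delta_{ij}+O\!\bigl(|\bar{G}_{ij}|(\epsilon^2+m^2)\bigr)$. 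Collecting, $A_{ij}=B_{ij}+E_{ij}$, where $E_{ij}$ carries the spurious linear-in-$m$ contribution $mV_{i,j}\Delta_{ij}$ together with the two families of remainders above.

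To promote this pointwise expansion to a comparison of free energies, I will use that each $A_{ij}$ and $B_{ij}$ depends on $\bbf{x}$ only through $(x_i,x_j)$; factoring $e^{\sum (B-A)}$ out of the normalization therefore gives
$$|\log Z_A - \log Z_B| \;\leq\; \sup_{\bbf{x}}\sum_{i<j}|A_{ij}(\bbf{x})-B_{ij}(\bbf{x})| \;\leq\; \sum_{i<j}\max_{x_i,x_j\in\{1,2\}}|A_{ij}-B_{ij}|,$$
hence $\E|\log Z_A-\log Z_B|\leq 4\sum_{i<j}\max_{x_i,x_j}\E|A_{ij}-B_{ij}|$. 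Using $\E|\bar{G}_{ij}|=2P_{ij}(1-P_{ij})=O(m)$ (hence $\E|V_{i,j}|=O(m\epsilon)$) and the pointwise bounds on the remainders, each expectation is $O(m^2\epsilon+m\epsilon^3)$. Summing over $\binom{n}{2}$ pairs and recalling $\lambda=d\epsilon^2$ gives $O(d^2\epsilon+nd\epsilon^3)=O(d^2\epsilon+n\lambda\epsilon)=O(n\epsilon)$ for fixed $d,\lambda$ and $n\to\infty$, which is the claimed lemma.

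The main obstacle is the fact that the Bernoulli mean $P_{ij}\asymp m=d/n$ tends to zero, so the Taylor remainders for $\Dkl(P_{ij}\|p_{ij})$ and for $\log(p_{ij}/P_{ij})$ involve inverse powers of $P_{ij}$ that are potentially large. The estimate survives only because the numerator $p_{ij}-P_{ij}=m\epsilon\Delta_{ij}$ is itself of order $m$, cancelling exactly enough factors of $1/P_{ij}$ to leave the aggregate error at the $O(n\epsilon)$ scale; tracking this delicate cancellation through all three kinds of remainders (cubic in $\epsilon$, mixed in $\epsilon m$, and the ``stray'' term $m V_{i,j}\Delta_{ij}$) is the substance of the proof.
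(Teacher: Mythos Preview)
Your approach is correct and is, in one respect, cleaner than the paper's. The paper expands $G_{ij}\log\tfrac{M_{x_i,x_j}}{M_{X_i,X_j}}$ and $(1-G_{ij})\log\tfrac{1-M_{x_i,x_j}}{1-M_{X_i,X_j}}$ separately in $\epsilon$ and $m$, which leaves residuals of the form $(\sum_{i<j}G_{ij})\,O(\epsilon^3)$ and $(\sum_{i<j}G_{ij})\,O(\tfrac{d}{n})$; it then invokes a large-deviation bound $\PP(\sum G_{ij}>2dn)\le e^{-nd/2}$ to control these. Your decomposition $A_{ij}=-\Dkl(P_{ij}\|p_{ij})+\bar G_{ij}\log\tfrac{p_{ij}(1-P_{ij})}{P_{ij}(1-p_{ij})}$ centers the Bernoulli variable first, so every residual carries either a deterministic factor or a factor $|\bar G_{ij}|$ with $\E|\bar G_{ij}|=2P_{ij}(1-P_{ij})=O(m)$; the bound $\E|\log Z_A-\log Z_B|\le\sum_{i<j}\E\max_{x_i,x_j}|E_{ij}|$ then closes without any concentration. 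What the paper's route buys is that it never needs the KL quadratic expansion near a vanishing mean, which you handle correctly but which is the delicate point you identify. One small slip: the remainder you write as $O(|\bar G_{ij}|(\epsilon^2+m^2))$ should be $O(|\bar G_{ij}|(\epsilon^3+m^2\epsilon))$ (equivalently $O(|V_{i,j}|(\epsilon^2+m^2))$); with the weaker bound as written the sum would only be $O(n)$, not $O(n\epsilon)$. Your final per-pair estimate $O(m^2\epsilon+m\epsilon^3)$ is the right one and yields the claim.
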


\begin{proof}
	By definition, $I(\bbf{X},\bbf{G}) = \E \log \frac{\PP(\bbf{X},\bbf{G})}{\PP(\bbf{X}) \PP(\bbf{G})} = - \E \log \frac{\PP(\bbf{G})}{\PP(\bbf{G}|\bbf{X})}$. Thus
	\begin{align*}
		I(\bbf{X},\bbf{G}) = - \E \log \frac{\sum_{\bbf{x} \in \{1,2\}^n} P_0(\bbf{x}) \PP(\bbf{G}|\bbf{x})}{\PP(\bbf{G}|\bbf{X})}
	\end{align*}
	Recall that $\PP(\bbf{G}|\bbf{x}) = \prod_{i<j} M_{x_i,x_j}^{G_{i,j}} (1-M_{x_i,x_j})^{1-G_{i,j}} = \exp(\sum_{i<j} G_{i,j} \log M_{x_i,x_j} + (1-G_{i,j}) \log (1 - M_{x_i,x_j}))$. This leads to
	\begin{equation} \label{eq:i_x_g}
	I(\bbf{X},\bbf{G}) = - \E \Big[ \log \big(  \sum_{\bbf{x} \in \{1,2\}^n} P_0(\bbf{x}) \exp( \sum_{i<j} G_{i,j} \log(\frac{M_{x_i,x_j}}{M_{X_i,X_j}}) + (1 - G_{i,j}) \log( \frac{1 - M_{x_i,x_j}}{1- M_{X_i,X_j}} ) )\big) \Big]
\end{equation}
	Notice that $M_{x_i,x_j} = \frac{d}{n} (1 + \tilde{x}_i \tilde{x}_j \epsilon)$.
	Therefore $\log(\frac{M_{x_i,x_j}}{M_{X_i,X_j}}) = \log(\frac{1 + \tilde{x}_i \tilde{x}_j \epsilon}{1 + \tilde{X}_i \tilde{X}_j \epsilon})$. By the Taylor-Lagrange inequality, there exist a constant $C>0$ such that, for $\epsilon$ small enough (i.e.\ for $d$ large enough):
	\begin{align}
		&\Big| \log(\frac{1 + \tilde{x}_i \tilde{x}_j \epsilon}{1 + \tilde{X}_i \tilde{X}_j \epsilon}) - \epsilon (\tilde{x}_i \tilde{x}_j - \tilde{X}_i \tilde{X}_j) + \frac{1}{2} \epsilon^2 ((\tilde{x}_i \tilde{x}_j)^2 - (\tilde{X}_i \tilde{X}_j)^2) \Big| \leq C \epsilon^3 \label{eq:dl_log_1}
		\\
		& \Big| \log( \frac{1 - M_{x_i,x_j}}{1- M_{X_i,X_j}} ) + \frac{d}{n} \epsilon ( \tilde{x}_i \tilde{x}_j - \tilde{X}_i \tilde{X}_j ) \Big| \leq C \frac{d^2}{n^2} \label{eq:dl_log_2}
	\end{align}
	By summation and triangle inequality:
	\begin{align*}
		&\sum_{i<j} G_{i,j} \log(\frac{M_{x_i,x_j}}{M_{X_i,X_j}}) + (1 - G_{i,j}) \log( \frac{1 - M_{x_i,x_j}}{1- M_{X_i,X_j}} ) 
		\\
		&=
		\sum_{i<j} \epsilon (\tilde{x}_i \tilde{x}_j - \tilde{X}_i \tilde{X}_j) G_{i,j} 
		- \frac{1}{2} \epsilon^2 ((\tilde{x}_i \tilde{x}_j)^2 - (\tilde{X}_i \tilde{X}_j)^2) G_{i,j} 
		- (1 - G_{i,j}) (\tilde{x}_i \tilde{x}_j - \tilde{X}_i \tilde{X}_j) \frac{d}{n} \epsilon + \Delta_n 
	\end{align*}
	where $|\Delta_n| \leq C (d^2 + \sum_{i<j} G_{i,j} \epsilon^3)$ because of equations~\eqref{eq:dl_log_1} and~\eqref{eq:dl_log_2}. Then
	\begin{align*}
		&\sum_{i<j} G_{i,j} \log(\frac{M_{x_i,x_j}}{M_{X_i,X_j}}) + (1 - G_{i,j}) \log( \frac{1 - M_{x_i,x_j}}{1- M_{X_i,X_j}} ) 
		\\
		&=
		\sum_{i<j} \Big( \epsilon (\tilde{x}_i \tilde{x}_j - \tilde{X}_i \tilde{X}_j) (G_{i,j} - \frac{d}{n} - \frac{d \tilde{X}_i \tilde{X}_j \epsilon}{n}) 
			+ \epsilon^2 \frac{d}{n} (\tilde{x}_i \tilde{x}_j - \tilde{X}_i \tilde{X}_j) \tilde{X}_i \tilde{X}_j
			- \frac{1}{2} \epsilon^2 ((\tilde{x}_i \tilde{x}_j)^2 - (\tilde{X}_i \tilde{X}_j)^2) G_{i,j} 
			\\
		&+ G_{i,j} (\tilde{x}_i \tilde{x}_j - \tilde{X}_i \tilde{X}_j) \frac{d}{n} \epsilon \Big) + \Delta_n
			\\
			&=
			\sum_{i<j} \Big( \epsilon (\tilde{x}_i \tilde{x}_j - \tilde{X}_i \tilde{X}_j) V_{i,j} 
				+ \epsilon^2 \frac{d}{n} (\tilde{x}_i \tilde{x}_j - \tilde{X}_i \tilde{X}_j) \tilde{X}_i \tilde{X}_j
				\\
				&- \frac{1}{2} \epsilon^2 ((\tilde{x}_i \tilde{x}_j)^2 - (\tilde{X}_i \tilde{X}_j)^2) V_{i,j} 
				- \frac{1}{2} \epsilon^2 ((\tilde{x}_i \tilde{x}_j)^2 - (\tilde{X}_i \tilde{X}_j)^2) \frac{d}{n}(1+\tilde{X}_i \tilde{X}_j \epsilon)
			+ G_{i,j} (\tilde{x}_i \tilde{x}_j - \tilde{X}_i \tilde{X}_j) \frac{d}{n} \epsilon \Big) + \Delta_n 
			\\
			&=
			\sum_{i<j} \Big(  \epsilon (\tilde{x}_i \tilde{x}_j - \tilde{X}_i \tilde{X}_j) V_{i,j} 
				- \frac{1}{2} \epsilon^2 ((\tilde{x}_i \tilde{x}_j)^2 - (\tilde{X}_i \tilde{X}_j)^2) V_{i,j} 
				\\
				&+ \epsilon^2 \frac{d}{n} (\tilde{x}_i \tilde{x}_j - \tilde{X}_i \tilde{X}_j) (\tilde{X}_i \tilde{X}_j
			- \frac{\tilde{x}_i \tilde{x}_j + \tilde{X}_i \tilde{X}_j}{2} (1+\tilde{X}_i \tilde{X}_j \epsilon))
		+ G_{i,j} (\tilde{x}_i \tilde{x}_j - \tilde{X}_i \tilde{X}_j) \frac{d}{n} \epsilon \Big) + \Delta_n 
		\\
		&=
		\sum_{i<j} \Big( \epsilon (\tilde{x}_i \tilde{x}_j - \tilde{X}_i \tilde{X}_j) V_{i,j} 
			- \frac{1}{2} \epsilon^2 ((\tilde{x}_i \tilde{x}_j)^2 - (\tilde{X}_i \tilde{X}_j)^2) V_{i,j} 
			- \frac{\lambda}{2n} (\tilde{x}_i \tilde{x}_j - \tilde{X}_i \tilde{X}_j)^2
		+ O(\frac{\epsilon}{n}) + G_{i,j} (\tilde{x}_i \tilde{x}_j - \tilde{X}_i \tilde{X}_j) \frac{d}{n} \epsilon \Big) 
		\\
		&+ \Delta_n 
	\end{align*}
	Notice that $G_{i,j} \in \{0,1\}$, we have therefore, for some constant $C' >0$,
	$$
	\vert \sum_{i<j} G_{i,j}(\tilde{x}_i \tilde{x}_j - \tilde{X}_i \tilde{X}_j) \frac{d}{n} \vert \leq C' \frac{d}{n} \sum_{i<j} G_{i,j}
	$$
	We use then the following lemma to control $\sum_{i<j} G_{i,j}$.
	\begin{lemma} For $d$ large enough and $n$ large enough, we have
		$$
		\PP \big( \sum_{i<j} G_{i,j} > 2dn \big) \leq \exp(-\frac{1}{2}nd)
		$$
	\end{lemma}
	\begin{proof}
		Conditionally to $\bbf{X}$, the variables $(G_{i,j})_{i<j}$ are independent and distributed as Bernoulli random variables with expectations equal to $\frac{ad}{n}$, $\frac{bd}{n}$ or $\frac{cd}{n}$, depending of the $\bbf{X}$ variables. Notice that $a,b,c \xrightarrow[d \to \infty]{} 1$, therefore for $d$ large enough the variables $G_{i,j}$ are stochastically bounded by independent identically distributed variables $B_{i,j} \sim \Ber(\frac{2d}{n})$. Thus, using standard concentration result for binomial random variables,
		$$
		\PP( \sum_{i<j} G_{i,j} > 2dn )
		\leq \PP( \sum_{i<j} B_{i,j} > 2dn )
		\leq \exp \big( - \frac{1}{2} n(n-1) \text{kl}(\frac{4d}{n-1},\frac{2d}{n}) \big)
		$$
		where $\text{kl}(\frac{4d}{n-1},\frac{2d}{n})=\frac{4d}{n-1} \log (\frac{2n}{n-1}) + (1- \frac{4d}{n-1}) \log \frac{1- 4d/(n-1)}{1-2d/n} > \frac{d}{n-1}$ for $n$ large enough.
	\end{proof}
	\\

	\noindent We have now
	\begin{align*}
		I(\bbf{X},\bbf{G}) 
		&= - \E \Big[ \log \sum_{\tilde{\bbf{x}} \in S_p^n} P_0(\tilde{\bbf{x}}) \exp\Big( \Delta_n + O(n\epsilon) + O(\frac{d}{n} \sum_{i<j} G_{i,j}) \\
		&+
		\sum_{i<j} (\tilde{x}_i \tilde{x}_j - \tilde{X}_i \tilde{X}_j) V_{i,j}  
		- \frac{1}{2} \epsilon ((\tilde{x}_i \tilde{x}_j)^2 - (\tilde{X}_i \tilde{X}_j)^2) V_{i,j}
- \frac{\lambda}{2n}(\tilde{x}_i \tilde{x}_j - \tilde{X}_i \tilde{X}_j)^2 \Big) \Big] \\
&= - \E \Big[ \log \sum_{\tilde{\bbf{x}} \in S_p^n} P_0(\tilde{\bbf{x}}) \exp\Big( O((\frac{d}{n} + \epsilon^3) \sum_{i<j} G_{i,j}) \\
&+
\sum_{i<j} (\tilde{x}_i \tilde{x}_j - \tilde{X}_i \tilde{X}_j) V_{i,j}  
- \frac{1}{2} \epsilon ((\tilde{x}_i \tilde{x}_j)^2 - (\tilde{X}_i \tilde{X}_j)^2) V_{i,j}
			- \frac{\lambda}{2n}(\tilde{x}_i \tilde{x}_j - \tilde{X}_i \tilde{X}_j)^2 \Big) \Big] + O(n\epsilon) 
		\end{align*}
		Distinguishing the cases $\sum_{i<j} G_{i,j} > 2dn$ (which happens with an exponentially small probability) and $\sum_{i<j} G_{i,j} \leq 2dn$, we obtain the desired result.
	\end{proof}

	\subsubsection{Lindeberg argument}

		We recall the Lindeberg generalization theorem (Theorem 2 from~\cite{korada2011lindeberg}).
		\begin{theorem}[Lindeberg generalization theorem] \label{th:lindeberg}
			Let $(U_i)_{1 \leq i \leq n}$ and $(V_i)_{1 \leq i \leq n}$ be two collection of random variables with independent components and $f: \R^n \to \R$ a $\mathcal{C}^3$ function. Denote $a_i = | \E U_i - \E V_i |$ and $b_i = | \E U_i^2 - \E V_i^2 |$. Then
			\vspace{-0.2cm}
			\begin{align*}
				|\E f(U) - \E f(V) | & \leq
				\sum_{i=1}^n \!\Big(\!
					a_i \E | \partial_i f(U_{1:i-1},0,V_{i+1:n}) | + \frac{b_i}{2} \E | \partial^2_i f(U_{1:i-1}, 0,V_{i+1:n})| \\
					&+ \frac{1}{2} \E \!\int_{0}^{U_i}\!\!\! |\partial^3_i f(U_{1:i-1}, 0,V_{i+1:n})|(U_i - s)^2 ds 
					+ \frac{1}{2} \E\! \int_{0}^{V_i}\!\!\! |\partial^3_i f(U_{1:i-1}, 0,V_{i+1:n})|(V_i - s)^2 ds
				\Big)
			\end{align*}
		\end{theorem}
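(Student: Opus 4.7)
\textbf{Proof proposal for Theorem~\ref{th:lindeberg}.} The plan is the classical Lindeberg swapping (``one-coordinate-at-a-time replacement'') argument: I will exchange the components of $V$ for those of $U$ sequentially and control each single-coordinate swap by a third-order Taylor expansion, using the first- and second-moment matching to annihilate the low-order contributions in expectation.

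Concretely, I would introduce the hybrids $W^{(i)} = (U_1, \dots, U_i, V_{i+1}, \dots, V_n)$ for $0 \leq i \leq n$, so that $W^{(0)} = V$ and $W^{(n)} = U$, and telescope
$$
\E f(U) - \E f(V) = \sum_{i=1}^n \bigl( \E f(W^{(i)}) - \E f(W^{(i-1)}) \bigr);
$$
the $i$-th summand then only involves exchanging the $i$-th coordinate. Writing $\widetilde{W}^{(i)} = (U_{1:i-1}, 0, V_{i+1:n})$ for the same hybrid with its $i$-th coordinate zeroed out, I would Taylor-expand $f$ in its $i$-th argument around $0$ with the integral form of the third-order remainder, evaluate at $t = U_i$ and at $t = V_i$, and subtract to obtain
$$
f(W^{(i)}) - f(W^{(i-1)}) = (U_i - V_i)\, \partial_i f(\widetilde{W}^{(i)}) + \tfrac{U_i^2 - V_i^2}{2}\, \partial_i^2 f(\widetilde{W}^{(i)}) + R_i^U - R_i^V,
$$
where $R_i^U = \tfrac{1}{2}\int_0^{U_i} (U_i - s)^2 \partial_i^3 f(U_{1:i-1}, s, V_{i+1:n})\, ds$ and $R_i^V$ is the analogous quantity with $V_i$ in place of $U_i$.

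The key step will be to factor expectations in the first two terms: the independence of the components of $U$, the independence of the components of $V$, and (by taking independent copies if needed, which does not affect the marginals involved) the independence of the two collections from each other, imply that $U_i$ and $V_i$ are each independent of the random evaluation point $\widetilde{W}^{(i)}$. Consequently $\E[(U_i - V_i)\,\partial_i f(\widetilde{W}^{(i)})] = (\E U_i - \E V_i)\,\E\, \partial_i f(\widetilde{W}^{(i)})$, and similarly for the quadratic term; taking absolute values and pushing them inside via Jensen's inequality yields the two claimed bounds $a_i\, \E|\partial_i f(\widetilde{W}^{(i)})|$ and $\tfrac{b_i}{2}\, \E|\partial_i^2 f(\widetilde{W}^{(i)})|$. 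The two remainders are then bounded directly by pulling the absolute value inside the defining integral. Summing over $i$ and applying the triangle inequality will produce the inequality of the theorem. There is no real obstacle here: once the telescoping, the Taylor expansion, and the independence-based factorization are in place, the rest is bookkeeping.
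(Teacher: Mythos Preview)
Your approach is correct and is exactly the standard Lindeberg swapping argument; the paper does not supply its own proof of this theorem but simply quotes it from \cite{korada2011lindeberg}, so there is nothing to compare against beyond noting that your outline is the textbook one.

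One small remark: in your remainder you (correctly) evaluate $\partial_i^3 f$ at $(U_{1:i-1}, s, V_{i+1:n})$, whereas the theorem as stated in the paper writes the third derivative at $(U_{1:i-1}, 0, V_{i+1:n})$. The integral Taylor remainder genuinely has the derivative at the running point $s$, so your version is the right one and the ``$0$'' in the displayed statement is almost certainly a transcription slip; the bound as literally written (derivative frozen at $0$) would not in general dominate the true remainder. This does not affect the way the result is used downstream, since in the application $\partial_i^3 \Phi$ is uniformly bounded.
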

	Define
	\vspace{-0.3cm}
	$$
	J(\bbf{X},\bbf{Z})= - \E \log \sum_{\tilde{\bbf{x}} \in S_p^n} P_0(\tilde{\bbf{x}}) \exp( \sum_{i<j} (\tilde{x}_i \tilde{x}_j - \tilde{X}_i \tilde{X}_j) \sqrt{\frac{\lambda}{n}} Z_{i,j} 
		- \frac{1}{2} \epsilon ((\tilde{x}_i \tilde{x}_j)^2 - (\tilde{X}_i \tilde{X}_j)^2) \sqrt{\frac{\lambda}{n}} Z_{i,j} 
	- \frac{\lambda}{2n}(\tilde{x}_i \tilde{x}_j - \tilde{X}_i \tilde{X}_j)^2 )
	\vspace{-0.3cm}
	$$
	We will show, using Theorem~\ref{th:lindeberg}, that $J(\bbf{X},\bbf{Z})$ is close to $I(\bbf{X},\bbf{G})$.
	\begin{lemma}
		$$
		\frac{1}{n} \vert I(\bbf{X},\bbf{G}) - J(\bbf{X},\bbf{Z}) \vert = O(\epsilon)
		$$
	\end{lemma}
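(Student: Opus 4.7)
The plan is to apply the Lindeberg generalization theorem (Theorem~\ref{th:lindeberg}) conditionally on $\bbf{X}$, with the two families of independent centered variables $U_{i,j}=V_{i,j}$ and $\tilde U_{i,j}=\sqrt{\lambda/n}\,Z_{i,j}$ indexed by pairs $i<j$, and the scalar functional
\[
f(\bbf{v})=\log\!\!\!\sum_{\tilde{\bbf{x}}\in S_p^n}\!\!\! P_0(\tilde{\bbf{x}})\exp\!\Big(\sum_{i<j}A_{i,j}(\tilde{\bbf{x}})\,v_{i,j}-\tfrac{\lambda}{2n}(\tilde x_i\tilde x_j-\tilde X_i\tilde X_j)^2\Big),
\]
where $A_{i,j}(\tilde{\bbf{x}})=(\tilde x_i\tilde x_j-\tilde X_i\tilde X_j)-\tfrac12\epsilon\bigl((\tilde x_i\tilde x_j)^2-(\tilde X_i\tilde X_j)^2\bigr)$. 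With this notation, the previous lemma gives $I(\bbf{X},\bbf{G})=-\E f(\bbf{V})+O(n\epsilon)$, while $J(\bbf{X},\bbf{Z})=-\E f(\sqrt{\lambda/n}\,\bbf{Z})$, so the lemma reduces to controlling $|\E f(\bbf{V})-\E f(\sqrt{\lambda/n}\,\bbf{Z})|=O(n\epsilon)$.

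First I would verify the moment matching. Conditionally on $\bbf{X}$, the variables $V_{i,j}=\epsilon(G_{i,j}-M_{X_i,X_j})$ are independent and centered, so $a_{i,j}=|\E V_{i,j}-\E\sqrt{\lambda/n}Z_{i,j}|=0$. For the second moments, using $M_{X_i,X_j}=\frac{d}{n}(1+\tilde X_i\tilde X_j\epsilon)$ and $\lambda=d\epsilon^2$,
\[
\E[V_{i,j}^2\mid\bbf{X}]=\epsilon^2 M_{X_i,X_j}(1-M_{X_i,X_j})=\tfrac{\lambda}{n}+O\!\bigl(\tfrac{\lambda\epsilon}{n}\bigr)+O\!\bigl(\tfrac{\lambda d}{n^2}\bigr),
\]
so $b_{i,j}=|\E V_{i,j}^2-\lambda/n|=O(\lambda\epsilon/n+\lambda d/n^2)$. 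Finally, since $|V_{i,j}|\le\epsilon$ we get $\E|V_{i,j}|^3\le\epsilon\,\E V_{i,j}^2=O(\lambda\epsilon/n)$, while $\E|\sqrt{\lambda/n}Z_{i,j}|^3=O((\lambda/n)^{3/2})$.

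Next I would bound the partial derivatives of $f$. Because each $v_{i,j}$ appears linearly in the Gibbs weight, $\partial_{i,j}f=\langle A_{i,j}\rangle$, $\partial^2_{i,j}f=\langle A_{i,j}^2\rangle-\langle A_{i,j}\rangle^2$ and $\partial^3_{i,j}f$ is the corresponding third connected Gibbs cumulant, where $\langle\cdot\rangle$ is the Gibbs measure associated with $f(\bbf{v})$ at a given $\bbf{v}$. Since $|\tilde x_i|\le C_p:=\max(\sqrt{p/(1-p)},\sqrt{(1-p)/p})$, we have $|A_{i,j}|\le 3C_p^4$ deterministically, which gives a uniform (in $\bbf v$, $n$, $i$, $j$) bound $|\partial^{k}_{i,j}f|\le K_p$ for $k=1,2,3$ for a constant $K_p$ depending only on $p$. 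This uniformity is what lets us apply Lindeberg cleanly; it is the one bookkeeping point that requires care.

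Combining, Theorem~\ref{th:lindeberg} yields, after integrating out $\bbf{X}$,
\[
|I(\bbf{X},\bbf{G})-J(\bbf{X},\bbf{Z})|\le O(n\epsilon)+K_p\binom{n}{2}\Bigl[b_{i,j}+\E|V_{i,j}|^3+\E|\sqrt{\lambda/n}Z_{i,j}|^3\Bigr]= O(n\epsilon)+O(n\lambda\epsilon)+O(\lambda d)+O(\sqrt{n}\,\lambda^{3/2}),
\]
and dividing by $n$ leaves $O(\epsilon)+O(\lambda\epsilon)+O(\lambda d/n)+O(\lambda^{3/2}/\sqrt n)$; the last two terms vanish as $n\to\infty$, and $\lambda\epsilon=O(\epsilon)$ for $\lambda$ bounded, giving the claim. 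The only genuinely non-routine point is the uniform bound on the third derivative of $f$, which rests on the observation that $v_{i,j}$ enters linearly so that derivatives reduce to bounded-observable Gibbs cumulants.
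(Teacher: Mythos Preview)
Your proposal is correct and follows essentially the same approach as the paper: both apply the Lindeberg generalization theorem conditionally on $\bbf{X}$ to the same log-partition functional, match the first two moments of $V_{i,j}$ against those of $\sqrt{\lambda/n}\,Z_{i,j}$, and control the third-order remainder via the boundedness of the coefficients $A_{i,j}$. Your write-up is in fact more explicit than the paper's, which simply asserts ``$\Phi$ is $\mathcal{C}^3$ with bounded derivatives'' and ``$\E(V_{i,j}^3|\bbf{X})=O(\epsilon/n)$''; you spell out that the derivatives of $f$ are Gibbs cumulants of the bounded observable $A_{i,j}$ (hence uniformly bounded in $\bbf v$), and you separate the subleading $O(\lambda d/n)$ and $O(\lambda^{3/2}/\sqrt n)$ contributions that vanish as $n\to\infty$.
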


	\begin{proof}
		We apply here Theorem~\ref{th:lindeberg} conditionally to $\bbf{X}$ to the function
		$$
		\Phi(u) = - \log \sum_{\tilde{\bbf{x}} \in S_p^n} P_0(\tilde{\bbf{x}}) \exp( \sum_{i<j} (\tilde{x}_i \tilde{x}_j - \tilde{X}_i \tilde{X}_j) u_{i,j} 
			- \frac{1}{2} \epsilon ((\tilde{x}_i \tilde{x}_j)^2 - (\tilde{X}_i \tilde{X}_j)^2) u_{i,j}
		- \frac{\lambda}{2n}(\tilde{x}_i \tilde{x}_j - \tilde{X}_i \tilde{X}_j)^2 )
		$$
		$\Phi$ is $\mathcal{C}^3$ with bounded derivatives (because $\alpha$ is bounded).
		Notice that $I(\bbf{X},\bbf{G}) = \E \Phi(\bbf{V})$ and $J(\bbf{X},\bbf{Z}) = \E \Phi(\sqrt{\frac{\lambda}{n}} \bbf{Z})$.
		Let us compute $V_{i,j}$ moments, conditionally to $\bbf{X}$.
		\begin{align*}
			\E (V_{i,j} | \bbf{X}) &= 0 \\
			\E (V_{i,j}^2 | \bbf{X}) &= \epsilon^2 \text{Var}(G_{i,j} \vert \bbf{X}) 
			=
			\epsilon^2 \frac{d}{n}(1 + \tilde{X}_i \tilde{X}_j \epsilon) (1 + O(\frac{d}{n}))
			\\
			&=
			(1-\epsilon) \frac{\lambda}{n} (1 + \tilde{X}_i \tilde{X}_j \epsilon) (1 + O(\frac{d}{n}))
			\\
			&= \frac{\lambda}{n} + O(\frac{\epsilon}{n}) = \frac{\lambda}{n} \E(Z_{i,j}^2) + O(\frac{\epsilon}{n})
		\end{align*}
		Analogously, $\E(V_{i,j}^3 | \bbf{X}) = O(\frac{\epsilon}{n})$. Using the Lindeberg generalization theorem we obtain
		\begin{align*}
			\vert \E (\Phi(\sqrt{\frac{\lambda}{n}} \bbf{Z})) - \E(\Phi(\bbf{V})) \vert &\leq C'' \sum_{i<j} O(\frac{\epsilon}{n}) = O(n \epsilon)
		\end{align*}
	\end{proof}

	\subsubsection{Gaussian interpolation}

	It remains to show

	\begin{lemma}
		$$
		I(\bbf{X},\bbf{Y}) = J(\bbf{X},\bbf{Z}) + O(n\epsilon)
		$$
	\end{lemma}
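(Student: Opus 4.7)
The strategy is a Gaussian interpolation in the extra term that distinguishes the two expressions. First observe that the map $x \mapsto \phi_p(x)$ is a bijection between $\{1,2\}$ and the support $S_p = \{\sqrt{(1-p)/p},-\sqrt{p/(1-p)}\}$, so $\tilde{\bbf{X}}$ is a deterministic invertible function of $\bbf{X}$ and $I(\bbf{X},\bbf{Y}) = I(\tilde{\bbf{X}},\bbf{Y})$. The expression for $I(\tilde{\bbf{X}},\bbf{Y})$ already given in the excerpt differs from $J(\bbf{X},\bbf{Z})$ only by the presence of the single extra term $-\frac{1}{2}\epsilon\bigl((\tilde{x}_i\tilde{x}_j)^2-(\tilde{X}_i\tilde{X}_j)^2\bigr)\sqrt{\lambda/n}\, Z_{i,j}$ inside the exponent.

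The plan is therefore to introduce, for $t\in[0,1]$,
\begin{equation*}
\Psi(t) = -\E\log\sum_{\tilde{\bbf{x}}\in S_p^n} P_0(\tilde{\bbf{x}})\exp\!\Big( \sum_{i<j}(\tilde{x}_i\tilde{x}_j-\tilde{X}_i\tilde{X}_j)\sqrt{\tfrac{\lambda}{n}}Z_{i,j} - \tfrac{t\epsilon}{2}\bigl((\tilde{x}_i\tilde{x}_j)^2-(\tilde{X}_i\tilde{X}_j)^2\bigr)\sqrt{\tfrac{\lambda}{n}}Z_{i,j} - \tfrac{\lambda}{2n}(\tilde{x}_i\tilde{x}_j-\tilde{X}_i\tilde{X}_j)^2\Big),
\end{equation*}
so that $\Psi(0)=I(\tilde{\bbf{X}},\bbf{Y})$ and $\Psi(1)=J(\bbf{X},\bbf{Z})$. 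Let $\langle\cdot\rangle_t$ denote the Gibbs measure associated with this $t$-dependent Hamiltonian. Differentiating under the expectation gives
\begin{equation*}
\Psi'(t) = \tfrac{\epsilon}{2}\sqrt{\tfrac{\lambda}{n}}\sum_{i<j}\E\bigl[ Z_{i,j}\,\langle (\tilde{x}_i\tilde{x}_j)^2-(\tilde{X}_i\tilde{X}_j)^2\rangle_t \bigr].
\end{equation*}

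The key step is Gaussian integration by parts in each variable $Z_{i,j}$. Since
$\partial_{Z_{i,j}} H_t = \sqrt{\lambda/n}\bigl[(\tilde{x}_i\tilde{x}_j-\tilde{X}_i\tilde{X}_j)-\tfrac{t\epsilon}{2}((\tilde{x}_i\tilde{x}_j)^2-(\tilde{X}_i\tilde{X}_j)^2)\bigr]$, integration by parts produces a covariance of the bracket with this derivative, multiplied by a second factor of $\sqrt{\lambda/n}$. All occurrences of $\tilde{x}_i,\tilde{X}_i$ are uniformly bounded (the support $S_p$ is finite, for any fixed $p\in(0,1)$), so every Gibbs bracket that appears is $O(1)$ with a constant $C_p$ depending only on $p$. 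Each of the $\binom{n}{2}$ summands is thus bounded by $\tfrac{\epsilon}{2}\cdot\tfrac{\lambda}{n}\cdot C_p = O(\epsilon\lambda/n)$, giving $|\Psi'(t)|\le C'_p\,\epsilon\lambda\, n$ uniformly in $t\in[0,1]$. Integrating,
\begin{equation*}
|J(\bbf{X},\bbf{Z})-I(\tilde{\bbf{X}},\bbf{Y})| \;=\; |\Psi(1)-\Psi(0)| \;\le\; C'_p\,\lambda\, n\,\epsilon \;=\; O(n\epsilon),
\end{equation*}
which is the desired estimate. The only subtlety is the bookkeeping in the Gaussian integration by parts, which produces a sum of three Gibbs correlators; each is controlled crudely by the uniform bound on $|(\tilde{x}_i\tilde{x}_j)^2-(\tilde{X}_i\tilde{X}_j)^2|$ and $|\tilde{x}_i\tilde{x}_j-\tilde{X}_i\tilde{X}_j|$, so no concentration of the overlap is required here. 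Combined with the previous two lemmas ($I(\bbf{X},\bbf{G})=J(\bbf{X},\bbf{Z})+O(n\epsilon)$), this yields $I(\bbf{X},\bbf{G})=I(\tilde{\bbf{X}},\bbf{Y})+O(n\epsilon)$, and Theorem~\ref{th:information_sbm} follows since $\epsilon=\sqrt{\lambda/d}$ for $d$ large.
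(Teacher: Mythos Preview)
Your proof is correct and follows essentially the same route as the paper: a one-parameter interpolation in the extra $\epsilon$-term, followed by Gaussian integration by parts on each $Z_{i,j}$ and a crude bound using the boundedness of the support $S_p$. The only cosmetic difference is that the paper first drops the $\bbf{x}$-independent pieces $\tilde{X}_i\tilde{X}_j\sqrt{\lambda/n}\,Z_{i,j}$ and $\tfrac{\epsilon}{2}(\tilde{X}_i\tilde{X}_j)^2\sqrt{\lambda/n}\,Z_{i,j}$ (which vanish in expectation since $\bbf{Z}$ is centered and independent of $\bbf{X}$) and then interpolates directly in $\epsilon$, whereas you keep those terms and interpolate via $t\epsilon$; after the integration by parts the two computations coincide.
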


	\begin{proof}
		The $Z_{i,j}$ are centered and independent of $X_{i,j}$ we can therefore simplify:
		\begin{align*}
			&I(\bbf{X},\bbf{Y})=
			- \E \Big[ \log \sum_{\tilde{\bbf{x}} \in S_p^n} P_0(\tilde{\bbf{x}}) \exp( \sum_{i<j}  \sqrt{\frac{\lambda}{n}} Z_{i,j} \tilde{x}_i \tilde{x}_j - \frac{\lambda}{2n}(\tilde{x}_i \tilde{x}_j - \tilde{X}_i \tilde{X}_j)^2 )\Big]
			\\
			&J(\bbf{X},\bbf{Z}) = - \E \Big[ \log \sum_{\tilde{\bbf{x}} \in S_p^n} P_0(\tilde{\bbf{x}}) \exp( \sum_{i<j} \sqrt{\frac{\lambda}{n}} Z_{i,j} \tilde{x}_i \tilde{x}_j
			- \frac{\lambda}{2n}(\tilde{x}_i \tilde{x}_j - \tilde{X}_i \tilde{X}_j)^2 
	- \frac{1}{2} \epsilon (\tilde{x}_i \tilde{x}_j)^2  \sqrt{\frac{\lambda}{n}} Z_{i,j}) \Big]
\end{align*}
We define:
\begin{align*}
	H(\bbf{x},\bbf{X},\bbf{Z},\epsilon) &= \sum_{i<j} \tilde{x}_i \tilde{x}_j \sqrt{\frac{\lambda}{n}} Z_{i,j} 
	- \frac{\lambda}{2n}(\tilde{x}_i \tilde{x}_j - \tilde{X}_i \tilde{X}_j)^2
	- \frac{1}{2} \epsilon (\tilde{x}_i \tilde{x}_j)^2  \sqrt{\frac{\lambda}{n}} Z_{i,j}
	\\
	F(\epsilon) &= \E \log \sum_{\bbf{x} \in \{1,2\}^n} P_0(\bbf{x}) \exp(H(\bbf{x},\bbf{X},\bbf{Z},\epsilon)) 
\end{align*}
Notice that $F(0)= I(\bbf{X},\bbf{Y})$ and $F(\epsilon)=J(\bbf{X},\bbf{Z})$. We are going to control the derivative of $F$. We note $\langle \cdot \rangle$  the expectation with respect to the Gibbs measure:$\langle g(\tilde{\bbf{x}}) \rangle := \frac{\sum_{\tilde{\bbf{x}}} P_0(\tilde{\bbf{x}}) g(\tilde{\bbf{x}}) \exp(H(\bbf{x},\bbf{X},\bbf{Z},\epsilon))}{\sum_{\tilde{\bbf{x}}} P_0(\tilde{\bbf{x}}) \exp(H(\bbf{x},\bbf{X},\bbf{Z},\epsilon))}$.
$F$ is derivable and
$$
F'(\epsilon) = - \frac{1}{2} \sqrt{\frac{\lambda}{n}} \sum_{i<j} \E \big[ Z_{i,j} \langle (\tilde{x}_i \tilde{x}_j)^2 \rangle \big]
$$
Here $\langle (\tilde{x}_i \tilde{x}_j)^2 \rangle$ is a continuously differentiable function of $Z_{i,j}$ and 
\begin{align*}
	\partial_{Z_{i,j}} \langle (\tilde{x}_i \tilde{x}_j)^2 \rangle 
	&=
	\langle (\tilde{x}_i \tilde{x}_j)^2 \sqrt{\frac{\lambda}{n}} ( \tilde{x}_i \tilde{x}_j - \frac{1}{2} \epsilon (\tilde{x}_i \tilde{x}_j)^2 ) \rangle - \langle (\tilde{x}_i \tilde{x}_j)^2 \rangle \langle \sqrt{\frac{\lambda}{n}} ( \tilde{x}_i \tilde{x}_j - \frac{1}{2} \epsilon (\tilde{x}_i \tilde{x}_j)^2 ) \rangle
	\\
	&=
	\sqrt{\frac{\lambda}{n}} \text{Cov}_{\langle \cdot \rangle}(\tilde{x}_i \tilde{x}_j, ( \tilde{x}_i \tilde{x}_j - \frac{1}{2} \epsilon (\tilde{x}_i \tilde{x}_j)^2 ))
\end{align*}
Using Gaussian integration by parts: $F'(\epsilon) = - \frac{\lambda}{2n} \sum_{i<j} \E \Big[  \text{Cov}_{\langle \cdot \rangle}(\tilde{x}_i \tilde{x}_j, ( \tilde{x}_i \tilde{x}_j - \frac{1}{2} \epsilon (\tilde{x}_i \tilde{x}_j)^2 )) \Big]$.
The $\tilde{x}_i$ are bounded, so we have
$$
\vert F'(\epsilon) \vert \leq \frac{\lambda}{2n} \sum_{i<j} \E \vert \text{Cov}_{\langle \cdot \rangle}(\tilde{x}_i \tilde{x}_j, ( \tilde{x}_i \tilde{x}_j - \frac{1}{2} \epsilon (\tilde{x}_i \tilde{x}_j)^2 )) \vert
\leq Cn
$$
We conclude $\vert F(0) - F(\epsilon) \vert \leq C n \epsilon$.
\end{proof}

\subsection{From mutual information to solvability: proof of Theorem~\ref{th:solvability_sbm}}

We are first going to introduce an estimation metric that will allow us to make the link between the minimum mean square error for matrix factorization, and the overlap for community detection. Define
\begin{equation} \label{eq:def_mmse_sbm}
	\MMSE^{G}_n(\lambda) = \min_{\hat{\theta}} \frac{2}{n(n-1)} \sum_{i<j} \E \big( \tilde{X}_i \tilde{X}_j - \hat{\theta}_{i,j}(\bbf{G}) \big)^2
	= \frac{2}{n(n-1)} \sum_{i<j} \E \big( \tilde{X}_i \tilde{X}_j -  \E(\tilde{X}_i \tilde{X}_j | \bbf{G}) \big)^2
\end{equation}
where the minimum is taken over all function $\hat{\theta}$ of $\bbf{G}$ (and possibly of some auxiliary randomization). By considering the trivial estimator $\hat{\theta}=0$, we see that $\MMSE^{G}_n(\lambda) \in [0,1]$. This estimation metric correspond (up to a vanishing error term) to the derivative of the mutual information between the graph $\bbf{G}$ and the labels $\bbf{X}$.

\begin{proposition}
Let $\lambda_0 >0$. There exists a constant $C>0$ such that, for all $\lambda \in (0,\lambda_0]$, $d\geq 1$ and $n \geq 1$
\begin{equation} \label{eq:control_mmse}
	\Big| \frac{1}{n} \frac{\partial I(\bbf{X},\bbf{G})}{\partial \lambda} - \frac{1}{4} \MMSE^G_n(\lambda) \Big| \leq C \Big( d^{-1/2} + \frac{d}{n} + \frac{d^{1/2} \lambda^{-1/2}}{n} + d^{3/2} n^{-2} \lambda^{-1/2} \Big)
\end{equation}
\end{proposition}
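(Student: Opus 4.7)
My plan is to differentiate $I(\bbf{X},\bbf{G})$ with respect to $\lambda$ directly, reduce the derivative to a sum over edges via the conditional independence of the $G_{ij}$ given $\bbf{X}$, and match the result with $(n-1)/4\cdot\MMSE^G_n(\lambda)$ up to Taylor and leave-one-out errors.

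Starting from $I(\bbf{X},\bbf{G})=\E[\log(P(\bbf{G}|\bbf{X})/P(\bbf{G}))]$, a standard score computation gives
$$\partial_\lambda I(\bbf{X},\bbf{G})=\sum_{i<j}\E\bigl[S_{ij}\log(P(\bbf{G}|\bbf{X})/P(\bbf{G}))\bigr],$$
where $S_{ij}=\partial_\lambda\log P(G_{ij}|X_i,X_j)=\partial_\lambda M_{ij}\cdot(G_{ij}-M_{ij})/(M_{ij}(1-M_{ij}))$ satisfies $\E[S_{ij}\mid\bbf{X}]=0$. The cross-terms in $\log P(\bbf{G}|\bbf{X})=\sum_{k<l}\log P(G_{kl}|X_k,X_l)$ with $(k,l)\neq(i,j)$ therefore vanish, leaving $\E[S_{ij}\log P(G_{ij}|X_i,X_j)]=\E[\partial_\lambda M_{ij}\log(M_{ij}/(1-M_{ij}))]$. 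For the $\log P(\bbf{G})$ piece I would integrate $G_{ij}$ out from $\log P(\bbf{G})$ given $(\bbf{X},\bbf{G}_{-(ij)})$ and use the Bayes identity $P(G_{ij}=1|\bbf{G}_{-(ij)})=\E[M_{ij}|\bbf{G}_{-(ij)}]=:\langle M_{ij}\rangle_{-ij}$, which yields $\E[S_{ij}\log P(\bbf{G})]=\E[\partial_\lambda M_{ij}\log(\langle M_{ij}\rangle_{-ij}/(1-\langle M_{ij}\rangle_{-ij}))]$.

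Next I would Taylor expand $\log(M/(1-M))$ and its leave-one-out counterpart jointly in the small parameters $m=d/n$ and $\epsilon=\sqrt{\lambda/d}$. Using $\partial_\lambda M_{ij}=\tfrac{1}{2n}\sqrt{d/\lambda}\,\tilde X_i\tilde X_j$ together with the moments $\E[\tilde X_i\tilde X_j]=0$, $\E[(\tilde X_i\tilde X_j)^2]=1$, and the Nishimori identity $\E[\tilde X_i\tilde X_j\,f(\bbf{G}_{-(ij)})]=\E[\langle\tilde X_i\tilde X_j\rangle_{-ij}\,f(\bbf{G}_{-(ij)})]$, the contributions not containing a factor $\tilde X_i\tilde X_j$ or $\langle\tilde X_i\tilde X_j\rangle_{-ij}$ cancel under the weighted expectation, and the prefactor $\epsilon\sqrt{d/\lambda}=1$ simplifies the rest. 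Up to a remainder $R_n$, this gives
$$\partial_\lambda I(\bbf{X},\bbf{G})=\frac{1+O(m)}{2n}\sum_{i<j}\Bigl(1-\E[\langle\tilde X_i\tilde X_j\rangle_{-ij}^2]\Bigr)+R_n,$$
where $R_n$ is controlled edge-wise by higher-order Taylor terms in $(\epsilon,m)$ multiplied by the chain-rule factor $\tfrac{1}{2n}\sqrt{d/\lambda}$.

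Finally I would swap the leave-one-out posterior for the full posterior. With $A_{ij}=\E[\tilde X_i\tilde X_j|\bbf{G}]$, Jensen gives $\E[A_{ij}^2]-\E[\E[A_{ij}|\bbf{G}_{-(ij)}]^2]=\E\,\mathrm{Var}(A_{ij}|\bbf{G}_{-(ij)})$; since $A_{ij}$ given $\bbf{G}_{-(ij)}$ is a bounded function of the single Bernoulli variable $G_{ij}$ whose conditional mean is $\langle M_{ij}\rangle_{-ij}=O(d/n)$, this variance is $O(d/n)$ per edge. After this substitution, and after replacing the prefactor $(1+O(m))(n-1)/(4n)$ by $1/4$ at cost $O(d/n+1/n)$, the main term coincides with $\tfrac{1}{4}\MMSE^G_n(\lambda)$; the $\epsilon^2$ Taylor remainder combined with the $\sqrt{d/\lambda}$ factor yields the $d^{-1/2}$ term, the $m$ corrections and leave-one-out error yield the $d/n$ term, and the remaining higher-order Taylor remainders in combination with $\sqrt{d/\lambda}$ produce the two $\lambda^{-1/2}$ terms in the claimed bound. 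The main obstacle will be the uniform bound on the leave-one-out discrepancy and the careful bookkeeping of the $\epsilon$, $m$, and $\sqrt{d/\lambda}$ factors through each Taylor remainder, so that the four distinct error rates in the statement emerge cleanly, in particular as $\lambda\to 0$ where $\sqrt{d/\lambda}$ diverges.
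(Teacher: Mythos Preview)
Your proposal is correct and follows essentially the same route as the paper: the paper cites a differentiation lemma (Lemma~7.1 of \cite{deshpande2016asymptotic}) that you instead derive on the fly via the score identity, then performs the same Taylor expansion of the log-odds in $\epsilon=\sqrt{\lambda/d}$ and $d/n$, reduces to $\E[\hat a_{ij}^2]$ with $\hat a_{ij}=\E[\tilde X_i\tilde X_j\mid\bbf G_{-(ij)}]$, and finally swaps the cavity posterior for the full one. The only cosmetic difference is that the paper bounds $|a_{ij}-\hat a_{ij}|$ pointwise via Bayes' rule (distinguishing $G_{ij}=1$ and $G_{ij}=-1$) whereas you use the tower property $\E[A_{ij}\mid\bbf G_{-(ij)}]=\hat a_{ij}$ together with a conditional-variance bound; both yield the same $O(d/n)$ contribution.
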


\begin{proof}
	We are going to differentiate $H(\bbf{X} | \bbf{G})$ with respect to $\lambda$. To do so we will use a differentiation formula from~\cite{deshpande2016asymptotic} (Lemma 7.1), which was first proved in~\cite{measson2009generalized}. Let us recall the setting (taken from~\cite{deshpande2016asymptotic}) of this Lemma.

	For $n$ an integer, denote by $\Pair$ the set of unordered pairs in $[n]$ (in particular $\#\Pair = \binom{n}{2})$). We will use $e,e_1,e_2,\dots$ to denote elements of $\Pair$. For for each $e = (i,j)$ we are given a one-parameter family of discrete noisy channels indexed by $\theta\in J$ (with $J = (a_1,a_2)$ a non-empty interval), with finite input alphabet $\mathcal{X}_0$ and finite output alphabet $\mathcal{Y}$. Concretely, for any $e$, we have a transition probability 
	\begin{align} \label{eq:transition_probability}
		\{p_{e,\theta}(y|x)\}_{x\in \mathcal{X}_0,y\in \mathcal{Y}}\, ,
	\end{align}
	which is differentiable in $\theta$. We shall omit the subscript
	$\theta$ since it will be clear from the context.

	We then consider  $\bbf{X}= (X_1,X_2,\dots,X_n)$ a random vector in $\mathcal{X}^n$, and $\bbf{Y} = (Y_{ij})_{(i,j)\in\Pair}$ a set of observations in $\mathcal{Y}^{\Pair}$ that are conditionally independent given $\bbf{X}$. Further $Y_{ij}$ is the noisy observation of $X_i X_j \in \mathcal{X}_0$ through the channel $p_{ij}(\,\cdot\,|\,\cdot\,)$. In formulae, the joint probability density function of $\bbf{X}$ and $\bbf{Y}$ is
	\begin{align}
		p_{\bbf{X},\bbf{Y}} (\bbf{x},\bbf{y}) = p_{\bbf{X}}(\bbf{x}) \prod_{(i,j)\in \Pair}p_{ij}(y_{ij}|x_i x_j)\,.
	\end{align}
	This obviously include the two-groups stochastic block model as a special case. In that case $\bbf{Y} = \bbf{G}$ is just the adjacency matrix of the graph. In the following we write $\bbf{Y}_{-e}= (Y_{e'})_{e'\in\Pair\setminus e}$ for the set of observations excluded $e$, and $X_e = X_{i}X_j$ for $e=(i,j)$. 

	\begin{lemma} \label{lem:diff_entropy}
		With the above notation, we have:
		\begin{align}
			\frac{\partial H(\bbf{X}|\bbf{Y})}{\partial \theta} = \sum_{e\in\Pair}\sum_{x_e,y_e}\frac{\partial p_e(y_e|x_e)}{\partial \theta}
			\E\Big\{p_{X_e|\bbf{Y}_{-e}}(x_e|\bbf{Y}_{-e})\log
			\Big[\sum_{x'_e}\frac{p_e(y_e|x'_e)}{p_e(y_e|x_e)}p_{X_e|\bbf{Y}_{-e}}(x'_e|\bbf{Y}_{-e})\Big]\Big\}
			\label{eq:BigDerivative}
		\end{align}
	\end{lemma}

  We apply Lemma~\ref{lem:diff_entropy} to the stochastic block model. Let $\lambda_0 >0$ and $\lambda \in (0,\lambda_0]$.
  Instead of having $G_{i,j} \in \{0,1\}$, it will be more convenient to consider $G_{i,j} \in \{-1,1\}$: $G_{i,j} = 1$ if $i \sim j$, $G_{i,j} = -1$ else. Notice that neither the mutual information nor $\MMSE^G_n$ are affected by this change. $G_{i,j}$ is, conditionally to $\tilde{X}_i \tilde{X}_j$, independent of any other random variable, and distributed as follows
  $$
  \PP(G_{i,j} = 1 | \tilde{X}_i \tilde{X}_j)=\frac{d}{n} (1 + \sqrt{\frac{\lambda}{d}} \tilde{X}_i \tilde{X}_j)
  $$
  The transition probability from equation~\eqref{eq:transition_probability} is then $p_{\lambda} (g_{i,j} | \tilde{x}_i \tilde{x}_j) = \frac{1-g_{i,j}}{2} + g_{i,j}\frac{d}{n}(1 + \tilde{x}_i \tilde{x}_j \sqrt{\frac{\lambda}{d}})$. Thus
  $$
  \frac{\partial}{\partial \lambda} p_{\lambda} (g_{i,j} | \tilde{x}_i \tilde{x}_j) 
  = \frac{1}{2n} g_{i,j} \tilde{x}_i \tilde{x}_j \sqrt{\frac{d}{\lambda}}
  $$
  Lemma~\ref{lem:diff_entropy} gives
  \begin{align}
	  &\frac{\partial H(\bbf{X}|\bbf{G})}{\partial \lambda} \nonumber
	  \\
	  &=\frac{1}{2n}  \sqrt{\frac{d}{\lambda}} 
	  \sum_{i<j} 
	  \underbrace{%
	  \sum_{\overset{\tilde{x}_i,\tilde{x}_j}{g_{i,j}}} g_{i,j} \tilde{x}_i \tilde{x}_j
	  \E \Big[
		  p_{(\tilde{X}_i,\tilde{X}_j)|\bbf{G}_{-(i,j)}}(\tilde{x}_i,\tilde{x}_j | \bbf{G}_{-(i,j)})
		  \log \big(
			  \sum_{\tilde{x}_i',\tilde{x}_j'} p_{\lambda}(g_{i,j}| \tilde{x}_i' \tilde{x}_j') p_{(\tilde{X}_i,\tilde{X}_j)|\bbf{G}_{-(i,j)}}(\tilde{x}_i', \tilde{x}_j'|\bbf{G}_{-(i,j)})
		  \big)
	  \Big]
	  }_{A} \nonumber
	  \\
	  &-\frac{1}{2n}  \sqrt{\frac{d}{\lambda}} 
	  \sum_{i<j} 
	  \underbrace{%
	  \sum_{\overset{\tilde{x}_i,\tilde{x}_j}{g_{i,j}}} g_{i,j} \tilde{x}_i \tilde{x}_j
	  \E \Big[
		  p_{(\tilde{X}_i,\tilde{X}_j)|\bbf{G}_{-(i,j)}}(\tilde{x}_i,\tilde{x}_j | \bbf{G}_{-(i,j)})
		  \log \big(
			  p_{\lambda}(g_{i,j}| \tilde{x}_i \tilde{x}_j)
		  \big)
	  \Big]
	  }_{B} \label{eq:compute_dH}
  \end{align}
  Compute 
  \begin{align*}
	  B &= 
	  \sum_{\overset{\tilde{x}_i,\tilde{x}_j}{g_{i,j}}} g_{i,j} \tilde{x}_i \tilde{x}_j
	  p(\tilde{x}_i,\tilde{x}_j)
	  \log \big(
		  p_{\lambda}(g_{i,j}| \tilde{x}_i \tilde{x}_j)
	  \big)
	  \\
	  &=
	  p^2 \frac{1-p}{p} \big( \log (\frac{ad}{n}) - \log(1-\frac{ad}{n})\big)
	  -2 p(1-p) \big( \log (\frac{bd}{n}) - \log(1-\frac{bd}{n})\big)
	  +(1-p)^2 \frac{p}{1-p} \big( \log (\frac{cd}{n}) - \log(1-\frac{cd}{n})\big)
	  \\
	  &=
	  p(1-p) \log \big(\frac{ac}{b^2}\big) + p(1-p) d (2 \frac{b}{n}- \frac{a}{n} - \frac{c}{n}) +O(\frac{d^2}{n^2})
	  \\
	  &=
	  p(1-p) \epsilon \big( \frac{1-p}{p} + \frac{p}{1-p} + 2 \big) + O(\epsilon^2) + O(\frac{d^2}{n^2})
	  \\
	  &= \epsilon + O(\epsilon^2) + O(\frac{d^2}{n^2})
  \end{align*}
  and
  \begin{align*}
	  A &=
	  \sum_{g_{i,j}} g_{i,j} \E \Big[
		  \E(\tilde{X}_i \tilde{X}_j | \bbf{G}_{-(i,j)})
		  \log \big(
			  \sum_{\tilde{x}_i',\tilde{x}_j'} p_{\lambda}(g_{i,j}| \tilde{x}_i' \tilde{x}_j') p_{(\tilde{X}_i,\tilde{X}_j)|\bbf{G}_{-(i,j)}}(\tilde{x}_i', \tilde{x}_j'|\bbf{G}_{-(i,j)})
		  \big)
	  \Big]
	  \\
	  &=
	  \E \Big[
		  \E(\tilde{X}_i \tilde{X}_j | \bbf{G}_{-(i,j)})
		  \log \big(
			  \frac{%
			  \sum_{\tilde{x}_i',\tilde{x}_j'} p_{\lambda}(1 | \tilde{x}_i' \tilde{x}_j') p_{(\tilde{X}_i,\tilde{X}_j)|\bbf{G}_{-(i,j)}}(\tilde{x}_i', \tilde{x}_j'|\bbf{G}_{-(i,j)})
			  }{%
			  \sum_{\tilde{x}_i',\tilde{x}_j'} p_{\lambda}(-1| \tilde{x}_i' \tilde{x}_j') p_{(\tilde{X}_i,\tilde{X}_j)|\bbf{G}_{-(i,j)}}(\tilde{x}_i', \tilde{x}_j'|\bbf{G}_{-(i,j)})
			  }
		  \big)
	  \Big]
  \end{align*}
  Define $\hat{a}_{i,j} = \E(\tilde{X}_i \tilde{X}_j | \bbf{G}_{-(i,j)})$.
  \begin{align*}
	  A &=
	  \E \Big[
		  \hat{a}_{i,j}
		  \log \big(
			  \frac{%
			  \sum_{\tilde{x}_i',\tilde{x}_j'} \frac{d}{n}(1 + \tilde{x}_i' \tilde{x}_j' \sqrt{\frac{\lambda}{d}}) p_{(\tilde{X}_i,\tilde{X}_j)|\bbf{G}_{-(i,j)}}(\tilde{x}_i', \tilde{x}_j'|\bbf{G}_{-(i,j)})
			  }{%
			  \sum_{\tilde{x}_i',\tilde{x}_j'} (1- \frac{d}{n}(1 + \tilde{x}_i' \tilde{x}_j' \sqrt{\frac{\lambda}{d}})) p_{(\tilde{X}_i,\tilde{X}_j)|\bbf{G}_{-(i,j)}}(\tilde{x}_i', \tilde{x}_j'|\bbf{G}_{-(i,j)})
			  }
		  \big)
	  \Big]
	  \\
	  &=
	  \E \Big[
		  \hat{a}_{i,j}
		  \log \big(
			  \frac{%
			  \frac{d}{n}(1 + \hat{a}_{i,j} \sqrt{\frac{\lambda}{d}}) 
			  }{%
			  1- \frac{d}{n}(1 + \hat{a}_{i,j} \sqrt{\frac{\lambda}{d}}) 
			  }
		  \big)
	  \Big]
	  =
	  \E \Big[
		  \hat{a}_{i,j}
		  \big(\log \frac{d}{n} + \epsilon \hat{a}_{i,j} + O(\epsilon^2) + O(\frac{d}{n})
		  \big)
	  \Big]
  \end{align*}
  $\E \hat{a}_{i,j} = \E \tilde{X}_i \tilde{X}_j = (\E \tilde{X}_1)^2 = 0$ therefore $A= \epsilon \E \hat{a}_{i,j}^2 + O(\epsilon^2 + \frac{d}{n})$.
  By replacing $A$ and $B$ in~\eqref{eq:compute_dH}, we have
  \begin{align}
	  \frac{\partial H(\bbf{X}|\bbf{G})}{\partial \lambda}
	  &=\frac{1}{2n}  \epsilon^{-1} 
	  \sum_{i<j}  \Big(
		  \epsilon \E \hat{a}_{i,j}^2 + O(\epsilon^2) + O(\frac{d}{n}) 
	  \Big)
	  -\frac{1}{2n}  \epsilon^{-1} 
	  \sum_{i<j} \Big(
		  \epsilon + O(\epsilon^2) + O(\frac{d^2}{n^2})
	  \Big) \nonumber
	  \\
	  &= \frac{1}{2n} \sum_{i<j} \Big( \E \hat{a}_{i,j}^2 -1 \Big) + O(n \epsilon) 
	  + O(d^{1/2} \lambda^{-1/2} + d^{3/2} n^{-1} \lambda^{-1/2})\label{eq:dH_hat}
  \end{align}
  Define $a_{i,j} = \E (\tilde{X}_i \tilde{X}_j | \bbf{G})$. Using Bayes rule, we have
  $$
  p(\tilde{x}_i, \tilde{x}_j | \bbf{G}) = \frac{p(G_{i,j}|\tilde{x}_i \tilde{x}_j) p(\tilde{x}_i, \tilde{x}_j | \bbf{G}_{-(i,j)})}{\sum_{\tilde{x}_i',\tilde{x}_j'}p(G_{i,j}|\tilde{x}_i' \tilde{x}_j') p(\tilde{x}_i', \tilde{x}_j' | \bbf{G}_{-(i,j)})}
  $$
  If $G_{i,j}=1$, then
  $$
  p(\tilde{x}_i, \tilde{x}_j | \bbf{G}) = \frac{\frac{d}{n}(1 + \tilde{x}_i \tilde{x}_j \epsilon) p(\tilde{x}_i, \tilde{x}_j | \bbf{G}_{-(i,j)})}{\sum_{\tilde{x}_i',\tilde{x}_j'} \frac{d}{n}(1 + \tilde{x}_i' \tilde{x}_j' \epsilon) p(\tilde{x}_i', \tilde{x}_j' | \bbf{G}_{-(i,j)})}
  =
  \frac{(1 + \epsilon \tilde{x}_i \tilde{x}_j) p(\tilde{x}_i,\tilde{x}_j|\bbf{G}_{-(i,j)})}{1 + \epsilon \hat{a}_{i,j}}
  $$
  Thus
  $$
  a_{i,j} = \frac{\hat{a}_{i,j} + \epsilon \E((\tilde{X}_i \tilde{X}_j)^2 | \bbf{G}_{-(i,j)})}{1 + \epsilon \hat{a}_{i,j}} = \hat{a}_{i,j} + O(\epsilon)
  $$
  If $G_{i,j} = -1$,
  $$
  p(\tilde{x}_i, \tilde{x}_j | \bbf{G}) = \frac{(1-\frac{d}{n}(1 + \tilde{x}_i \tilde{x}_j \epsilon)) p(\tilde{x}_i, \tilde{x}_j | \bbf{G}_{-(i,j)})}{1 - \sum_{\tilde{x}_i',\tilde{x}_j'} \frac{d}{n}(1 + \tilde{x}_i' \tilde{x}_j' \epsilon) p(\tilde{x}_i', \tilde{x}_j' | \bbf{G}_{-(i,j)})}
  =
  (1-\frac{d}{n}(1 + \tilde{x}_i \tilde{x}_j \epsilon)) p(\tilde{x}_i, \tilde{x}_j | \bbf{G}_{-(i,j)}) + O(\frac{d}{n})
  $$
  Therefore $a_{i,j} = \hat{a}_{i,j} + O(\frac{d}{n})$. Equation~\eqref{eq:dH_hat} becomes then
  \begin{align}
	  \frac{\partial H(\bbf{X}|\bbf{G})}{\partial \lambda}
	  &= \frac{1}{2n} \sum_{i<j} \Big( \E a_{i,j}^2 -1 \Big) + O(n \epsilon + d) 
	  +O(d^{1/2} \lambda^{-1/2} + d^{3/2} n^{-1} \lambda^{-1/2}) \nonumber
	  \\
	  &= - \frac{1}{2n} \sum_{i<j} \E \Big(
	  (\tilde{X}_i \tilde{X}_j - \E(\tilde{X}_i \tilde{X}_j |\bbf{G}))^2 
  \Big)+O(n\epsilon +d + d^{1/2} \lambda^{-1/2} + d^{3/2} n^{-1} \lambda^{-1/2})
		  \end{align}
		  Decomposing $I(\bbf{X};\bbf{G}) = H(\bbf{X}) - H(\bbf{X}|\bbf{G})$ we obtain the desired result.
	  \end{proof}
	  \\

	  Consequently, if one consider a sufficiently large $d$ (in order to apply Corollary~\ref{cor:information_sbm}) and one integrate equation~\eqref{eq:control_mmse} from $0$ to $\lambda > 0$, and let $n$ tend to infinity,

	  \begin{equation} \label{eq:int_mmse}
		  \limsup_{n \to \infty} \Big| \int_0^{\lambda} \MMSE^G_n(\lambda') d\lambda' - (\frac{\lambda}{4} - \Phi_g(\lambda)) \Big| \leq C \lambda d^{-1/2} + C' \epsilon \leq K \epsilon
	  \end{equation}
	  for some constant (depending on $\lambda$ but not on $d$) $K>0$.

	  \begin{proposition} \label{prop:mmse}
		  For $\lambda < \lambda_c(p)$
		  \begin{equation} \label{eq:lim_mmse_1}
			  \lim_{d \to \infty} \liminf_{n \to \infty} \MMSE^G_n (\lambda) = 1
		  \end{equation}
		  For $\lambda > \lambda_c(p)$
		  \begin{equation}\label{eq:lim_mmse_2}
			  \limsup_{d \to \infty} \limsup_{n \to \infty} \MMSE^G_n (\lambda) <1
		  \end{equation}
	  \end{proposition}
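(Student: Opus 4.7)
The plan is to combine the integrated $I$-MMSE identity~\eqref{eq:int_mmse} with monotonicity of $\lambda\mapsto\MMSE^G_n(\lambda)$ to extract pointwise asymptotics from integral asymptotics. First I would spell out the consequence of integrating~\eqref{eq:control_mmse} from $0$ to $\lambda$ and substituting the mutual-information asymptotics from Corollary~\ref{cor:information_sbm}: for every $\lambda>0$,
$$\lim_{d\to\infty}\limsup_{n\to\infty}\Big|\int_{0}^{\lambda}\MMSE^G_n(\lambda')\,d\lambda' - f(\lambda)\Big|=0,\qquad f(\lambda):=\lambda-4\Phi_g(\lambda).$$
The key properties of $f$ follow from Propositions~\ref{prop:derivative_phi} and~\ref{prop:lc} applied to the zero-mean unit-variance prior on $\tilde{X}$: on $(0,\lambda_c(p))$, $q^{\ast}(\lambda)=0$, so $\Phi_g\equiv 0$ and $f(\lambda)=\lambda$; on $(\lambda_c(p),\infty)$, $q^{\ast}(\lambda)>0$, so $\Phi_g$ is strictly increasing with derivative $q^{\ast}(\lambda)^{2}/4$, and $f'(\lambda)=1-q^{\ast}(\lambda)^{2}\in[0,1)$ almost everywhere.

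Assuming that $\lambda\mapsto\MMSE^G_n(\lambda)$ is non-increasing, the rest of the proof is a short sandwich argument. For $\lambda<\lambda_c(p)$, pick $\lambda_{1}\in(\lambda,\lambda_c(p))$; monotonicity gives
$$\MMSE^G_n(\lambda)\;\geq\;\frac{1}{\lambda_{1}-\lambda}\int_{\lambda}^{\lambda_{1}}\MMSE^G_n(\lambda')\,d\lambda',$$
whose iterated limit is $(f(\lambda_{1})-f(\lambda))/(\lambda_{1}-\lambda)=1$. Combined with the trivial upper bound $\MMSE^G_n\leq 1$, this yields~\eqref{eq:lim_mmse_1}. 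Symmetrically, for $\lambda>\lambda_c(p)$ pick $\lambda_{0}\in(\lambda_c(p),\lambda)$; monotonicity yields
$$\MMSE^G_n(\lambda)\;\leq\;\frac{1}{\lambda-\lambda_{0}}\int_{\lambda_{0}}^{\lambda}\MMSE^G_n(\lambda')\,d\lambda',$$
whose iterated limit $(f(\lambda)-f(\lambda_{0}))/(\lambda-\lambda_{0})$ is strictly less than $1$ because $\Phi_g(\lambda)>\Phi_g(\lambda_{0})$ (as $q^{\ast}>0$ almost everywhere on $(\lambda_c(p),\infty)$), giving~\eqref{eq:lim_mmse_2}.

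The principal obstacle is establishing that $\lambda\mapsto\MMSE^G_n(\lambda)$ is non-increasing. A direct Blackwell-domination argument for the SBM channel is delicate because $\tilde{X}_{i}\tilde{X}_{j}$ takes three distinct values $\{\tfrac{1-p}{p},-1,\tfrac{p}{1-p}\}$, so a two-point Bernoulli comparison does not immediately apply. The cleanest route is to establish concavity of $\lambda\mapsto\tfrac{1}{n}I(\bbf{X};\bbf{G})$, which via~\eqref{eq:control_mmse} is equivalent (up to $O(d^{-1/2})$ corrections) to non-increase of $\MMSE^G_n$. Concavity can in turn be inherited from the Gaussian counterpart, where $I(\tilde{\bbf{X}};\bbf{Y})$ is concave in $\lambda$ by standard I-MMSE, using the coupling/Lindeberg construction from the proof of Theorem~\ref{th:information_sbm}; the resulting $O(d^{-1/2})$ slack is absorbed harmlessly once we take $d\to\infty$ in the sandwich bounds above.
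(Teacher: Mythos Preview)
Your strategy is the paper's: both arguments pass from the integrated control~\eqref{eq:int_mmse} to pointwise statements using that $\lambda\mapsto\MMSE^G_n(\lambda)$ is non-increasing. The paper is simply terse about it --- it integrates over $[0,\lambda_c]$, obtains $\limsup_n\int_0^{\lambda_c}(1-\MMSE^G_n)\le 4K\sqrt{\lambda_c/d}$, and writes ``Equation~\eqref{eq:lim_mmse_1} follows''; likewise the contradiction proof of~\eqref{eq:lim_mmse_2} tacitly uses that $\MMSE^G_n(\lambda')\ge\MMSE^G_n(\lambda_0)$ for $\lambda'\le\lambda_0$. Your sandwich with $\lambda_1\in(\lambda,\lambda_c)$ and $\lambda_0\in(\lambda_c,\lambda)$ is just the explicit version of the same step.

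Where you go astray is in calling the monotonicity ``delicate'' and proposing to import it from the Gaussian side via Lindeberg. The three-valued input $\tilde X_i\tilde X_j$ is irrelevant: what matters is that the edge channel $G_{i,j}\mid\tilde X_i\tilde X_j=a$ is $\Ber\!\big(\tfrac{d}{n}(1+a\epsilon)\big)$, which is \emph{affine} in $\epsilon$. For $0\le\epsilon_1<\epsilon_2$ one has the exact identity
\[
W_{\epsilon_1}(\cdot\mid a)\;=\;\tfrac{\epsilon_1}{\epsilon_2}\,W_{\epsilon_2}(\cdot\mid a)\;+\;\big(1-\tfrac{\epsilon_1}{\epsilon_2}\big)\,\Ber(d/n),
\]
and since $\Ber(d/n)$ does not depend on $a$, this is an explicit Blackwell degradation (keep the $W_{\epsilon_2}$-output with probability $\epsilon_1/\epsilon_2$, otherwise replace it by an independent $\Ber(d/n)$). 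Applied edge-by-edge, the graph at $\epsilon_1$ is a garbling of the graph at $\epsilon_2$, so $\MMSE^G_n$ is non-increasing in $\epsilon$, hence in $\lambda=d\epsilon^2$. Your concavity-via-Lindeberg detour is both unnecessary and shaky: the Lindeberg/interpolation bounds in Section~\ref{sec:sbm} control $|I(\bbf{X};\bbf{G})-I(\tilde{\bbf{X}};\bbf{Y})|$ pointwise in $\lambda$, which does not transfer second-derivative information without additional work.
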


	  \begin{proof}
		  This is a consequence of equation~\eqref{eq:int_mmse} and the definition of $\lambda_c(p)$.
		  We will show~\eqref{eq:lim_mmse_1} first. $\Phi_g$ is continuous, $\Phi_g(0) = 0$ and for almost every $\lambda \in ] 0, \lambda_c[$, $\Phi_g'(\lambda) =0$. Therefore $\Phi_g(\lambda_c)=0$. Equation~\eqref{eq:int_mmse} gives then $\limsup_{n \to \infty} | \int_0^{\lambda_c} \frac{1}{4}\MMSE^G_n(\lambda) d\lambda - \frac{\lambda_c}{4} | \leq K \epsilon$,
		  which gives
		  $$
		  \limsup_{n \to \infty} \int_0^{\lambda_c} |1 - \MMSE^G_n(\lambda) | d\lambda  \leq 4K \sqrt{\frac{\lambda_c}{d}}
		  $$
		  because $\MMSE^G_n(\lambda) \leq 1$. Equation~\eqref{eq:lim_mmse_1} follows.
		  \\

		  Equation~\eqref{eq:lim_mmse_2} is proved analogously. If $\limsup_{d \to \infty} \limsup_{n \to \infty} \MMSE^G_n(\lambda_0) =1$ for some $\lambda_0 > \lambda_c$, then $\limsup_{d \to \infty} \limsup_{n \to \infty} | \int_0^{\lambda_0} \frac{1}{4}\MMSE^G_n(\lambda) d\lambda - \frac{\lambda_0}{4} | =0$ which implies (by equation~\eqref{eq:int_mmse}) that $\Phi_g(\lambda_0)=0$. This is absurd because $\Phi_g(\lambda_c)=0$ and $\Phi_g$ is strictly increasing on $]\lambda_c, \lambda_0[$.
	  \end{proof}
	  \\

	  The following two lemmas will be useful to make the link between the MMSE and the overlap.
	  \begin{lemma} \label{lem:partition}
		  Let $n \in \N^*$. Let $(A_1,A_2)$ and $(B_1,B_2)$ be two partitions of $\{1, \dots,n\}$. Then
		  \begin{align}
			  \# A_1 \cap B_1 - \frac{1}{n} \# A_1 \# B_1 &= \# A_2 \cap B_2 - \frac{1}{n} \# A_2 \# B_2 \label{eq:partition1} \\
			  \# A_1 \cap B_1 - \frac{1}{n} \# A_1 \# B_1 &= - \Big( \# A_1 \cap B_2 - \frac{1}{n} \# A_1 \# B_2 \Big)\label{eq:partition2} 
		  \end{align}
	  \end{lemma}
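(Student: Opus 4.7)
The plan is to reduce everything to the single combinatorial identity
\[
\#A_1 \cap B_2 = \#A_1 - \#A_1 \cap B_1, \qquad \#B_2 = n - \#B_1,
\]
which follows immediately from the fact that $(B_1,B_2)$ is a partition of $\{1,\dots,n\}$. I would treat equation~\eqref{eq:partition2} first, since it is the base case, and then derive~\eqref{eq:partition1} from it by applying~\eqref{eq:partition2} twice.

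For~\eqref{eq:partition2}, I would simply substitute the above identities:
\[
\#A_1 \cap B_2 - \tfrac{1}{n}\#A_1\,\#B_2
= (\#A_1 - \#A_1\cap B_1) - \tfrac{1}{n}\#A_1\,(n-\#B_1)
= -\bigl(\#A_1\cap B_1 - \tfrac{1}{n}\#A_1\,\#B_1\bigr),
\]
which is exactly the claim.

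For~\eqref{eq:partition1}, I would invoke~\eqref{eq:partition2} twice: first with the roles of $B_1$ and $B_2$ swapped (using the same $A_1$), which gives
\[
\#A_1 \cap B_1 - \tfrac{1}{n}\#A_1\,\#B_1 = -\bigl(\#A_1\cap B_2 - \tfrac{1}{n}\#A_1\,\#B_2\bigr),
\]
and then with the roles of $A_1$ and $A_2$ swapped (using $B_2$), which gives
\[
\#A_1\cap B_2 - \tfrac{1}{n}\#A_1\,\#B_2 = -\bigl(\#A_2\cap B_2 - \tfrac{1}{n}\#A_2\,\#B_2\bigr).
\]
Combining the two equalities yields~\eqref{eq:partition1}.

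There is no genuine obstacle here: the statement is a purely combinatorial bookkeeping identity, reflecting the fact that the ``centered'' overlap $\#A\cap B - \tfrac{1}{n}\#A\,\#B$ flips sign under complementation of either factor in $\{1,\dots,n\}$ and is therefore invariant under simultaneous complementation of both. The only thing to be careful about is to apply the complementation identity on the correct side, and to use that $(A_1,A_2)$ and $(B_1,B_2)$ are genuine partitions of the full ground set of size $n$.
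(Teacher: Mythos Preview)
Your proof is correct and uses essentially the same elementary substitutions as the paper. The only organizational difference is that the paper proves~\eqref{eq:partition1} directly by expanding $\#A_2\cap B_2$ and $\#A_2\,\#B_2$ in terms of $A_1,B_1$, whereas you prove~\eqref{eq:partition2} first and then obtain~\eqref{eq:partition1} by applying the sign-flip identity twice; both are equally valid and of the same length.
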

	  \begin{proof}
		  We prove~\eqref{eq:partition1} first.
		  Remark that $\# A_2 \cap B_2 = \# B_2 - (\# A_1 - \# B_1 \cap A_1)$ and $\# A_2 \# B_2 = n^2 - n(\# A_1 + \# B_1)+ \# A_1 \# B_1$. So that
		  \begin{align*}
			  \# A_1 \cap B_1 - \frac{1}{n} \# A_1 \# B_1 &= \# A_2 \cap B_2 - \frac{1}{n} \# A_2 \# B_2  
			  + \# B_2 - \# A_1 - n + \# A_1 + \# B_1 \\
			  &= \# A_2 \cap B_2 - \frac{1}{n} \# A_2 \# B_2  
		  \end{align*}

		  To prove~\eqref{eq:partition2}, write $\# A_1 \cap B_1 = \# A_1  - \# A_1 \cap B_2$. Thus
		  \begin{align*}
			  \# A_1 \cap B_1 - \frac{1}{n} \# A_1 \# B_1 &= \# A_1  - \# A_1 \cap B_2 - \frac{1}{n} \# A_1 (n - \# B_2) 
													   =- \Big( \# A_1 \cap B_2 - \frac{1}{n} \# A_1 \# B_2 \Big)
		  \end{align*}
	  \end{proof}

	  \begin{lemma} \label{lem:overlap_equivalence}
		  Let $\bbf{x}=(x_1, \dots, x_n) \in \{1,2\}^n$. Recall that $\tilde{\bbf{x}}= (\phi_p(x_1), \dots, \phi_p(x_n))$ where $\phi_p(1) = \sqrt{\frac{1-p}{p}}$ and $\phi_p(2)=-\sqrt{\frac{p}{1-p}}$. Then
		  $$
		  \frac{1}{n} \Big| \sum_{i=1}^n \tilde{X}_i \tilde{x}_i  \Big| 
		  = \frac{1}{2p(1-p)} \overlap(\bbf{x},\bbf{X}) + O \Big(|\frac{S_1(\bbf{X})}{n} - p | + |\frac{S_2(\bbf{X})}{n} - (1-p) | \Big)
		  $$
	  \end{lemma}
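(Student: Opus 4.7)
The strategy will be to reduce both sides to explicit functions of the four contingency counts $n_{ab} = \#\{i \in \{1,\dots,n\} : X_i = a,\ x_i = b\}$ for $a,b \in \{1,2\}$, since these numbers encode simultaneously the inner product $\sum_i \tilde X_i \tilde x_i$ and the community overlap. Note that $\#S_a(\bbf{X}) = n_{a1} + n_{a2}$ and $\#S_b(\bbf{x}) = n_{1b} + n_{2b}$, so every quantity appearing in the lemma can be written in terms of $n_{11}$, $\#S_1(\bbf{X})$ and $\#S_1(\bbf{x})$.

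The first step is a direct expansion. Using $\phi_p(1)^2 = (1-p)/p$, $\phi_p(2)^2 = p/(1-p)$ and $\phi_p(1)\phi_p(2) = -1$, I would write
$$
\sum_{i=1}^n \tilde X_i \tilde x_i = \tfrac{1-p}{p}\,n_{11} + \tfrac{p}{1-p}\,n_{22} - n_{12} - n_{21},
$$
and then substitute $n_{12} = \#S_1(\bbf{X}) - n_{11}$, $n_{21} = \#S_1(\bbf{x}) - n_{11}$ and $n_{22} = n - \#S_1(\bbf{X}) - \#S_1(\bbf{x}) + n_{11}$. Collecting terms, the coefficient of $n_{11}/n$ simplifies to $1/[p(1-p)]$, and a short algebraic manipulation yields
$$
\frac{1}{n}\sum_{i=1}^n \tilde X_i \tilde x_i \;=\; \frac{1}{p(1-p)}\Big(\tfrac{n_{11}}{n} - p\,\tfrac{\#S_1(\bbf{x})}{n}\Big) + O\Big(\big|\tfrac{\#S_1(\bbf{X})}{n}-p\big|\Big),
$$
where the error term absorbs the dependence of the constant contribution on $\#S_1(\bbf{X})/n - p$ (the factor $\#S_1(\bbf{x})/n$ multiplying the discrepancy is bounded by $1$).

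For the overlap, the idea is to apply Lemma~\ref{lem:partition} to the partitions $(S_1(\bbf{x}),S_2(\bbf{x}))$ and $(S_1(\bbf{X}),S_2(\bbf{X}))$. Identity~\eqref{eq:partition1} shows that the two $i=1,2$ summands inside the $\overlap$ formula are equal for each choice of permutation $\sigma$, and identity~\eqref{eq:partition2} shows that the sum for $\sigma=\mathrm{id}$ is the negative of the sum for $\sigma=\mathrm{swap}$. Consequently the maximum reduces to an absolute value and
$$
\overlap(\bbf{x},\bbf{X}) \;=\; 2\Big|\tfrac{n_{11}}{n} - \tfrac{\#S_1(\bbf{x})}{n}\cdot\tfrac{\#S_1(\bbf{X})}{n}\Big|.
$$
Replacing $\#S_1(\bbf{X})/n$ by $p$ inside this absolute value costs at most $\tfrac{\#S_1(\bbf{x})}{n}\,|\tfrac{\#S_1(\bbf{X})}{n}-p| = O(|\#S_1(\bbf{X})/n - p|)$, again because $\#S_1(\bbf{x})/n \in [0,1]$.

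Taking absolute values in the first display, dividing the second by $2p(1-p)$, and subtracting, the main terms match exactly and the remaining discrepancy is $O(|\#S_1(\bbf{X})/n - p|)$, which equals $|\#S_2(\bbf{X})/n - (1-p)|$ because $\#S_1(\bbf{X}) + \#S_2(\bbf{X}) = n$; hence this single quantity controls both error terms appearing in the statement. There is no real obstacle here: the only point that requires care is that the triangle inequality $||a|-|b|| \le |a-b|$ must be applied to quantities where $a$ and $b$ differ by $O(|\#S_1(\bbf{X})/n - p|)$ uniformly in the remaining variables, which follows from the elementary bounds noted above.
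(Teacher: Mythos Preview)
Your argument is correct and follows essentially the same approach as the paper: both expand the inner product in terms of the contingency counts, invoke Lemma~\ref{lem:partition} to collapse the overlap to a single absolute value, and trade $p$ for $\#S_1(\bbf{X})/n$ at the cost of the stated error term. The only cosmetic difference is that the paper keeps the two summands (weighted by $1/p$ and $1/(1-p)$) in parallel and replaces $p,1-p$ by the empirical proportions, whereas you reduce everything to $n_{11}$ first and substitute in the opposite direction; the content is the same.
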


	  \begin{proof}
		  \begin{align*}
			  \frac{1}{n} \Big| \sum_{i=1}^n \tilde{X}_i \tilde{x}_i  \Big| 
			  &= \frac{1}{n} \Big| \frac{1-p}{p} \# S_1(\bbf{x}) \cap S_1(\bbf{X}) + \frac{p}{1-p} \# S_2(\bbf{x}) \cap S_2(\bbf{X}) - (n - \# S_1(\bbf{x}) \cap S_1(\bbf{X}) - \# S_2(\bbf{x}) \cap S_2(\bbf{X})) \Big|
			  \\
			  &= \frac{1}{n} \Big| \frac{1}{p} \# S_1(\bbf{x}) \cap S_1(\bbf{X}) + \frac{1}{1-p} \# S_2(\bbf{x}) \cap S_2(\bbf{X}) - n  \Big|
			  \\
			  &= \frac{1}{n} \Big| \frac{1}{p} (\# S_1(\bbf{x}) \cap S_1(\bbf{X}) - p \# S_1(\bbf{x})) + \frac{1}{1-p} (\# S_2(\bbf{x}) \cap S_2(\bbf{X}) - (1-p) \# S_2(\bbf{x})) \Big|
			  \\
			  &= \frac{1}{n} \Big| \frac{1}{p} (\# S_1(\bbf{x}) \cap S_1(\bbf{X}) - \frac{1}{n} \# S_1(\bbf{X}) \# S_1(\bbf{x})) + \frac{1}{1-p} (\# S_2(\bbf{x}) \cap S_2(\bbf{X}) - \frac{1}{n} \# S_2(\bbf{X}) \# S_2(\bbf{x})) \Big| \\
			  &+ O\Big( \frac{S_1(\bbf{x})}{pn} |p - \frac{S_1(\bbf{X})}{n}| + \frac{S_2(\bbf{x})}{(1-p)n} |(1-p) - \frac{S_2(\bbf{X})}{n}| \Big)
		  \end{align*}

		  Using Lemma~\ref{lem:partition}, one obtain then
		  $$
		  \frac{1}{n} \Big| \sum_{i=1}^n \tilde{X}_i \tilde{x}_i  \Big| 
		  = \frac{1}{2p(1-p)} \overlap(\bbf{x},\bbf{X}) + O(|\frac{S_1(\bbf{X})}{n} - p | + |\frac{S_2(\bbf{X})}{n} - (1-p) |)
		  $$

	  \end{proof}
	  \\

	  We are now going to prove Theorem~\ref{th:solvability_sbm}. Remark that
	  \begin{align*}
		  \MMSE^G_n(\lambda) &= \frac{2}{n(n-1)} \sum_{i<j} \E \big( \tilde{X}_i \tilde{X}_j -  \E(\tilde{X}_i \tilde{X}_j | \bbf{G}) \big)^2 \\
									   &= \frac{2}{n(n-1)} \sum_{i<j} \E \big( 1 -  (\E(\tilde{X}_i \tilde{X}_j | \bbf{G}))^2 \big)
	  \end{align*}

	  We will denote $\langle \cdot \rangle_G$ the expectation with respect to the posterior distribution $\PP(.|\bbf{G})$. We have then $\E (\E(\tilde{X}_i \tilde{X}_j | \bbf{G})^2) = \E \langle \tilde{X}_i \tilde{X}_j \tilde{x}_i \tilde{x}_j \rangle_G$, where $\tilde{x}$ is sampled, conditionally to $\bbf{G}$, from $\PP(.|\bbf{G})$. Thus
	  \begin{equation} \label{eq:mmse_overlap}
		  \MMSE^G_n(\lambda) = 1 - \frac{1}{n^2} \sum_{i,j}\E \langle \tilde{X}_i \tilde{X}_j \tilde{x}_i \tilde{x}_j \rangle_G + o(1) = 1 - \E \Big\langle (\frac{1}{n} \sum_{i=1}^n \tilde{X}_i \tilde{x}_i)^2 \Big\rangle_G + o(1)
	  \end{equation}

	  Suppose $\lambda > \lambda_c$. Then~\eqref{eq:mmse_overlap} and Proposition~\ref{prop:mmse} imply
	  $$
	  \liminf_{d \to \infty} \liminf_{n \to \infty} \E \langle |\frac{1}{n} \sum_{i=1}^n \tilde{x}_i \tilde{X}_i | \rangle_G >0
	  $$
	  Using Lemma~\ref{lem:overlap_equivalence}, this gives 
	  $$
	  \liminf_{d \to \infty} \liminf_{n \to \infty} \E \langle \overlap(\bbf{x},\bbf{X}) \rangle_G >0
	  $$
	  where $\bbf{x}$ is sampled according to the posterior distribution of $\bbf{X}$. Sampling from the posterior distribution $\PP(\bbf{X}=.|\bbf{G})$ provides thus an estimator that achieves a non zero overlap: the community detection problem is solvable.
	  \\

	  Suppose now $\lambda<\lambda_c$. Suppose that the community detection problem is solvable. There is therefore an estimator $\bbf{a}$ that achieves a non zero overlap. Lemma~\ref{lem:overlap_equivalence} gives then
	  $$
	  \alpha := \liminf_{d \to \infty} \liminf_{n \to \infty} \frac{1}{n} \E |\sum_{i=1}^n \tilde{X}_i \tilde{a}_i | > 0
	  $$
Compute now for $\delta \in (0,1]$
\begin{align*}
	\frac{2}{n(n-1)} \sum_{i<j} \E (\tilde{X}_i \tilde{X}_j - \delta \tilde{a}_i \tilde{a}_j)^2
	&= \frac{1}{n^2} \sum_{i,j} \E (\tilde{X}_i \tilde{X}_j - \delta \tilde{a}_i \tilde{a}_j)^2 + o(1) \\
	&=\frac{1}{n^2} \sum_{i\neq j} \E (\tilde{X}_i^2 \tilde{X}_j^2) + O(\delta^2) - \frac{2 \delta}{n^2} \E (\sum_{i=1}^n \tilde{X}_i \tilde{a}_i )^2 + o(1) \\
	&\leq 1 + O(\delta^2) - 2 \delta\Big( \frac{1}{n} \E |\sum_{i=1}^n \tilde{X}_i \tilde{a}_i | \Big)^2 + o(1)
\end{align*}
So that 
$$
\liminf_{d \to \infty} \liminf_{n\to \infty} \MMSE^G_n(\lambda) \leq
\liminf_{d \to \infty} \liminf_{n \to \infty} \frac{2}{n(n-1)} \sum_{i<j} \E (\tilde{X}_i \tilde{X}_j - \delta \tilde{a}_i \tilde{a}_j)^2 = 1 - 2 \delta \alpha^2 + O(\delta^2)
$$
The right-hand side will be strictly inferior to $1$ for $\delta$ sufficiently small. This is contradictory with Proposition~\ref{prop:mmse} (recall that $\lambda < \lambda_c$). The community detection problem is not solvable. Theorem~\ref{th:solvability_sbm} is proved.

\bibliographystyle{plain}
\bibliography{./references.bib}

\begin{thebibliography}{10}

\bibitem{abbe2015detection}
Emmanuel Abbe and Colin Sandon.
\newblock Detection in the stochastic block model with multiple clusters: proof
  of the achievability conjectures, acyclic bp, and the information-computation
  gap.
\newblock {\em arXiv preprint arXiv:1512.09080}, 2015.

\bibitem{aizenman2003extended}
Michael Aizenman, Robert Sims, and Shannon~L Starr.
\newblock Extended variational principle for the sherrington-kirkpatrick
  spin-glass model.
\newblock {\em Physical Review B}, 68(21):214403, 2003.

\bibitem{baik2005phase}
Jinho Baik, G{\'e}rard Ben~Arous, and Sandrine P{\'e}ch{\'e}.
\newblock Phase transition of the largest eigenvalue for nonnull complex sample
  covariance matrices.
\newblock {\em Annals of Probability}, pages 1643--1697, 2005.

\bibitem{bandeira2016tightness}
Afonso~S Bandeira, Nicolas Boumal, and Amit Singer.
\newblock Tightness of the maximum likelihood semidefinite relaxation for
  angular synchronization.
\newblock {\em Mathematical Programming}, pages 1--23, 2016.

\bibitem{banks2016information}
Jess Banks, Cristopher Moore, Nicolas Verzelen, Roman Vershynin, and Jiaming
  Xu.
\newblock Information-theoretic bounds and phase transitions in clustering,
  sparse pca, and submatrix localization.
\newblock {\em arXiv preprint arXiv:1607.05222v2}, 2016.

\bibitem{barbier2016mutual}
Jean Barbier, Mohamad Dia, Nicolas Macris, Florent Krzakala, Thibault Lesieur,
  and Lenka Zdeborov{\'a}.
\newblock Mutual information for symmetric rank-one matrix estimation: A proof
  of the replica formula.
\newblock In {\em Advances in Neural Information Processing Systems}, pages
  424--432, 2016.

\bibitem{benaych2011eigenvalues}
Florent Benaych-Georges and Raj~Rao Nadakuditi.
\newblock The eigenvalues and eigenvectors of finite, low rank perturbations of
  large random matrices.
\newblock {\em Advances in Mathematics}, 227(1):494--521, 2011.

\bibitem{boucheron2013concentration}
St{\'e}phane Boucheron, G{\'a}bor Lugosi, and Pascal Massart.
\newblock {\em Concentration inequalities: A nonasymptotic theory of
  independence}.
\newblock Oxford university press, 2013.

\bibitem{caltagirone2016asymmetric}
Francesco Caltagirone, Marc Lelarge, and L{\'e}o Miolane.
\newblock Recovering asymmetric communities in the stochastic block model.
\newblock {\em arXiv preprint arXiv:1610.03680}, 2016.

\bibitem{coja2016information}
Amin Coja-Oghlan, Florent Krzakala, Will Perkins, and Lenka Zdeborova.
\newblock Information-theoretic thresholds from the cavity method.
\newblock {\em arXiv preprint arXiv:1611.00814}, 2016.

\bibitem{decelle2011asymptotic}
Aurelien Decelle, Florent Krzakala, Cristopher Moore, and Lenka Zdeborov{\'a}.
\newblock Asymptotic analysis of the stochastic block model for modular
  networks and its algorithmic applications.
\newblock {\em Physical Review E}, 84(6):066106, 2011.

\bibitem{deshpande2016asymptotic}
Yash Deshpande and Emmanuel Abbe.
\newblock Asymptotic mutual information for the balanced binary stochastic
  block model.
\newblock {\em Information and Inference}, page iaw017, 2016.

\bibitem{deshpande2014information}
Yash Deshpande and Andrea Montanari.
\newblock Information-theoretically optimal sparse pca.
\newblock In {\em 2014 IEEE International Symposium on Information Theory},
  pages 2197--2201. IEEE, 2014.

\bibitem{feral2007largest}
Delphine F{\'e}ral and Sandrine P{\'e}ch{\'e}.
\newblock The largest eigenvalue of rank one deformation of large wigner
  matrices.
\newblock {\em Communications in mathematical physics}, 272(1):185--228, 2007.

\bibitem{guerra2003broken}
Francesco Guerra.
\newblock Broken replica symmetry bounds in the mean field spin glass model.
\newblock {\em Communications in mathematical physics}, 233(1):1--12, 2003.

\bibitem{guo2005mutual}
Dongning Guo, Shlomo Shamai, and Sergio Verd{\'u}.
\newblock Mutual information and minimum mean-square error in gaussian
  channels.
\newblock {\em IEEE Transactions on Information Theory}, 51(4):1261--1282,
  2005.

\bibitem{hajek2016information}
Bruce Hajek, Yihong Wu, and Jiaming Xu.
\newblock Information limits for recovering a hidden community.
\newblock In {\em Information Theory (ISIT), 2016 IEEE International Symposium
  on}, pages 1894--1898. IEEE, 2016.

\bibitem{iba1999nishimori}
Yukito Iba.
\newblock The nishimori line and bayesian statistics.
\newblock {\em Journal of Physics A: Mathematical and General}, 32(21):3875,
  1999.

\bibitem{korada2009exact}
Satish~Babu Korada and Nicolas Macris.
\newblock Exact solution of the gauge symmetric p-spin glass model on a
  complete graph.
\newblock {\em Journal of Statistical Physics}, 136(2):205--230, 2009.

\bibitem{korada2010tight}
Satish~Babu Korada and Nicolas Macris.
\newblock Tight bounds on the capacity of binary input random cdma systems.
\newblock {\em IEEE Transactions on Information Theory}, 56(11):5590--5613,
  2010.

\bibitem{korada2011lindeberg}
Satish~Babu Korada and Andrea Montanari.
\newblock Applications of the lindeberg principle in communications and
  statistical learning.
\newblock {\em IEEE Transactions on Information Theory}, 57(4):2440--2450,
  2011.

\bibitem{krzakala2016mutual}
Florent Krzakala, Jiaming Xu, and Lenka Zdeborov{\'a}.
\newblock Mutual information in rank-one matrix estimation.
\newblock In {\em Information Theory Workshop (ITW), 2016 IEEE}, pages 71--75.
  IEEE, 2016.

\bibitem{DBLP:conf/allerton/LesieurKZ15}
Thibault Lesieur, Florent Krzakala, and Lenka Zdeborov{\'{a}}.
\newblock {MMSE} of probabilistic low-rank matrix estimation: Universality with
  respect to the output channel.
\newblock In {\em 53rd Annual Allerton Conference on Communication, Control,
  and Computing, Allerton 2015, Allerton Park {\&} Retreat Center, Monticello,
  IL, USA, September 29 - October 2, 2015}, pages 680--687. {IEEE}, 2015.

\bibitem{DBLP:conf/isit/LesieurKZ15}
Thibault Lesieur, Florent Krzakala, and Lenka Zdeborov{\'{a}}.
\newblock Phase transitions in sparse {PCA}.
\newblock In {\em {IEEE} International Symposium on Information Theory, {ISIT}
  2015, Hong Kong, China, June 14-19, 2015}, pages 1635--1639. {IEEE}, 2015.

\bibitem{measson2009generalized}
Cyril M{\'e}asson, Andrea Montanari, Thomas~J Richardson, and R{\"u}diger
  Urbanke.
\newblock The generalized area theorem and some of its consequences.
\newblock {\em IEEE Transactions on Information Theory}, 55(11):4793--4821,
  2009.

\bibitem{mezard1987spin}
Marc M{\'e}zard, Giorgio Parisi, and Miguel Virasoro.
\newblock {\em Spin glass theory and beyond: An Introduction to the Replica
  Method and Its Applications}, volume~9.
\newblock World Scientific Publishing Co Inc, 1987.

\bibitem{milgrom2002envelope}
Paul Milgrom and Ilya Segal.
\newblock Envelope theorems for arbitrary choice sets.
\newblock {\em Econometrica}, 70(2):583--601, 2002.

\bibitem{andrea2008estimating}
Andrea Montanari.
\newblock Estimating random variables from random sparse observations.
\newblock {\em European Transactions on Telecommunications}, 19(4):385--403,
  2008.

\bibitem{montanari2015finding}
Andrea Montanari.
\newblock Finding one community in a sparse graph.
\newblock {\em Journal of Statistical Physics}, 161(2):273--299, 2015.

\bibitem{neeman2014asymSBM}
Joe Neeman and Praneeth Netrapalli.
\newblock Non-reconstructability in the stochastic block model.
\newblock {\em arXiv preprint arXiv:1404.6304}, 2014.

\bibitem{nishimori2001statistical}
Hidetoshi Nishimori.
\newblock {\em Statistical physics of spin glasses and information processing:
  an introduction}, volume 111.
\newblock Clarendon Press, 2001.

\bibitem{panchenko2013SK}
Dmitry Panchenko.
\newblock {\em The Sherrington-Kirkpatrick model}.
\newblock Springer Science \& Business Media, 2013.

\bibitem{perry2016optimality}
Amelia Perry, Alexander~S Wein, Afonso~S Bandeira, and Ankur Moitra.
\newblock Optimality and sub-optimality of pca for spiked random matrices and
  synchronization.
\newblock {\em arXiv preprint arXiv:1609.05573}, 2016.

\bibitem{rangan2012iterative}
Sundeep Rangan and Alyson~K Fletcher.
\newblock Iterative estimation of constrained rank-one matrices in noise.
\newblock In {\em Information Theory Proceedings (ISIT), 2012 IEEE
  International Symposium on}, pages 1246--1250. IEEE, 2012.

\bibitem{saade2015spectral}
Alaa Saade, Marc Lelarge, Florent Krzakala, and Lenka Zdeborov{\'a}.
\newblock Spectral detection in the censored block model.
\newblock In {\em 2015 IEEE International Symposium on Information Theory
  (ISIT)}, pages 1184--1188. IEEE, 2015.

\bibitem{saade2016clustering}
Alaa Saade, Marc Lelarge, Florent Krzakala, and Lenka Zdeborov{\'a}.
\newblock Clustering from sparse pairwise measurements.
\newblock In {\em Information Theory (ISIT), 2016 IEEE International Symposium
  on}, pages 780--784. IEEE, 2016.

\bibitem{talagrand2010meanfield1}
Michel Talagrand.
\newblock {\em Mean field models for spin glasses: Volume I: Basic examples},
  volume~54.
\newblock Springer Science \& Business Media, 2010.

\bibitem{talagrand2010meanfield2}
Michel Talagrand.
\newblock {\em Mean field models for spin glasses: Volume II: Advanced
  Replica-Symmetry and Low Temperature}, volume~55.
\newblock Springer Science \& Business Media, 2011.

\bibitem{zdeborova2016statistical}
Lenka Zdeborov{\'a} and Florent Krzakala.
\newblock Statistical physics of inference: Thresholds and algorithms.
\newblock {\em Advances in Physics}, 65(5):453--552, 2016.

\end{thebibliography}
\end{document}